\numberwithin{equation}{section}
\newtheorem{theorem}[equation]{Theorem}
\newtheorem{lemma}[equation]{Lemma}
\newtheorem{prop}[equation]{Proposition}
\newtheorem{corollary}[equation]{Corollary}
\theoremstyle{definition}
\newtheorem{definition}[equation]{Definition}
\newtheorem{example}[equation]{Example}
\theoremstyle{remark}
\newtheorem{remark}[equation]{Remark}
\newtheorem*{remark*}{Remark}
\newtheorem*{assumption}{Assumption}
\newcommand{\ie}{\emph{i.e.} }
\newcommand{\eg}{\emph{e.g.} }
\newcommand{\cf}{\emph{cf.} }
\newcommand{\vspan}{\operatorname{span}}
\newcommand{\beq}{\begin{equation}}
\newcommand{\eeq}{\end{equation}}
\newcommand{\bea}{\begin{eqnarray}}
\newcommand{\eea}{\end{eqnarray}}
\newcommand{\C}{\mathbb{C}}
\newcommand{\R}{\mathbb{R}}
\newcommand{\Z}{\mathbb{Z}}
\newcommand{\N}{\mathbb{N}}
\newcommand{\HH}{\mathbb{H}}
\newcommand{\CP}{\mathbb{CP}}
\newcommand{\ra}{\rightarrow}
\newcommand{\vol}{\operatorname{Vol}}
\newcommand{\dvol}{\operatorname{dv}}
\newcommand{\Real}{\operatorname{Re}}
\newcommand{\Imag}{\operatorname{Im}}
\newcommand{\Ric}{\operatorname{Ric}}
\newcommand{\Lie}[1]{\mathfrak{#1}}
\newcommand{\tu}[1]{\textup{#1}}
\newcommand{\gtwo}{\ensuremath{\textup{G}_2}}
\newcommand{\gtstr}{\gtwo--structure}
\newcommand{\gtmfd}{\gtwo--manifold}
\newcommand{\gtmetric}{\gtwo--metric}
\newcommand{\gthol}{\gtwo--holonomy\ }
\newcommand{\unitary}[1]{\textup{U$(#1)$}}
\newcommand{\sunitary}[1]{\textup{SU$(#1)$}}
\newcommand{\sunitaryn}{\textup{SU$(n)$}}
\newcommand{\suthreestr}{\sunitary{3}--structure}
\newcommand{\sutwostr}{\sunitary{2}--structure}
\newcommand{\sorth}[1]{\textup{SO$(#1)$}}
\newcommand{\spin}[1]{\textup{Spin$(#1)$}}
\def\co{\colon\thinspace}
\begin{document}

\title[Complete non-compact \gtwo--manifolds from AC Calabi--Yau $3$-folds]{Complete non-compact \gtwo--manifolds from asymptotically conical Calabi--Yau $3$-folds}
\author[L.~Foscolo]{Lorenzo~Foscolo}
\author[M.~Haskins]{Mark~Haskins}
\author[J.~Nordstr\"om]{Johannes~Nordstr\"om}

\maketitle

\begin{abstract}
We develop a powerful new analytic method to construct complete non-compact Ricci-flat $7$-manifolds, more specifically 
\gtmfd s, \ie Riemannian $7$-manifolds $(M,g)$ whose holonomy group is the compact exceptional Lie group
\gtwo. Our construction gives the first general analytic construction of complete non-compact Ricci-flat metrics in any odd dimension
and establishes a link with the  Cheeger--Fukaya--Gromov theory of collapse with bounded curvature.

The construction starts with a complete non-compact asymptotically conical 
Calabi--Yau $3$-fold $B$ and a circle bundle $M\ra B$ satisfying a necessary topological condition. 
Our method then produces a $1$-parameter family of circle-invariant complete 
\gtmetric s $g_\epsilon$ on $M$ 
that collapses with bounded curvature as $\epsilon \to 0$ to the original Calabi--Yau metric on the base $B$.
The \gtmetric s we construct have controlled asymptotic geometry at infinity, so-called 
asymptotically locally conical (ALC) metrics;
these are the natural higher-dimensional analogues of the ALF metrics that are well known 
in $4$-dimensional hyperk\"ahler geometry. 

We give two illustrations of the strength of our method. Firstly we use it to construct infinitely many diffeomorphism types 
of complete non-compact simply connected \gtmfd s; previously only a handful of such diffeomorphism types was known. 
Secondly we use it to prove the existence of 
continuous families of complete non-compact \gtmetric s of arbitrarily high dimension; 
previously only rigid or $1$-parameter families of complete non-compact \gtmetric s were known.
\end{abstract}

\section{Introduction}
Despite the centrality of Ricci curvature in modern Riemannian geometry, 
constructing Ricci-flat metrics remains extremely challenging. At present the only two tools available are holonomy reduction 
methods or symmetry reduction (in the non-compact setting). 
In even dimensions, holonomy reduction techniques related to K\"ahler geometry have proven very powerful for constructing both 
compact and complete non-compact examples. 
The only possible irreducible holonomy reduction for odd-dimensional manifolds is that of \gtwo~holonomy for $7$-manifolds. 
While in recent years the number of known families of compact \gtmfd s has grown quite considerably, there have remained very few known families of  complete non-compact irreducible \gtmfd s.

In this paper we develop a new analytic method to construct complete non-compact \gtmfd s, 
that is Riemannian $7$-manifolds $(M,g)$ whose holonomy group is the compact exceptional Lie group
\gtwo. The manifolds we construct have a controlled asymptotic geometry at infinity, so-called \emph{asymptotically locally conical} (ALC) geometry, which is the natural higher-dimensional analogue of the asymptotic geometry of $4$-dimensional ALF hyperk\"ahler manifolds. 
As an illustration of the strength of the method developed here we use it to construct infinitely many
diffeomorphism types of complete non-compact simply connected \gtmfd s and to prove the existence of continuous families of complete non-compact \gtmetric s of arbitrarily high dimension.
Prior to our work only a handful of diffeomorphism types of complete non-compact \gtmfd s (or more generally irreducible Ricci-flat $7$-manifolds) were known.

\subsubsection*{Einstein and Ricci-flat metrics with circle symmetry}
In the case of Einstein manifolds with positive scalar curvature, looking for circle-invariant Einstein metrics on the total space of 
circle bundles or more generally torus-invariant metrics on torus bundles \cite{Friedrich:Kath,Wang:Ziller} over compact K\"ahler--Einstein Fano manifolds (or more generally orbifolds) has proven a quite powerful way 
to generate new Einstein metrics from existing ones. While the absence of Killing fields
rules out such bundle constructions in the compact irreducible Ricci-flat case, it is still natural to consider such circle-invariant 
Ricci-flat metrics in the complete non-compact setting---particularly in light of the great utility of the Gibbons--Hawking ansatz as a way
to produce interesting complete (and incomplete) circle-invariant $4$-dimensional hyperk\"ahler~metrics. One concrete idea, given further motivation below, is to look for families of complete circle-invariant Ricci-flat metrics $g_\epsilon$ with submaximal volume growth on the total space $M$ of a circle bundle
over an asymptotically conical Ricci-flat manifold $(B,g_B)$ that collapse in the Gromov--Hausdorff sense as $\epsilon \to 0$ to $(B,g_B)$. 
To make this idea into a powerful generally applicable tool for constructing large families of interesting new 
complete Ricci-flat metrics a number of ingredients are necessary: (i) tools to construct enough interesting asymptotically conical 
Ricci-flat metrics for use as the base metric; (ii) a good understanding of the deformation theory of such asymptotically conical 
Ricci-flat metrics and (iii) a well-behaved perturbation theory to allow for the correction by PDE methods of almost Ricci-flat metrics on $M$ to genuine Ricci-flat metrics. In the general Ricci-flat setting all three issues cause serious difficulty.

\subsubsection*{\gthol and Calabi--Yau geometry in $6$ dimensions}
However, in the special case where we try to construct \gthol metrics on circle bundles over asymptotically conical (AC) Calabi--Yau $3$-folds 
we have additional tools available that allow us to overcome all three issues and therefore develop a generally applicable analytic construction 
method: see Theorem~\ref{thm:Main:Theorem:technical} below for a precise statement.  
Perhaps the most surprising aspect of our main theorem is not that such a construction works in some cases, but that the tools now available are sufficiently powerful 
that it works in complete generality, independent of the particular AC Calabi--Yau $3$-fold or circle bundle in question. Moreover, 
to demonstrate that the existence of very many complete non-compact \gtmfd s follows from our construction we will rely on many of the latest developments 
in the metric aspects of complete non-compact Calabi--Yau manifolds; specifically this includes the recent extension of the equivalence between $K$-stability 
and the existence of K\"ahler--Einstein metrics on Fano manifolds \cite{CDS} to the setting of Sasaki--Einstein metrics/Calabi--Yau cone metrics \cite{Collins:Szekelyhidi:JDG,Collins:Szekelyhidi:GT}.
Our work establishes a new intimate connection between metric Calabi--Yau geometry in 6 real dimensions and  irreducible \gthol metrics in 7 dimensions; 
it gives the first systematic understanding why certain non-compact complete Calabi--Yau metrics in $6$ dimensions 
lead to closely-related non-compact complete irreducible \gthol metrics in $7$ dimensions.

Another point to stress is that in the context of the Gibbons--Hawking ansatz in 
hyperk\"ahler geometry in $4$ dimensions, one must allow circle actions with fixed points 
(corresponding to poles of the positive harmonic function used) to generate non-trivial complete examples. 
Instead in 7 dimensions by considering only free circle actions we can still produce a bountiful supply 
of complete non-compact \gtmfd s. This is due to the fact that---unlike $\R^3$---many AC Calabi--Yau $3$-folds have non-trivial second cohomology and therefore support non-trivial circle bundles.

\subsubsection*{Cohomogeneity one examples: AC and ALC geometry}
We now discuss some further motivation for our basic approach and the particular kinds of asymptotic geometry that we choose to consider. 
This motivation comes from the geometry of the classical highly symmetric complete non-compact \gtmfd s constructed by Bryant and Salamon
in 1989, and more recent deformations thereof considered in the M theory community. 
Recall that Bryant and Salamon constructed the first complete non-compact \gtmfd s in\cite{Bryant:Salamon}. 
 There are three Bryant--Salamon \gtmetric s up to scaling, all admitting a cohomogeneity one action, that is, a compact Lie group acts isometrically on $(M,g)$
with generic orbit of codimension 1. The large symmetry group affords a reduction of the system of nonlinear 
partial differential equations for 
a torsion-free \gtstr~to a family of nonlinear ordinary differential equations. 
The geometry at infinity of the three Bryant--Salamon examples is \emph{asymptotically conical}, that is, 
outside a compact subset the non-compact $7$-manifold $M$ is diffeomorphic to a cone $C(S)$ over a smooth 
Riemannian $6$-manifold $(S,g_S)$ and the metric $g$ on $M$ becomes asymptotic 
to the conical metric $g_C=dr^2+r^2 g_S$ on $C(S)$. 

In the early 2000s, because of the importance of \gtmfd s 
in supersymmetric compactifications in M theory, 
several different groups of theoretical physicists revisited the cohomogeneity one approach \cite{BGGG,Brandhuber,CGLP:C7:tilde,CGLP:M:Conifolds,Hori:A7}: 
they wrote down ODE systems that govern more general cohomogeneity one \gtmfd s, and thereby
discovered a new explicit non-compact ALC \gtmfd  \cite{BGGG}. 
(Cohomogeneity one ALC $8$-manifolds with exceptional holonomy group $\textup{Spin}_7$ were found a little earlier in \cite{CGLP:ALC:Spin7}).
By studying numerical solutions to these ODE systems they also 
gave strong numerical evidence for the existence of four different $1$-parameter families of 
complete non-compact \gtmetric s, denoted by the physicists as $\mathbb{A}_7$, $\mathbb{B}_7$, $\mathbb{C}_7$ and $\mathbb{D}_7$. 
The existence of the $\mathbb{B}_7$ family has since been established rigorously  \cite{Bogoyavlenskaya}. 
A rigorous ODE-based proof of the existence of the other three conjectured $1$-parameter families of solutions has not yet been given. (Since the first version of this paper appeared, the authors \cite{FHN:Coho1:ALC} have used ODE-based methods to establish the existence of the $\mathbb{C}_7$ and $\mathbb{D}_7$ families, including far from the collapsed limit considered here.)

The geometry at infinity (of the generic member of each) of these $1$-parameter families of complete non-compact \gtmfd s 
has the following common feature: the complement of 
a compact subset of the non-compact $7$-manifold $M$ is diffeomorphic to the total space 
of a principal circle bundle over a $6$-dimensional Riemannian cone $(C(\Sigma),g_C)$, 
and the metric $g$ on $M$ approaches $g_C + \theta_\infty^2$ for some connection $\theta_\infty$ 
on this circle bundle. 
Physicists termed this asymptotic geometry
\emph{asymptotically local conical} (ALC), thinking of it as a natural higher-dimensional generalisation 
of the asymptotically locally flat (ALF) geometry of the Taub--NUT metric. In mathematics,  the asymptotic geometry of ALC manifolds is a 
special case of the \emph{fibred boundary metrics} introduced by Mazzeo--Melrose \cite{Mazzeo:Melrose:fibred}.

\subsubsection*{Degenerations of ALC metrics and the motivation for our approach}
One interesting common feature of the various $1$-parameter families of  cohomogeneity one 
ALC \gtmetric s is that within each family two different  non-ALC asymptotic geometries 
arise as limits. In one limit the ALC geometry at infinity transitions 
to  asymptotically conical (AC) geometry and one of the original Bryant--Salamon AC \gthol metrics is recovered.
This is analogous to the way that ALE gravitational instantons can
appear as limits of ALF gravitational instantons, \eg in the Gibbons--Hawking construction of multi--Taub--NUT and multi--Eguchi--Hanson spaces.

However, the motivation for the approach taken in the present paper is the other non-ALC limit.
In this other limit, the $1$-parameter family of $7$-dimensional ALC \gthol metrics $g_\epsilon$ 
collapses as $\epsilon \to 0$ to a $6$-dimensional AC metric $g_0$ on a Calabi--Yau manifold $B$.
 
Because all the ALC \gtmetric s $g_\epsilon$ discovered by physicists are of cohomogeneity one, 
so are the associated 6-dimensional collapsed limits $g_0$. 
By considering the different known cohomogeneity one AC Calabi--Yau 3-folds---the small resolution of the conifold, the smoothing of the conifold (and its quotient by the standard anti-holomorphic involution) 
and the Calabi metric on $K_{\CP^1\times\CP^1}$---one gains an important insight into the origin of the four known/conjectured 
$1$-parameter families of ALC \gtmetric s; each of these $1$-parameter families collapses to a different AC Calabi--Yau 3-fold. 

\subsubsection*{Highly collapsed \gtmetric s from Calabi--Yau $3$-folds: an analytic approach}
The previous discussion naturally suggests we try to reverse the above procedure: we wish to start from a given 
AC metric on a Calabi--Yau $3$-fold $B$ and construct a family of highly collapsed ALC \gtmetric s 
on a suitable non-compact $7$-manifold $M$, built from $B$ and some further auxiliary data, that collapses 
back to the given AC Calabi--Yau metric. 
If successful, such an approach is potentially very powerful because 
a large number of AC Calabi--Yau 3-folds have now been constructed by PDE methods, that is, by proving existence of 
solutions with controlled asymptotics to a complex Monge--Amp\`ere equation on suitable non-compact complex $3$-folds. 

In most cases such AC Calabi--Yau 3-folds will not have many (or indeed any) continuous symmetries 
and therefore any resulting ALC \gtmfd s also need not have a high degree of symmetry, 
in strong contrast to the cohomogeneity one solutions explored by physicists. 
This necessitates adopting a PDE-based rather than ODE-based approach to the problem.
In this paper we develop such a general PDE-based method. Our main result is the following general analytic existence theorem, 
whose statement also includes a more precise description of what is meant by ALC geometry.

\begin{theorem}\label{thm:Main:Theorem:technical}
Let $(B,g_0,\omega_0,\Omega_0)$ be an AC Calabi--Yau $3$--fold asymptotic with rate $\mu<0$ to the Calabi--Yau cone $\left( \tu{C}(\Sigma),g_\tu{C},\omega_\tu{C},\Omega_\tu{C}\right)$ over a smooth Sasaki--Einstein $5$--manifold $\Sigma$. Let $M\ra B$ be a principal $U(1)$--bundle such that $c_1 (M)\neq 0$ but $c_1(M)\cup [\omega_0] =0\in H^4(B)$. 

Then there exists $\epsilon_0>0$ such that for every $\epsilon\in (0,\epsilon_0)$ the $7$--manifold $M$ carries an $S^1$--invariant torsion-free \gtstr~$\varphi_\epsilon$ with the following properties. Let $g_\epsilon$ denote the Riemannian metric on $M$ induced by $\varphi_\epsilon$.
\begin{enumerate}
\item $g_\epsilon$ has restricted holonomy $\tu{Hol}^0(g_\epsilon)=G_2$.
\item $(M,g_\epsilon)$ is an ALC manifold: outside of a compact set $K$, $M$ is identified with the total space of a principal $U(1)$--bundle over an exterior region $\{ r> R\}$ in the cone $\tu{C}(\Sigma)$. Under this identification
\[
g_\epsilon = g_\tu{C} + \epsilon^2 \theta_\infty + O\left( r^{-\min{\{1,-\mu\}}}\right)
\]
with analogous decay for all covariant derivatives. Here $\theta_\infty$ is a connection on the principal circle bundle $M\setminus K \ra \{ r\geq R\}\subset \tu{C}(\Sigma)$.
\item There exists a connection $\theta$ on $M\ra B$ such that the difference between $g_\epsilon$ and the Riemannian submersion with fibres of constant length $2\pi\epsilon$
\[
g_0 + \epsilon^2 \theta^2
\]
converges to zero as $\epsilon \ra 0$ in $C^{k,\alpha}$ for every $k\geq 0$ and $\alpha\in (0,1)$. In particular, $(M,g_\epsilon)$ collapses with bounded curvature to $(B,g_0)$ as $\epsilon\ra 0$.
\end{enumerate}
\end{theorem}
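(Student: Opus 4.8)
The plan is to realise $\varphi_\epsilon$ as a small perturbation of an explicit \emph{approximately} torsion-free \gtstr\ built directly from the Calabi--Yau data on $B$, and to push the perturbation analysis through uniformly as $\epsilon\to0$, \ie in the collapsing limit. Recall first that a choice of principal connection $\theta$ on $M\to B$ lets one write every $S^1$-invariant form on $M$ in terms of forms on $B$ (\eg a $3$-form as $\theta\wedge\alpha+\gamma$), so that the torsion-free system $d\varphi=0$, $d\psi=0$, with $\psi:={*}_\varphi\varphi$, for an $S^1$-invariant \gtstr\ reduces to a coupled elliptic system on $B$ for an $\sunitary{3}$-structure together with a positive dilaton function. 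As starting point I take
\[
\varphi_\epsilon^{\mathrm{app}}=\epsilon\,\theta\wedge\omega_0+\Real\,\Omega_0 ,
\]
where $\theta$ is a connection whose curvature $F=d\theta$ is the harmonic representative of $2\pi c_1(M)$; this is a genuine \gtstr\ for every $\epsilon>0$ and induces the metric $g_0+\epsilon^2\theta^2$. Since $F$ is then of type $(1,1)$ one has $F\wedge\Omega_0=0$, so the dual $\psi_\epsilon^{\mathrm{app}}=\tfrac12\omega_0^2-\epsilon\,\theta\wedge\Imag\,\Omega_0$ is already closed; the only torsion is $d\varphi_\epsilon^{\mathrm{app}}=\epsilon\,F\wedge\omega_0$, a closed $4$-form on $B$ which decays at infinity and is \emph{exact} precisely because $c_1(M)\cup[\omega_0]=0$. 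Writing $F\wedge\omega_0=d\beta$ with $\beta$ decaying (weighted Hodge theory on $B$), the corrected form $\varphi_\epsilon^{\mathrm{app}}-\epsilon\,\pi^*\beta$ is closed and is still a \gtstr\ for small $\epsilon$, with torsion (now entirely in $d\psi$) of size $O(\epsilon)$ concentrated where $F$ is large; solving the linearised reduced equations on $B$ order by order in $\epsilon$ yields finitely many further corrections, of controlled decay, reducing this torsion to a sufficiently high power of $\epsilon$.

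With a good approximate solution $\varphi_\epsilon$ in hand, I follow the standard deformation theory of torsion-free \gtstr s: seek the genuine solution as $\varphi_\epsilon+d\eta_\epsilon$, with $\eta_\epsilon$ an $S^1$-invariant $2$-form --- so that closedness is automatic and the dilaton and horizontal distribution are free to move --- impose $d{*}\varphi=0$ together with a Coulomb-type gauge condition on $\eta_\epsilon$, and arrive at a quasilinear elliptic equation
\[
\mathcal L_\epsilon\,\eta_\epsilon+Q_\epsilon(\eta_\epsilon)=-\,\bigl(\text{torsion of }\varphi_\epsilon\bigr),
\]
where $\mathcal L_\epsilon$ is a Laplace-type operator for the approximate metric $g_0+\epsilon^2\theta^2$ and $Q_\epsilon$ is the smooth, quadratically small nonlinearity arising from the nonlinear dependence of ${*}_\varphi$ on $\varphi$. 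A contraction-mapping argument then produces $\eta_\epsilon$, with $\|\eta_\epsilon\|\to0$ as $\epsilon\to0$, once one has a bound on $\mathcal L_\epsilon^{-1}$ uniform in $\epsilon$ in suitable function spaces.

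Proving this uniform linear estimate is the heart of the matter and the main obstacle, since both the collapse and the non-compactness make $\mathcal L_\epsilon$ degenerate. For the collapse: the fibres of $M\to B$ have length $\sim\epsilon$, so one must use Hölder (or weighted Sobolev) spaces adapted to the collapsing geometry and split forms into their $S^1$-invariant part --- governed by an $\epsilon$-independent operator on $B$ --- and the higher Fourier modes, whose eigenvalues are $\gtrsim\epsilon^{-2}$ and are hence invertible with excellent constants; a finite-dimensional space of small eigenvalues, of size $\sim\epsilon^2$ and tied to the non-closed connection form $\theta$ (the ``collapsing cohomology''), has to be tracked separately, which is what forces the higher-order approximate solution above. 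For the non-compactness: the spaces carry weights matched to the AC rate $\mu$ of $g_0$ at infinity and to the limiting ALC \gtwo-geometry on the circle bundle over $\tu{C}(\Sigma)$. Uniform invertibility is then obtained by a blow-up/contradiction scheme: a hypothetical sequence of unit-norm near-kernel elements with $\mathcal L_{\epsilon}$-image tending to zero is shown, after passing to a rescaled limit, to survive on one of the model spaces --- the Calabi--Yau cone $\tu{C}(\Sigma)$, flat $\R^6$ or $\R^7$ from rescaling near an interior point, and the model ALC geometry at infinity --- where Liouville-type vanishing theorems force it to be zero, a contradiction. Carrying out this limiting analysis and patching the regions together is where essentially all of the work lies.

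Given the uniform estimate, the fixed-point argument produces an $S^1$-invariant torsion-free \gtstr\ $\varphi_\epsilon$ for all small $\epsilon$; the decay of $\eta_\epsilon$ in the adapted norms gives part (iii) at once, \ie $g_\epsilon-(g_0+\epsilon^2\theta^2)\to0$ in every $C^{k,\alpha}$, hence a collapse with uniformly bounded curvature. Feeding the known asymptotics of $\omega_0,\Omega_0,F,\theta$ into the formula for $\varphi_\epsilon$ and using the weighted decay of $\eta_\epsilon$ yields part (ii): the identification of $M\setminus K$ with a circle bundle over $\{r>R\}\subset\tu{C}(\Sigma)$ and the expansion $g_\epsilon=g_{\tu{C}}+\epsilon^2\theta_\infty^2+O\bigl(r^{-\min\{1,-\mu\}}\bigr)$, with the analogous decay for covariant derivatives. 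Finally, torsion-freeness gives $\tu{Hol}^0(g_\epsilon)\subseteq\gtwo$, and any proper connected subgroup would force, on a complete manifold, a local de Rham splitting off a flat factor; this is incompatible with the ALC end of (ii) --- a twisted, collapsed circle bundle over a cone, which in particular is not a metric product with a line --- and, in the flat case, with $c_1(M)\neq0$. Hence $\tu{Hol}^0(g_\epsilon)=\gtwo$, establishing (i).
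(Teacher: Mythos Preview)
Your skeleton --- approximate solution, linearise, perturb, check asymptotics and holonomy --- is right, and your first step (the harmonic representative of $c_1(M)$ is automatically primitive $(1,1)$, so $\psi^{\mathrm{app}}_\epsilon$ is closed and the hypothesis $c_1(M)\cup[\omega_0]=0$ makes $F\wedge\omega_0$ exact) agrees with the paper's Theorem~\ref{thm:GH:G2:linearised:HYM}. But your perturbation scheme diverges from the paper's in a way that hides the main difficulties.

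The paper never works on $M$ with an $\epsilon$-dependent Laplacian. Since the sought \gtstr\ is $S^1$-invariant, torsion-freeness is equivalent to the \emph{Apostolov--Salamon system} \eqref{eq:GH:G2:collapsing:sequence} for $(\omega_\epsilon,\Omega_\epsilon,h_\epsilon,\theta_\epsilon)$ on $B$, and its linearisation at the Calabi--Yau limit is an $\epsilon$-\emph{independent} first-order operator on $B$. No blow-up argument or uniform-in-$\epsilon$ estimate is needed; instead the paper analyses this fixed operator by identifying a decoupled subsystem with the Dirac operator of $B$ and handling the remainder via ``normal forms'' for exact $4$-forms (Section~\ref{sec:Normal:form:4:forms}), introducing auxiliary scalar and vector unknowns to make the map surjective (Proposition~\ref{prop:Modified:GH:G2}, Theorem~\ref{thm:Linearisation:GH:G2}). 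Moreover, the paper states explicitly that it \emph{could not} set up a contraction mapping/Implicit Function Theorem argument of the type you sketch, because the nonlinearity of Hitchin's duality map $\Real\Omega\mapsto\Imag\Omega$ makes it unclear how to choose a suitable pointwise exponential map after truncation; it therefore builds the solution as a formal power series in $\epsilon$ and proves convergence \`a la Kodaira--Nirenberg--Spencer. Your phrase ``finitely many further corrections reducing the torsion to a sufficiently high power of $\epsilon$'' also glosses over a decay issue the paper flags: the torsion of $\varphi^{(1)}_\epsilon$ decays only like $r^{-2}$, too slowly for an $L^2$-based Joyce-style argument on the ALC space $M$, and improving it already forces one to understand the linearisation on $B$ completely --- at which point the power-series route is the more efficient one.

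For part~(i), your holonomy argument is too quick: ruling out a parallel $1$-form on a complete non-compact ALC manifold needs more than ``the end is not a metric product''. The paper uses the $L^2$-cohomology of fibred-boundary metrics (Hausel--Hunsicker--Mazzeo) to show that any bounded closed and coclosed $1$-form $\Gamma$ on $M$ has an expansion $\Gamma=a\,\theta_\infty+O(r^{\delta})$ with $\delta<0$, and that the map $\Gamma\mapsto a$ is injective; hence $\Gamma$ is $S^1$-invariant, so $\Gamma=u\,\theta+\gamma$ with $u,\gamma$ pulled back from $B$, and $d\Gamma=0$ forces $u\equiv 1$ and $d\gamma=-d\theta$, contradicting $[d\theta]=c_1(M)\neq 0$ in $H^2(B)$.
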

%
%
\subsubsection*{Consequences of the main theorem}
In combination with the powerful methods now available (see Section~\ref{sec:examples} for details and references to the recent literature) 
to construct many interesting AC Calabi--Yau $3$-folds, 
Theorem \ref{thm:Main:Theorem:technical} leads to the construction of a plethora of new complete non-compact \gtmetric s 
with ALC geometry. 
A small sample of the many new complete ALC \gtmetric s that arise from our method is described 
in Section \ref{sec:examples}.  We defer a more systematic study of the variety of possible ALC \gtmetric s that 
arise from our construction to elsewhere, concentrating here instead on presenting the analytic details of 
our general construction.
More specifically in Section \ref{sec:examples} we focus on AC Calabi--Yau metrics on \emph{small resolutions} of Calabi--Yau cones: in this case $H^4(B)=0$ and hence the condition $c_1(M)\cup [\omega_0] =0\in H^4(B)$ is trivially satisfied. Until recently only one example of a $3$-dimensional Calabi--Yau cone admitting a small resolution was known to exist. Recent results about existence of Sasaki--Einstein metrics and K-stability \cite{Collins:Szekelyhidi:GT}, however, yield the existence of an infinite family of $3$-dimensional Calabi--Yau cones with small resolutions. In Corollary \ref{cor:cAp:ALC:G2} we exploit this infinite family to produce infinitely many diffeomorphism types of simply connected ALC \gtmfd s and families of ALC \gtmetric s of arbitrarily high dimension.

\subsubsection*{The adiabatic limit of circle-invariant torsion-free \gtstr s.}
The central objects of study in this paper are therefore circle-invariant torsion-free \gtstr s $\varphi$ 
on a principal circle bundle $M^7$ over a non-compact $6$-manifold $B^6$, as
first studied by Apostolov--Salamon in \cite{Apostolov:Salamon}.
The complexity of the Apostolov--Salamon equations is such that at present little can be said about its solutions in any generality. Our strategy will therefore be to study these equations in the natural adiabatic limit where the circle fibres shrink to zero length. More specifically, we will be interested in a family $\varphi_\epsilon$ of circle-invariant torsion-free \gtstr s 
on the total space $M$ with circle fibres shrinking to zero length as $\epsilon \to 0$. 
Such a $1$-parameter family of collapsing circle-invariant \gtstr s $\varphi_\epsilon$ can be written as 
\[
\varphi_\epsilon = \epsilon\,\theta_\epsilon\wedge\omega_\epsilon + (h_\epsilon)^\frac{3}{4}\Real\Omega_\epsilon,
\]
where for each $\epsilon>0$, $(\omega_\epsilon, \Omega_\epsilon)$ is an {\suthreestr}, 
$h_\epsilon$ is a positive function on $B$ and $\theta_\epsilon$ is a connection $1$-form on the circle bundle $M \to B$.

The condition that $\varphi_\epsilon$ be torsion-free, \ie the closure and coclosure of $\varphi_\epsilon$, 
gives rise to a complicated $\epsilon$-dependent nonlinear system of PDEs for the quadruple 
$(\omega_\epsilon, \Omega_\epsilon, h_\epsilon, \theta_\epsilon)$, that following \cite{Apostolov:Salamon} we call the (rescaled) Apostolov--Salamon equations.
These equations can be viewed as coupled equations for the torsion of the {\suthreestr} 
$(\omega_\epsilon, \Omega_\epsilon)$ together with a gauge-theoretic equation for the pair $(h_\epsilon, \theta_\epsilon)$. 
In the formal limit in which $\epsilon \to 0$ the rescaled Apostolov--Salamon equations simplify considerably: 
one finds that the function $h_0$ should be a constant (that we can assume to be $1$) and 
that $(\omega_0,\Omega_0)$ should be a Calabi--Yau structure on $B$.

We are now seeking to reverse this collapsing process: we assume given the AC Calabi--Yau structure $(\omega_0,\Omega_0)$ on $B$ and we aim to reconstruct the $1$-parameter family of torsion-free \gtstr s $\varphi_\epsilon$ on $M$ for non-zero but small $\epsilon$. The problem can be subdivided into two steps: first we need to construct an approximate solution and then correct it to a torsion-free \gtstr. Both steps involve non-trivial linear analysis on $B$ that we now describe in more detail.  

\subsubsection*{Construction of an approximate solution}
In our context, a good approximate solution is a closed $S^1$-invariant \gtstr~on $M \to B$ with small torsion.  
There is no a priori or obvious choice of such an approximate solution and we have to use analysis on $B$ to produce one.

To this end, it is natural to linearise the rescaled Apostolov--Salamon equations on the limiting Calabi--Yau 3-fold
$(B,\omega_0,\Omega_0)$. This results in a system of coupled linear PDEs for an infinitesimal deformation $(\sigma,\rho+i\hat{\rho})$ of the {\suthreestr} $(\omega_0,\Omega_0)$, a function $h$ and a connection $1$-form $\theta$. The linearised equations simplify somewhat if one assumes that $(B,\omega_\epsilon)$ is a fixed symplectic manifold, in which
case we can take the $2$-form $\sigma$ to be zero; since this will be the only case relevant to this paper hereafter 
we restrict attention to this case.

The linearised equations contain a subsystem of equations involving only the function $h$ and the connection $1$-form $\theta$
and which is well known: the abelian Calabi--Yau monopole equations
\[
dh = *(d\theta\wedge\Real{\Omega_0}), \qquad d\theta\wedge\omega_0^2=0.
\]
Since $d\Real{\Omega_0}=0$ 
these equations force the function $h$ to be harmonic; therefore in many cases one can conclude that 
$h$ must be constant. In this case the Calabi--Yau monopole equations reduce to the (abelian) 
Hermitian Yang--Mills (HYM) equations, \ie the condition that the curvature $2$-form $d\theta$ be a primitive $(1,1)$-form.
This is also equivalent to the condition that $d\theta$ be $\omega_0$-anti-self-dual, \ie $\ast d\theta = -\omega_0 \wedge d\theta$.
For some problems it is of interest to consider solutions to the linearised Apostolov--Salamon equations 
in which we allow genuine Calabi--Yau monopoles with nonconstant $h$; 
one natural way to obtain such solutions is to permit Dirac-type singularities along a smooth compact special Lagrangian 
submanifold $L$ in $B$. This case will be treated in a future paper; in this paper we consider only solutions to the linearised Apostolov--Salamon equations where $\theta$ is HYM.

In the Hermitian Yang--Mills case $\theta$ satisfies the equations
\[
\label{eqn:HYM}
d\theta \wedge \omega_0^2 =0= d\theta \wedge \Real{\Omega_0}.
\tag{HYM}
\]
In our setting where $B$ is a complete non-compact asymptotically conical Calabi--Yau $3$-fold
we will use analytic methods to prove that a HYM connection $\theta$ exists on \emph{any} circle bundle over $B$. 
Having found such a HYM connection $\theta$ we can therefore consider the family of $S^1$-invariant \gtstr s $\varphi_\epsilon = \epsilon\, \theta \wedge \omega_0 + \Real{\Omega_0}$.
However,
$\varphi_\epsilon$ is \emph{not} closed in general since $d\varphi_\epsilon = \epsilon d \theta \wedge \omega_0$. 
For \gtstr s at present we only know how to reduce the torsion-free condition to an equivalent tractable elliptic PDE problem in the case of \emph{closed} structures with small torsion \cite[\S\S 10.3-10.4, \S 11.6]{Joyce:Book}. 
So before we can expect to proceed further we must understand how to improve $\varphi_\epsilon$ to produce a family 
of closed $S^1$-invariant \gtstr s with small torsion.
Given a HYM connection $\theta$ the remaining part of the linearised Apostolov--Salamon equations is the following 
linear inhomogeneous system
\[
\label{eqn:LAS}
d \rho = -d\theta \wedge \omega_0, \quad d \hat{\rho} =0.
\tag{LAS}
\]
The first equation  of \eqref{eqn:LAS} will guarantee that the modified  family of $S^1$-invariant \gtstr s $\varphi^{(1)}_\epsilon = \epsilon\, \theta \wedge \omega_0 + \Real{\Omega_0}+ \epsilon \rho$ are all closed. 
However the solvability of the first equation of \eqref{eqn:LAS} now imposes a
topological constraint on the circle bundle $M \ra B$, namely that 
\[
\label{eqn:c1:cup:omega}
c_1(M) \cup [\omega_0] = 0 \in H^4(B).
\tag{CH1}
\]
We will establish that provided \eqref{eqn:c1:cup:omega}
is satisfied then one can indeed solve \eqref{eqn:LAS}.
%
To control the asymptotic geometry of the 
metric induced on $M$ by the \gtstr~we will also clearly need to control the asymptotics of the solutions to the systems of linear PDEs that we produce.

Therefore the first step in implementing the proof strategy is to adopt an appropriate analytic framework for doing elliptic analysis 
on our complete non-compact Calabi--Yau $3$-fold $B$. More specifically, 
 we need to choose appropriate function spaces adapted to the asymptotic geometry of the non-compact Calabi--Yau $3$-fold $B$. 
 On AC manifolds it is now well understood that suitable weighted H\"older spaces are the appropriate setting 
in which to develop the analytic results required. 
While this general analytic framework is now well established, as we will see below we need some rather detailed analytic properties
for a number of rather particular operators. 
Therefore even assuming this basic analytic framework, establishing all the linear analytic properties  we require entails
a non-trivial amount of work: it occupies both Sections \ref{sec:CY:cones} and \ref{sec:forms:ac:cy} in this paper.
 
Given these preliminary results, our first main goal, achieved in Theorem \ref{thm:GH:G2:linearised:HYM},  is to solve the linearised Apostolov--Salamon equations \eqref{eqn:HYM} and \eqref{eqn:LAS} for the quadruple $(\sigma, \rho+i\hat{\rho},h,\theta)$, 
under our standing assumption that the function $h$ and the $2$-form $\sigma$ both vanish. 
First we prove that any circle bundle $M$ over an irreducible AC Calabi--Yau $3$-fold $B$ 
admits a HYM connection $\theta$. This follows by applying 
a non-compact Hodge-type theorem on $B$ for closed and coclosed $2$-forms with appropriate decay, 
together with vanishing results for decaying harmonic functions and $1$-forms on $B$. To solve the remaining inhomogeneous 
linear system \eqref{eqn:LAS} for $\rho$ and $\hat{\rho}$ turns out to be equivalent to solving the equation
\[
\label{eqn:LAS'}
(d+d^*) \rho= *d\theta
\tag{LAS'}
\]
and then to define $\hat{\rho}$ to be equal to $-\!\ast\!\!\rho$. 
Here the inhomogeneous term $d\theta$ is the curvature of the HYM connection $\theta$ already constructed. The advantage of \eqref{eqn:LAS'} is that the general theory of elliptic operators on weighted spaces 
applies immediately to analyse the obstructions to solving this equation. By understanding the closed and coclosed 2-forms on $B$ within a certain range of decay rates
we are able to establish that the necessary condition 
$[*d \theta]=[-d\theta \wedge \omega_0]= -c_1(M) \cup [\omega_0]=0 \in H^4(B)$ also suffices to solve \eqref{eqn:LAS'} and therefore also \eqref{eqn:LAS}. With such solutions $\theta$ and $\rho$ now in hand the $S^1$-invariant \gtstr s $\varphi^{(1)}_\epsilon = \epsilon\, \theta \wedge \omega_0 + \Real{\Omega_0}+ \epsilon \rho$ are all closed and have torsion of order $O(\epsilon^2)$. Moreover, our control of the asymptotic behaviour of the solutions $\theta$ and $\rho$ 
is sufficient to conclude that the metrics induced by $\varphi^{(1)}_\epsilon$ are all ALC metrics in the sense explained in the statement 
of the main theorem. These  $S^1$-invariant \gtstr s $\varphi^{(1)}_\epsilon$ constitute our background approximate solutions 
and we now seek to prove that for $\epsilon>0$ sufficiently small we can correct them to $S^1$-invariant torsion-free \gtstr s $\varphi_\epsilon$
maintaining the ALC metric asymptotics.

\subsubsection*{Perturbation to a torsion-free \gtstr.}
To achieve this correction to torsion-free there are two main approaches. At first sight the most obvious approach, given all previous analytic constructions 
of torsion-free \gtstr s, is to attempt to adapt Joyce's perturbation theory \cite[\S\S 10.3-10.4]{Joyce:Book} to our present non-compact setting. 
There are two main complications to doing this. The first is that such analysis would then take place on the
$7$-manifold $M$ endowed with an asymptotically locally conical (ALC) metric. 
So we would need to develop the full Fredholm package for (sufficiently general) elliptic operators on appropriate weighted function spaces on such ALC spaces. 
While this theory can indeed be developed it is not currently available in the literature
and presenting the required ALC weighted elliptic analysis here would considerably add to the length of this paper.  

However, even with the requisite ALC elliptic 
analysis package in place, Joyce's perturbation theory includes an $L^2$-smallness requirement on the torsion \cite[Theorem 11.6.1]{Joyce:Book}
which in our non-compact setting entails sufficiently fast decay of the torsion. 
In our case the decay of the torsion of  the closed $S^1$-invariant \gtstr s $\varphi^{(1)}_\epsilon$ that we construct in the first step is significantly too 
slow to be able to apply Joyce's general closed almost torsion-free \gtwo--perturbation machinery directly. We therefore need to find a way to improve the decay of the torsion 
of our approximate solutions sufficiently so that the general machinery would then apply. 

To achieve this the most natural approach seems to be to remain downstairs, \ie to work directly with the Apostolov--Salamon equations on $B$,
rather than with the torsion-free equations for the \gtstr~on $M$. The strategy is then to understand the detailed mapping properties of the linearisation of the Apostolov--Salamon equations on our approximate solutions. This way one can then attempt to construct successively better $1$-parameter families of approximations $\varphi^{(k)}_\epsilon$ to 
the torsion-free structures $\varphi_\epsilon$ which have torsion of order $O(\epsilon^{k+1})$. 
By iterating this process a finite number of times downstairs on $B$ we will be able to improve the decay of the torsion of  $\varphi^{(k)}_\epsilon$ sufficiently for the general theory to apply
to the resulting $S^1$-invariant ALC \gtstr~on $M$.

However, it turns out that we need to iterate this process
three times before we can guarantee sufficient decay of the torsion. 
To be able to iterate this many times forces us to understand completely the 
mapping properties of the linearisation of the Apostolov--Salamon equations on our approximate solutions.
With those mapping properties understood one can then continue this iteration process indefinitely 
and construct a solution to the original nonlinear Apostolov--Salamon equations globally on $B$ 
as a formal power series solution in $\epsilon$.
To construct a genuine solution one needs to solve the 
linearised equations with suitable estimates in order to prove that 
the formal power series solution actually converges (in appropriate function spaces) for sufficiently small $\epsilon$.

Therefore the most efficient implementation of our proof strategy is to work with the Apostolov--Salamon equations directly
using analysis on the AC space $B$; 
we thereby avoid analysis on the ALC space $M$ altogether.
We regard this approach to solving the Apostolov--Salamon equations directly in terms of a convergent formal power series 
 as a natural extension of the Tian--Todorov \cite{Tian,Todorov} approach to 
proving unobstructedness of complex structure deformations on Calabi--Yau manifolds.

\subsubsection*{Solving the Apostolov--Salamon equations iteratively.}
Having described the basic technical approach we adopt and explained why it seems the most efficient method,
we now describe in more detail how we go about finding a formal power series solution 
to the nonlinear Apostolov--Salamon equations working directly on $B$.
We therefore look for an {\suthreestr} $(\omega_\epsilon,\Omega_\epsilon)$ with $\omega_\epsilon=\omega_0$ and a positive function $h_\epsilon$ on $B$ and a connection $\theta_\epsilon$ on $M\ra B$ expressed as power series in $\epsilon$. We write schematically
\[
\varphi_\epsilon = \left( h_\epsilon,\epsilon\,\theta_\epsilon,\Real\Omega_\epsilon\right) = (1,0,\Real\Omega_0) + \sum_{k\geq 1}{\epsilon^k \varphi_k}.
\]
In view of the solution $(\theta,\rho)$ of the linearised Apostolov--Salamon equations discussed above we
set $\varphi_1 = (0,\theta,\rho)$.

Suppose that the triples $\varphi_i$ have been determined for $i \le k-1$ and that the Apostolov--Salamon 
equations are satisfied up to terms of order $O(\epsilon^k)$.
Then the vanishing of the degree $k$ truncation of the Apostolov--Salamon system is equivalent 
to a first-order linear system of PDEs 
\begin{equation*}
\label{eqn:LINk}
\mathcal{L}(\varphi_k)=\Theta_k.
\tag{$\mathcal{L}_k$}
\end{equation*}
Here $\mathcal{L}$ is a given linear first-order operator, depending only on the background AC Calabi--Yau $3$-fold 
$(B,\omega_0,\Omega_0)$, 
acting on the triple $\varphi_k$
and $\Theta_k$ is an algebraic expression in $\varphi_1, \ldots ,\varphi_{k-1}$.

To construct a formal power series solution to the Apostolov--Salamon equations 
the task remaining is therefore to understand the mapping properties of this first-order linear operator 
$\mathcal{L}$ and in particular to guarantee that at every order the error term $\Theta_k$ 
belongs to the image of $\mathcal{L}$. To go further and actually obtain 
a convergent power series solution for $\epsilon$ sufficiently small, in addition one needs 
uniform estimates for the solution $\varphi_k$ in terms of the size of the order $k$ error $\Theta_k$.

The linear system \eqref{eqn:LINk} contains a decoupled subsystem involving the linearisation of the Calabi--Yau monopole equations at the HYM connection $\theta$. Because of special algebraic facts about the spinor bundle of a Calabi--Yau $3$--fold, this decoupled subsystem can be reinterpreted as an inhomogeneous Dirac 
equation. For a wide range of decay rates we prove that the Dirac operator $\slashed{D}$ acting on 
weighted H\"older spaces on $B$ 
is an isomorphism. Therefore the subsystem of \eqref{eqn:LINk} consisting of the linearised Calabi--Yau monopole equations can always be solved in weighted H\"older spaces given an appropriate choice of weight.

Once the subsystem of  \eqref{eqn:LINk} consisting of the linearised Calabi--Yau monopole equations has been solved, the remaining equations 
in the system take the form
\[
d\rho_k = \alpha_{k}, \qquad d\hat{\rho}_k = \beta_{k}.
\]
Here $\alpha_{k}, \beta_{k}$ are given exact $4$-forms, while the unknown $\rho_k +i \hat{\rho}_k$ is an infinitesimal deformation of the complex volume form $\Omega_0$, that is, $\hat{\rho}_k$ is the image of $\rho_k$ under the linearisation of Hitchin's duality map for stable $3$-forms on $6$-manifolds. This linearisation is a composition of the Hodge star operator together with a pointwise decomposition of $3$-forms into irreducible representations of $\sunitary{3}$. Concretely
any $3$-form $\rho$ can be decomposed as $\rho^+ + \rho^-$ so that 
$\hat{\rho}$ is equal to $\ast \rho^+ - \!\ast \rho^-$. 

To understand the mapping properties of the operator $\rho \mapsto (d\rho, d\hat{\rho})$ is non-trivial. There are two main ingredients 
to our approach. The first is to observe that it is possible to introduce additional degrees of freedom---a pair 
of real functions and a vector field on $B$---to the Apostolov--Salamon equations. The key observation, see Proposition \ref{prop:Modified:GH:G2}, 
is that under certain conditions any solution of these extended Apostolov--Salamon
equations in fact solves the original Apostolov--Salamon equations. The proof of this relies on the existence for any {\suthreestr}
of relations between different torsion components that appear in the exterior derivatives of the defining forms $\omega$ and $\Omega$.

The second ingredient is a careful use of the Dirac operator to derive normal forms for suitably decaying exact $4$-forms on $B$ (Proposition \ref{prop:Exact:4form:AC:CY} and Corollary \ref{cor:Exact:4form:AC:CY}). These normal forms are used to relate the operator $\rho \mapsto (d\rho, d\hat{\rho})$ to the more familiar operator $\rho \mapsto (d\rho, d {\ast}\rho)$ and thereby understand the mapping properties of the linearised first-order operator $\mathcal{L}$. In fact, motivated by the extended Apostolov--Salamon equations, 
we will understand the mapping properties of an extended linearisation in which we perturb $\mathcal{L}$ by additional terms arising from a pair of functions and a vector field. In Theorem \ref{thm:Linearisation:GH:G2} we obtain unique solvability
of this extended linearised operator and uniform estimates for the size of the solution in terms of the size of the initial data, 
provided we work in weighted H\"older spaces within a certain range of decay rates.

A final technical point concerns the proof of convergence of the formal power series solutions to the Apostolov--Salamon equations. In adiabatic limit problems, once it has been established that the $\epsilon$-dependent equations can be solved to arbitrarily high order in $\epsilon$, it is customary to truncate the formal power series solution to sufficiently high degree and apply an appropriate version of the Implicit Function Theorem to perturb the resulting approximate solution into an exact solution of the problem. Since the space of {\suthreestr}s on $\R^6$ is a non-linear manifold, truncation of the power series solutions to high order in $\epsilon$ must be followed by a pointwise exponential map. The linearisation of the Apostolov--Salamon equations is sufficiently complicated that we were unable to choose this algebraic map in a suitable way to set up a contraction mapping argument. Instead, exploiting our uniform estimates for solutions to \eqref{eqn:LINk}, we prove that the power series solution constructed by solving \eqref{eqn:LINk} iteratively for all $k\geq 1$ has a positive radius of convergence in appropriate weighted H\"older spaces, adapting the argument used by Kodaira--Nirenberg--Spencer in their proof of the existence of deformations of complex structures \cite{Kodaira:Nirenberg:Spencer}. The fact that we construct solutions that depend real analytically on the parameter $\epsilon$ might appear surprising at first, since from the $7$-dimensional perspective the collapsed Calabi--Yau limit at $\epsilon=0$ is a degenerate solution. However, imposing $\tu{S}^1$--invariance, working directly in $6$ dimensions and considering only the case of collapse with bounded curvature make the rescaled Apostolov--Salamon equations depend real analytically on the parameter $\epsilon$ up to and including $\epsilon=0$.

\subsubsection*{Further extensions}
The method developed in the present paper allows the construction of a plethora of new complete non-compact
\gtmetric s with ALC geometry many of which, unlike previous methods, admit only a 1-dimensional isometry group. 
However the method as developed herein does not cover all  four of the conjectural/numerical cohomogeneity one 
families of ALC \gtmetric s mentioned at the outset: it covers collapse to the 
small resolutions of the conifold and to $K_{\CP^1\times\CP^1}$ (the families denoted by $\mathbb{D}_7$ and $\mathbb{C}_7$ 
respectively in the physics literature), but not to the smoothing of the conifold, nor to its quotient by the standard anti-holomorphic involution
(the families denoted by $\mathbb{B}_7$ and $\mathbb{A}_7$ respectively).
We explain now briefly what extensions to the current method would be needed to cover the remaining cases. 
These extensions will appear in future work.

All the known/conjectured $1$-parameter families of cohomogeneity one ALC \gtmetric s do admit a collapsed $6$-dimensional  limit. 
However, in some cases the collapse occurs with \emph{unbounded} rather than bounded curvature. In the simplest such case, namely
collapse to the Candelas--de la Ossa--Stenzel AC metric on the smoothing of the conifold, collapse with unbounded curvature occurs but still 
in a circle-invariant way (this is no longer the case in the $\mathbb{A}_7$ family). However in this case the
isometric circle action is no longer free. The fixed point set of the circle action is a smooth 
compact 3-manifold (a 3-sphere in the case of the smoothing of the conifold) 
and collapse with bounded curvature occurs away from this fixed point set. 
In this case the $7$-manifold $M$ can be thought of as the union of two pieces: the neighbourhood 
of the fixed point set and the part sufficiently far from the fixed point set. Close to the collapsed limit, the latter part 
is still modelled by a circle bundle over (a subset of) an AC Calabi--Yau base, 
but the topology and geometry close to the fixed point set of the circle action requires a different model. 
A systematic analytic treatment of such circle-invariant collapse with unbounded curvature is therefore
more complicated and we will not consider it further in the current paper. 
At the linear level it would involve studying solutions to the Calabi--Yau monopole equations 
with Dirac-type singularities along a smooth compact $3$-manifold.

One further important point to stress here is that, while this extension to circle actions with non-trivial fixed point sets is certainly very natural 
(both mathematically and physically), for most practical purposes the circle bundle construction presented in this paper is, for the foreseeable future, likely to remain the most powerful method for the construction of large families of complete non-compact \gtwo--metrics. The reason 
for this is that the smooth compact $3$-manifolds along which the Dirac-type singularities occur are required to be special 
Lagrangian $3$-folds. At present, although we can certainly exhibit some specific smooth compact special Lagrangian $3$-folds in specific relatively
simple asymptotically conical Calabi--Yau $3$-folds, we have no general analytic methods to construct special Lagrangians
independent of the precise details of the Calabi--Yau metric. 
Until such methods are developed the power of such an extension to our present circle bundle 
construction method will therefore remain more limited as a practical tool to construct \gthol metrics. 
Again this is in contrast with the case of circle-invariant $4$-dimensional hyperk\"ahler metrics where the interesting complete examples all have non-trivial fixed point sets: but
since the fixed point sets are isolated points in $\R^3$ they have no further geometric structure. 

\subsubsection*{Connections to physics}
There is a direct connection between our work in this paper and the so-called weak coupling Type IIA String Theory limit 
of M theory: one can think of the work in this paper as establishing a precise geometric meaning to some of the statements 
made or conjectured in the physics literature (at least in the non-compact setting within which we work). In this weak coupling 
limit there is supposed to be a relation between one physical theory in 11 dimensions 
and another physical theory in 10 dimensions. One should imagine that seven dimensions of the 11-dimensional space-time take the form 
of a \gthol manifold $M^7$ which itself is a circle bundle (or similar) over a $6$-dimensional space $B^6$, which should itself be thought of as forming six of the spatial dimensions in the 10-dimensional space-time.
Geometrically, this weak coupling limit is precisely the highly collapsed limit in which the {\gtmfd}  $M^7$ 
Gromov--Hausdorff converges to the Calabi--Yau $3$-fold $B$. Therefore our description of the solutions to the 
Apostolov--Salamon equations (which constitute part of the equations in Type IIA String Theory) by expanding in terms of a power series 
solution about the Calabi--Yau limit in powers of the scale $\epsilon$ of the circle fibres seems to accord very well with physical expectations. 

The free isometric circle actions considered throughout this paper correspond physically to the case of IIA theory
with a non-zero Ramond--Ramond (RR) $2$-form flux but no $D6$-branes. The richness of the possible solutions that we find in the absence of any $D6$-branes does not seem to have been 
anticipated in the physics literature.
The inclusion of $D6$-branes in the IIA theory would correspond mathematically 
to the extension of our theory mentioned earlier in which the global isometric circle action acquires a $3$-dimensional fixed point set. 

We note also that physicists have proposed dualities between pairs of Type IIA String Theory  backgrounds: one involving only RR fluxes 
on one (non-compact) Calabi--Yau 3-fold and the other involving D6-branes on a different (non-compact) Calabi--Yau 3-fold. 
In its simplest manifestation this duality is between $N$ units of RR flux through the $\CP^1$ in the small resolution of the conifold 
and $N$ $D6$-branes wrapping the 3-sphere in the smoothing of the conifold \cite{vafa:top:strings}.
From the physics perspective some useful geometric insight into this IIA duality was obtained by considering their lifts to M theory, 
\ie by considering the associated \gthol manifolds \cite{Acharya:sym,Atiyah:Maldacena:Vafa}. 
(We note in passing that  Atiyah--Maldacena--Vafa  \cite{Atiyah:Maldacena:Vafa}  predicted the existence of the $1$-parameter families of 
ALC \gtmetric s $\mathbb{B}^7$ and $\mathbb{D}^7$ that collapse to the smoothing  and the small resolution of the conifold 
respectively and what their isometry groups ought to be).
Our current work puts aspects of these physical considerations on firmer mathematical foundations, 
while these proposed dualities give further motivation for pursuing the extension of our analytic approach mentioned above.

\subsection*{Organisation} The rest of the paper is organised as follows. 
In Section \ref{sec:su3} we recall some basic results about {\suthreestr}s on 
$6$-manifolds and in particular about torsion-free {\suthreestr}s, \ie Calabi--Yau $3$-folds. 
The reader familiar with the algebra, geometry and analysis of {\suthreestr}s is encouraged to 
skip to the next section, returning to this section as necessary. 

Section \ref{sec:GH:G2} initiates the study of torsion-free \gtstr s $(M^7\!,\varphi)$ that admit a free isometric circle action preserving the fundamental $3$-form $\varphi$. 
In Section \ref{sec:as} we describe some basic features of what we call the \emph{Apostolov--Salamon equations}, which describe the reduction of the torsion-free condition for a circle-invariant \gtstr~to the quotient space $B^6=M/S^1$.
Section \ref{sec:as:adiabatic} introduces the central idea in the paper, the study of solutions to the Apostolov--Salamon equations
via their \emph{adiabatic limit}: we view $M$ as the total space of a circle bundle over $B$ and consider 
a family of $S^1$-invariant torsion-free \gtstr s $\varphi_\epsilon$ with  fibres 
shrinking to zero length as $\epsilon \to 0$.

Sections \ref{sec:CY:cones} and \ref{sec:forms:ac:cy} establish the detailed linear analytic properties of AC Calabi--Yau 3-folds 
that we will require in the remaining sections of the paper. 
For clarity of exposition we collect together in these two sections essentially all the linear analytic results that 
will be required later in the paper to solve the Apostolov--Salamon equations.
Following the general methodology summarised in Appendix \ref{Appendix:Analysic:AC}, 
the asymptotic behaviour of solutions to the natural elliptic equations on any AC manifold is 
controlled by the behaviour of solutions to these equations on its asymptotic cone.
To this end Section \ref{sec:CY:cones} establishes basic facts about geometry and analysis 
on $3$-dimensional Calabi--Yau cones. 
The indicial roots associated with various natural elliptic operators play a particularly important role. Section \ref{sec:forms:ac:cy} applies these results to understand 
the linear analysis of differential forms on an AC Calabi--Yau 3-fold. 
The results we prove about decaying closed and coclosed 2-forms and $3$-forms and decaying exact 4-forms turn out to be of central importance.

The main result of Section \ref{sec:approx:solns} is Theorem \ref{thm:GH:G2:linearised:HYM}. 
Here we use the linear analytic results of Section \ref{sec:forms:ac:cy} to  solve the linearisation of the Apostolov--Salamon equations in the adiabatic limit on an AC Calabi--Yau $3$-fold; our solutions belong to weighted H\"older spaces  with appropriately chosen decay rates. 
For $\epsilon$ sufficiently small this theorem enables us to construct a $1$-parameter family of closed \gtstr s $\varphi^{(1)}_\epsilon$ 
whose torsion is of order $O(\epsilon^2)$ and whose asymptotic geometry is ALC.

Section \ref{sec:lin:AS} develops the tools to proceed beyond this initial approximation and to solve the Apostolov--Salamon equations 
order by order as a formal power series. The main effort goes into understanding the mapping properties 
of the linearisation of the Apostolov--Salamon system.

The main result of Section \ref{sec:g2:existence} is Theorem \ref{thm:GH:G2:Existence}, which states that every solution of the linearised Apostolov--Salamon equations provided by Theorem \ref{thm:GH:G2:linearised:HYM} can be integrated to a solution of the nonlinear Apostolov--Salamon equations and therefore concludes the proof of the main result of the paper, Theorem \ref{thm:Main:Theorem:technical}. In Section \ref{sec:as:formal} we establish the existence of formal power series solutions  $\varphi_\epsilon$ to the Apostolov--Salamon equations. 
In Section \ref{sec:ac:convergence} we establish that our formal power series solution $\varphi_\epsilon$ 
actually converges in weighted H\"older spaces for $\epsilon$ sufficiently small. 

Section \ref{sec:examples} presents some illustrative applications of Theorem \ref{thm:Main:Theorem:technical} to the construction 
of new complete ALC \gtmfd s, arising from AC Calabi--Yau 3-folds. Given all the recent work on AC Calabi--Yau 3-folds we do not attempt to be
too systematic. 

There are two Appendices. Appendix \ref{Appendix:homogenous:harmonic} contains general results, needed 
elsewhere in the paper and due to Cheeger \cite{Cheeger:unpublished,Cheeger:PNAS,Cheeger:cones}, on homogeneous harmonic forms on Riemannian cones.
Appendix \ref{Appendix:Analysic:AC} gives a telegraphic summary of the requisite features of analysis on weighted H\"older spaces on AC manifolds that underpins the technical core of the paper, Sections \ref{sec:CY:cones}--\ref{sec:g2:existence}.

\subsection*{Acknowledgements}
MH and JN would like to thank the Simons Foundation for its support of their research 
under the Simons Collaboration on Special Holonomy in Geometry, Analysis and Physics (grants \#488620, Mark Haskins, and \#488631, Johannes Nordstr\"om). LF is supported by a Royal Society University Research Fellowship. LF would also like to thank NSF for the partial support of his work under grant DMS-1608143.

All three authors would like to thank MSRI and the program organisers for providing an extremely productive environment
during the Differential Geometry Program at MSRI in Spring 2016. Research at MSRI was partially supported by NSF grant DMS-1440140.
MH would also like to thank the Simons Center for Geometry and Physics 
for hosting him in August 2016.

The authors would like to thank Bobby Acharya for useful discussions concerning intuition from physics and the connections of our work 
to physics and Jeff Cheeger for referring them to his work on the spectral geometry of cones \cite{Cheeger:PNAS,Cheeger:cones} and in particular for providing them with a copy of the unpublished preprint \cite{Cheeger:unpublished}.
The authors would also like to thank the referees for their careful reading of the paper.

\section{Preliminaries on \suthreestr s on $6$-manifolds and Calabi--Yau $3$-folds}
\label{sec:su3}

The holonomy reduction of a Riemannian $7$-manifold to $\gtwo$ is conveniently expressed as the existence of a closed and coclosed (in fact, parallel) $3$-form $\varphi$ with special algebraic properties at each point. The natural action of $\tu{GL}(7,\R)$ on $\Lambda^3(\R^7)^\ast$ has two open orbits; one of these is isomorphic to $\tu{GL}(7,\R)/\gtwo$ and we say that a $3$-form $\varphi$ on a $7$-manifold $M$ is positive if $\varphi_x$ lies in this orbit for every $x\in M$. Since the stabiliser of a positive $3$--form is conjugate to $\gtwo$, the existence of $\varphi$ is equivalent to the reduction of the frame bundle of $M$ to $\gtwo$. Moreover, since $\gtwo$ is a subgroup of $\sorth{7}$ every positive $3$-form $\varphi$ defines a Riemannian metric $g_\varphi$ and volume form $\dvol_\varphi$ on $M$.

Now, the positive $3$-forms we will consider in this paper are invariant under a circle action on $M$. As we will see in the next section, in this case the holonomy reduction to $\gtwo$ can be completely expressed in terms of the induced {\suthreestr} on the $6$-dimensional orbit space $B=M/S^1$ together with additional gauge-theoretic data. In this preliminary section we collect the facts about \suthreestr s on $6$-manifolds that will be used in the rest of the paper.

\subsection{\suthreestr s on $6$-manifolds}

In analogy to the fact that the reduction of the frame bundle of a $7$-manifold to $\gtwo$ can be described by the existence of a positive $3$-form, the reduction of the frame bundle of a $6$-manifold $B$ to $\sunitary{3}$ is equivalent to the existence of a pair of differential forms with special algebraic properties at each point of $B$. 

\begin{definition}\label{def:SU(3):structure}
An {\suthreestr} on a $6$-manifold $B$ is a pair of smooth differential forms $(\omega,\Omega)$, where $\omega$ is a real non-degenerate $2$-form and $\Omega$ is a complex volume form (\ie a locally decomposable complex $3$-form such that $\Omega\wedge\overline{\Omega}$ is nowhere zero), satisfying the algebraic constraints 
\begin{equation}\label{eq:SU(3):structure:Constraints}
\omega\wedge\Real\Omega=0, \qquad \tfrac{1}{6}\omega^3 = \tfrac{1}{4}\Real\Omega\wedge\Imag\Omega.
\end{equation}
\end{definition}
On $\R^6 \simeq \C^3$ with holomorphic coordinates $(z_1,z_2,z_3)$ we define the standard parallel {\suthreestr} $(\omega_0, \Omega_0)$ by
\[
\omega_0 = \tfrac{i}{2} \left( dz_1 \wedge d\overline{z}_1 + dz_2 \wedge d\overline{z}_2 + dz_3 \wedge d\overline{z}_3 \right), \qquad \Omega_0 = dz_1 \wedge dz_2 \wedge dz_3. 
\]
An {\suthreestr} on a $6$-manifold $B$ in the sense of Definition \ref{def:SU(3):structure} defines a reduction of the structure group of the tangent bundle of $M$ to $\sunitary{3}$ by considering the subbundle of the frame-bundle of $M$ defined by $\{ u\co \R^6 \stackrel{\sim}{\longrightarrow} T_{x}B \, |\, u^\ast (\omega_x,\Omega_x)=(\omega_0,\Omega_0) \}$.

\begin{remark}\label{rmk:SU(3):spin}
Since $\sunitary{3}\subset \sunitary{4}\simeq \spin{6}$ is precisely the stabiliser of a non-zero vector in $\C^4$, we could also define an {\suthreestr} as the choice of a spin structure on $B$ together with a non-vanishing spinor.
\end{remark}

Note that every {\suthreestr} induces a Riemannian metric $g$ because $\sunitary{3}\subset \sorth{6}$. Moreover, since $\sunitary{3}\subset \tu{GL}(3,\C)$ every $6$-manifold with an {\suthreestr} is endowed with an almost complex structure $J$: a $1$-form $\gamma$ is of type $(1,0)$ if and only if $\gamma\wedge\Omega=0$.

We collect here identities involving $1$-forms/vector fields on a manifold endowed with an {\suthreestr} $(\omega,\Omega)$. They can all be proved using a basis of $\C^3$ adapted to the standard {\suthreestr} $(\omega_0,\Omega_0)$.

\begin{lemma}\label{lem:identities:1forms}
Let $X$ be a vector field on $(B,\omega,\Omega)$. Then
\begin{enumerate}
\item $X\lrcorner\,\omega = -JX^\flat$;
\item $X\lrcorner\Imag\Omega = -(JX)\lrcorner\Real\Omega$;
\item $(X\lrcorner\Real\Omega)\wedge\omega = JX^\flat\wedge\Real\Omega=X^\flat\wedge\Imag\Omega$;
\item $(X\lrcorner\Real\Omega)\wedge\Real\Omega = X^\flat\wedge\omega^2$ and $(X\lrcorner\Real\Omega)\wedge\Imag\Omega = -JX^\flat\wedge\omega^2$;
\item $\ast X^\flat = -\tfrac{1}{2}JX^\flat\wedge\omega^2$;
\end{enumerate}
\end{lemma}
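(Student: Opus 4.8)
The plan is to verify each identity (i)--(v) by a direct computation in a local frame adapted to the standard $\sunitary{3}$-structure, exploiting the transitivity of $\sunitary{3}$ on unit vectors in $\R^6$ to reduce to a single well-chosen vector. First I would fix at the point $x$ an orthonormal coframe $(e^1,\dots,e^6)$ such that $\omega = e^{12}+e^{34}+e^{56}$, $\Real\Omega = e^{135}-e^{146}-e^{236}-e^{245}$, $\Imag\Omega = e^{136}+e^{145}+e^{235}-e^{246}$, and $J e_1 = e_2$, $Je_3 = e_4$, $Je_5 = e_6$ (the standard model, with $z_j = e^{2j-1}+ie^{2j}$ up to normalisation). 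Since every tangent vector can be moved into the span of $e_1$ by an element of $\sunitary{3}$, and all the forms appearing are $\sunitary{3}$-invariant, it suffices to check each identity for $X = e_1$ (and then extend by linearity over the vector-field coefficients, which is legitimate because all five identities are pointwise and $C^\infty(B)$-linear in $X$).

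Then I would simply contract. For (i): $e_1 \lrcorner \omega = e^2 = Je_1^\flat \cdot(-1)\cdot(-1)$; more precisely $e_1\lrcorner(e^{12}+e^{34}+e^{56}) = e^2$, while $-J e_1^\flat = -e^2$ — so one must be careful about the sign convention for $J$ on forms versus vectors; with the convention that $JX^\flat := -X\lrcorner\omega$ made consistent with $J$ acting on vectors, (i) is a tautology after fixing conventions, and I would state the convention explicitly. For (ii): $e_1\lrcorner\Imag\Omega = e^{36}+e^{45}$, and $Je_1 = e_2$ so $-(Je_1)\lrcorner\Real\Omega = -e_2\lrcorner(e^{135}-e^{146}-e^{236}-e^{245}) = -(-e^{36}-e^{45}) = e^{36}+e^{45}$, agreeing. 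For (iii)--(v) the pattern is identical: compute $e_1\lrcorner\Real\Omega = e^{35}-e^{46}$, then wedge with $\omega$, $\Real\Omega$, $\Imag\Omega$ as required and compare with $Je_1^\flat\wedge(\cdot) = e^2\wedge(\cdot)$, $e_1^\flat\wedge(\cdot) = e^1\wedge(\cdot)$, and $e^{1}\wedge\omega^2 = e^1\wedge 2(e^{1234}+e^{1256}+e^{3456}) = 2e^{13456}$, etc. For (v) I would also recall the Hodge star orientation: with $\dvol = e^{123456}$ one has $\ast e^1 = e^{23456}$, and $-\tfrac12 Je_1^\flat\wedge\omega^2 = -\tfrac12 e^2\wedge 2e^{13456} = -e^{213456} = e^{123456}\cdot(-1)\cdot(\text{sign of the transposition})$, which after tracking the sign equals $e^{23456}$.

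The only genuine subtlety — and the step I would be most careful about — is bookkeeping of sign conventions: the sign in $X\lrcorner\omega = \pm JX^\flat$, the orientation/volume-form normalisation used to define $\ast$, and the precise form of $\Real\Omega,\Imag\Omega$ in the adapted coframe (some sources use $e^{135}-e^{146}-e^{236}-e^{245}$, others differ by relabelling). I would therefore pin down in Definition~\ref{def:SU(3):structure}'s neighbourhood (or just before the lemma) exactly which model of $(\omega_0,\Omega_0)$ and which orientation we use, observe that the identities are manifestly $\sunitary{3}$-equivariant so that checking them on the model suffices, and then present the computation for (ii)--(v) for $X=e_1$ compactly — (i) being essentially the definition of $J$ on $1$-forms. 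No analysis is involved; the content is entirely linear algebra on $\Lambda^\bullet(\R^6)^\ast$, and the "hard part" is purely the discipline of not dropping a minus sign while contracting and wedging.
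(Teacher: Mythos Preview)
Your proposal is correct and is exactly the approach the paper itself indicates: the paper does not give a proof but simply remarks that all the identities ``can all be proved using a basis of $\C^3$ adapted to the standard {\suthreestr} $(\omega_0,\Omega_0)$''. Your reduction to $X=e_1$ via $\sunitary{3}$-transitivity and explicit contraction/wedging is precisely this, and your caveat about pinning down sign conventions (for $JX^\flat$, the orientation, and the explicit form of $\Real\Omega_0$) is well placed --- that bookkeeping is indeed the only content.
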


\subsubsection{Decomposition of the space of differential forms}

We will make very frequent use of the the decomposition of the $\sunitary{3}$--representation $\Lambda^\ast\R^6$ into irreducible representations. 
This is well known, see for example \cite[\S 2]{Moroianu:Nagy:Semmelmann} for a detailed presentation. 
In this section we therefore simply record all the main facts that we will subsequently need. 
The decomposition is usually stated after complexification in terms of the $(p,q)$--type decomposition induced by the standard complex structure $J_0$ on $\R^6\simeq \C^3$ and in terms of primitive forms. We prefer to stick with real representations and use the uniform notation $\Lambda^k_\ell$ for an irreducible component of $\Lambda^k\R^6$ of dimension $\ell$.

\begin{lemma}\label{lem:decomposition:forms}
We have the following orthogonal decompositions into irreducible $\sunitary{3}$--representations:
\[
\Lambda ^{2}\R^6= \Lambda^2_1 \oplus \Lambda^2_6 \oplus \Lambda^2_8,
\]
where $\Lambda^2_1 = \R\,\omega$, $\Lambda^2_6 = \{ X\lrcorner\Real\Omega \, |\, X\in\R^6\}$ and $\Lambda^2_8$ is the space of primitive forms of type $(1,1)$.
\[
\Lambda ^{3}\R^6= \Lambda^3 _6 \oplus \Lambda^3 _{1\oplus 1} \oplus \Lambda^3_{12},
\]
where $\Lambda^3_6 = \{ X^\flat\wedge \omega \, |\, X\in\R^6\}$, $\Lambda^3 _{1\oplus 1} = \R \Real\Omega \oplus \R \Imag\Omega$ and $\Lambda^3_{12}$ is the space of primitive forms of type $(1,2)+(2,1)$, \ie $\Lambda^3_{12} = \{ S_{\ast}\Real{\Omega} \, | \, S \in Sym^{2}(\R ^{6}), SJ+JS=0 \}$.
\end{lemma}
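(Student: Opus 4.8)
The plan is to reduce the statement to standard representation theory of $\sunitary{3}$ by complexifying, decomposing by $(p,q)$--type and into primitive forms, and then matching the resulting invariant subspaces with the explicitly described geometric ones; all pointwise identities (including those of Lemma~\ref{lem:identities:1forms}) can then be checked in a basis adapted to $(\omega_0,\Omega_0)$. A reference for the bare representation theory is \cite[\S 2]{Moroianu:Nagy:Semmelmann}. The starting point: writing $\R^6\cong\C^3=V$ for the standard representation and $J_0$ for the standard complex structure, $\Lambda^k(\R^6)^\ast\otimes\C=\bigoplus_{p+q=k}\Lambda^{p,q}$ as $\sunitary{3}$--modules, with complex conjugation interchanging $\Lambda^{p,q}$ and $\Lambda^{q,p}$. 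Hence real $\sunitary{3}$--invariant subspaces of $\Lambda^k\R^6$ correspond to conjugation--stable invariant subspaces of $\Lambda^k_\C$, and the real points of $W\oplus\overline W$, for $W$ complex irreducible, are irreducible over $\R$ precisely when $W\not\cong\overline W$.

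For the $2$--forms I would use the Lefschetz splitting $\Lambda^{1,1}=\C\,\omega_0\oplus\Lambda^{1,1}_0$, with $\Lambda^{1,1}_0\cong\Lie{su}(3)\otimes\C$ the complexified adjoint, irreducible of complex dimension $8$; its real points give $\Lambda^2_1=\R\,\omega$ and $\Lambda^2_8$. On the other hand the contraction $X\mapsto X\lrcorner\,\Omega_0$ maps $\R^6$ isomorphically onto $\Lambda^{2,0}$ (nondegeneracy of $\Omega_0$), so the real points of $\Lambda^{2,0}\oplus\Lambda^{0,2}$ are exactly $\{X\lrcorner\Real\Omega_0\,|\,X\in\R^6\}=\Lambda^2_6$, irreducible of real dimension $6$ since $V\not\cong\overline V$. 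The count $15=1+6+8$ shows these exhaust $\Lambda^2\R^6$, and the summands are mutually orthogonal as pairwise inequivalent representations.

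For the $3$--forms, $\Lambda^{3,0}\oplus\Lambda^{0,3}$ is a sum of two trivial modules whose real points are $\R\Real\Omega_0\oplus\R\Imag\Omega_0=\Lambda^3_{1\oplus 1}$, the two lines being orthogonal because $\Omega_0$ and $\overline\Omega_0$ are orthogonal (different type) and of equal norm. The Lefschetz splitting $\Lambda^{2,1}=\Lambda^{2,1}_0\oplus\omega_0\wedge\Lambda^{1,0}$ has non--primitive part with real points $\{X^\flat\wedge\omega\,|\,X\in\R^6\}=\Lambda^3_6\cong\Lambda^1\R^6$ (irreducible, not self--conjugate), and primitive part $\Lambda^{2,1}_0$ irreducible of complex dimension $6$ (the module $\mathbf{6}$, not self--conjugate), whose real points give the irreducible $12$--dimensional $\Lambda^3_{12}$; then $20=6+2+12$, and orthogonality follows as before. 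It remains to identify $\Lambda^3_{12}$ with $\{S_\ast\Real\Omega_0\,|\,S\in Sym^2(\R^6),\ SJ+JS=0\}$: the space of symmetric endomorphisms anticommuting with $J_0$ has real dimension $12$; under the natural derivation action of $\Lie{gl}(6,\R)$ on $\Lambda^3\R^6$ such an $S$ is carried into the primitive $(2,1)+(1,2)$ forms (a check in an adapted basis), and the resulting map $S\mapsto S_\ast\Real\Omega_0$ is injective since an $S$ in the kernel would lie in the Lie algebra of the stabiliser of $\Real\Omega_0$, which by Hitchin is $\SL{3}$, forcing $S$ to commute with $J_0$ and hence $S=0$. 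As source and target both have dimension $12$, the map is an isomorphism.

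The main obstacle: everything except the last identification is bookkeeping once one grants the standard irreducibility and (non-)self--conjugacy facts for $V$, $\mathbf{6}$ and $\Lie{su}(3)$. The genuinely computational point is pinning down the derivation action of $\Lie{gl}(6,\R)$ and verifying, in a basis adapted to $(\omega_0,\Omega_0)$, that $S\mapsto S_\ast\Real\Omega_0$ lands in the primitive $(2,1)+(1,2)$ forms and is injective; this is the same adapted--basis calculation that proves the identities of Lemma~\ref{lem:identities:1forms}, and it is the step I would expect to take the most care.
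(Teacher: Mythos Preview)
Your argument is correct and is essentially the standard one; the paper itself does not prove this lemma but simply records it, referring to \cite[\S 2]{Moroianu:Nagy:Semmelmann} for details. Your complexify-and-split-by-type approach, together with the Lefschetz decomposition and the injectivity argument for $S\mapsto S_\ast\Real\Omega_0$ via the $\SL{3}$ stabiliser, is exactly the route that reference takes, so there is nothing to compare.
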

For a $6$-manifold $B$ endowed with an {\suthreestr} $(\omega,\Omega)$ we denote by $\Omega^k_\ell (B)$ the space of smooth sections of the bundle over $B$ with fibre $\Lambda^k_\ell$.

The following identities follow from \cite[Equations (12), (17), (18) and (19)]{Moroianu:Nagy:Semmelmann}.

\begin{lemma}\label{lem:Hodge:star}
In the decomposition of Lemma \ref{lem:decomposition:forms} the Hodge $\ast$ operator is given by:
\begin{enumerate}
\item $\ast\omega = \tfrac{1}{2}\omega^2$;
\item $\ast (X\lrcorner\Real\Omega) = -JX^\flat\wedge\Real\Omega = X^\flat\wedge\Imag\Omega$;
\item $\ast (\eta _{0} \wedge \omega)=-\eta _{0}$ for all $\eta _{0} \in \Omega ^2_8$;
\item $\ast (X^\flat \wedge \omega) = \frac{1}{2} X \lrcorner \omega ^{2}=-JX^\flat \wedge \omega$;
\item $\ast \Real{\Omega}=\Imag{\Omega}$ and $\ast\Imag\Omega = -\Real\Omega$;
\end{enumerate}
\end{lemma}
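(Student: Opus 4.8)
The plan is to treat all five identities as pointwise, $\sunitary{3}$--equivariant statements about $\Lambda^\ast\R^6$, so that it is enough to verify each of them at a single point for the standard structure $(\omega_0,\Omega_0)$ on $\R^6\simeq\C^3$; equivariance of the Hodge star under $\sorth{6}\supset\sunitary{3}$ then transports the identities to an arbitrary {\suthreestr}. First I would make a representation-theoretic reduction. Since $\ast$ commutes with the $\sunitary{3}$--action and $\ast$ itself is a $\sunitary{3}$--isomorphism $\Lambda^2\R^6\cong\Lambda^4\R^6$ (note $\ast\ast=(-1)^{k(6-k)}$, so $\ast$ is an involution on $\Lambda^2\oplus\Lambda^4$ and $\ast^2=-1$ on $\Lambda^3$), Schur's lemma applied to the decomposition of Lemma \ref{lem:decomposition:forms}---whose summands $\Lambda^2_1,\Lambda^2_6,\Lambda^2_8$ are irreducible and pairwise non-isomorphic---forces $\ast$ to carry each $\Lambda^2_\ell$ isomorphically onto the corresponding summand of $\Lambda^4\R^6$, and to preserve each of $\Lambda^3_6,\Lambda^3_{1\oplus 1},\Lambda^3_{12}$; moreover $\ast$ makes $\Lambda^3\R^6$ into a complex vector space in which $\Lambda^3_{1\oplus 1}=\R\,\Real\Omega\oplus\R\,\Imag\Omega$ is a complex line. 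Hence each of (i)--(v) is pinned down by a single real scalar (or, on the complex line $\Lambda^3_{1\oplus 1}$, by rotation by $\pm\pi/2$), and it only remains to test both sides against one convenient element of each summand.

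For the explicit evaluations I would argue as follows. Identity (i) is the standard \ka\ fact $\ast\omega_0=\tfrac{1}{2}\omega_0^2$, equivalent to $\ast 1=\tfrac{1}{6}\omega_0^3$ together with the primitive decomposition of $2$-forms. For (v) one computes $\ast(dz_1\wedge dz_2\wedge dz_3)$ directly in real coordinates to obtain $\ast\Real\Omega_0=\Imag\Omega_0$, and then $\ast\Imag\Omega_0=\ast\ast\Real\Omega_0=-\Real\Omega_0$ by the involution formula above. For (iii) one applies $\ast$ to $\eta_0\wedge\omega_0$ for the explicit primitive $(1,1)$-form $\eta_0=\tfrac{i}{2}(dz_1\wedge d\overline z_1-dz_2\wedge d\overline z_2)$ and reads off the constant $-1$. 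For (ii) and (iv) one takes $X=\partial_{x_1}$, so $X^\flat=dx_1$, writes $X\lrcorner\Real\Omega_0$ and $X^\flat\wedge\omega_0$ out in coordinates, applies $\ast$, and rewrites the answer in the stated forms using the algebraic dictionary of Lemma \ref{lem:identities:1forms}; for instance $X\lrcorner\omega=-JX^\flat$ gives $\tfrac{1}{2}X\lrcorner\omega^2=-JX^\flat\wedge\omega$ at once, which is the second equality in (iv), and $(X\lrcorner\Real\Omega)\wedge\omega=X^\flat\wedge\Imag\Omega$ supplies the identification needed for (ii).

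I do not expect any genuine conceptual obstacle here; the one thing that needs care is the bookkeeping of keeping the two incarnations of each isomorphic summand consistent---$\Lambda^2_6$ as $\{X\lrcorner\Real\Omega\}$ versus its $\ast$--image in $\Lambda^4\R^6$, and $\Lambda^3_6$ as $\{X^\flat\wedge\omega\}$ versus the expression $X\lrcorner\omega^2$. Lemma \ref{lem:identities:1forms} is precisely the list of translation rules needed for this, so once those identities are available each of (i)--(v) reduces to a handful of explicit computations with monomials on $\C^3$. (These formulas are also recorded in \cite[\S 2]{Moroianu:Nagy:Semmelmann}; the argument above indicates how one reconstructs them directly.)
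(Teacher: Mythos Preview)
Your proposal is correct; indeed it supplies more than the paper does, since the paper gives no argument at all but simply records that the identities follow from \cite[Equations (12), (17), (18) and (19)]{Moroianu:Nagy:Semmelmann}---the very reference you invoke at the end. Your Schur-lemma reduction to a single scalar on each irreducible summand, followed by a coordinate check on the standard $(\omega_0,\Omega_0)$, is exactly the kind of argument that underlies those cited formulas, and the use of Lemma~\ref{lem:identities:1forms} to translate between the various incarnations of $\Lambda^2_6$ and $\Lambda^3_6$ is the right bookkeeping device.
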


Lemma \ref{lem:Hodge:star} can be used to deduce useful identities and characterisations of the different types of forms, see \cite[Lemmas 2.10 and 2.11]{def:nK}.

\begin{lemma}\label{lem:identities:2forms}
If $\sigma = \sigma _{0}+\lambda \omega + X \lrcorner\Real{\Omega} \in \Omega ^{2}$ with $\sigma_0\in\Omega^2_8$, then the following holds:
\begin{enumerate}
\item $\ast (\sigma \wedge \omega) = -\sigma _{0}+2\lambda \omega + X \lrcorner \Real{\Omega}$;
\item $\ast (\sigma \wedge \Real{\Omega}) = -2JX^\flat$ and $\ast (\sigma \wedge \Imag{\Omega})=-2X^\flat$;
\item $\ast (\sigma \wedge \omega ^{2}) = 6\lambda$.
\end{enumerate}
In particular, $\sigma \in \Omega ^2_8$ if and only if $\sigma \wedge \omega^{2} =0=\sigma \wedge \Real{\Omega}$ if and only if $\sigma \wedge \omega =-\ast \sigma$.
\end{lemma}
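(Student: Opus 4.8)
The plan is to derive each of the three displayed formulas by a single short computation, using only the orthogonal $\sunitary{3}$--decomposition of $\Lambda^2\R^6$ from Lemma \ref{lem:decomposition:forms}, the Hodge star formulas of Lemma \ref{lem:Hodge:star}, and the $1$--form identities of Lemma \ref{lem:identities:1forms}; since all statements are pointwise, one could equally well verify them in a unitary coframe adapted to $(\omega_0,\Omega_0)$, as the remark preceding Lemma \ref{lem:identities:1forms} suggests. Writing $\sigma = \sigma_0 + \lambda\omega + X\lrcorner\Real\Omega$ with $\sigma_0\in\Omega^2_8$, the first step is to record the vanishing products that make each of the computations collapse to a single surviving term: since $\Omega^2_8$ consists of primitive $(1,1)$--forms, we have $\sigma_0\wedge\omega^2 = 0$ and $\sigma_0\wedge\Real\Omega = 0 = \sigma_0\wedge\Imag\Omega$ (the last two because a complex $3$--fold carries no nonzero $(4,1)$-- or $(1,4)$--forms); likewise $\omega\wedge\Real\Omega = 0 = \omega\wedge\Imag\Omega$ (from the structure equations \eqref{eq:SU(3):structure:Constraints} and the same type reasoning) and $\tfrac{1}{6}\omega^3 = \dvol$.

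For part (iii), wedging $\sigma$ with $\omega^2$ annihilates the $\sigma_0$ term by primitivity and the $X\lrcorner\Real\Omega$ term because $(X\lrcorner\Real\Omega)\wedge\omega = X^\flat\wedge\Imag\Omega$ by Lemma \ref{lem:identities:1forms}(iii), which dies upon a further wedge with $\omega$; hence $\sigma\wedge\omega^2 = 6\lambda\,\dvol$ and $\ast(\sigma\wedge\omega^2) = 6\lambda$. For part (ii), wedging with $\Real\Omega$ or $\Imag\Omega$ annihilates the $\sigma_0$ and $\lambda\omega$ terms, and Lemma \ref{lem:identities:1forms}(iv) gives $\sigma\wedge\Real\Omega = X^\flat\wedge\omega^2$ and $\sigma\wedge\Imag\Omega = -JX^\flat\wedge\omega^2$; it then remains to compute $\ast(Y^\flat\wedge\omega^2)$, which follows by applying $\ast$ to Lemma \ref{lem:identities:1forms}(v) and using $\ast\ast = -1$ on $1$--forms in dimension $6$, giving $\ast(Y^\flat\wedge\omega^2) = -2(JY)^\flat$; taking $Y = X$ and $Y = JX$ (so that $JY = -X$) then yields $\ast(\sigma\wedge\Real\Omega) = -2JX^\flat$ and $\ast(\sigma\wedge\Imag\Omega) = -2X^\flat$. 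For part (i), wedging with $\omega$ and applying $\ast$ term by term, Lemma \ref{lem:Hodge:star}(iii) gives $\ast(\sigma_0\wedge\omega) = -\sigma_0$, Lemma \ref{lem:Hodge:star}(i) gives $\ast(\lambda\omega^2) = 2\lambda\omega$, and comparing the final expressions in Lemma \ref{lem:identities:1forms}(iii) and Lemma \ref{lem:Hodge:star}(ii) shows $(X\lrcorner\Real\Omega)\wedge\omega = X^\flat\wedge\Imag\Omega = \ast(X\lrcorner\Real\Omega)$, so a further application of $\ast$ (with $\ast\ast = +1$ on $2$--forms) returns $X\lrcorner\Real\Omega$; summing the three contributions gives the asserted formula.

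For the concluding equivalences, $\sigma\in\Omega^2_8$ means exactly that $\lambda = 0$ and $X = 0$, and since $X\mapsto X\lrcorner\Real\Omega$ is injective by Lemma \ref{lem:decomposition:forms}, parts (iii) and (ii) show immediately that this is equivalent to $\sigma\wedge\omega^2 = 0 = \sigma\wedge\Real\Omega$. For the last equivalence: if $\sigma = \sigma_0\in\Omega^2_8$ then part (i) reduces to $\ast(\sigma_0\wedge\omega) = -\sigma_0$, that is, $\sigma_0\wedge\omega = -\ast\sigma_0$; conversely, if $\sigma\wedge\omega = -\ast\sigma$, then applying $\ast$ and invoking part (i) gives $-\sigma_0 + 2\lambda\omega + X\lrcorner\Real\Omega = -\sigma_0 - \lambda\omega - X\lrcorner\Real\Omega$, and equating components in the orthogonal sum $\Lambda^2_1\oplus\Lambda^2_6\oplus\Lambda^2_8$ forces $3\lambda\omega = 0$ and $2(X\lrcorner\Real\Omega) = 0$, hence $\sigma = \sigma_0\in\Omega^2_8$.

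I do not expect any genuine obstacle here: the proof is essentially bookkeeping, and the only points requiring care are tracking the sign $\ast\ast = (-1)^{k(6-k)}$ when inverting the Hodge star (so $+1$ on $2$--forms but $-1$ on $1$--forms) and assembling at the outset the short list of wedge products that vanish. If one prefers to avoid even the "type reasons" above, all of these vanishings together with $\tfrac{1}{6}\omega^3 = \dvol$ can be read off directly by evaluating on the standard model $(\R^6,\omega_0,\Omega_0)$.
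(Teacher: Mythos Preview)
Your proof is correct and follows exactly the route the paper indicates: the paper does not spell out a proof but simply notes that Lemma~\ref{lem:Hodge:star} can be used to deduce these identities, citing \cite[Lemmas~2.10 and~2.11]{def:nK}, and your computation is precisely the unpacking of that remark using Lemmas~\ref{lem:identities:1forms} and~\ref{lem:Hodge:star}.
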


\begin{lemma}\label{lem:identities:3forms}
If $\rho = \gamma \wedge \omega + \lambda \Real{\Omega} + \mu \Imag{\Omega}+ \rho_0 \in \Omega ^{3}$ with $\rho_0\in\Omega^3_{12}$, then the following holds:
\begin{enumerate}
\item $\ast (\rho \wedge \omega) = -2J\gamma$;
\item $\ast (\rho \wedge \Real{\Omega}) = -4\mu$;
\item $\ast (\rho \wedge \Imag{\Omega})=4\lambda$.
\end{enumerate}
In particular, $\rho \in \Omega ^3_{12}$ if and only if $\rho \wedge \omega =0=\rho \wedge \Omega$.
\end{lemma}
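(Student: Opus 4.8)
The strategy is the standard one for identities of this type: reduce everything to a pointwise computation at a single tangent space, where the \suthreestr\ can be assumed to be the standard flat structure $(\omega_0,\Omega_0)$ on $\R^6 \simeq \C^3$, and then compute. Since all the forms and operators in the statement are constructed algebraically and $\sunitary{3}$-equivariantly from the \suthreestr, it suffices to verify the identities for $(\omega_0,\Omega_0)$; the general case follows by applying a frame $u$ with $u^\ast(\omega_x,\Omega_x)=(\omega_0,\Omega_0)$. So first I would fix the basis $dz_1,dz_2,dz_3$ of $(1,0)$-forms (with $dz_j = dx_j + i\,dy_j$) and record $\omega_0 = \sum dx_j\wedge dy_j$, $\Real\Omega_0 = \Real(dz_1\wedge dz_2\wedge dz_3)$, $\Imag\Omega_0 = \Imag(dz_1\wedge dz_2\wedge dz_3)$, together with $\dvol = \tfrac16\omega_0^3 = dx_1\wedge dy_1\wedge dx_2\wedge dy_2\wedge dx_3\wedge dy_3$.

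Next I would exploit the type decomposition from Lemma \ref{lem:decomposition:forms} to treat each summand of $\rho = \gamma\wedge\omega + \lambda\Real\Omega + \mu\Imag\Omega + \rho_0$ separately, using bilinearity of $\rho\mapsto \ast(\rho\wedge\cdot)$ in $\rho$. For the identities (i)--(iii), the key observation is a degree/type-counting argument: wedging a $3$-form with $\omega$ (type $(1,1)$) or with $\Real\Omega,\Imag\Omega$ (type $(3,0)+(0,3)$) and landing in $\Lambda^6$ kills most components. Concretely: $\rho_0\wedge\omega = 0$ and $\rho_0\wedge\Omega = 0$ by the last sentence of Lemma \ref{lem:identities:3forms} itself (equivalently, primitive $(1,2)+(2,1)$ forms pair trivially with $\omega$ and with $(3,0)+(0,3)$ forms); and $(\gamma\wedge\omega)\wedge\Real\Omega = \gamma\wedge(\omega\wedge\Real\Omega) = 0$ by the first \suthreestr\ constraint \eqref{eq:SU(3):structure:Constraints}, similarly for $\Imag\Omega$ using $\omega\wedge\Imag\Omega = J(\omega\wedge\Real\Omega)$-type reasoning or a direct check. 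Hence only the cross-terms survive: for (i), $\rho\wedge\omega = (\gamma\wedge\omega)\wedge\omega = \gamma\wedge\omega^2$, and then $\ast(\gamma\wedge\omega^2)=-2J\gamma$ follows from Lemma \ref{lem:Hodge:star}(iv) (which gives $\ast(X^\flat\wedge\omega)=-JX^\flat\wedge\omega$, hence $\ast(X^\flat\wedge\omega^2)$ via a second application, or directly by checking $\ast\ast$ and the sign conventions). For (ii), $\rho\wedge\Real\Omega = (\lambda\Real\Omega + \mu\Imag\Omega)\wedge\Real\Omega = \mu\,\Imag\Omega\wedge\Real\Omega$, and using $\Real\Omega\wedge\Imag\Omega = \tfrac23\omega^3 = 4\dvol$ from \eqref{eq:SU(3):structure:Constraints} together with anticommutativity of $3$-forms ($\Imag\Omega\wedge\Real\Omega = \Real\Omega\wedge\Imag\Omega$ since $3$ is odd) gives $\rho\wedge\Real\Omega = 4\mu\,\dvol$, so $\ast(\rho\wedge\Real\Omega) = 4\mu\,\ast\dvol$; but wait---$\ast\dvol = 1$, so one must recheck the normalization, and in fact the constraint is $\tfrac16\omega^3 = \tfrac14\Real\Omega\wedge\Imag\Omega$, giving $\Real\Omega\wedge\Imag\Omega = \tfrac23\omega^3 = 4\dvol$, hence $\rho\wedge\Real\Omega = 4\mu\,\dvol$ and the claimed sign $-4\mu$ must come from $\Imag\Omega\wedge\Real\Omega = -\Real\Omega\wedge\Imag\Omega$ — no: for forms of odd degree $\alpha\wedge\beta = \beta\wedge\alpha$ when $\deg\alpha=\deg\beta=3$ since $(-1)^{9}=-1$, actually $\alpha\wedge\beta = (-1)^{3\cdot3}\beta\wedge\alpha = -\beta\wedge\alpha$. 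So $\Imag\Omega\wedge\Real\Omega = -\Real\Omega\wedge\Imag\Omega = -4\dvol$, giving $\rho\wedge\Real\Omega = -4\mu\,\dvol$ and $\ast(\rho\wedge\Real\Omega) = -4\mu$ as stated. Identity (iii) is entirely analogous: $\rho\wedge\Imag\Omega = \lambda\,\Real\Omega\wedge\Imag\Omega = 4\lambda\,\dvol$, so $\ast(\rho\wedge\Imag\Omega) = 4\lambda$.

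Finally, for the ``in particular'' statement: if $\rho\in\Omega^3_{12}$ then $\gamma=0$, $\lambda=0$, $\mu=0$, so (i)--(iii) immediately give $\rho\wedge\omega = 0$ and $\rho\wedge\Real\Omega = \rho\wedge\Imag\Omega = 0$, i.e.\ $\rho\wedge\Omega = 0$. Conversely, writing a general $\rho = \gamma\wedge\omega + \lambda\Real\Omega + \mu\Imag\Omega + \rho_0$ with $\rho_0\in\Omega^3_{12}$ and imposing $\rho\wedge\omega = 0 = \rho\wedge\Omega$, identities (i)--(iii) force $J\gamma = 0$ (hence $\gamma=0$), $\mu=0$, $\lambda=0$, leaving $\rho = \rho_0\in\Omega^3_{12}$. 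The main obstacle here is purely bookkeeping: getting all the factors of $2$, the orientation/$\dvol$ normalization, and the odd-degree sign $\alpha\wedge\beta = -\beta\wedge\alpha$ consistent with the conventions fixed for $(\omega_0,\Omega_0)$ and with Lemmas \ref{lem:identities:1forms}--\ref{lem:identities:2forms}; there is no conceptual difficulty, only the risk of sign errors, which is why checking against the already-established $\ast\Real\Omega = \Imag\Omega$, $\ast\Imag\Omega = -\Real\Omega$ of Lemma \ref{lem:Hodge:star}(v) is a useful consistency test throughout.
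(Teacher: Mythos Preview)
Your proposal is correct and follows exactly the route the paper indicates (the paper gives no proof of its own, merely citing \cite[Lemmas 2.10 and 2.11]{def:nK} and remarking that the identities follow from Lemma~\ref{lem:Hodge:star}); your direct computation in a standard $\sunitary{3}$-frame using the type decomposition and the normalisation $\Real\Omega\wedge\Imag\Omega = 4\,\dvol$ is the standard argument. One cosmetic point: you justify $\rho_0\wedge\omega=0=\rho_0\wedge\Omega$ by invoking ``the last sentence of Lemma~\ref{lem:identities:3forms} itself'', which is circular as written---lead instead with your parenthetical reason (primitivity gives $\rho_0\wedge\omega=0$; $(p,q)$-type gives $\rho_0\wedge\Omega=0$), and note that for $\ast(\gamma\wedge\omega^2)=-2J\gamma$ the cleanest route is to apply $\ast$ to Lemma~\ref{lem:identities:1forms}(v) rather than Lemma~\ref{lem:Hodge:star}(iv).
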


\subsubsection{Hitchin's duality map for stable $3$-forms}

Definition \ref{def:SU(3):structure} can be generalised to any higher even dimension to define $\sunitaryn$--structures on $2n$-dimensional manifolds. However, something special occurs in complex dimension $3$: the real part $\Real\Omega$ of the complex volume form $\Omega$ determines uniquely the imaginary part $\Imag\Omega$, as explained by Hitchin in \cite[\S 2.2]{Hitchin:3forms}. This is related to the fact that in real dimension $6$, $\Real\Omega_0$ is a \emph{stable} form \cite{Hitchin:Stable:forms}, \ie its orbit in $\Lambda^3(\R^6)^\ast$ under $\tu{GL}(6,\R)$ is open.

The map $\Real\Omega\mapsto\Imag\Omega$ will be referred to as \emph{Hitchin's duality map} for stable $3$-forms on $6$-manifolds. It will be useful to have an explicit formula for the linearisation of Hitchin's duality map in terms of the Hodge star $\ast$ and the decomposition of forms into types.

\begin{prop}\label{prop:Linearisation:Hitchin:dual}
Given an {\suthreestr} $(\omega,\Omega)$ on $B$, let $\rho\in\Omega^3 (B)$ be a form with small enough $C^0$--norm so that $\Real\Omega+\rho$ is still a stable form. Decomposing into types we write $\rho = \rho _6 + \rho _{1\oplus 1} + \rho _{12}$. Then the image $\hat{\rho}$ of $\rho$ under the linearisation of Hitchin's duality map at $\Real\Omega$ is
\[
\hat{\rho} = \ast (\rho_6 + \rho _{1\oplus 1}) -\ast \rho_{12}. 
\]
\end{prop}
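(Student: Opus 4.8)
The plan is to compute the differential of Hitchin's duality map directly by exploiting the variational/$\sunitary{3}$-equivariance structure of the problem, reducing everything to a check on irreducible summands. First I would recall that the imaginary part $\Imag\Omega$ is determined from $\rho_{\Real}:=\Real\Omega$ via Hitchin's construction: one forms the Hitchin functional $\Phi(\rho_{\Real})$ (a suitable power of the $\tu{GL}(6,\R)$-invariant $\lambda(\rho_{\Real})$), and $\Imag\Omega$ is obtained as the derivative of $\Phi$ with respect to $\rho_{\Real}$ (identified with a $3$-form via the volume form that $\rho_{\Real}$ itself determines). Concretely, following \cite[\S 2.2]{Hitchin:3forms}, the map $\Real\Omega\mapsto\Imag\Omega$ is real-analytic on the open set of stable forms and $\tu{GL}(6,\R)$-equivariant. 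Differentiating at $\Real\Omega$ therefore yields a linear map $\rho\mapsto\hat\rho$ on $\Lambda^3(\R^6)^\ast$ that commutes with the stabiliser $\sunitary{3}$ of the pair $(\omega,\Omega)$ — here it is important that we fix $\omega$ as well, so the symmetry group really is $\sunitary{3}$ rather than just the parabolic stabilising a single $3$-form.

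Next I would invoke Schur's lemma together with the decomposition $\Lambda^3\R^6 = \Lambda^3_6\oplus\Lambda^3_{1\oplus1}\oplus\Lambda^3_{12}$ from Lemma \ref{lem:decomposition:forms}. Since $\Lambda^3_6\cong\R^6$, $\Lambda^3_{12}$ are irreducible and pairwise non-isomorphic, while $\Lambda^3_{1\oplus1}=\R\Real\Omega\oplus\R\Imag\Omega$ is a sum of two trivial summands, the linearised duality map must act as a scalar on $\Lambda^3_6$, as a scalar on $\Lambda^3_{12}$, and as a $2\times2$ matrix on $\Lambda^3_{1\oplus1}$. The Hodge star $\ast$ (Lemma \ref{lem:Hodge:star}) maps $\Lambda^3_6\to\Lambda^3_6$, $\Lambda^3_{12}\to\Lambda^3_{12}$ and $\Lambda^3_{1\oplus1}\to\Lambda^3_{1\oplus1}$, so to prove the claimed formula $\hat\rho=\ast(\rho_6+\rho_{1\oplus1})-\ast\rho_{12}$ it suffices to pin down four scalars: the eigenvalue on $\Lambda^3_6$ (should be $+1$), the eigenvalue on $\Lambda^3_{12}$ (should be $-1$), and the $2\times2$ block on $\Lambda^3_{1\oplus1}$ (should be $\ast$ itself, i.e. $\Real\Omega\mapsto\Imag\Omega$, $\Imag\Omega\mapsto-\Real\Omega$).

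The final step is to evaluate these scalars on explicit test deformations using the standard structure $(\omega_0,\Omega_0)$ on $\C^3$. For the $\Lambda^3_{1\oplus1}$ block: varying $\Omega\mapsto e^{i\vartheta}\Omega$ (a genuine path of complex volume forms) immediately gives that $\delta(\Real\Omega)=-\vartheta\,\Imag\Omega$ produces $\delta(\Imag\Omega)=\vartheta\,\Real\Omega$, while scaling $\Omega\mapsto(1+t)\Omega$ gives $\delta(\Real\Omega)=\Real\Omega\mapsto\Imag\Omega=\ast\Real\Omega$; these two computations fix the whole block and match $\ast$. For the $\Lambda^3_6$ piece, one takes the family obtained by acting with a path in $\tu{GL}(6,\R)$ of the form $(1+tX)$ with $X$ the transvection realising a chosen element of $\Lambda^3_6=\{X^\flat\wedge\omega\}$, or more simply uses that $\sunitary{3}$-equivariance forces the scalar and then reads it off from a single one-parameter subgroup; I expect the normalisation to come out $+1$ after using Lemma \ref{lem:Hodge:star}(ii). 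For $\Lambda^3_{12}=\{S_\ast\Real\Omega : S\in\mathrm{Sym}^2,\ SJ+JS=0\}$, the key point is that deforming $\Real\Omega$ by $S_\ast\Real\Omega$ corresponds infinitesimally to rotating the almost complex structure by $S$, under which $\Imag\Omega$ deforms by $-S_\ast\Imag\Omega=-\ast(S_\ast\Real\Omega)$ — the sign flip relative to the $\Lambda^3_6$ and trivial pieces is exactly the asserted $-\ast\rho_{12}$. The main obstacle is precisely this last computation: getting the relative sign and the interaction of $S$ with the Hodge star right requires care with the identification of $\mathrm{Sym}^2$-deformations of the metric/complex structure with $3$-form deformations, and with Hitchin's normalisation of $\Phi$; this is where I would spend most of the effort, checking it on a concrete anti-commuting symmetric endomorphism such as $S=dx^1\otimes dx^4+dx^4\otimes dx^1$ (in real coordinates with $J_0(\partial_{x^1})=\partial_{x^2}$, etc.) and comparing both sides summand by summand.
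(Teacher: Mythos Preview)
The paper does not actually prove this proposition; it simply refers the reader to \cite[Proposition~2.12]{def:nK}. So there is no ``paper's own proof'' to compare against directly.

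Your strategy is sound and is a standard way to establish the result. The core mechanism is the $\mathrm{GL}(6,\R)$--equivariance of Hitchin's duality map: differentiating $\Phi(g\cdot\rho)=g\cdot\Phi(\rho)$ at $g=e$ gives $D\Phi|_{\Real\Omega}(A_\ast\Real\Omega)=A_\ast\Imag\Omega$ for every $A\in\mathfrak{gl}(6,\R)$, and since $\Real\Omega$ has open orbit this determines the linearisation completely. Your test deformations for $\Lambda^3_{1\oplus 1}$ (phase rotation and scaling) and for $\Lambda^3_{12}$ (the one-parameter subgroup $\exp(tS)$ with $S$ symmetric and $SJ+JS=0$) are exactly the right moves; the claim $S_\ast\Imag\Omega=-\ast(S_\ast\Real\Omega)$ follows from the standard fact that on primitive $(2,1)$--forms the Hodge star acts as multiplication by $i$, and is a short computation in adapted coordinates.

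Two small points of care. First, your appeal to Schur's lemma is slightly imprecise: $\Lambda^3_6\cong\R^6$ and $\Lambda^3_{12}$ are irreducible real $\sunitary{3}$--representations of \emph{complex} type (indeed $\ast^2=-1$ on $3$--forms in six dimensions, so $\ast$ itself furnishes the complex structure), hence the $\sunitary{3}$--equivariant endomorphism algebra on each is $\C=\R\langle 1,\ast\rangle$, not $\R$. This does not break your argument, because a single well-chosen test deformation yields one complex equation, enough to pin down the complex scalar; but the phrase ``act as a scalar'' should be understood in that sense. Second, the remark that ``it is important that we fix $\omega$ as well'' is slightly off: Hitchin's duality map depends only on $\Real\Omega$, so the linearisation is equivariant under the larger stabiliser $\mathrm{SL}(3,\C)\subset\mathrm{GL}(6,\R)$ of $\Real\Omega$. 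You invoke $\sunitary{3}$ only because the type decomposition $\Lambda^3=\Lambda^3_6\oplus\Lambda^3_{1\oplus 1}\oplus\Lambda^3_{12}$ requires $\omega$; the equivariance itself is stronger.
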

\noindent
For a proof see \cite[Proposition 2.12]{def:nK}.

\subsubsection{The torsion of an \suthreestr}

Given a subgroup $\tu{G}$ of $\sorth{n}$, we define a $\tu{G}$--structure on a Riemannian manifold $(B^n,g)$ as a sub-bundle $\mathcal{P}$ of the orthogonal frame bundle of $B$ with structure group $\tu{G}$. The \emph{intrinsic torsion} of $\mathcal{P}$ is a measure of how far $\mathcal{P}$ is from being parallel with respect to the Levi--Civita connection $\nabla$ of $(B,g)$. More precisely, restricting $\nabla$ to $\mathcal{P}$ yields a $\Lie{so}(n)$--valued $1$-form $\Theta$ on $\mathcal{P}$. Choose a complement $\Lie{m}$ of the Lie algebra of $\tu{G}$ in $\Lie{so}(n)$. Projection of $\Theta$ onto $\Lie{m}$ yields a $1$-form on $B$ with values in the bundle $\mathcal{P}\times_\tu{G} \Lie{m}$. By abuse of notation we will still denote this $1$-form by $\Theta$. This is the (intrinsic) torsion of the $\tu{G}$--structure $\mathcal{P}$. 

For an {\suthreestr} on a $6$-manifold $B$ one can check that $\nabla (\omega,\Omega) = \Theta_\ast (\omega,\Omega)$ where $\Theta$ acts on differential forms via the representation of $\Lie{m}\subset\Lie{so}(6)$ on $\Lambda^\ast(\R^6)$. It turns out that $\Theta$ itself is uniquely recovered by knowledge of the anti-symmetric part of $\Theta_\ast (\omega,\Omega)$, \ie the knowledge of $d\omega$ and $d\Omega$.

\begin{prop}[Chiossi--Salamon {\cite[\S 1]{Chiossi:Salamon}}]\label{prop:Torsion:SU(3):structures}
Let $(\omega,\Omega)$ be an {\suthreestr} on $B^6$. Then there exist functions $w_1,\hat{w}_1$, primitive $(1,1)$--forms $w_2,\hat{w}_2$, a $3$-form $w_3\in\Omega^3_{12}(B)$ and $1$-forms $w_4,w_5$ on $B$ such that
\[
\begin{aligned}
&d\omega = 3w_1 \Real\Omega + 3\hat{w}_1 \Imag\Omega + w_3 + w_4 \wedge\omega,\\
&d\Real\Omega = 2\hat{w}_1\omega^2 + w_5\wedge\Real\Omega + w_2 \wedge\omega,\\
&d\Imag\Omega =-2w_1\omega^2 +w_5 \wedge\Imag\Omega + \hat{w}_2\wedge\omega.
\end{aligned}
\]
Moreover $(w_1,\hat{w}_1,w_2,\hat{w}_2,w_3,w_4,w_5)$ is identified with the intrinsic torsion of the \suthreestr.
\end{prop}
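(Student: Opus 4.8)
The plan is to treat this as a purely algebraic statement about the $\sunitary{3}$--module structure of the intrinsic torsion, following the Gray--Hervella-style analysis. First I would identify the torsion module: writing $\Lie{so}(6)=\Lie{su}(3)\oplus\Lie{m}$ as $\sunitary{3}$--modules (with $\Lie{su}(3)=\Lambda^2_8$ the primitive $(1,1)$--forms) one has $\Lie{m}\cong\Lambda^2_1\oplus\Lambda^2_6$, of real dimension $7$, so the intrinsic torsion $\Theta$ of an $\sunitary{3}$--structure is a section of the rank-$42$ bundle modelled on $\Lambda^1\otimes\Lie{m}\cong\Lambda^1\otimes(\Lambda^2_1\oplus\Lambda^2_6)$. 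Decomposing this into irreducibles (using $\Lambda^1\cong\Lambda^2_6$ and the tensor product rules for $\sunitary{3}$, most easily after complexifying $\Lambda^2_6\otimes\C\cong V\oplus\overline{V}$ with $V=\C^3$ the standard representation) one obtains
\[
\Lambda^1\otimes(\Lambda^2_1\oplus\Lambda^2_6)\cong \Lambda^1\oplus\Lambda^1\oplus\R\oplus\R\oplus\Lambda^2_8\oplus\Lambda^2_8\oplus\Lambda^3_{12},
\]
of dimensions $6,6,1,1,8,8,12$; these seven summands are the respective homes of $w_4,w_5,w_1,\hat{w}_1,w_2,\hat{w}_2,w_3$.

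Next I would use that, since $\nabla(\omega,\Omega)=\Theta_\ast(\omega,\Omega)$ and exterior differentiation is $\nabla$ followed by antisymmetrisation, the assignment $\Theta\mapsto(d\omega,d\Real\Omega,d\Imag\Omega)$ is a fixed $\sunitary{3}$--equivariant linear map with codomain $\Lambda^3\oplus\Lambda^4\oplus\Lambda^4$. I would decompose that codomain using Lemmas \ref{lem:decomposition:forms}, \ref{lem:Hodge:star}, \ref{lem:identities:1forms} and \ref{lem:identities:2forms}: $\Lambda^3=\Lambda^3_6\oplus\Lambda^3_{1\oplus1}\oplus\Lambda^3_{12}$, so that every $3$--form is uniquely $w_4\wedge\omega+3w_1\Real\Omega+3\hat{w}_1\Imag\Omega+w_3$ with $w_3\in\Lambda^3_{12}$; and $\Lambda^4=\ast\Lambda^2$ with $\ast\Lambda^2_1=\R\,\omega^2$, $\ast\Lambda^2_6=\{\eta\wedge\Real\Omega:\eta\in\Lambda^1\}=\{\eta\wedge\Imag\Omega:\eta\in\Lambda^1\}$ and $\ast\Lambda^2_8=\{w_2\wedge\omega:w_2\in\Lambda^2_8\}$, so that every $4$--form is uniquely $2c\,\omega^2+\eta\wedge\Real\Omega+w_2\wedge\omega$, and similarly with $\Imag\Omega$ in place of $\Real\Omega$. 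This already produces the displayed shape of $d\omega$, $d\Real\Omega$ and $d\Imag\Omega$; in particular the $\Lambda^2_8$--component of $d\Real\Omega$ (resp.\ $d\Imag\Omega$) is automatically a primitive $(1,1)$--form wedged with $\omega$, and $d\omega$ has no $\Lambda^2_8$--component because $\Lambda^3$ contains none.

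The substantive content --- and the step I expect to be the main obstacle --- is to pin down that these components are genuinely governed by the \emph{seven} torsion modules above, with the coincidences and coefficients in the statement: that $w_4$ occurs only in $d\omega$, that $w_2,\hat{w}_2$ occur only in $d\Real\Omega$ and $d\Imag\Omega$ respectively, that the \emph{same} $1$--form $w_5$ appears in $d\Real\Omega$ and $d\Imag\Omega$, and that the \emph{same} scalars $w_1,\hat{w}_1$ appear, with ratios $3:(-2)$ and $3:2$, in $d\omega$ and in $d\Imag\Omega$, $d\Real\Omega$. By Schur's lemma the equivariant map $\Theta\mapsto(d\omega,d\Real\Omega,d\Imag\Omega)$ respects isotypic components, so this reduces to: (a) checking that none of the seven summands is annihilated --- which yields the ``Moreover'' statement that $(w_1,\hat{w}_1,w_2,\hat{w}_2,w_3,w_4,w_5)$ determines and is determined by $\Theta$; and (b) in the trivial and $\Lambda^1$ isotypic blocks, where the torsion module has strictly smaller multiplicity ($2$) than the codomain ($4$ and $3$ respectively), working out precisely how the two-dimensional images sit, which is what forces the matchings of $w_1,\hat{w}_1,w_5$ across the three equations; in the remaining blocks one only needs the relevant equivariant maps to be nonzero. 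Both (a) and (b) come down to a single finite computation on the flat model $(\R^6\simeq\C^3,\omega_0,\Omega_0)$: for a torsion tensor of each of the seven algebraic types one evaluates the antisymmetrisations of $\Theta_\ast\omega_0$ and $\Theta_\ast\Real\Omega_0$ via the explicit action of $\Lie{m}\subset\Lie{so}(6)$ on forms. I would lighten this bookkeeping by first differentiating the structure constraints $\omega\wedge\Real\Omega=0$ and $\tfrac{1}{6}\omega^3=\tfrac{1}{4}\Real\Omega\wedge\Imag\Omega$ to obtain linear relations among the top-degree products $d\omega\wedge\Real\Omega$, $\omega\wedge d\Real\Omega$, $d\omega\wedge\Imag\Omega$, etc.; these cut the number of free constants down to a handful, which are then fixed by evaluating on one explicit torsion tensor of each type.
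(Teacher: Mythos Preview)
The paper does not give its own proof of this proposition: it is simply quoted from Chiossi--Salamon \cite[\S 1]{Chiossi:Salamon} with no argument supplied. Your plan is correct and is in fact the standard one used in that reference---identify $\Lie{m}\cong\Lambda^2_1\oplus\Lambda^2_6$, decompose $\Lambda^1\otimes\Lie{m}$ into the seven irreducibles you list, then use Schur together with the differentiated constraints $d(\omega\wedge\Real\Omega)=0$ and $d(\tfrac{1}{6}\omega^3-\tfrac{1}{4}\Real\Omega\wedge\Imag\Omega)=0$ and a flat-model check to fix the cross-identifications and constants. So there is nothing to compare: your outline is the Chiossi--Salamon argument that the paper is citing.
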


\begin{remark}\label{rmk:Torsion:SU(3):Spinor}
In Remark \ref{rmk:SU(3):spin} we observed that an {\suthreestr} on a $6$-manifold can be defined as a spin structure together with a non-vanishing spinor $\psi$ (normalised to have unit length). In this spinorial approach, the intrinsic torsion $\Theta$ of the {\suthreestr} can be identified with $\nabla\psi$.  
\end{remark}

\subsection{Calabi--Yau $3$-folds}

We now restrict to torsion-free \suthreestr s.

\begin{definition}\label{def:CY3}
A Calabi--Yau $3$-fold is a $6$-manifold $B$ endowed with a torsion-free {\suthreestr} $(\omega,\Omega)$, \ie an {\suthreestr} $(\omega,\Omega)$ such that
\[
d\omega =0= d\Omega.
\]
\end{definition}

\begin{remark}\label{rmk:CY3:Spinor}
By Remark \ref{rmk:Torsion:SU(3):Spinor}, a Calabi--Yau $3$-fold can be equivalently defined as a spin $6$-manifold together with a parallel spinor.
\end{remark}
 
We will now collect algebraic identities for a number of differential operators acting on functions and $1$-forms on a Calabi--Yau $3$-fold $(B,\omega,\Omega)$. These operators are defined using exterior differential, codifferential and the type decomposition of differential forms on $B$ and will appear at various points in the rest of the paper.

First of all, we define a \emph{curl-operator} on $1$-forms by
\begin{equation}\label{eq:curl}
\tu{curl} \,\gamma=\ast (d\gamma\wedge\Real\Omega).
\end{equation}

\begin{prop}\label{prop:differential:2forms}
For every $f\in C^\infty(B)$, $\gamma\in\Omega^1(B)$ and vector field $X$ on $B$ we have
\begin{enumerate}
\item $d(f\omega)=df\wedge\omega$ and $d^\ast (f\omega) = -\!\ast d\bigl( \frac{1}{2}f\omega^2\bigr) = J df$;
\item $d(f\omega^2) = df\wedge\omega^2$ and $d^\ast(f\omega^2) = 2Jdf\wedge\omega$;
\item $d^\ast \gamma = \ast (dJ\gamma\wedge\tfrac{1}{2}\omega^2)$;
\item $d\gamma = -\tfrac{1}{3}d^\ast (J\gamma)\,\omega +\tfrac{1}{2}\left( J\tu{curl}\,\gamma\right)^\sharp\lrcorner\Real\Omega + \pi_8(d\gamma)$;
\item $d(X \lrcorner \Real{\Omega}) = \tfrac{1}{2}\tu{curl}\, X^\flat \wedge \omega -\tfrac{1}{2}(d^{\ast}X^\flat) \Real{\Omega} +\tfrac{1}{2}d^{\ast}(JX^\flat) \Imag{\Omega} + \pi_{12}\left( d(X \lrcorner \Real{\Omega})\right)$;
\item $d^\ast(X\lrcorner\Real{\Omega})=J\tu{curl}\,X^\flat$.
\end{enumerate}
\proof
The identities in (i) and (ii) follow immediately from Lemmas \ref{lem:identities:1forms} (i) and \ref{lem:Hodge:star} (i) and the fact that $\omega$ is closed. Similarly, (iii) follows immediately from Lemma \ref{lem:identities:1forms}. Now (iv) follows from (iii), the definition of $\tu{curl}$ and Lemma \ref{lem:identities:2forms} (ii).

In order to derive (v) we use Lemma \ref{lem:identities:3forms} to study each component of $d(X \lrcorner \Real{\Omega})$ independently. For instance, in order to identify its $\Omega^3_6$--component it is enough to consider $d(X \lrcorner \Real{\Omega}) \wedge \omega$ and use Lemmas \ref{lem:identities:1forms} (iii) and \ref{lem:identities:2forms} (ii). Similarly, the $\Omega^3_{1\oplus 1}$--component of $d(X \lrcorner \Real{\Omega})$ can be explicitly understood using Lemma \ref{lem:identities:1forms} (iv) and the identity (iv) that we just proved. Finally,
\[
d^\ast (X\lrcorner\Real{\Omega}) = -\!\ast \!d(X^\flat\wedge\Imag\Omega) = -\!\ast\! \bigl( dX^\flat \wedge\Imag\Omega\bigr) = J\tu{curl}\,X^\flat
\]
using Lemma \ref{lem:identities:2forms} (ii) and the definition of $\tu{curl}$.
\endproof
\end{prop}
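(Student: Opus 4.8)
The plan is to prove every identity as a pointwise algebraic statement, using the Calabi--Yau conditions $d\omega = 0 = d\Omega$ only to discard the terms in which the Leibniz rule differentiates one of the defining forms $\omega$, $\omega^2$, $\Real\Omega$ or $\Imag\Omega$, and then reading off the irreducible components of the resulting differential form through the dictionary of Lemmas~\ref{lem:identities:1forms}, \ref{lem:Hodge:star}, \ref{lem:identities:2forms} and \ref{lem:identities:3forms}. The only inputs from outside Section~\ref{sec:su3} that I need are the identity $d^\ast = -{\ast}d{\ast}$, valid on forms of every degree on an oriented $6$-manifold, and ${\ast\ast} = (-1)^{k(6-k)}$ on $k$-forms (so ${\ast\ast} = -1$ on $1$- and $3$-forms, $+1$ otherwise). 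No analysis on $B$ enters; the proof is a chain of references to Section~\ref{sec:su3}, carried out in the order (i)--(vi), since (iii) and (iv) are used in proving (v) and (vi).

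Identities (i) and (ii): the exact forms are immediate, $d(f\omega) = df\wedge\omega$ and $d(f\omega^2) = df\wedge\omega^2$, because $d\omega = 0$. For the codifferentials compute ${\ast}(f\omega) = \tfrac12 f\omega^2$ and ${\ast}(f\omega^2) = 2f\omega$ from Lemma~\ref{lem:Hodge:star}(i), apply $d$ (again using $d\omega=0$), and translate back with $\ast(X^\flat\wedge\omega) = -JX^\flat\wedge\omega$ of Lemma~\ref{lem:Hodge:star}(iv) and $\ast X^\flat = -\tfrac12 JX^\flat\wedge\omega^2$ of Lemma~\ref{lem:identities:1forms}(v); this gives $d^\ast(f\omega) = Jdf$ and $d^\ast(f\omega^2) = 2Jdf\wedge\omega$. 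Identity (iii) is the same computation for a general $1$-form: ${\ast}\gamma = -\tfrac12 J\gamma\wedge\omega^2$ gives $d{\ast}\gamma = -\tfrac12 d(J\gamma)\wedge\omega^2$, hence $d^\ast\gamma = {\ast}\bigl(d(J\gamma)\wedge\tfrac12\omega^2\bigr)$. Identity (vi) starts from ${\ast}(X\lrcorner\Real\Omega) = X^\flat\wedge\Imag\Omega$ of Lemma~\ref{lem:Hodge:star}(ii), so $d^\ast(X\lrcorner\Real\Omega) = -{\ast}(dX^\flat\wedge\Imag\Omega)$, and Lemma~\ref{lem:identities:2forms}(ii) applied to the $\Omega^2_6$-component of $dX^\flat$ (found in (iv)) yields $J\,\tu{curl}\,X^\flat$.

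The content is in (iv) and (v), where one decomposes a $2$-form, respectively a $3$-form, into its irreducible $\sunitary{3}$-pieces and isolates each coefficient by wedging with a suitable test form. For (iv) write $d\gamma = \lambda\,\omega + Y\lrcorner\Real\Omega + \pi_8(d\gamma)$: pairing with $\omega^2$ and using Lemma~\ref{lem:identities:2forms}(iii) together with (iii) applied to $J\gamma$ identifies $\lambda = -\tfrac13 d^\ast(J\gamma)$, while pairing with $\Real\Omega$ --- which by \eqref{eq:curl} gives $\tu{curl}\,\gamma$ --- and using Lemma~\ref{lem:identities:2forms}(ii) identifies $Y^\flat = \tfrac12 J\,\tu{curl}\,\gamma$. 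For (v) write $d(X\lrcorner\Real\Omega) = \gamma'\wedge\omega + \lambda\Real\Omega + \mu\Imag\Omega + \pi_{12}\bigl(d(X\lrcorner\Real\Omega)\bigr)$; using $d\omega = d\Omega = 0$ and the pointwise identities $(X\lrcorner\Real\Omega)\wedge\omega = X^\flat\wedge\Imag\Omega$, $(X\lrcorner\Real\Omega)\wedge\Real\Omega = X^\flat\wedge\omega^2$, $(X\lrcorner\Real\Omega)\wedge\Imag\Omega = -JX^\flat\wedge\omega^2$ of Lemma~\ref{lem:identities:1forms}(iii)--(iv), the wedges of $d(X\lrcorner\Real\Omega)$ against $\omega$, $\Real\Omega$, $\Imag\Omega$ become $dX^\flat\wedge\Imag\Omega$, $dX^\flat\wedge\omega^2$ and $-d(JX^\flat)\wedge\omega^2$ respectively; Lemma~\ref{lem:identities:3forms}, combined with Lemma~\ref{lem:identities:2forms}(ii)--(iii) and with (iii)--(iv) to name the $\Omega^2_6$- and $\Omega^2_1$-components that arise, then produces $\gamma' = \tfrac12\tu{curl}\,X^\flat$, $\lambda = -\tfrac12 d^\ast X^\flat$ and $\mu = \tfrac12 d^\ast(JX^\flat)$.

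I expect the only real obstacle to be bookkeeping: keeping the conventions for ${\ast}$, $d^\ast$ and $J$ consistent, remembering whether $\tu{curl}$ or $J\,\tu{curl}$ appears, and tracking the normalisation constants (the $3$'s, $\tfrac12$'s and $6$'s) coming out of Lemmas~\ref{lem:identities:2forms} and \ref{lem:identities:3forms} --- a single mislaid factor of $J$ or sign propagates from (iii)--(iv) into (v) and (vi). Since nothing depends on the global geometry of $B$, the cleanest organisation is to prove (i)--(iv) first and then invoke them verbatim inside the proofs of (v) and (vi).
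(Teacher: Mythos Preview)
Your proposal is correct and follows essentially the same route as the paper's proof: both arguments reduce each identity to a pointwise calculation using $d\omega=0=d\Omega$, the Hodge-star formulas of Lemmas~\ref{lem:identities:1forms} and~\ref{lem:Hodge:star}, and then isolate the irreducible components of $d\gamma$ and $d(X\lrcorner\Real\Omega)$ by wedging with $\omega,\omega^2,\Real\Omega,\Imag\Omega$ and invoking Lemmas~\ref{lem:identities:2forms} and~\ref{lem:identities:3forms}, with (iii)--(iv) established first and then fed into (v)--(vi). Your write-up is somewhat more explicit about the constants and the intermediate Hodge-star identities, but the logical structure and the lemmas cited are the same.
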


By exploiting Hitchin's notion of stable forms we can add the following identities.

\begin{lemma}\label{lem:d:3forms:type6}
For every vector field $X$ on $B$ we have
\[
\ast d(X\lrcorner\Real\Omega) - d(X\lrcorner\Imag\Omega)\in\Omega^3_{12}, \qquad \ast d(X\lrcorner\Real\Omega) + d(X\lrcorner\Imag\Omega)\in\Omega^3_{1\oplus 1}\oplus\Omega^3_6.
\]
\proof
Since $\Omega$ is closed, $d(X\lrcorner\Real\Omega) = \mathcal{L}_{X}\Real\Omega$ and $d(X\lrcorner\Imag\Omega) = \mathcal{L}_X\Imag\Omega$. Moreover, the equivariance of Hitchin's duality map for stable $3$-forms under diffeomorphisms implies that $\mathcal{L}_X\Imag\Omega$ is the image of $\mathcal{L}_X\Real\Omega$ under the linearisation of Hitchin's duality map. Thus Proposition \ref{prop:Linearisation:Hitchin:dual} yields
\[
\pi_{1\oplus 1\oplus 6}\mathcal{L}_X\Imag\Omega = \ast \pi_{1\oplus 1\oplus 6}\mathcal{L}_X\Real\Omega = \pi_{1\oplus 1\oplus 6}\ast\mathcal{L}_X\Real\Omega, \qquad \pi_{12}\mathcal{L}_X\Imag\Omega  =  -\pi_{12}\ast\mathcal{L}_X\Real\Omega.\qedhere
\]
\end{lemma}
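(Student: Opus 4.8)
The plan is to reduce the statement to the infinitesimal equivariance of Hitchin's duality map. The key point is that on a Calabi--Yau $3$-fold $\Omega$ is closed, so Cartan's formula gives $d(X\lrcorner\Real\Omega)=\mathcal{L}_X\Real\Omega$ and $d(X\lrcorner\Imag\Omega)=\mathcal{L}_X\Imag\Omega$. First I would observe that Hitchin's duality map $\Real\Omega\mapsto\Imag\Omega$ is defined pointwise and purely algebraically from a stable $3$-form, hence it commutes with pullback by diffeomorphisms; differentiating the $1$-parameter family of diffeomorphisms generated by $X$ at the identity shows that $\mathcal{L}_X\Imag\Omega$ equals the image of $\mathcal{L}_X\Real\Omega$ under the linearisation of Hitchin's duality map at $\Real\Omega$. (Here one uses that the {\suthreestr} $(\omega,\Omega)$ is torsion-free only to know that $\Real\Omega$ is closed and stable, which is all that is needed for the duality map to be defined and for the Lie derivative identification to hold; in fact stability alone suffices, closedness of $\Omega$ is used to rewrite $\mathcal{L}_X$ as $d(X\lrcorner\,\cdot\,)$.)

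Next I would invoke Proposition \ref{prop:Linearisation:Hitchin:dual}: writing the type decomposition $\mathcal{L}_X\Real\Omega = \rho_6+\rho_{1\oplus 1}+\rho_{12}$, its image under the linearised duality map is $\ast(\rho_6+\rho_{1\oplus 1})-\ast\rho_{12}$. Therefore, projecting onto the $\Lambda^3_6\oplus\Lambda^3_{1\oplus 1}$ summand,
\[
\pi_{1\oplus 1\oplus 6}\,\mathcal{L}_X\Imag\Omega = \ast\,\pi_{1\oplus 1\oplus 6}\,\mathcal{L}_X\Real\Omega = \pi_{1\oplus 1\oplus 6}\,\ast\mathcal{L}_X\Real\Omega,
\]
where the last equality uses that $\ast$ preserves the $\sunitary{3}$-type decomposition of $3$-forms (Lemma \ref{lem:Hodge:star}). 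Rearranging, $\pi_{1\oplus 1\oplus 6}\bigl(\ast\mathcal{L}_X\Real\Omega-\mathcal{L}_X\Imag\Omega\bigr)=0$, which is exactly the assertion that $\ast d(X\lrcorner\Real\Omega)-d(X\lrcorner\Imag\Omega)$ has no component in $\Lambda^3_6\oplus\Lambda^3_{1\oplus 1}$, i.e. lies in $\Omega^3_{12}$.

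The only mildly delicate point — and the step I would be most careful about — is the passage from equivariance of Hitchin's duality map under diffeomorphisms to its infinitesimal version: one needs that the duality map is smooth on the open set of stable $3$-forms (true, since that orbit is open and the map is real-analytic) and that differentiating the identity $\phi^*(\widehat{\Real\Omega})=\widehat{\phi^*\Real\Omega}$ along the flow of $X$ is legitimate. This is standard, but it is worth stating explicitly because it is the conceptual heart of the argument; everything else is the bookkeeping already packaged in Proposition \ref{prop:Linearisation:Hitchin:dual}. No hard analysis is involved — the lemma is purely pointwise/algebraic once Cartan's formula has been used.
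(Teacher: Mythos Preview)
Your proposal is correct and follows essentially the same argument as the paper: Cartan's formula plus closedness of $\Omega$ to rewrite the exterior derivatives as Lie derivatives, diffeomorphism-equivariance of Hitchin's duality map to identify $\mathcal{L}_X\Imag\Omega$ with the linearised duality map applied to $\mathcal{L}_X\Real\Omega$, and then Proposition~\ref{prop:Linearisation:Hitchin:dual} to compare the $\Lambda^3_{1\oplus 1\oplus 6}$--components. The paper's proof is the same three-line argument, only more tersely stated.
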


\begin{remark}\label{rmk:d:3forms:type6}
Using Proposition \ref{prop:differential:2forms} (iv) and Lemmas \ref{lem:identities:1forms} (ii) and \ref{lem:Hodge:star} (iv), the $\Omega^3_6$--component of this identity yields
\[
\tu{curl}\, JX^\flat = -J\tu{curl}\, X^\flat
\]
for every vector field $X$ on $B$.
\end{remark}

\begin{remark}\label{rmk:d:3forms:type6:1}
A further consequence of Lemma \ref{lem:d:3forms:type6} is that for every $\rho_0\in\Omega^3_{12}$ and (compactly supported, say) vector field $X$ we have
\[
\langle d\rho_0, \ast (X\lrcorner\Real\Omega)\rangle_{L^2} = \langle d^\ast\rho_0, X\lrcorner\Imag\Omega\rangle_{L^2}.
\]
We therefore deduce that there is a relation between the $\Omega^4_6$--components of $d\rho_0$ and $d\ast\rho_0$. This is the linearised version of the relation between the $\Omega^4_6$--components of $d\Real\Omega$ and $d\Imag\Omega$ in Proposition \ref{prop:Torsion:SU(3):structures} since $\rho_0-i\!\ast\!\rho_0$ is an infinitesimal deformation of the complex volume form $\Omega$ by Proposition \ref{prop:Linearisation:Hitchin:dual}. In particular, if $d^\ast\rho_0=0$ then $d\rho_0\in \Omega^4_8$. Indeed, for every $\rho_0\in \Omega^3_{12}$ we have $\pi_1 (d\rho_0)=0$ since $d\rho_0\wedge\omega = d(\rho_0\wedge\omega)=0$.
\end{remark}

We will also need to consider the following second-order operators.

\begin{lemma}\label{lem:Gauge:fixing:operator:AC:CY}
Let $(B,\omega,\Omega)$ be a Calabi--Yau $3$-fold. Then for every function $g$
\[
\pi_1 dd^\ast \bigl( \tfrac{1}{2}g\omega^2\bigr) = \tfrac{1}{3}(\triangle g)\omega^2
\]
and the operator $(f,\gamma) \mapsto \pi_{1\oplus 6}d^\ast d \bigl( f\omega + \gamma^\sharp\lrcorner\Real\Omega\bigr)$ can be identified with
\[
(f,\gamma)\longmapsto \left( \tfrac{2}{3}\triangle f, dd^\ast\gamma + \tfrac{2}{3}d^\ast d\gamma\right).
\]
\proof
Using Proposition \ref{prop:differential:2forms} (ii) we have $dd^\ast \bigl( \tfrac{1}{2}g\omega_0^2\bigr) = (dJdg)\wedge\omega_0$. The type decomposition of the differential of a $1$-form Proposition \ref{prop:differential:2forms} (iv) then immediately implies the first statement.

The second statement can be deduced quickly from \cite[Proposition 2.24]{Karigiannis:Lotay} applied to the torsion-free {\gtstr} $\varphi = dt\wedge\omega + \Real\Omega$ and the vector field $X=f\partial_t + \gamma^\sharp$ on $B\times\R_t$.
\endproof
\end{lemma}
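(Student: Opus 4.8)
\textbf{Proof plan for Lemma \ref{lem:Gauge:fixing:operator:AC:CY}.}

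The plan is to prove the two statements separately, both by purely algebraic manipulation using the identities already established in Section \ref{sec:su3}, with no genuine analysis required. For the first identity, I would start from $d^\ast\bigl(\tfrac12 g\omega^2\bigr) = 2(Jdg)\wedge\omega$ (Proposition \ref{prop:differential:2forms} (ii)) so that $dd^\ast\bigl(\tfrac12 g\omega^2\bigr) = d\bigl(2(Jdg)\wedge\omega\bigr) = 2\,d(Jdg)\wedge\omega$, using $d\omega=0$. Now I apply the type decomposition of the differential of the $1$-form $Jdg$ from Proposition \ref{prop:differential:2forms} (iv): the $\Lambda^2_1$--component of $d(Jdg)$ is $-\tfrac13 d^\ast(J^2 dg)\,\omega = \tfrac13 (d^\ast dg)\,\omega = \tfrac13(\triangle g)\,\omega$, since $J^2=-1$ on $1$-forms and $\triangle g = d^\ast dg$. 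Wedging with $\omega$, only the $\Lambda^2_1 = \R\omega$ and $\Lambda^2_8$ components of $d(Jdg)$ can contribute to a multiple of $\omega^2$ (the $\Lambda^2_6$ part wedged with $\omega$ lands in $\Lambda^4_6$, and by Lemma \ref{lem:identities:2forms} (iii) a primitive $(1,1)$--form wedged with $\omega$ has no $\omega^2$--component after applying $\ast$); so $\pi_1\bigl(d(Jdg)\wedge\omega\bigr) = \tfrac13(\triangle g)\,\omega^2$, giving $\pi_1 dd^\ast\bigl(\tfrac12 g\omega^2\bigr) = \tfrac23(\triangle g)\,\omega^2$. Here I should be careful: the stated answer is $\tfrac13(\triangle g)\omega^2$, so I would recheck the constant in $d^\ast\bigl(\tfrac12 g\omega^2\bigr)$ and in the $\Lambda^2_1$ projection, but the mechanism is exactly this.

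For the second statement, the cleanest route — and the one the excerpt already points to — is to reduce it to a known formula on \gtwo--structures. Consider $B\times\R_t$ with the product torsion-free \gtstr{} $\varphi = dt\wedge\omega + \Real\Omega$; since $d\omega = d\Omega = 0$ this is indeed torsion-free. Given $(f,\gamma)$ on $B$, form the vector field $X = f\,\partial_t + \gamma^\sharp$ on $B\times\R_t$. The point is that the $7$-dimensional Hodge Laplacian (or the relevant second-order operator $d^\ast d$) acting on the $2$-form $X\lrcorner\varphi$ decomposes, using $\partial_t$-invariance, into pieces that are exactly $\pi_1 d^\ast d(f\omega + \gamma^\sharp\lrcorner\Real\Omega)$ and the $\Lambda^2_6$--component, because under the splitting $\Lambda^2(\R^7) \cong \Lambda^1(\R^6)\oplus\Lambda^2(\R^6)$ one has $dt\wedge\omega$ contributing the $f\omega$ term and $\Real\Omega$ contributing $\gamma^\sharp\lrcorner\Real\Omega$. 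Proposition 2.24 of \cite{Karigiannis:Lotay} gives a formula for this operator applied to $X\lrcorner\varphi$ in terms of $\triangle_g X$ and lower-order curvature/torsion terms; since our \gtstr{} is torsion-free and the metric is a product with a flat $\R$ factor, all torsion terms drop and the curvature terms are those of $(B,g)$. Unwinding the $t$-independent part then yields $\tfrac23\triangle f$ in the function slot and $dd^\ast\gamma + \tfrac23 d^\ast d\gamma$ in the $1$-form slot.

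I expect the main obstacle to be bookkeeping rather than conceptual: keeping track of the precise numerical constants through the several identifications (the factors $\tfrac12$, $\tfrac13$, $\tfrac23$ and the factor-of-$2$'s in Lemmas \ref{lem:identities:2forms} and \ref{lem:identities:3forms}), and correctly matching the $7$-dimensional conventions of \cite{Karigiannis:Lotay} for $\varphi$, $\ast\varphi$, and the sign of $X\lrcorner\varphi$ with the $6$-dimensional conventions fixed here. In particular the appearance of the Laplacian with coefficient $\tfrac23$ rather than $1$ on the function $f$ — and the asymmetric split $dd^\ast\gamma + \tfrac23 d^\ast d\gamma$ on the $1$-form — is exactly what comes out of projecting the $7$-dimensional operator onto the $\Lambda^2_1\oplus\Lambda^2_6$ subbundle, since $\Lambda^2_6 \cong \R^6$ is the image of $X\mapsto X\lrcorner\Real\Omega$ but this identification is not an isometry-compatible-with-$\triangle$ in the naive way; verifying that the coefficients come out as stated is the one place where genuine care is needed. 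Everything else is a direct application of Proposition \ref{prop:differential:2forms} and the Hodge-star dictionary of Lemmas \ref{lem:Hodge:star}--\ref{lem:identities:3forms}.
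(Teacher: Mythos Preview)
Your approach is essentially identical to the paper's: Proposition \ref{prop:differential:2forms} (ii) and (iv) for the first statement, and the reduction to \cite[Proposition 2.24]{Karigiannis:Lotay} on $B\times\R_t$ with $\varphi = dt\wedge\omega + \Real\Omega$ and $X = f\partial_t + \gamma^\sharp$ for the second. The factor-of-two slip you flagged is exactly where you suspect: Proposition \ref{prop:differential:2forms} (ii) gives $d^\ast(g\omega^2) = 2(Jdg)\wedge\omega$, hence $d^\ast\bigl(\tfrac12 g\omega^2\bigr) = (Jdg)\wedge\omega$, not $2(Jdg)\wedge\omega$; with that corrected your computation yields the stated $\tfrac13(\triangle g)\omega^2$.
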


\subsubsection{The Dirac operator} We close this section by deriving a formula for the Dirac operator on a Calabi--Yau $3$-fold. By Remark \ref{rmk:SU(3):spin} every $6$-manifold $B$ with an {\suthreestr} $(\omega,\Omega)$ is spin and it is endowed with a unit spinor $\psi$. Considering the decomposition into irreducible $\sunitary{3}$--representations, the real spinor bundle $\slashed{S}(B)$ is isomorphic to $\underline{\R}\oplus\underline{\R}\oplus T^\ast B$, where $\underline{\R}$ is the trivial real line bundle. The isomorphism is
\[
(f,g,\gamma)\longmapsto f\psi + g\vol\cdot\psi + \gamma\cdot\psi,
\]
where $\cdot$ denotes Clifford multiplication. We now describe the Dirac operator $\slashed{D}$ of a Calabi--Yau $3$-fold in terms of this isomorphism.

By Remark \ref{rmk:CY3:Spinor} the unit spinor $\psi$ defining the Calabi--Yau structure is parallel. In particular, $\slashed{D}\psi = 0=\slashed{D}(\vol\cdot\psi)$. Thus
\begin{subequations}\label{eq:Dirac:SU(3)}
\begin{equation}\label{eq:Dirac:SU(3):functions}
\slashed{D}\left( f\psi + g\vol\cdot\psi\right) = \left( df +Jd g \right)\cdot \psi,
\end{equation}
since the complex structure $J$ on $M$ defined by $(\omega,\Omega)$ satisfies $(J\gamma)\cdot\psi = -\vol\cdot \gamma\cdot\psi=\gamma\cdot\vol\cdot\psi$ \cite[Equation (5.11)]{Friedrich:al}. On the other hand,
\begin{equation}\label{eq:Dirac:SU(3):1forms}
\slashed{D}(\gamma\cdot\psi) = \sum_{i=1}^6{e_i\cdot\nabla_{e_i}\gamma\cdot\psi} =d\gamma\cdot\psi + (d^\ast \gamma)\, \psi.
\end{equation}
\end{subequations}

An explicit computation using an orthonormal coframe adapted to the {\suthreestr} yields the following formula for the action of a $2$-form $\sigma$ on the spinor $\psi$. Decompose $\sigma$ into types: $\sigma=\lambda\omega + Y\lrcorner\Real\Omega + \sigma_0$, with $\sigma_0$ the primitive $(1,1)$--component of $\sigma$. Then
\begin{equation}\label{eq:Clifford:multiplication:2forms}
\sigma\cdot\psi = 3\lambda \vol\cdot\psi + 2JY\cdot\psi.
\end{equation}

Combining \eqref{eq:Dirac:SU(3)}, Proposition \ref{prop:differential:2forms} (iv) and \eqref{eq:Clifford:multiplication:2forms} yields a proof of the following Lemma.

\begin{lemma}\label{lem:Dirac:cone}
Let $(M,\omega,\Omega)$ be a Calabi--Yau $3$-fold. Then for every $f,g\in C^\infty (M)$ and $\gamma\in\Omega^1(M)$
\[
\slashed{D}(f,g,\gamma) = (d^\ast\gamma, -d^\ast J\gamma, \tu{curl}\,\gamma + df-Jdg).
\]
In particular if $f=0=g$ then $\gamma$ is in the kernel of $\slashed{D}$ if and only if $d^\ast\gamma=0$ and $d\gamma$ is a primitive $(1,1)$--form.
\end{lemma}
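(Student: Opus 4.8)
The plan is exactly the assembly indicated just before the statement: combine the splitting $\slashed{S}(B)\cong\underline{\R}\oplus\underline{\R}\oplus T^\ast B$ with the action of $\slashed{D}$ on each summand recorded in \eqref{eq:Dirac:SU(3)} and the Clifford multiplication formula \eqref{eq:Clifford:multiplication:2forms}. First I would apply $\slashed{D}$ to $f\psi+g\,\vol\cdot\psi+\gamma\cdot\psi$ term by term. Since $\psi$ (hence also $\vol\cdot\psi$) is parallel by Remark \ref{rmk:CY3:Spinor}, the Leibniz rule for the Dirac operator gives $\slashed{D}(f\psi+g\,\vol\cdot\psi)=(df+Jdg)\cdot\psi$ as in \eqref{eq:Dirac:SU(3):functions}, which lies entirely in the $T^\ast B$ summand, while $\slashed{D}(\gamma\cdot\psi)=d\gamma\cdot\psi+(d^\ast\gamma)\,\psi$ as in \eqref{eq:Dirac:SU(3):1forms}. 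The coefficient of $\psi$ is therefore already $d^\ast\gamma$, the first component claimed, and what remains is to expand $d\gamma\cdot\psi$ into the three summands.

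The heart of the computation is to rewrite $d\gamma\cdot\psi$ using the type decomposition of the $2$-form $d\gamma$. Proposition \ref{prop:differential:2forms}(iv) gives $d\gamma=-\tfrac13 d^\ast(J\gamma)\,\omega+\tfrac12(J\tu{curl}\,\gamma)^\sharp\lrcorner\Real\Omega+\pi_8(d\gamma)$, so by \eqref{eq:Clifford:multiplication:2forms} the primitive $(1,1)$ piece $\pi_8(d\gamma)$ annihilates $\psi$, the $\Lambda^2_1$ piece contributes a multiple of $\vol\cdot\psi$ with coefficient $3\cdot(-\tfrac13 d^\ast(J\gamma))=-d^\ast(J\gamma)$, and the $\Lambda^2_6$ piece contributes a $1$-form term $2JY\cdot\psi$ with $Y=\tfrac12(J\tu{curl}\,\gamma)^\sharp$; simplifying this last term using $J^2=-1$ and the compatibility of $J$ with the musical isomorphisms reduces it to $\pm\tu{curl}\,\gamma$. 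Collecting the three contributions then yields $\slashed{D}(f,g,\gamma)=(d^\ast\gamma,\,-d^\ast J\gamma,\,\tu{curl}\,\gamma+df-Jdg)$. The one point needing genuine care is the sign bookkeeping: one must keep straight the paper's convention for $J$ acting on $1$-forms versus vector fields (fixed by Lemma \ref{lem:identities:1forms}(i)) and the convention $(J\gamma)\cdot\psi=-\vol\cdot\gamma\cdot\psi=\gamma\cdot\vol\cdot\psi$ from \cite[Equation (5.11)]{Friedrich:al}; this is really the only obstacle here, and it is a mild one.

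For the final assertion, set $f=g=0$, so that $\slashed{D}(0,0,\gamma)=(d^\ast\gamma,-d^\ast J\gamma,\tu{curl}\,\gamma)$. By Proposition \ref{prop:differential:2forms}(iv) the $\Lambda^2_1$ and $\Lambda^2_6$ components of $d\gamma$ are $-\tfrac13 d^\ast(J\gamma)\,\omega$ and $\tfrac12(J\tu{curl}\,\gamma)^\sharp\lrcorner\Real\Omega$ respectively, so $d\gamma$ is a primitive $(1,1)$--form precisely when both $d^\ast(J\gamma)=0$ and $\tu{curl}\,\gamma=0$. Hence $\slashed{D}(0,0,\gamma)=0$ if and only if $d^\ast\gamma=0$ and $d\gamma$ is a primitive $(1,1)$--form, which is the claim.
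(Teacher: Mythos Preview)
Your proof is correct and follows exactly the route the paper sketches just before the lemma: combine \eqref{eq:Dirac:SU(3)}, Proposition~\ref{prop:differential:2forms}(iv), and \eqref{eq:Clifford:multiplication:2forms}. You have also correctly identified the only delicate point, namely the sign bookkeeping for $J$ on $1$-forms and the Clifford relation $(J\gamma)\cdot\psi=\gamma\cdot\vol\cdot\psi$, which the paper's one-line proof leaves implicit.
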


\section{Collapsed $\tu{S}^1$--invariant torsion-free \gtstr s}\label{sec:GH:G2}

In \cite{Apostolov:Salamon} Apostolov--Salamon (see also \cite{CGLP:Almost:Special:Holonomy,Kaste:et:al:I,Kaste:et:al:II} for earlier work in the physics literature) considered \gtmfd s (necessarily reducible, non-compact or incomplete) admitting an isometric circle action. Apostolov--Salamon focus on the special case where the quotient by the circle action is a K\"ahler manifold. In this section we reconsider the general case: closer in spirit to \cite{CGLP:Almost:Special:Holonomy,Kaste:et:al:I,Kaste:et:al:II}, we interpret the dimensional reduction of the {\gthol} equations in terms of the intrinsic torsion of the {\suthreestr} induced on the $6$-dimensional quotient by the circle action and a coupled abelian Calabi--Yau monopole. These equations can be thought of as an analogue in $\gtwo$--geometry of the Gibbons--Hawking ansatz for $4$-dimensional hyperk\"ahler metrics with a triholomorphic circle action. In contrast to the $4$-dimensional hyperk\"ahler case, however, the dimensional reduction of the {\gthol} equations to $6$ dimensions still consists of a \emph{nonlinear} system of equations and it is not clear how to solve them directly. We therefore consider the \emph{adiabatic limit} of these equations when the circle fibres have small length. In this section we focus on formal aspects: after deriving the equations satisfied by a circle-invariant torsion-free {\gtstr} we
write down the adiabatic limit equations. In the rest of the paper we will use these equations to construct new ALC \gtmfd s from Hermitian Yang--Mills connections on asymptotically conical Calabi--Yau $3$-folds.

\subsection{Gibbons--Hawking-type ansatz for {\gthol} metrics} 
\label{sec:as}

Let $M^7$ be a principal circle bundle over a $6$-manifold $B$. Denote by $V$ the vector field that generates the fibrewise circle action, normalised to have period $2\pi$. Any $\tu{S}^1$--invariant {\gtstr} $\varphi$ on $M$ can be written in the form
\begin{subequations}\label{eq:circle:invariant:G2}
\begin{equation}
\varphi = \theta \wedge \omega + h^{\frac{3}{4}}\Real\Omega,
\end{equation}
where $(\omega,\Omega)$ is an {\suthreestr} on $B$, $h$ is a positive function on $B$ and $\theta$ is a connection $1$-form on the principal $\unitary{1}$--bundle $M\ra B$. In particular, $\theta (V)=1$. Note that
\begin{equation}
\ast_{\varphi}\varphi = -h^{\frac{1}{4}}\theta\wedge\Imag\Omega + \tfrac{1}{2}h\omega^2
\end{equation}
and the induced metric is
\begin{equation}
\label{eqn:g7}
g_\varphi = \sqrt{h}\, g_B + h^{-1}\theta^2,
\end{equation}
where $g_B$ is the metric on $B$ induced by the {\suthreestr} $(\omega,\Omega)$.
\end{subequations}

\begin{remark*}
There is some arbitrariness in the choice of conformal factor in front of $g_B$ in the expression for $g_\varphi$ (and therefore the definition of the forms $\omega,\Omega$ on $B$). The choice we made is convenient because, as we will see, requiring that $\varphi$ be closed implies that $\omega$ is closed, \ie $\omega$ is a symplectic form on $B$. Other choices of conformal factor simplify other equations but have the disadvantage that $\omega$ is no longer closed. Physicists have another privileged choice of conformal factor based on the notion of a \emph{string frame} (see for example \cite[\S 2]{Kaste:et:al:II}), but its mathematical significance is not clear to us.    
\end{remark*}

A straightforward calculation using \eqref{eq:circle:invariant:G2} allows one to express the equations $d\varphi=0=d\!\ast\!\varphi$ for an $\tu{S}^1$--invariant torsion-free {\gtstr} as a system of PDEs for the $4$-tuple $(\omega,\Omega,h,\theta)$ on $B$.

\begin{lemma}[Apostolov--Salamon {\cite[\S 1]{Apostolov:Salamon}}]\label{lem:GH:G2} The $\tu{S}^1$--invariant {\gtstr} $\varphi$ on $M$ determined by the $4$-tuple $(\omega,\Omega,h,\theta)$ on $B$ is torsion-free if and only if
\begin{equation}\label{eq:GH:G2}
\begin{aligned}
d\omega =0&, \qquad &d\big( h^{\frac{3}{4}}\Real\Omega\big) = -d\theta\wedge\omega,\\
d\bigl( h^{\frac{1}{4}}\Imag\Omega\bigr) = 0&, \qquad &\tfrac{1}{2}dh\wedge \omega^2 = h^{\frac{1}{4}}d\theta\wedge\Imag\Omega.
\end{aligned}
\end{equation}
\proof
From the expressions for $\varphi$ and $\ast_\varphi\varphi$ in \eqref{eq:circle:invariant:G2} we compute
\begin{gather*}
d\varphi = -\theta\wedge d\omega + d\theta\wedge\omega + d\bigl( h^{\frac{3}{4}}\Real\Omega \bigr),\\
d\!\ast_\varphi\!\varphi = \theta\wedge d\bigl( h^{\frac{1}{4}}\Imag\Omega\bigr) -  h^{\frac{1}{4}}d\theta\wedge\Imag\Omega +\tfrac{1}{2}d\bigl( h\omega^2\bigr).
\end{gather*}
Projection onto the image of $\theta\wedge\,\cdot\,$ and the kernel of $V\lrcorner\,\cdot\,$ yields the result.
\endproof 
\end{lemma}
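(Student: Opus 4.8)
The plan is to substitute the Gibbons--Hawking ansatz \eqref{eq:circle:invariant:G2} into $d\varphi$ and $d\!\ast_\varphi\!\varphi$, expand using the Leibniz rule, and then decompose the resulting forms on $M$ into the part proportional to $\theta$ and the part pulled back from $B$. Since $\theta(V)=1$ and the remaining forms $\omega,\Omega,h$ are basic, these two pieces are linearly independent, so each of $d\varphi=0$ and $d\!\ast_\varphi\!\varphi=0$ splits into two equations on $B$. First I would record the two computations already displayed in the proof sketch: $d\varphi = -\theta\wedge d\omega + d\theta\wedge\omega + d(h^{3/4}\Real\Omega)$ and $d\!\ast_\varphi\!\varphi = \theta\wedge d(h^{1/4}\Imag\Omega) - h^{1/4}d\theta\wedge\Imag\Omega + \tfrac12 d(h\omega^2)$. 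These follow from $d\theta$ being basic (the curvature of the connection), from $dh$, $d\omega$, $d\Omega$ being pullbacks from $B$, and from $d(\theta\wedge\alpha) = d\theta\wedge\alpha - \theta\wedge d\alpha$ for basic $\alpha$.

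Next I would extract the four scalar/form equations by matching the $\theta$-components and the basic components separately. From $d\varphi=0$: the $\theta$-component gives $d\omega=0$, and the basic component gives $d(h^{3/4}\Real\Omega) = -d\theta\wedge\omega$. From $d\!\ast_\varphi\!\varphi=0$: the $\theta$-component gives $d(h^{1/4}\Imag\Omega)=0$, and the basic component gives $\tfrac12 d(h\omega^2) = h^{1/4}d\theta\wedge\Imag\Omega$. For the last equation I would note $d(h\omega^2) = dh\wedge\omega^2 + h\,d(\omega^2) = dh\wedge\omega^2$ once $d\omega=0$ is known, which is why the first equation must be invoked to put it in the stated form $\tfrac12 dh\wedge\omega^2 = h^{1/4}d\theta\wedge\Imag\Omega$. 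Conversely, if the four equations \eqref{eq:GH:G2} hold then reading the same identities backwards gives $d\varphi=0=d\!\ast_\varphi\!\varphi$, so the equivalence is immediate in both directions; this is the content of Apostolov--Salamon's lemma.

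I do not expect a serious obstacle here: the only genuine content is the expression for $\ast_\varphi\varphi$ in \eqref{eq:circle:invariant:G2}, which is a pointwise linear-algebra fact about positive $3$-forms of this shape and is taken as given, and the observation that the $\theta$-part and the basic part of a form on the total space of a principal circle bundle are independent. The mildest subtlety worth a sentence is the appearance of the conformal factors $h^{3/4}$, $h^{1/4}$, $h$ in the various places: these are exactly the powers for which the metric \eqref{eqn:g7} and its volume form make $\ast_\varphi$ come out as stated, and once \eqref{eq:circle:invariant:G2} is accepted they simply propagate through the Leibniz expansion. So the proof is essentially the two displayed computations followed by separating $\theta$-components from basic components, with the $d\omega=0$ equation fed into the fourth equation to simplify $d(h\omega^2)$.
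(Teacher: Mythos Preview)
Your proposal is correct and follows exactly the paper's approach: compute $d\varphi$ and $d\!\ast_\varphi\!\varphi$ using the Leibniz rule on the ansatz \eqref{eq:circle:invariant:G2}, then split into $\theta$-component and basic component. The paper's proof in fact displays only the two computations you recorded and leaves the separation into components (and the use of $d\omega=0$ to simplify $d(h\omega^2)$) implicit, so you have spelled out precisely what the paper intends.
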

 
\begin{remark}
\label{rmk:closedG2:obst}
Note that the first row of equations in \eqref{eq:GH:G2} is equivalent to the $\tu{S}^1$--invariant {\gtstr} $\varphi$ on $M$ determined by 
$(\omega,\Omega,h,\theta)$ being closed. 
In particular if $\varphi$ is closed then $\omega$ is a symplectic form on $B$ and we have the cohomological condition 
that 
\[
[d\theta] \cup [\omega] =c_1(M) \cup [\omega]= 0 \in H^4(B). 
\]
To obtain non-trivial torsion-free $\tu{S}^1$--invariant \gtstr s we need to 
consider non-compact $6$-manifolds $B$, but we will need to take into account the potential cohomological obstruction 
to the existence of a closed $\tu{S}^1$--invariant {\gtstr} on a given circle bundle $M$ over $B$ just described.
\end{remark}

We will refer to the equations in \eqref{eq:GH:G2} as the \emph{Apostolov--Salamon equations}. They can be interpreted as coupled equations for the torsion of the {\suthreestr} $(\omega,\Omega)$ and the pair $(h,\theta)$, \cf \cite[\S\S 3-4]{Kaste:et:al:II}.

\begin{lemma}\label{lem:Torsion:GH:G2}
The {\suthreestr} $(\omega,\Omega)$ arising from a solution $(\omega,\Omega,h,\theta)$ of \eqref{eq:GH:G2} has torsion
\begin{equation}\label{eq:Torsion:GH:G2}
w_1=\hat{w}_1=\hat{w}_2=w_3=w_4=0, \qquad w_5 = -\tfrac{1}{4}h^{-1}dh, \qquad w_2 = -h^{-\frac{3}{4}}\kappa_0,
\end{equation}
where $\kappa_0$ is the projection of the curvature $d\theta$ of $\theta$ onto the space of primitive $(1,1)$--forms. Moreover, $\big( h,\theta\big)$ satisfies
\begin{equation}\label{eq:CY:monopole:eq}
d\Bigl( \tfrac{4}{3}h^{\frac{3}{4}}\Bigr) = \ast \left( d\theta\wedge\Real\Omega\right), \qquad d\theta\wedge\omega^2=0, 
\end{equation}
or equivalently $d\theta = -\frac{1}{2}h^{-\frac{1}{4}}(J\nabla h) \lrcorner\Real\Omega + \kappa_0$, where $J$ is the almost complex structure induced by $\Omega$.

Conversely, let $(\omega,\Omega)$ be an {\suthreestr} on $B$ whose torsion satisfies \eqref{eq:Torsion:GH:G2} for some function $h>0$ and primitive $(1,1)$--form $\kappa_0$. Assume that $-\frac{1}{2}h^{-\frac{1}{4}}(J\nabla h) \lrcorner\Real\Omega + \kappa_0$ is the curvature of a connection $\theta$ on a principal circle bundle $M$ over $B$ (\ie it is a closed $2$-form representing an integral cohomology class). Then $(h,\theta,\omega,\Omega)$ is a solution to \eqref{eq:GH:G2} coming from an $\tu{S}^1$--invariant torsion-free {\gtstr} on $M$.
\proof
Rewrite the first three equations in \eqref{eq:GH:G2} as
\[
d\omega=0, \qquad d\Imag\Omega = -\tfrac{1}{4}h^{-1}dh\wedge \Imag\Omega, \qquad d\Real\Omega = -\tfrac{3}{4}h^{-1}dh\wedge \Real\Omega - h^{-\frac{3}{4}}d\theta\wedge\omega.
\]
Note that \eqref{eq:Torsion:GH:G2} follows from these equations and Proposition \ref{prop:Torsion:SU(3):structures}.
 
Now, since $\omega$ is closed and $\omega\wedge\Real\Omega=0$, then also $\omega\wedge d\Real\Omega=0$  and therefore the third equation above yields $d\theta\wedge\omega^2=0$. Moreover, using Lemma \ref{lem:identities:1forms} (iii), the relation between the $\Omega^4_6$--components of $d\Real\Omega$ and $d\Imag\Omega$ in Proposition \ref{prop:Torsion:SU(3):structures} (\ie the fact that it is the same $1$-form $w_5$ that appears in both expressions) also determines the $\Omega^2_6$--component of $d\theta$. We conclude that $d\theta = -\frac{1}{2}h^{-\frac{1}{4}}(J\nabla h) \lrcorner\Real\Omega + \kappa_0$. Using Lemma \ref{lem:identities:1forms} (iv) and Lemma \ref{lem:identities:2forms} (ii), one can then check that this expression for $d\theta$ is equivalent to $d\theta\wedge\omega^2=0$ together with the fourth equation in \eqref{eq:GH:G2} and that the latter is equivalent to $d\bigl( \tfrac{4}{3}h^{\frac{3}{4}}\bigr) = \ast \left( d\theta\wedge\Real\Omega\right)$.
\endproof
\end{lemma}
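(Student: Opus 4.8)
The plan is to take Lemma~\ref{lem:GH:G2}, which identifies the torsion-free condition with the system \eqref{eq:GH:G2}, and re-express that system so it can be matched term by term against the Chiossi--Salamon torsion normal form of Proposition~\ref{prop:Torsion:SU(3):structures}. First I would apply the Leibniz rule to the first three equations of \eqref{eq:GH:G2} and divide out the relevant powers of $h$, obtaining
\[
d\omega = 0, \qquad d\Imag\Omega = -\tfrac{1}{4}h^{-1}dh\wedge\Imag\Omega, \qquad d\Real\Omega = -\tfrac{3}{4}h^{-1}dh\wedge\Real\Omega - h^{-\frac{3}{4}}d\theta\wedge\omega.
\]
Reading these against Proposition~\ref{prop:Torsion:SU(3):structures}: $d\omega = 0$ forces $w_1 = \hat w_1 = w_3 = w_4 = 0$; the equation for $d\Imag\Omega$ forces $w_5 = -\tfrac{1}{4}h^{-1}dh$ and $\hat w_2 = 0$; and, with $w_5$ now known, the equation for $d\Real\Omega$ identifies $w_2$ with $-h^{-\frac{3}{4}}$ times the primitive $(1,1)$--part $\kappa_0$ of $d\theta$. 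This establishes \eqref{eq:Torsion:GH:G2}.

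Next I would determine the curvature $d\theta$ completely. Since $\omega$ is closed and $\omega\wedge\Real\Omega = 0$, differentiating gives $\omega\wedge d\Real\Omega = 0$; substituting the third rewritten equation yields $d\theta\wedge\omega^2 = 0$, so $d\theta$ has no $\Lambda^2_1$--component. Its $\Lambda^2_6$--component is forced by the structural fact in Proposition~\ref{prop:Torsion:SU(3):structures} that the \emph{same} $1$-form $w_5$ governs the $\Omega^4_6$--components of both $d\Real\Omega$ and $d\Imag\Omega$; translating this constraint through Lemma~\ref{lem:identities:1forms}(iii) (which records how a $1$-form enters degree four via $X^\flat\wedge\Imag\Omega = (X\lrcorner\Real\Omega)\wedge\omega$) pins the $\Lambda^2_6$--part of $d\theta$ to $-\tfrac{1}{2}h^{-\frac{1}{4}}(J\nabla h)\lrcorner\Real\Omega$, giving $d\theta = -\tfrac{1}{2}h^{-\frac{1}{4}}(J\nabla h)\lrcorner\Real\Omega + \kappa_0$. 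Finally, using Lemmas~\ref{lem:identities:1forms}(iv) and \ref{lem:identities:2forms}(ii), I would check that this formula for $d\theta$ is equivalent to the conjunction of $d\theta\wedge\omega^2 = 0$ and the fourth equation of \eqref{eq:GH:G2}, and that the fourth equation is in turn equivalent to $d\bigl(\tfrac{4}{3}h^{\frac{3}{4}}\bigr) = \ast(d\theta\wedge\Real\Omega)$, i.e.\ to the abelian Calabi--Yau monopole equations \eqref{eq:CY:monopole:eq}: wedging $d\theta$ with $\Real\Omega$ annihilates $\kappa_0$, and $\ast$ turns the $\Lambda^2_6$--term into the gradient of $\tfrac{4}{3}h^{\frac{3}{4}}$.

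For the converse, each implication above is reversible. Given an {\suthreestr} $(\omega,\Omega)$ whose torsion is \eqref{eq:Torsion:GH:G2} for some $h>0$ and primitive $(1,1)$--form $\kappa_0$, Proposition~\ref{prop:Torsion:SU(3):structures} yields the three rewritten equations with $d\theta$ replaced by $-\tfrac{1}{2}h^{-\frac{1}{4}}(J\nabla h)\lrcorner\Real\Omega + \kappa_0$; under the hypothesis that this $2$-form is the curvature of a connection $\theta$ on a circle bundle $M\to B$, these are exactly the first three equations of \eqref{eq:GH:G2}, and the monopole equations \eqref{eq:CY:monopole:eq} supply the fourth, again via Lemmas~\ref{lem:identities:1forms}(iv) and \ref{lem:identities:2forms}(ii). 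Hence \eqref{eq:GH:G2} holds, and Lemma~\ref{lem:GH:G2} produces the torsion-free $\tu{S}^1$--invariant \gtstr{} $\varphi = \theta\wedge\omega + h^{\frac{3}{4}}\Real\Omega$ on $M$. The one genuinely non-formal step --- which I expect to be the main obstacle --- is isolating the $\Lambda^2_6$--component of $d\theta$ from the single torsion form $w_5$ that is shared between $d\Real\Omega$ and $d\Imag\Omega$: this requires knowing precisely how a $1$-form embeds into $\Lambda^4\R^6$ through the two maps $X\mapsto X^\flat\wedge\Imag\Omega$ and $X\mapsto JX^\flat\wedge\omega^2$, which is exactly what the identities of Lemmas~\ref{lem:identities:1forms} and \ref{lem:identities:2forms}(ii) together with the Hodge-star formulae of Lemma~\ref{lem:Hodge:star} provide. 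Everything else is a dictionary translation.
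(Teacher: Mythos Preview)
Your proposal is correct and follows essentially the same route as the paper's proof: rewrite the first three equations of \eqref{eq:GH:G2} via Leibniz, match against the Chiossi--Salamon torsion forms in Proposition~\ref{prop:Torsion:SU(3):structures}, use $\omega\wedge d\Real\Omega=0$ to get $d\theta\wedge\omega^2=0$, pin down the $\Lambda^2_6$--part of $d\theta$ from the shared $w_5$ via Lemma~\ref{lem:identities:1forms}(iii), and then use Lemmas~\ref{lem:identities:1forms}(iv) and \ref{lem:identities:2forms}(ii) for the equivalence with the fourth equation and the monopole form. Your additional paragraph making the converse explicit is a welcome elaboration but changes nothing substantive.
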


\begin{remark*}
The equations \eqref{eq:CY:monopole:eq} are gauge-theoretic equations which arise as the dimensional reduction of the (abelian) $\gtwo$--instanton equations to $6$-dimensions. They are called the \emph{abelian Calabi--Yau monopole} equations, \cf \cite[Definition 3.1.2]{Oliveira:Thesis}. While Calabi--Yau monopoles can be defined for arbitrary structure group, only the abelian $\unitary{1}$ case is relevant for this paper. 
\end{remark*}

\begin{remark*}
The Nijenhuis tensor of the almost complex structure $J$ induced by $\Omega$ only depends on $w_1,\hat{w}_1,w_2,\hat{w}_2$ \cite[p. 118]{Chiossi:Salamon}. Thus the almost complex structure $J$ arising from a solution of \eqref{eq:GH:G2} is integrable if and only if $\kappa_0=0$. This is the case considered by Apostolov--Salamon in \cite{Apostolov:Salamon}: $(B,\omega,J)$ is then a K\"ahler manifold and one can further consider its K\"ahler quotient by the Hamiltonian vector field $J\nabla h$. In general, $\kappa_0\neq 0$ and we are forced to consider non-integrable almost complex $6$-manifolds.
\end{remark*}

\subsection{The adiabatic limit of $\tu{S}^1$--invariant torsion-free \gtstr s}
\label{sec:as:adiabatic}

In contrast with the Gibbons--Hawking ansatz, which allows one to construct $4$-dimensional hyperk\"ahler metrics with a triholomorphic circle action from a positive harmonic function on an open subset of $\R^3$, the equations \eqref{eq:GH:G2} are still nonlinear and it is unclear how to find solutions in general. In this section we consider the \emph{adiabatic limit} of \eqref{eq:GH:G2} when the circle fibres shrink to zero length. The collapsed limit is a Calabi--Yau structure on $B$. The linearisation of \eqref{eq:GH:G2} at this degenerate solution reduces to a coupled system for a Calabi--Yau monopole and a $3$-form on $B$.

Let $\varphi_\epsilon$ be a family of $\tu{S}^1$--invariant torsion-free \gtstr s on the total space $M$ of a principal circle bundle over $B$ with circle fibres shrinking to zero length as $\epsilon \ra 0$. By rescaling along the fibres we write 
\[
\varphi_\epsilon = \epsilon\,\theta_\epsilon\wedge\omega_\epsilon + {(h_\epsilon)}^\frac{3}{4}\Real\Omega_\epsilon.
\]
Note that the metric induced by $\varphi_\epsilon$ is $\sqrt{h_\epsilon}\, g_\epsilon + \epsilon^2 h_\epsilon^{-1}\theta_\epsilon^2$, where $g_\epsilon$ is the metric on $B$ induced by $(\omega_\epsilon,\Omega_\epsilon)$. The PDE system \eqref{eq:GH:G2} for $\varphi_\epsilon$ then becomes
\begin{equation}\label{eq:GH:G2:collapsing:sequence}
\begin{gathered}
d\omega_\epsilon=0,\qquad
\tfrac{1}{2}dh_\epsilon\wedge \omega_\epsilon^2 = \epsilon\,  (h_\epsilon)^{\frac{1}{4}}d\theta_\epsilon\wedge\Imag\Omega_\epsilon,\qquad \epsilon\, d\theta_\epsilon\wedge\omega_\epsilon^2=0,\\
d\Real\Omega_\epsilon = -\tfrac{3}{4}h_\epsilon^{-1}dh_\epsilon\wedge \Real\Omega_\epsilon-\epsilon (h_\epsilon)^{-\frac{3}{4}} d\theta_\epsilon\wedge\omega_\epsilon,\qquad
d\Imag\Omega_\epsilon = -\tfrac{1}{4}h_\epsilon^{-1}dh_\epsilon\wedge \Imag\Omega_\epsilon.
\end{gathered}
\end{equation}
Here we have used Lemma \ref{lem:Torsion:GH:G2} to add the equation $d\theta_\epsilon\wedge\omega_\epsilon^2=0$ which is necessary for \eqref{eq:GH:G2} to hold. For $\epsilon>0$ the system \eqref{eq:GH:G2:collapsing:sequence} is equivalent to \eqref{eq:GH:G2}, but in the limit $\epsilon\ra 0$ it simplifies. Indeed, the formal limit of the second equation as $\epsilon\ra 0$ implies that $h_0=\lim_{\epsilon\ra 0}h_\epsilon$ is constant---we will assume that $h_0\equiv 1$---and hence the rest of the system implies that $\Real\Omega_0$ and $\Imag\Omega_0$ are both closed. Thus $(\omega_0,\Omega_0)$ is a Calabi--Yau structure on $B$.

We want to find a better approximation to $(h_\epsilon,\omega_\epsilon,\Omega_\epsilon)$ for small $\epsilon$ by linearising \eqref{eq:GH:G2:collapsing:sequence} on the limiting Calabi--Yau $3$-fold $(B,\omega_0,\Omega_0)$. To this end we write
\begin{alignat*}{2}
h_\epsilon &= 1+\epsilon\, h +O(\epsilon^2), & \qquad \quad \epsilon\,\theta_\epsilon &= \epsilon\,\theta + O(\epsilon^2),\\
\omega_\epsilon &= \omega_0 + \epsilon\,\sigma + O(\epsilon^2), &  \Omega_\epsilon &= \Omega_0 + \epsilon\, (\rho + i \hat{\rho}) + O(\epsilon^2).
\end{alignat*}
Here $\hat{\rho}$ is the image of the $3$-form $\rho$ under the linearisation of Hitchin's duality map for stable $3$-forms on a $6$-manifold, \cf Proposition \ref{prop:Linearisation:Hitchin:dual}. Ignoring terms of order $\epsilon^2$ in \eqref{eq:GH:G2:collapsing:sequence} implies that $(\sigma,\rho+i\hat{\rho},h,\theta)$ satisfies the following system of linear equations
\begin{equation}\label{eq:GH:G2:linearised}
\begin{gathered}
d\sigma =0,\qquad
\tfrac{1}{2}dh\wedge\omega_0^2 = d\theta\wedge\Imag\Omega_0,\qquad d\theta\wedge\omega_0^2=0,\\
d\rho = -\tfrac{3}{4}dh\wedge\Real\Omega_0-d\theta\wedge\omega_0, \qquad  
d\hat{\rho}=-\tfrac{1}{4}dh\wedge\Imag\Omega_0,\\
\omega_0\wedge\left( \rho + i\hat{\rho}\right) + \sigma\wedge \Omega_0=0,\qquad
\Real\Omega_0\wedge\hat{\rho}+\rho\wedge\Imag\Omega_0 = 2\sigma\wedge\omega_0^2.
\end{gathered}
\end{equation}
Here the last two equations are the linearisation of the algebraic constraints for an \suthreestr, \ie $\omega\wedge\Omega=0$ and $\tfrac{1}{6}\omega^3 = \tfrac{1}{4}\Real\Omega\wedge\Imag\Omega$. The system \eqref{eq:GH:G2:linearised} can be simplified further by assuming that $(B,\omega_\epsilon)$ is a \emph{fixed} symplectic manifold. We will justify this assumption in the cases of interest later in the paper. In particular we are mostly interested in solutions to \eqref{eq:GH:G2:linearised} with $\sigma=0$. 

Note that if $(h,\theta,\sigma,\rho)$ is a solution to \eqref{eq:GH:G2:linearised} then $(h,\theta)$ is an abelian Calabi--Yau monopole on the Calabi--Yau $3$-fold $(B,\omega_0,\Omega_0)$. Indeed, $\tfrac{1}{2}dh\wedge\omega_0^2 = d\theta\wedge\Imag\Omega_0$ is equivalent to $dh = \ast (d\theta\wedge\Real\Omega_0)$. In particular, since $\Real\Omega_0$ is closed we have that $h$ is a harmonic function. In many circumstances (\eg when $B$ is complete and $h$ is bounded) one then concludes that $h$ is constant. In this case the Calabi--Yau monopole equations \eqref{eq:CY:monopole:eq} reduce to the requirement that $\theta$ be a \emph{Hermitian Yang--Mills} (HYM) connection on $B$, \ie the curvature $d\theta$ is a primitive $(1,1)$--form. Solutions with non-constant $h$ can be obtained by allowing ``Dirac-type singularities'' along a special Lagrangian submanifold $L$ in $B$: given $k\in \Z$ we require that 
\[
h=\frac{k}{2\,\textup{dist}(\,\cdot\,,L)}+O(1)
\]  
in a tubular neighbourhood of $L$. Note that this implies that $\theta$ is a connection on a circle bundle on $B\setminus L$ whose first Chern class evaluated on a $2$-sphere linking $L$ is $k$.
Referring back to the metric behaviour in \eqref{eqn:g7} it is natural to consider solutions in which $h \to +\infty$ when one wants to model the adiabatic limit 
geometry in the neighbourhood of a component of the fixed point set of an isometric circle action. 

Returning to \eqref{eq:GH:G2:linearised}, a Calabi--Yau monopole $(h,\theta)$ determines an infinitesimal deformation $\rho+i\hat{\rho}$ of the complex volume form $\Omega_0$ on $B$ given as a solution of the linear inhomogeneous equations
\[
d\rho = -\tfrac{3}{4}dh\wedge\Real\Omega_0-d\theta\wedge\omega_0, \qquad  
d\hat{\rho}=-\tfrac{1}{4}dh\wedge\Imag\Omega_0.
\]

\subsection{From AC Calabi--Yau $3$-folds to ALC \gtmfd s}

We can now explain in detail the strategy to prove Theorem \ref{thm:Main:Theorem:technical}. For $\epsilon$ small enough we will solve the $\epsilon$--dependent Apostolov--Salamon equations \eqref{eq:GH:G2:collapsing:sequence} as a power series in $\epsilon$.

As we have seen, at order $0$ in $\epsilon$ the equations \eqref{eq:GH:G2:collapsing:sequence} state that $(B,\omega_0,\Omega_0)$ is a Calabi--Yau $3$-fold. In order to construct ALC {\gtmfd}s we will start with an \emph{asymptotically conical} Calabi--Yau $3$-fold.

\begin{definition}\label{def:AC:CY}
Let $(\Sigma,g_\Sigma)$ be a closed smooth connected Riemannian $5$-manifold.
\begin{enumerate}
\item The cone $\tu{C}(\Sigma)$ over $\Sigma$ is the incomplete Riemannian manifold $(0,\infty)\times\Sigma$ endowed with the metric
\[
g_\tu{C} = dr^2 + r^2 g_\Sigma.
\]
We say that $\tu{C}(\Sigma)$ is a \emph{$3$-dimensional Calabi--Yau cone} if there exists a Calabi--Yau structure $(\omega_\tu{C},\Omega_\tu{C})$ on $\tu{C}(\Sigma)$ inducing the metric $g_\tu{C}$.
\item Let $(B,g_0,\omega_0,\Omega_0)$ be a complete Calabi--Yau $3$-fold. We say that $B$ is an \emph{asymptotically conical (AC) Calabi--Yau $3$-fold} asymptotic to the Calabi--Yau cone $\tu{C}(\Sigma)$ with rate $\mu<0$ if there exists a compact set $K\subset B$, $R>0$ and a diffeomorphism $f\co (R,\infty)\times\Sigma \ra B\setminus K$ such that
\[
\left|\nabla_{g_\tu{C}} ^k\left( f^\ast\omega_0 - \omega_\tu{C}\right)\right|_{g_\tu{C}} + \left|\nabla_{g_\tu{C}} ^k\left( f^\ast\Omega_0 - \Omega_\tu{C}\right)\right|_{g_\tu{C}} =O(r^{\mu-k})
\]
for every $k\geq 0$.
\end{enumerate}
\end{definition}

\begin{remark*} Since the metric $g_0$ is uniquely determined by the {\suthreestr} $(\omega_0,\Omega_0)$ we also have
\[
\left|\nabla_{g_\tu{C}} ^k\left( f^\ast g_0 - g_\tu{C}\right)\right|_{g_\tu{C}} =O(r^{\mu-k})
\]
for all $k\geq 0$ and therefore an AC Calabi--Yau $3$-fold is an AC Riemannian manifold in the sense of Definition \ref{def:AC} in Appendix \ref{Appendix:Analysic:AC}.
\end{remark*}

A systematic theory of AC Calabi--Yau manifolds has been developed in recent years by various authors and a very satisfactory existence and uniqueness theory is available, \cf Theorem \ref{thm:AC:CY:crepant:resolutions} in the final section of the paper.

Let then $(B,\omega_0,\Omega_0)$ be an AC Calabi--Yau $3$-fold asymptotic to the Calabi--Yau cone $\tu{C}(\Sigma)$. We want to solve the Apostolov--Salamon equations \eqref{eq:GH:G2:collapsing:sequence} to first order in $\epsilon$ on a principal circle bundle $M$ over $B$. Following the discussion of the previous subsection, we look for solutions of the linear system \eqref{eq:GH:G2:linearised}. In this paper we solve this system assuming that the Calabi--Yau monopole $(h,\theta)$ reduces to a HYM connection. The case of monopoles with singularities along a smooth compact special Lagrangian submanifold will be treated in a future paper. We therefore look for solutions of \eqref{eq:GH:G2:linearised}  with $\sigma=0=h$: the resulting coupled linear system for a $U(1)$--connection $\theta$ and a $3$--form $\rho$ on $B$ is
\begin{equation}\label{eq:GH:G2:linearised:HYM:1}
\begin{gathered}
d\theta\wedge\Imag\Omega_0=0,\quad d\theta\wedge\omega_0^2=0,\qquad
d\rho = -d\theta\wedge\omega_0, \quad  
d\hat{\rho}=0,\\
\omega_0\wedge\left( \rho + i\hat{\rho}\right) =0,\quad
\Real\Omega_0\wedge\hat{\rho}+\rho\wedge\Imag\Omega_0 = 0.
\end{gathered}
\end{equation}
By Lemma \ref{lem:identities:2forms} the first two equations are equivalent to the condition that $d\theta$ be a primitive $(1,1)$--form, \ie $\theta$ is a HYM connection.

Now, every solution $(\theta,\rho)$ of the linearised equations \eqref{eq:GH:G2:linearised:HYM:1} on $(B,\omega_0,\Omega_0)$ yields a $1$-parameter family of ALC \gtstr s
\[
\varphi^{(1)}_\epsilon = \epsilon\, \theta\wedge\omega_0 + \Real\Omega_0 + \epsilon\, \rho
\]
on $M$. Since $d\varphi^{(1)}_\epsilon = \epsilon (d\theta\wedge\omega_0 + d\rho)$, the third equation in \eqref{eq:GH:G2:linearised:HYM:1} guarantees that $\varphi^{(1)}_\epsilon$ is \emph{closed}. The remaining equations in \eqref{eq:GH:G2:linearised:HYM:1} imply that the rest of the torsion is of order $O(\epsilon^2)$. We will then show that for $\epsilon$ sufficiently small any such approximate solution can be perturbed to a solution to the Apostolov--Salamon equations \eqref{eq:GH:G2:collapsing:sequence}. We will first construct a solution of the Apostolov--Salamon equations as a formal power series in $\epsilon$ by solving iteratively \eqref{eq:GH:G2:collapsing:sequence} to all order in $\epsilon$. This step will require a complete understanding of the mapping properties of the linearisation of the Apostolov--Salamon equations. In order to conclude the proof of Theorem \ref{thm:Main:Theorem:technical} we will then show that the formal power series solutions that we construct in fact converge in appropriate weighted H\"older spaces for $\epsilon$ sufficiently small. Given that we will be able to solve the linearisation of the Apostolov--Salamon equations \emph{with estimates}, this final step follows very closely Kodaira--Nirenberg--Spencer's proof of the existence of analytic deformations of complex structures \cite[\S 5]{Kodaira:Nirenberg:Spencer}.

Our main tools to implement this strategy are analytic, more specifically the theory of linear elliptic operators acting on weighted H\"older spaces $C^{k,\alpha}_\nu$ on the AC Calabi--Yau $3$-fold $B$. Though this material is by now fairly standard, to make this article more readable and self-contained, in Appendix \ref{Appendix:Analysic:AC} we have collected the most relevant background material on analysis on AC manifolds. 
These results will be used throughout the rest of the paper.

As just described, the main steps in proving Theorem \ref{thm:Main:Theorem:technical} are to construct solutions  of \eqref{eq:GH:G2:linearised:HYM:1} and to understand the mapping properties of the linearisation of the Apostolov--Salamon equations. In the next two sections we study differential forms on Calabi--Yau cones and on AC Calabi--Yau manifolds as preliminary steps to address these two goals.

\section{Three-dimensional Calabi--Yau cones}
\label{sec:CY:cones}

In the rest of the paper we will work with asymptotically conical (AC) Calabi--Yau $3$-folds, 
as just defined in Definition \ref{def:AC:CY}. In Appendix \ref{Appendix:Analysic:AC} we have collected various facts about AC manifolds and the requisite analytic tools. 
Our goal is to understand natural differential operators such as the Dirac operator, the Dirac-type operator $d+d^\ast$ and the Laplacian $dd^\ast + d^\ast d$ acting on differential forms on AC Calabi--Yau 3-folds. By the results of Appendix \ref{Appendix:Analysic:AC}
the behaviour at infinity of these differential operators on any AC manifold is controlled by the properties of the analogous differential operators 
on the asymptotic Calabi--Yau cone $\tu{C}(\Sigma)$. By separation of variables, understanding the kernels of these operators on a Calabi--Yau cone $\tu{C}(\Sigma)$ is intimately related to properties of the cross-section $\Sigma$, which must be a Sasaki--Einstein $5$-manifold.

\subsection{Sasaki--Einstein $5$-manifolds}\label{sec:Sasaki:Einstein}

In this section we collect the results about Sasaki--Einstein $5$-manifolds that are necessary to understand the asymptotic behaviour of sections in the kernel of various differential operators on $3$-dimensional Calabi--Yau cones. As for many geometric structures arising from holonomy reduction, there are two main approaches to describe Sasaki--Einstein structures, either via differential forms or spinors. We will recall both of them here, mostly following the monograph \cite{Boyer:Galicki}, the survey paper \cite{Sparks:SE} and \cite[\S 4.3]{Friedrich:al}.

First of all a Sasaki--Einstein structure on a $5$-manifold $\Sigma$ is a special type of \sutwostr: this is a $4$-tuple $(\eta, \omega_1,\omega_2,\omega_3)$ of differential forms on $\Sigma$ satisfying the following algebraic constraints. $\eta$ is a nowhere vanishing $1$-form and therefore defines a codimension $1$ distribution $\ker{\eta}$ that we will denote by $\mathcal{H}$. $(\omega_1,\omega_2,\omega_3)$ is a triple of $2$-forms that span at every point a definite subspace of $\Lambda^2\mathcal{H}^\ast$: $\eta\wedge \omega_1^2\neq 0$ and
\[
\omega_i\wedge\omega_j = \delta_{ij}\,\omega_1^2
\]
for $i,j=1,2,3$. Since for any oriented $4$-dimensional vector space $V$ the $3$-dimensional space $\Lambda^+V^\ast$ of self-dual $2$-forms on $V$ is naturally oriented, it makes sense to require  further that $(\omega_1,\omega_2,\omega_3)$ is an oriented basis of the subspace of $\Lambda^2\mathcal{H}^\ast$ they span. Since $\sunitary{2}\subset\sorth{4}\subset\sorth{5}$ every {\sutwostr} induces a Riemannian metric $g_\Sigma$.

The {\sutwostr} $(\eta,\omega_1,\omega_2,\omega_3)$ is called \emph{Sasaki--Einstein} if
\begin{equation}\label{eq:Sasaki:Einstein}
d\eta = 2\omega_1, \qquad d\omega_2 = -3\eta\wedge\omega_3, \qquad d\omega_3 = 3\eta\wedge\omega_2.
\end{equation}
These equations are equivalent to the fact that the conical {\suthreestr} on $\tu{C}(\Sigma)$ defined by
\begin{equation}\label{eq:conical:CY}
\omega_\tu{C} = rdr\wedge\eta + r^2\omega_1, \qquad \Omega_\tu{C} = r^2(dr+ir\eta)\wedge (\omega_2 + i\omega_3)
\end{equation}
is torsion-free, \ie $\omega_\tu{C}$ and $\Omega_\tu{C}$ are both closed, and therefore defines a conical Calabi--Yau structure on $\tu{C}(\Sigma)$. Since every Calabi--Yau manifold is Ricci-flat, we immediately deduce that every Sasaki--Einstein $5$-manifold is Einstein with positive scalar curvature $\text{Scal}(g_\Sigma)=20$. In particular, complete Sasaki--Einstein $5$-manifolds are compact with finite fundamental group.

We now recall a few basic algebraic facts about differential forms on a $5$-manifold $\Sigma$ endowed with an {\sutwostr} $(\eta,\omega_1,\omega_2,\omega_3)$. We will refer to the vector field $\xi$ dual to $\eta$ via the metric $g_\Sigma$ as the \emph{Reeb vector field}. Note that $\xi$ is a unit length Killing field. The tangent bundle of $\Sigma$ splits as $\R\xi\oplus\mathcal{H}$. The distribution $\mathcal{H}$ inherits a triple of almost Hermitian structures given by restriction of the metric 
and the triple of $2$-forms to it. We denote the corresponding triple of transverse almost complex structures on $\mathcal{H}$ by $J_1, J_2, J_3$. Observe that $J_1, J_2, J_3$ satisfy the standard quaternionic relations, \ie $J_i J_j=-J_j J_i = J_k$ for $(ijk)$ any cyclic permutation of $(123)$. We extend $J_1,J_2,J_3$ to a triple of endomorphisms of $T\Sigma$ by setting $J_i \xi=0$.

The following lemma collects basic facts about the Hodge star operator and the decomposition of the space of differential forms on a $5$-manifold $\Sigma$ endowed with an {\sutwostr}. They can be proven easily by choosing a coframe on $\Sigma$ adapted to the {\sutwostr}.
\begin{lemma}\label{lem:Forms:Sasaki:Einstein}
Let $\Sigma$ be a $5$-manifold endowed with an {\sutwostr} $(\eta, \omega_1,\omega_2,\omega_3)$.
\begin{enumerate}
\item The volume form of the metric $g_\Sigma$ induced by the {\sutwostr} is $\dvol_\Sigma = \tfrac{1}{2}\eta\wedge\omega_1^2$.
\item $\ast \eta = \tfrac{1}{2}\omega_1^2$ and $\ast \gamma = -J_i\gamma \wedge \eta \wedge\omega_i$ for every $\gamma\in \mathcal{H}^\ast$ and $i=1,2,3$.
\item $\Lambda^2T^\ast\Sigma = \R\omega_1 \oplus \R\omega_2 \oplus \R \omega_3 \oplus \Lambda^{1,1}_0 \mathcal{H}^\ast\oplus \mathcal{H}^\ast\wedge\eta$, where $\Lambda^{1,1}_0\mathcal{H}^\ast$ denotes the space of $2$-forms on $\mathcal{H}$ which are primitive of type $(1,1)$ with respect to the almost Hermitian structure $(\omega_1,J_1)$.
\item For $i=1,2,3$ $\ast\omega_i = \eta\wedge\omega_i$, $\ast (\gamma\wedge\eta)=J_i\gamma\wedge\omega_i$ for every $\gamma\in\mathcal{H}^\ast$ and $\ast\sigma = -\eta\wedge\sigma$ for every $\sigma\in \Lambda^{1,1}_0 \mathcal{H}^\ast$.
\end{enumerate}
\end{lemma}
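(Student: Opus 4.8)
The plan is to verify all four statements by a direct computation in a local orthonormal coframe adapted to the $\sunitary{2}$--structure. Since the algebraic model of an $\sunitary{2}$--structure on a $5$--dimensional vector space is unique up to the action of the structure group, at each point of $\Sigma$ we may pick an oriented orthonormal coframe $(e^0,e^1,e^2,e^3,e^4)$ with $\eta=e^0$, $\mathcal{H}^\ast=\langle e^1,e^2,e^3,e^4\rangle$ and
\[
\omega_1=e^1\wedge e^2+e^3\wedge e^4,\qquad \omega_2=e^1\wedge e^3+e^4\wedge e^2,\qquad \omega_3=e^1\wedge e^4+e^2\wedge e^3,
\]
the orientation of $\Sigma$ being fixed by declaring $e^0\wedge e^1\wedge e^2\wedge e^3\wedge e^4$ positive and that of $\mathcal{H}$ by declaring $e^1\wedge e^2\wedge e^3\wedge e^4$ positive. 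From $g_\Sigma(J_iX,Y)=\omega_i(X,Y)$ one then reads off the explicit action of $J_1,J_2,J_3$ on $\mathcal{H}^\ast$ (for instance $J_1e^1=-e^2$, $J_1e^3=-e^4$, and cyclically). Once all of these conventions are fixed, each of (i)--(iv) becomes a finite check on wedge monomials, using the defining relation $\alpha\wedge\ast\beta=g_\Sigma(\alpha,\beta)\,\dvol_\Sigma$ on an orthonormal basis of forms to pin down every sign.

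With the coframe in hand, (i) is immediate: $\eta\wedge\omega_1^2=e^0\wedge 2\,e^1\wedge e^2\wedge e^3\wedge e^4=2\,\dvol_\Sigma$. For (ii) one computes $\ast e^0=e^1\wedge e^2\wedge e^3\wedge e^4=\tfrac12\omega_1^2$ directly, and for $\gamma=e^j$ ($j=1,\dots,4$) one checks the identity $\ast e^j=-J_i e^j\wedge\eta\wedge\omega_i$ for each $i=1,2,3$ by expanding the right-hand side in the coframe and comparing with the direct evaluation of $\ast e^j$; every term is a monomial and the comparison is routine. Part (iii) follows from the orthogonal splitting $T^\ast\Sigma=\R\eta\oplus\mathcal{H}^\ast$, which gives $\Lambda^2T^\ast\Sigma=\Lambda^2\mathcal{H}^\ast\oplus(\mathcal{H}^\ast\wedge\eta)$, together with the standard decomposition $\Lambda^2\mathcal{H}^\ast=\Lambda^+\mathcal{H}^\ast\oplus\Lambda^-\mathcal{H}^\ast$ into self-dual and anti-self-dual $2$--forms for the metric and orientation on $\mathcal{H}$ (for which $\dvol_{\mathcal{H}}=e^1\wedge e^2\wedge e^3\wedge e^4$). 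One identifies $\Lambda^+\mathcal{H}^\ast=\langle\omega_1,\omega_2,\omega_3\rangle$ by checking $\ast_{\mathcal{H}}\omega_i=\omega_i$, and $\Lambda^-\mathcal{H}^\ast=\Lambda^{1,1}_0\mathcal{H}^\ast$ is the classical fact that on an oriented Hermitian $4$--space the anti-self-dual $2$--forms are precisely the primitive $(1,1)$--forms; orthogonality of all five summands is visible in the coframe, and the dimension count $1+1+1+3+4=10=\binom{5}{2}$ confirms there is nothing left over.

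Finally, (iv) reduces to relating $\ast_\Sigma$ to the Hodge star $\ast_{\mathcal{H}}$ of the transverse $4$--metric: for a horizontal $p$--form $\alpha$ one has, up to a sign read off from the coframe, $\ast_\Sigma\alpha=\pm\,\eta\wedge\ast_{\mathcal{H}}\alpha$ and $\ast_\Sigma(\eta\wedge\alpha)=\pm\,\ast_{\mathcal{H}}\alpha$. Feeding in $\ast_{\mathcal{H}}\omega_i=\omega_i$ gives $\ast_\Sigma\omega_i=\eta\wedge\omega_i$; feeding in $\ast_{\mathcal{H}}\sigma=-\sigma$ for $\sigma\in\Lambda^{1,1}_0\mathcal{H}^\ast$ gives $\ast_\Sigma\sigma=-\eta\wedge\sigma$; and the four-dimensional Kähler identity $\ast_{\mathcal{H}}\gamma=\pm\,J_i\gamma\wedge\omega_i$ for $\gamma\in\mathcal{H}^\ast$, again verified on the coframe, gives $\ast_\Sigma(\gamma\wedge\eta)=J_i\gamma\wedge\omega_i$ with the stated sign.

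I do not expect any genuine obstacle here: the content is entirely elementary linear algebra in a single chart. The only thing requiring real care is the consistent bookkeeping of orientation and sign conventions---for $\Sigma$, for the transverse space $\mathcal{H}$, for the quaternionic triple $(J_1,J_2,J_3)$, and for the relation between $\ast_\Sigma$ and $\ast_{\mathcal{H}}$---and the cleanest way to control this is precisely to fix every convention once and for all through the explicit adapted coframe above and then evaluate each claimed identity on wedge monomials. This is why the lemma is only asserted, with the computation left to the reader.
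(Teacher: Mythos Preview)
Your proposal is correct and follows precisely the approach the paper indicates: the paper does not give a proof but simply states that the identities ``can be proven easily by choosing a coframe on $\Sigma$ adapted to the \sutwostr.'' Your explicit choice of adapted coframe and monomial-by-monomial verification is exactly this, and your closing remark about sign bookkeeping being the only genuine care required is apt.
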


We now move on to describe Sasaki--Einstein $5$-manifolds from the spinorial point of view. Recall that $\spin{5}\simeq \tu{Sp}(2)$, the spin representation of $\spin{5}$ is isomorphic to the standard representation $\HH^2$ of $\tu{Sp}(2)$ and $\sunitary{2}\simeq\tu{Sp}(1)$ is precisely the stabiliser of a non-zero vector. Thus every $5$-manifold endowed with an {\sutwostr} is spin and endowed with a nowhere-vanishing spinor. Conversely, the choice of a nowhere-vanishing spinor $\psi$ on $\Sigma^5$ defines a reduction of the structure group of the tangent bundle of $\Sigma$ from $\sorth{5}$ to $\sunitary{2}$. The condition \eqref{eq:Sasaki:Einstein} that an {\sutwostr} must satisfy in order to define a Sasaki--Einstein structure has an equivalent reformulation as an equation satisfied by the defining spinor $\psi$:
\begin{equation}\label{eq:Killing:spinor}
\nabla _X\psi = \alpha X\cdot\psi
\end{equation}
with $\alpha =\tfrac{1}{2}$ (up to a choice of orientation). This equation, the \emph{real Killing spinor} equation,  is equivalent to the fact that the radial extension of $\psi$ to $\tu{C}(\Sigma)$ defines a parallel spinor.

We now recall further details of the algebraic theory of spinors in dimension $5$, following \cite[\S 4.3]{Friedrich:al} and \cite[\S 4.4]{Moroianu:Semmelmann:generalised:Killing}, and relate the spinorial presentation to our previous definition in terms of differential forms. Given a nowhere-vanishing spinor $\psi$ (which we assume satisfies \eqref{eq:Killing:spinor} with $\alpha =\tfrac{1}{2}$ in the Sasaki--Einstein case) the spin representation $\slashed{S}$ decomposes orthogonally in terms of irreducible $\sunitary{2}$--representations
\[
\slashed{S} = \R \psi \oplus \R I\psi \oplus \R J\psi \oplus \R K\psi \oplus \mathcal{H}\cdot \psi,
\]
where $I,J,K$ define the standard quaternionic structure of $\HH ^2$ and the last factor is the space of spinors of the form $X\cdot \psi$ for a vector $X\in\mathcal{H}=\ker\eta$. In fact $I\psi = \xi\cdot\psi$, where $\xi$ is the Reeb vector field. Furthermore, $I$ commutes with Clifford multiplication by tangent vectors and $J, K$ anticommute with it. Finally, the transverse almost complex structures $J_1,J_2,J_3$ can be defined by
\[
X\cdot I\psi = J_1 X\cdot\psi, \qquad X\cdot J\psi = J_2 X\cdot\psi,\qquad X\cdot K\psi = J_3 X\cdot\psi
\]
for every $X\in\mathcal{H}$.

\begin{remark*}
The expression \eqref{eq:conical:CY} for the holomorphic volume form $\Omega_\tu{C}$ on the cone $\tu{C}(\Sigma)$ in terms of the Sasaki--Einstein structure on $\Sigma$ implies that the complex structure $J_\tu{C}$ on $\tu{C}(\Sigma)$ acts by $J_\tu{C}\,\partial_r = \frac{1}{r}\xi$ and $J_\tu{C}\,|_{\mathcal{H}}=J_1$.
\end{remark*}

We will also need to consider the action of $2$--forms on the spinor $\psi$. Decomposing a $2$--form $\sigma = f_1 \omega_1 + f_2 \omega_2 + f_3 \omega_3 + \sigma_0 +\gamma\wedge\eta$ as in Lemma \ref{lem:Forms:Sasaki:Einstein} (iii) and choosing an orthonormal basis of $T\Sigma$ adapted to the {\sutwostr} we find
\begin{equation}\label{eq:Clifford:2:forms}
\sigma\cdot\psi = -2 f_1 I\psi -2f_2 J\psi -2f_3 K\psi + \bigl( J_1 \gamma^\sharp \bigr) \cdot\psi. 
\end{equation}

We have the following result about the number of linearly independent real Killing spinors on Sasaki--Einstein $5$-manifolds \cite{Friedrich:Kath}.

\begin{prop}\label{prop:Killing:spinor}
Let $\Sigma$ be a Sasaki--Einstein $5$-manifold and $\psi$ a unit spinor satisfying \eqref{eq:Killing:spinor} with $\alpha=\tfrac{1}{2}$. Then $I\psi$ also satisfies \eqref{eq:Killing:spinor} with $\alpha =\tfrac{1}{2}$ while $J\psi$ and $K\psi$ are solutions of \eqref{eq:Killing:spinor} with $\alpha = -\tfrac{1}{2}$. Moreover, if the universal cover of $\Sigma$ is not isometric to the round $5$-sphere then there are no solutions to \eqref{eq:Killing:spinor} with $\alpha = \tfrac{1}{2}$ other than constant complex multiples of $\psi$ and no solutions with $\alpha=-\tfrac{1}{2}$ other than constant complex multiples of $J\psi$.
\end{prop}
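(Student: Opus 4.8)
The plan is to deduce the algebraic assertion from the $\spin{5}$-equivariance of the quaternionic structure $\{I,J,K\}$ on the spinor bundle, and the uniqueness (``moreover'') assertion from a holonomy computation on the metric cone $\tu{C}(\Sigma)$.

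\emph{That $I\psi,J\psi,K\psi$ are Killing spinors.} Because $I$, $J$ and $K$ commute with the $\spin{5}$-action on $\slashed{S}$ they are parallel for the spin connection; and, as recorded just before the Proposition, $I$ commutes with Clifford multiplication by tangent vectors whereas $J$ and $K$ anticommute with it. Hence for every vector field $X$ one gets $\nabla_X(I\psi)=I\nabla_X\psi=\tfrac12 I(X\cdot\psi)=\tfrac12 X\cdot(I\psi)$, so $I\psi$ is Killing with constant $+\tfrac12$; and $\nabla_X(J\psi)=J\nabla_X\psi=\tfrac12 J(X\cdot\psi)=-\tfrac12 X\cdot(J\psi)$, with the same computation for $K\psi$, so both $J\psi$ and $K\psi$ are Killing with constant $-\tfrac12$.

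\emph{Uniqueness.} I would use B\"ar's cone construction: restriction to $\Sigma=\{r=1\}$ identifies the space of parallel spinors on the metric cone $\tu{C}(\Sigma)$ with the space of real Killing spinors on $\Sigma$ with constant $\pm\tfrac12$, the sign of the constant corresponding to the chirality of the parallel spinor on the even-dimensional cone. Passing to the universal cover I may assume $\Sigma$ is simply connected, so that $\tu{C}(\Sigma)$ is the simply connected Calabi--Yau $3$-fold determined by \eqref{eq:conical:CY}; it is in particular Ricci-flat. By Gallot's theorem a Riemannian cone over a complete connected Riemannian manifold is either flat---exactly when that manifold is the unit round sphere---or has irreducible holonomy. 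Therefore, if the universal cover of $\Sigma$ is not the round $S^5$, then $\tu{C}(\Sigma)$ is an irreducible Ricci-flat K\"ahler $6$-manifold; it is not locally symmetric, since a Ricci-flat locally symmetric space is flat and flatness is already ruled out, so by Berger's classification its holonomy group is exactly $\sunitary{3}$ (the only connected irreducible subgroup of $\sunitary{3}$ appearing on Berger's list). A short branching computation for $\sunitary{3}\subset\sunitary{4}\simeq\spin{6}$ (the vector representation of $\sunitary{4}$ restricts to $\sunitary{3}$ as $\C^3\oplus\C$) then shows that the space of $\sunitary{3}$-invariant spinors in each half-spinor representation is one complex, hence two real, dimensional. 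Consequently each of the two spaces of real Killing spinors with constant $\pm\tfrac12$ on $\Sigma$ is two real dimensional. Now by the algebraic part $\psi$ and $I\psi$ lie in the $+\tfrac12$-space and $J\psi$, $K\psi$ in the $-\tfrac12$-space; since $I\psi=\xi\cdot\psi$ is not a real multiple of $\psi$ (Clifford multiplication by the unit field $\xi$ being a genuine complex structure on $\slashed{S}$), the pairs $\{\psi,I\psi\}$ and $\{J\psi,K\psi\}$ are each $\R$-linearly independent and hence span these two-dimensional spaces. Finally, because $I$ commutes with Clifford multiplication, these $\R$-spans are exactly the sets of constant \emph{complex} multiples of $\psi$ and of $J\psi$ (note $K\psi=I(J\psi)$), which is the claim.

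\emph{Main obstacle.} The delicate point is the holonomy identification: one must invoke Gallot's theorem to exclude a local de Rham splitting of the cone, and then Berger's list---with the observations that a Ricci-flat locally symmetric space is flat and that no proper irreducible subgroup of $\sunitary{3}$ is a Riemannian holonomy group---to conclude that the holonomy is precisely $\sunitary{3}$. This is the only place where the hypothesis ``universal cover not the round $S^5$'' is used, and it is the crux of the uniqueness statement; the cone--Killing-spinor dictionary and the $\sunitary{3}\subset\spin{6}$ representation theory are standard.
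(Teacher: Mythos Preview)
Your proof is correct. The paper does not give its own proof of this proposition but merely cites Friedrich--Kath \cite{Friedrich:Kath}; your argument via B\"ar's cone correspondence, Gallot's irreducibility theorem, and the Berger list is the standard modern route to this result and fills in exactly what the citation stands in for. The only cosmetic point is that the algebraic part---that $I$ is parallel and commutes with Clifford multiplication while $J,K$ are parallel and anticommute---is already recorded in the paper just before the Proposition, so you are simply reading off the consequence.
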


Finally, we deduce some useful properties of Killing vector fields on Sasaki--Einstein $5$-manifolds.

\begin{lemma}\label{lem:Killing:vectors:SE}
Let $\Sigma$ be a Sasaki--Einstein $5$-manifold whose universal cover is not isometric to the round $5$-sphere and let $X$ be a Killing field on $\Sigma$.
\begin{enumerate}
\item $X$ also preserves $\eta$ and $\omega_1$.
\item $X=a\,\xi + X_0$, where $a\in \R$ and $X_0$ preserves the whole {\sutwostr} (or equivalently the Killing spinor $\psi$).
\item The $2$-form $dX^\flat$ satisfies
\[
dX^\flat = \left( 2a -\langle X_0,\xi\rangle\right)  \omega_1 + 2\,(J_1 X^\flat)\wedge\eta + \sigma_0
\]
for some $\sigma_0\in \Lambda^{1,1}_0\mathcal{H}^\ast$.
\end{enumerate}
\proof
Let $\psi$ be the unit spinor on $\Sigma$ satisfying \eqref{eq:Killing:spinor} with $\alpha=\frac{1}{2}$. We are going to consider the Lie derivative of $\psi$ in the direction of $X$. Note that since $X$ is a Killing vector field there are no subtleties in defining this Lie derivative. 

Since $X$ is a Killing vector field, $\mathcal{L}_X\psi$ must be a spinor orthogonal to $\psi$ which solves \eqref{eq:Killing:spinor} with $\alpha=\tfrac{1}{2}$. Because of our assumption on $\Sigma$, Proposition \ref{prop:Killing:spinor} then implies that $\mathcal{L}_X\psi = \lambda I\psi$ and $\mathcal{L}_X (I\psi) = -\lambda \psi$ for some constant $\lambda\in\R$. Since $I\psi=\xi\cdot\psi$ we immediately deduce that $[\xi,X]=0$ and therefore $\mathcal{L}_X\eta=0$. Since $2\omega_1 = d\eta$ we also have $\mathcal{L}_X\omega_1=0$. This is the classical result \cite[Corollary 8.1.19]{Boyer:Galicki} that Killing fields on a Sasaki--Einstein manifold with non-constant curvature must preserve the Sasaki structure.

Now, recall that the (metric) Lie derivative of a spinor is given by $\mathcal{L}_X\psi = \nabla _X\psi - \tfrac{1}{4}dX^\flat \cdot \psi$ \cite[Proposition 17]{Bourguignon:Gauduchon}. Since $\psi$ satisfies \eqref{eq:Killing:spinor} with $\alpha=\tfrac{1}{2}$ we have
\begin{equation}\label{eq:Lie:derivative:spinor}
\mathcal{L}_X\psi = \tfrac{1}{2}X\cdot \psi - \tfrac{1}{4}dX^\flat\cdot\psi.
\end{equation}
In particular, applying this formula in the case where $X = a\,\xi$ for $a\in\R$ and using the facts that $\xi^\flat = \eta$, $d  \eta=2 \omega_1$ and \eqref{eq:Clifford:2:forms} we find that $\mathcal{L}_{a\xi}\psi = \tfrac{3}{2}a I\psi$. Thus if $\mathcal{L}_X\psi=\lambda I\psi$ then $X-\frac{2}{3}\lambda\xi$ is a Killing field that preserves $\psi$. It therefore now suffices to prove (iii) under the assumption that $X$ preserves $\psi$.

Write $X=\langle X,\xi\rangle\, \xi + X^\perp$ with $X^\perp\in\mathcal{H}$ and $dX^\flat = f_1 \omega_1 + f_2\omega_2 + f_3 \omega_3 + \sigma_0 + \gamma\wedge\eta$ with $\sigma_0\in \Lambda^{1,1}_0\mathcal{H}^\ast$ and $\gamma\in\mathcal{H}^\ast$. Using \eqref{eq:Clifford:2:forms} and \eqref{eq:Lie:derivative:spinor} we calculate
\[
2\mathcal{L}_X\psi = \left( \langle X,\xi\rangle + f_1 \right) I\psi + f_2 J\psi + f_3 K\psi + \bigl( X^\perp -\tfrac{1}{2}J_1\gamma^\sharp\bigr) \cdot\psi.
\]
Thus $\mathcal{L}_X\psi=0$ implies $f_1 = -\langle X,\xi\rangle$, $f_2=0=f_3$ and $\gamma = 2\, (J_1 X^\flat)$.  
\endproof
\end{lemma}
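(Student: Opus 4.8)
The plan is to argue entirely spinorially, using the Killing spinor $\psi$ (with $\alpha = \tfrac12$) that encodes the Sasaki--Einstein structure, the rigidity of Killing spinors supplied by Proposition~\ref{prop:Killing:spinor}, and the standard identity $\mathcal{L}_X\psi = \nabla_X\psi - \tfrac14\, dX^\flat\cdot\psi$ for the metric Lie derivative of a spinor along a Killing field (see \cite[Proposition~17]{Bourguignon:Gauduchon}). The point is that, since $X$ is Killing, $\mathcal{L}_X\psi$ again solves the $\alpha = \tfrac12$ Killing spinor equation, and $|\psi|\equiv 1$ being $X$-invariant forces $\mathcal{L}_X\psi$ to be pointwise orthogonal to $\psi$; Proposition~\ref{prop:Killing:spinor} then gives $\mathcal{L}_X\psi = \lambda\, I\psi$ for some constant $\lambda\in\R$.

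For part~(i) I would apply the same reasoning to $I\psi$ --- which is also an $\alpha = \tfrac12$ Killing spinor --- to get $\mathcal{L}_X(I\psi) = \mu\,\psi$ for a constant $\mu\in\R$, and then expand $\mathcal{L}_X(I\psi) = \mathcal{L}_X(\xi\cdot\psi) = [X,\xi]\cdot\psi + \xi\cdot\mathcal{L}_X\psi = [X,\xi]\cdot\psi - \lambda\,\psi$, using $I\psi = \xi\cdot\psi$, the Leibniz rule for the spinorial Lie derivative over Clifford multiplication, and $\xi\cdot\xi\cdot\psi = -\psi$. Comparing the two expressions and using that Clifford multiplication by a nonzero vector sends $\psi$ to a spinor orthogonal to $\psi$, I conclude $[X,\xi]\cdot\psi = 0$, hence $[X,\xi] = 0$. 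Therefore $\mathcal{L}_X\xi = 0$, so $\mathcal{L}_X\eta = 0$ (since $\eta = \xi^\flat$ and $X$ is an isometry) and $\mathcal{L}_X\omega_1 = \tfrac12\, d\mathcal{L}_X\eta = 0$; alternatively one can simply quote the classical fact \cite[Corollary~8.1.19]{Boyer:Galicki} that Killing fields on a Sasaki--Einstein manifold with non-constant curvature preserve the Reeb field.

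For parts~(ii) and~(iii) I would first compute $\mathcal{L}_\xi\psi$ directly from the Lie derivative formula: $\nabla_\xi\psi = \tfrac12\,\xi\cdot\psi = \tfrac12 I\psi$, $d\xi^\flat = d\eta = 2\omega_1$, and $\omega_1\cdot\psi = -2I\psi$ by \eqref{eq:Clifford:2:forms} give $\mathcal{L}_\xi\psi = \tfrac32 I\psi$. Hence $X_0 := X - \tfrac23\lambda\,\xi$ is a Killing field (a constant-coefficient combination of the Killing fields $X$ and $\xi$) with $\mathcal{L}_{X_0}\psi = 0$, \ie it preserves $\psi$, and therefore the whole {\sutwostr}; this proves~(ii) with $a := \tfrac23\lambda$. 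For~(iii) write $dX^\flat = a\,d\eta + dX_0^\flat = 2a\,\omega_1 + dX_0^\flat$, so it remains to identify $dX_0^\flat$. From $\mathcal{L}_{X_0}\psi = 0$ and $\nabla_{X_0}\psi = \tfrac12 X_0\cdot\psi$ one gets $dX_0^\flat\cdot\psi = 2\, X_0\cdot\psi$; I would then decompose $dX_0^\flat = f_1\omega_1 + f_2\omega_2 + f_3\omega_3 + \sigma_0 + \gamma\wedge\eta$ as in Lemma~\ref{lem:Forms:Sasaki:Einstein}(iii) (with $\sigma_0\in\Lambda^{1,1}_0\mathcal{H}^\ast$ and $\gamma\in\mathcal{H}^\ast$), split $X_0 = \langle X_0,\xi\rangle\,\xi + X_0^\perp$ with $X_0^\perp\in\mathcal{H}$, apply \eqref{eq:Clifford:2:forms} to the left-hand side, and match components in the orthogonal splitting $\slashed{S} = \R\psi\oplus\R I\psi\oplus\R J\psi\oplus\R K\psi\oplus\mathcal{H}\cdot\psi$. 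This forces $f_1 = -\langle X_0,\xi\rangle$ and $f_2 = f_3 = 0$, together with a linear relation between $\gamma^\sharp$ and $X_0^\perp$ which, on solving via $J_1^2 = -\mathrm{id}$ on $\mathcal{H}$, produces the term $2\,(J_1 X^\flat)\wedge\eta$; the component $\sigma_0$ does not appear in \eqref{eq:Clifford:2:forms}, so it acts trivially on $\psi$ and is left undetermined. Assembling the pieces yields $dX^\flat = (2a - \langle X_0,\xi\rangle)\,\omega_1 + 2\,(J_1 X^\flat)\wedge\eta + \sigma_0$.

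The only genuine obstacle is the bookkeeping in part~(iii): one must be careful about the Clifford-multiplication conventions and the signs attached to $J_1$ when reading off the $\mathcal{H}\cdot\psi$--component, and one should double-check the normalisation $\mathcal{L}_\xi\psi = \tfrac32 I\psi$, since it is this constant that fixes $a = \tfrac23\lambda$. Conceptually nothing is delicate: the whole argument is driven by the rigidity of Killing spinors (Proposition~\ref{prop:Killing:spinor}), which confines the infinitesimal motion of $\psi$ under any isometry to constant multiples of $I\psi$.
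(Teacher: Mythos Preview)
Your proposal is correct and follows essentially the same spinorial approach as the paper: both use the Bourguignon--Gauduchon formula $\mathcal{L}_X\psi = \nabla_X\psi - \tfrac14 dX^\flat\cdot\psi$, invoke Proposition~\ref{prop:Killing:spinor} to force $\mathcal{L}_X\psi = \lambda I\psi$, compute $\mathcal{L}_\xi\psi = \tfrac32 I\psi$ to split off $a = \tfrac23\lambda$, and then read off the components of $dX_0^\flat$ from \eqref{eq:Clifford:2:forms}. Your Leibniz-rule expansion of $\mathcal{L}_X(\xi\cdot\psi)$ to extract $[X,\xi]=0$ is a slightly more explicit version of the paper's one-line deduction, but the argument is otherwise identical.
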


\subsubsection{Eigenvalue estimates}

Here we collect some eigenvalue estimates for the differential-form Laplacian on a Sasaki--Einstein $5$-manifold. By Appendix \ref{Appendix:homogenous:harmonic} harmonic forms on the Riemannian cone $\tu{C}(\Sigma)$ over a smooth manifold $\Sigma$ are controlled by the spectrum of the Laplacian on $\Sigma$ itself. Since $\Sigma$ is $5$-dimensional the spectrum of the Laplacian acting on differential forms is controlled by the spectrum of the Laplacian acting on functions, on coclosed $1$-forms and on coclosed $2$-forms. Lower bounds for the first (non-zero) eigenvalue of the Laplacian acting on functions and coclosed $1$-forms are classical and only use the lower bound for the Ricci-curvature of $\Sigma$. We will also prove a lower bound for the first non-zero eigenvalue of the Laplacian acting on coclosed $2$-forms on a \emph{regular} Sasaki--Einstein $5$-manifold, \ie in the case when the orbits of the Reeb vector field are closed and $\Sigma$ is the total space of a circle bundle over a smooth K\"ahler--Einstein del Pezzo surface $D$. Examples show that this lower bound fails if $\Sigma$ is not regular.

Throughout the section and in the rest of the paper we make the following
\begin{assumption} 
The Sasaki--Einstein $5$-manifold $\Sigma$ does not have constant curvature.
\end{assumption}
This assumption, that will suffice for our applications, is convenient for two main reasons. The first is given by Proposition \ref{prop:Killing:spinor}: the only real Killing spinors on $\Sigma$ are quaternionic multiples of $\psi$. The second one is the Lichnerowicz--Obata Theorem, \cf Proposition \ref{prop:Spectrum:Laplacian:Regular:SE} (i) below.

We will denote by $\mathcal{K}(\Sigma)$ the space of Killing fields on the Sasaki--Einstein manifold $\Sigma$ and by $\mathcal{K}_0(\Sigma)$ the subspace of those Killing fields that preserve the Killing spinor $\psi$. By Lemma \ref{lem:Killing:vectors:SE} (ii) $\mathcal{K}(\Sigma) = \mathcal{K}_0(\Sigma) \oplus \R \xi$. 

\begin{prop}\label{prop:Spectrum:Laplacian:Regular:SE}
Let $\Sigma$ be a Sasaki--Einstein $5$-manifold with non-constant curvature.
\begin{enumerate}
\item The first non-zero eigenvalue of the scalar Laplacian on $\Sigma$ is strictly greater than $5$.
\item The first eigenvalue of the Laplacian acting on coclosed $1$-forms is greater than or equal to $8$ and the eigenspace with eigenvalue $8$ consists of $1$-forms dual to Killing vector fields.
\item If $\Sigma$ is a regular Sasaki--Einstein $5$-manifold then the first non-zero eigenvalue of the Laplacian acting on coclosed $2$-forms is strictly greater than $4$.
\end{enumerate}
\proof
Part (i) is the Lichnerowicz--Obata Theorem \cite[\S 77]{Lichnerowicz}, \cite{Obata}, and part (ii) is due to the fact that $\Sigma$ is Einstein with scalar curvature $20$ \cite[\S 77]{Lichnerowicz}.


We now prove part (iii). Let $\tau$ be a coclosed $2$-form on $\Sigma$ such that $\triangle\tau =\mu^2 \tau$ for some $\mu\in (0,2]$. We begin with some remarks that hold on a general Sasaki--Einstein $5$-manifold.

By Theorem \ref{thm:Harmonic:forms:cone:n} (iv), $r^\mu \tau$ is a harmonic $2$-form on $\tu{C}(\Sigma)$ of rate $\mu-2\in (-2,0]$. Decomposing $r^\mu \tau$ into types and using the fact that each component is harmonic, Propositions \ref{prop:Harmonic:Functions:Cone} and \ref{prop:Harmonic:1Forms:Cone} below imply that $r^\mu \tau$ is a primitive $(1,1)$--form when $\mu\in (0,2)$ and the sum of a primitive $(1,1)$--form and a constant multiple of the K\"ahler form $\omega_\tu{C}$ when $\mu=2$.

We now exploit this observation to study the decomposition of $\tau \in \Omega^2 (\Sigma)$ and $d\tau\in\Omega^3(\Sigma)$ in terms of the type decomposition of Lemma \ref{lem:Forms:Sasaki:Einstein} (iii). Straightforward calculations using the definition \eqref{eq:conical:CY} of the conical Calabi--Yau structure $(\omega_\tu{C},\Omega_\tu{C})$ in terms of the Sasaki--Einstein structure $(\eta,\omega_1,\omega_2,\omega_3)$ show that the conditions $r^\mu\tau\wedge\omega_\tu{C}^2 = K \omega_\tu{C}^3$ for some $K\in\R$ ($K=0$ if $\mu\neq 2$) and $r^\mu\tau\wedge\Real\Omega_\tu{C}=0$ imply that
\[
\tau = K\omega_1 + \tau_0, \qquad \ast\tau = K\eta\wedge\omega_1 - \eta\wedge\tau_0
\]    
for some $\tau_0\in\Lambda^{1,1}_0\mathcal{H}^\ast$. Now, since $\tau$ is coclosed, $0=d\ast\tau = 2K\omega_1^2 + \eta\wedge d\tau_0$ implies that $K=0$ and $\tau=\tau_0\in\Lambda^{1,1}_0\mathcal{H}^\ast$ regardless of the value of $\mu\in (0,2]$.

Consider now $d\tau$. Since $d\tau\wedge\omega_i=0$ (by differentiating $\tau\wedge\omega_i=0$) we have $\langle d\tau,\eta\wedge\omega_i\rangle =0$ for all $i=1,2,3$. Thus $d\tau = \gamma\wedge\omega_1 + \mu\, \eta \wedge \tau'$ for some $1$-form $\gamma \in \Lambda^1 \mathcal{H}^\ast$ and $\tau'\in\Lambda^{1,1}_0\mathcal{H}^\ast$. The requirement $0=d\ast\tau=\eta\wedge d\tau$ then forces $\gamma$ to vanish. Using that $d^\ast d\tau=\mu^2\tau$ we further deduce that the pair $(\tau,\tau')$ satisfies the first order system
\[
d\tau = \mu\,\eta \wedge\tau'=-\mu\ast\tau', \qquad d\tau' = -\mu\eta\wedge\tau = \mu\ast \tau.
\]
Here we use Lemma \ref{lem:Forms:Sasaki:Einstein} (iv) and the fact that $\tau$ and $\tau'$ are sections of $\Lambda^{1,1}_0\mathcal{H}^\ast$. We then consider the complex-valued form $\tau^c = \tau + i\tau'\in\Lambda^{1,1}_0\mathcal{H}^\ast\otimes\C$: it satisfies $\triangle\tau^c =\mu^2 \tau^c$ and $\mathcal{L}_\xi \tau^c = -i\mu\tau^c$. We will show that $\tau^c=0$ whenever $\Sigma$ is a regular Sasaki--Einstein $5$-manifold.

Passing to the universal cover if necessary we assume that $\Sigma$ is a simply connected regular Sasaki--Einstein $5$-manifold. Then $\Sigma$ is the total space of the principal $\unitary{1}$--bundle associated with the line bundle $L=K_D^{1/I}$ over a K\"ahler--Einstein del Pezzo surface $D$ with Fano index $I$ and K\"ahler--Einstein metric $\omega_1$ satisfying $\Ric (\omega_1) = 6\omega_1$, \cf for example \cite[Theorem 2.1]{Sparks:SE}. Here the Fano index of $D$ is the divisibility of $K_D$ in the Picard group of $D$. Since $d\eta = 2\omega_1$ we deduce that $V=\frac{I}{3}\xi$ is the vector field of period $2\pi$ generating the fibre-wise circle action on $\Sigma \ra D$. The fact that $\mathcal{L}_\xi \tau^c = -i\mu\tau^c$ then immediately implies that $\mu = \frac{3}{I}m$ for some $m \in \Z_{>0}$. Since $I=1$ for all del Pezzo surfaces except for $\CP^2$ and $\CP^1\times \CP^1$ this already shows that $\mu \geq 3$ in all but these two cases. When $D=\CP^2$ or $\CP^1\times\CP^1$ then $I=3$ and $2$ respectively and we have to argue differently.

Since $\mathcal{L}_{V}\tau^c = -im\,\tau^c$, $\tau^c$ can be interpreted as an $L^m$--valued anti-self-dual $2$-form on $D$. Moreover, following \cite[Lemma 4.4]{Ammann:Bar} the equation $\triangle \tau^c = \mu^2 \tau^c$ implies that $\tau^c$ is a harmonic section of $\Lambda^- T^\ast D \otimes L^m$, where $L^m$ is endowed with the connection $A=i\frac{3m}{I}\eta$. Since $A$ is a self-dual connection, the Weitzenb\"ock formula for $\triangle_A$ coincides with the standard Weitzenb\"ock formula for the Laplacian on anti-self-dual $2$--forms: $0=\triangle_A \tau^c = \nabla_A^\ast\nabla_A \tau^c -2W^-(\tau^c) + \frac{1}{3}\text{Scal}\,\tau^c$. When $D=\CP^2$ the curvature term is strictly positive and therefore $\tau^c=0$. When $D=\CP^1\times\CP^1$ the bundle $\Lambda^-T^\ast D$ has a parallel decomposition into a trivial real line bundle spanned by the difference $\omega_0$ of the Fubini--Study metrics on the two factors and a complex line bundle. The curvature term is strictly positive on the latter and vanishes on the former. We conclude that $\tau^c$ must be an $L^m$--valued parallel multiple of $\omega_0$; since $L^m$ is non-trivial for $m\neq 0$ we deduce that $\tau^c=0$.     
\endproof
\end{prop}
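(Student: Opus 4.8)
The plan is to treat the three parts separately, the first two being classical. For part~(i): since $\Sigma$ is a closed $5$--dimensional Einstein manifold with $\Ric(g_\Sigma)=4\,g_\Sigma$, the Lichnerowicz--Obata theorem gives that the first non-zero eigenvalue of the scalar Laplacian is at least $5$, with equality exactly for the round sphere; the standing assumption that $\Sigma$ has non-constant curvature then upgrades this to a strict inequality. For part~(ii): the Bochner--Weitzenb\"ock identity $\triangle = \nabla^\ast\nabla + \Ric$ on $1$--forms already yields the lower bound $4$ for coclosed eigen-$1$--forms, and the sharp bound $8$ together with the identification of the extremal eigenspace with duals of Killing fields is the Obata-type rigidity statement of \cite[Theorem 7.6]{Cheeger:Tian:AC:Ricci:flat} (see also \cite[Lemma B.2]{Hein:Sun}); alternatively one notes that an extremal coclosed eigen-$1$--form, multiplied by the appropriate power of $r$, extends to a homogeneous harmonic $1$--form on $\tu{C}(\Sigma)$, whose admissible rate forces the eigenvalue up to $8$.

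The substance is part~(iii), and I expect the main obstacle to lie here. Let $\tau$ be a coclosed $2$--form with $\triangle\tau = \mu^2\tau$ and suppose $0 < \mu \leq 2$; the goal is to force $\tau = 0$. First I would pass to the asymptotic cone: by the homogeneous harmonic forms results of Appendix~\ref{Appendix:homogenous:harmonic} (Theorem~\ref{thm:Harmonic:forms:cone:n}), $r^\mu\tau$ is a harmonic $2$--form on $\tu{C}(\Sigma)$ of rate $\mu - 2\in(-2,0]$. Decomposing $r^\mu\tau$ into $\sunitary{3}$--types---each summand again harmonic---and invoking the classification of decaying harmonic functions and $1$--forms on $3$--dimensional Calabi--Yau cones (Propositions~\ref{prop:Harmonic:Functions:Cone} and \ref{prop:Harmonic:1Forms:Cone} below) rules out every type except the primitive $(1,1)$--part, plus a constant multiple of $\omega_\tu{C}$ when $\mu = 2$. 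Translating back to $\Sigma$ through the explicit conical formulae \eqref{eq:conical:CY} and using $d^\ast\tau = 0$ then pins down $\tau = \tau_0 \in \Lambda^{1,1}_0\mathcal{H}^\ast$ and produces a companion form $\tau' \in \Lambda^{1,1}_0\mathcal{H}^\ast$ satisfying the first-order system $d\tau = -\mu\ast\tau'$, $d\tau' = \mu\ast\tau$; equivalently the complexified form $\tau^c = \tau + \tau'$ satisfies $\triangle\tau^c = \mu^2\tau^c$ and $\mathcal{L}_\xi\tau^c = -i\mu\,\tau^c$.

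The regularity hypothesis enters at this point. Passing to the universal cover, $\Sigma$ is the unit circle bundle of $L = K_D^{1/I}$ over a K\"ahler--Einstein del Pezzo surface $D$ of Fano index $I$, and $V = \tfrac{I}{3}\xi$ generates the period-$2\pi$ action; then $\mathcal{L}_\xi\tau^c = -i\mu\,\tau^c$ forces $\mu = \tfrac{3}{I}m$ for some $m\in\Z_{>0}$. When $I = 1$---that is, for all del Pezzo surfaces except $\CP^2$ and $\CP^1\times\CP^1$---this already gives $\mu\geq 3 > 2$, a contradiction, and the proof is complete in those cases. For $\CP^2$ ($I=3$) and $\CP^1\times\CP^1$ ($I=2$) I would instead regard $\tau^c$ as a harmonic section of $\Lambda^-T^\ast D \otimes L^m$ for the natural connection $A = i\tfrac{3m}{I}\eta$ on $L^m$, which is self-dual so that $\triangle_A$ coincides with the scalar Weitzenb\"ock Laplacian on anti-self-dual $2$--forms: $0 = \nabla_A^\ast\nabla_A\tau^c - 2W^-(\tau^c) + \tfrac{1}{3}\text{Scal}\,\tau^c$. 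On $\CP^2$ the zeroth-order term $-2W^- + \tfrac{1}{3}\text{Scal}$ is strictly positive, forcing $\tau^c = 0$; on $\CP^1\times\CP^1$ it is strictly positive except on the parallel trivial summand of $\Lambda^-T^\ast D$ spanned by the difference of the two Fubini--Study forms, on which $\tau^c$ would be a twisted-parallel section of the non-trivial bundle $L^m$ and hence still zero. In every case $\tau^c = 0$, so $\tau = 0$, and therefore no eigenvalue $\mu^2$ lies in $(0,4]$. Finally I would remark that the regularity assumption is genuinely used---for irregular $\Sigma$ the $\mathcal{L}_\xi$--eigenvalue need not have a controlled denominator and the bound of (iii) can fail---which is why (iii) is stated only in the regular case.
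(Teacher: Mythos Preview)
Your proposal is correct and follows essentially the same route as the paper: parts (i) and (ii) are dispatched by the same classical results and references, and for part (iii) you pass to the cone to pin down $\tau\in\Lambda^{1,1}_0\mathcal{H}^\ast$, derive the identical first-order system and complexified form $\tau^c$ with $\mathcal{L}_\xi\tau^c=-i\mu\tau^c$, then use regularity to quantize $\mu=\tfrac{3}{I}m$ and finish the residual $\CP^2$ and $\CP^1\times\CP^1$ cases by the same Weitzenb\"ock argument on $\Lambda^-T^\ast D\otimes L^m$. Your writeup compresses the intermediate step showing $K=0$ and the vanishing of the $\gamma\wedge\omega_1$ component of $d\tau$, but the logic is the same.
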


\begin{remark}\label{rmk:Harmonic:2:forms:SE}
The initial part of the proof of part (iii) can be applied to harmonic $2$-forms on $\Sigma$: every such form lies in $\Lambda^{1,1}_0\mathcal{H}^\ast$.
\end{remark}

\subsection{Differential forms on Calabi--Yau cones}

Let $\Sigma$ be a Sasaki--Einstein $5$-manifold and consider the Calabi--Yau cone $\tu{C}(\Sigma)$. The aim of this section is to study elements in the kernel of $d+d^\ast$, the Laplacian acting on differential forms and the Dirac operator on $\tu{C}(\Sigma)$ that are homogeneous with respect to the action of scaling on the cone in the following sense. 

\begin{definition}\label{def:Homogeneous:forms}
A $k$-form $\alpha$ on the cone $\tu{C}(\Sigma)$ is \emph{homogeneous of order} $\lambda$ if
\[
\alpha = r^\lambda \bigl( r^{k-1}dr\wedge \alpha_{k-1} + r^k \alpha_k \bigr)
\]
for some $\alpha_{k-1}\in\Omega^{k-1}(\Sigma)$ and $\alpha_k\in\Omega^k (\Sigma)$.
\end{definition}
Observe that if $\alpha$ is homogeneous of order $\lambda$ then $|\alpha|_{g_\tu{C}}^2=r^{2\lambda} \bigl( |\alpha_{k-1}|^2_{g_\Sigma} + |\alpha_k|^2_{g_\Sigma}\bigr)$.

\begin{remark*}
Throughout the paper harmonic forms are differential forms in the kernel of the Hodge Laplacian $\triangle = dd^\ast + d^\ast d$. Since we work on non-compact manifolds, the space of harmonic forms in general strictly contains the space of closed and coclosed forms.
\end{remark*}

\subsubsection{Harmonic functions and $1$-forms}

The following two propositions follow from Theorem \ref{thm:Harmonic:forms:cone:n} and Proposition \ref{prop:Spectrum:Laplacian:Regular:SE} (i) and (ii).

\begin{prop}\label{prop:Harmonic:Functions:Cone} 
Let $u$ be a harmonic function on $\tu{C}(\Sigma)$ homogeneous of order $\lambda$. Then $u=0$ if $\lambda\in [-5,1]\setminus \{ -4,0\}$ and $u=Kr^\lambda$ for some $K\in\R$ if $\lambda=-4,0$.
\end{prop}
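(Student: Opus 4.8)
The plan is to reduce the assertion to an eigenvalue estimate on the Sasaki--Einstein cross-section $\Sigma$ and then quote the Lichnerowicz--Obata bound recorded in Proposition \ref{prop:Spectrum:Laplacian:Regular:SE}(i). By Theorem \ref{thm:Harmonic:forms:cone:n}, a harmonic function on $\tu{C}(\Sigma)$ homogeneous of order $\lambda$ is of the form $u = r^\lambda f$ with $f \in C^\infty(\Sigma)$; separating variables in the cone Laplacian, which on functions reads $\triangle_{g_\tu{C}} = -\partial_r^2 - \tfrac{5}{r}\partial_r + \tfrac{1}{r^2}\triangle_\Sigma$ since $\dim\tu{C}(\Sigma)=6$, one finds that $\triangle_{g_\tu{C}} u = 0$ is equivalent to $\triangle_\Sigma f = \lambda(\lambda+4)\,f$. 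Thus $f$ is an eigenfunction of the (nonnegative) scalar Laplacian on $\Sigma$ with eigenvalue $\lambda(\lambda+4)$.

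First I would dispose of the values $\lambda = 0$ and $\lambda = -4$: here $\lambda(\lambda+4) = 0$, so $f$ lies in the kernel of $\triangle_\Sigma$ and hence is constant, $\Sigma$ being closed and connected; this gives $u = Kr^\lambda$ for some $K \in \R$, as claimed. For the remaining range $\lambda \in [-5,1]\setminus\{-4,0\}$ I would show that $f$ must vanish. Writing $p(\lambda) := \lambda(\lambda+4) = (\lambda+2)^2 - 4$, one checks that $p(\lambda) \in [-4,5]$ for $\lambda \in [-5,1]$, that $p(\lambda) = 5$ only at the endpoints $\lambda = -5,1$, and that $p(\lambda) \neq 0$ precisely because $\lambda \notin \{-4,0\}$. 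If $p(\lambda) < 0$ there is nothing to prove, the spectrum of $\triangle_\Sigma$ being nonnegative. If $0 < p(\lambda) \le 5$, then $p(\lambda)$ would be a nonzero eigenvalue of the scalar Laplacian on $\Sigma$ of size at most $5$; but Proposition \ref{prop:Spectrum:Laplacian:Regular:SE}(i) (Lichnerowicz--Obata, using the standing assumption that $\Sigma$ has non-constant curvature) guarantees that every nonzero eigenvalue is strictly greater than $5$. This contradiction forces $f \equiv 0$, hence $u \equiv 0$.

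The argument is short and the only points requiring care are bookkeeping: that the indicial constant is indeed $\dim\tu{C}(\Sigma)-2 = 4$, that the exceptional orders arising from $\ker\triangle_\Sigma$ are exactly the Kelvin-dual pair $\{0,-4\}$ (symmetric, like the whole interval $[-5,1]$, under $\lambda \mapsto -4-\lambda$), and that the endpoints $\lambda = -5,1$, where $p$ equals exactly $5$, are still covered because Lichnerowicz--Obata gives a \emph{strict} inequality. I do not anticipate a genuine obstacle here: the proposition is essentially a corollary of the cone analysis of Appendix \ref{Appendix:homogenous:harmonic} combined with the eigenvalue bound of Proposition \ref{prop:Spectrum:Laplacian:Regular:SE}(i).
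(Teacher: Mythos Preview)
Your proof is correct and follows exactly the approach the paper indicates: reduce via the cone Laplacian to the eigenvalue equation $\triangle_\Sigma f = \lambda(\lambda+4)f$ on $\Sigma$ (this is the $k=0$ case of Theorem~\ref{thm:Harmonic:forms:cone:n}), then invoke the strict Lichnerowicz--Obata bound of Proposition~\ref{prop:Spectrum:Laplacian:Regular:SE}(i). The only quibble is that the form $u=r^\lambda f$ is simply the definition of homogeneity rather than a consequence of Theorem~\ref{thm:Harmonic:forms:cone:n}, but the substance of the argument is identical to the paper's.
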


\begin{prop}\label{prop:Harmonic:1Forms:Cone}
Let $\gamma$ be a harmonic $1$-form homogeneous of order $\lambda$. Then
\[
\gamma = \begin{cases}
Krdr + d\bigl( \tfrac{1}{2}r^2\alpha \bigr) + r^2\beta, \text{ where } K\in\R,\, \triangle\alpha=12\alpha \text{ and } \beta^\flat\in\mathcal{K}(\Sigma) & \text{if } \lambda=1,\\
d\bigl( \tfrac{1}{\lambda+1}r^{\lambda+1}\alpha \bigr), \text{ where }\triangle\alpha=(\lambda +5)(\lambda +1)\alpha & \text{if } \lambda \in (0,1),\\
0 & \text{if } \lambda\in [-4,0],\\
r^\lambda \alpha\, dr -\frac{r^{\lambda+1}}{\lambda+3} d\alpha, \text{ where }\triangle\alpha = (\lambda+3)(\lambda-1)\alpha & \text{if } \lambda\in (-5,-4),\\
Kr^{-5}dr + (r^{-5}\alpha dr + \tfrac{1}{2}r^{-4}d\alpha) + r^{-4}\beta, \text{ where } K\in\R,\, \triangle \alpha = 12\alpha \text{ and } \beta^\flat\in\mathcal{K}(\Sigma) & \text{if } \lambda = -5.
\end{cases}
\]
\end{prop}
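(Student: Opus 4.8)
The plan is to deduce the statement from the general structure theory of homogeneous harmonic forms on Riemannian cones recorded in Theorem \ref{thm:Harmonic:forms:cone:n}, together with the eigenvalue bounds on $\Sigma$ in Proposition \ref{prop:Spectrum:Laplacian:Regular:SE}(i)--(ii). First I would write a homogeneous harmonic $1$-form $\gamma$ of order $\lambda$ on $\tu{C}(\Sigma)$ as $\gamma = r^\lambda(\alpha_0\,dr + r\,\alpha_1)$ with $\alpha_0 \in C^\infty(\Sigma)$ and $\alpha_1 \in \Omega^1(\Sigma)$, and Hodge-decompose $\alpha_0,\alpha_1$ on the closed manifold $\Sigma$. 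Since a Sasaki--Einstein manifold has positive Ricci curvature, Bochner's theorem gives $H^1(\Sigma;\R) = 0$; hence $\Sigma$ carries no nonzero harmonic $1$-forms and $\Omega^1(\Sigma)$ is the $L^2$-orthogonal sum of $d\,C^\infty(\Sigma)$ and the coclosed $1$-forms. Combining the eigenspace decomposition of $\triangle_\Sigma$ on functions and on coclosed $1$-forms with Theorem \ref{thm:Harmonic:forms:cone:n} then exhibits any such $\gamma$ as a finite sum of elementary homogeneous harmonic $1$-forms of three types: (a) closed forms $d(r^{\lambda+1}\alpha)$ with $\triangle_\Sigma\alpha = (\lambda+1)(\lambda+5)\alpha$ --- the exponent $s = \lambda+1$ satisfies $s(s+4) = (\lambda+1)(\lambda+5)$, which is exactly the condition for $r^s\alpha$ to be cone-harmonic --- together with the degenerate closed modes $K\,r\,dr = \tfrac{K}{2}\,d(r^2)$ and $K\,r^{-5}dr = -\tfrac{K}{4}\,d(r^{-4})$ coming from the constant function; (b) non-closed forms $r^\lambda\alpha\,dr - \tfrac{r^{\lambda+1}}{\lambda+3}\,d\alpha$ with $\triangle_\Sigma\alpha = (\lambda+3)(\lambda-1)\alpha$ (for $\lambda \neq -3$); and (c) forms $r^{\lambda+1}\beta$ built from a coclosed $1$-form $\beta$ on $\Sigma$ with $\triangle_\Sigma\beta = (\lambda+1)(\lambda+3)\beta$.

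With this list in hand the proof is a case analysis in $\lambda$ driven by the spectral gaps. By Proposition \ref{prop:Spectrum:Laplacian:Regular:SE}(i) a non-constant $\alpha$ in (a) requires $(\lambda+1)(\lambda+5) > 5$, \ie $\lambda > 0$, while $(\lambda+1)(\lambda+5) = 0$ forces $\alpha$ constant and $\lambda \in \{-1,-5\}$; likewise a non-constant $\alpha$ in (b) requires $(\lambda+3)(\lambda-1) > 5$, \ie $\lambda < -4$, and $(\lambda+3)(\lambda-1) = 0$ forces $\lambda \in \{1,-3\}$; and by Proposition \ref{prop:Spectrum:Laplacian:Regular:SE}(ii) the eigenvalue $(\lambda+1)(\lambda+3)$ in (c) must be $\geq 8$, which on the range $\lambda \in [-5,1]$ happens only at $\lambda \in \{1,-5\}$, where moreover $\beta^\flat \in \mathcal{K}(\Sigma)$. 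Running through the regimes: for $\lambda \in (0,1)$ only (a) survives, with eigenvalue $(\lambda+1)(\lambda+5) \in (5,12)$; for $\lambda \in [-4,0]$ all three eigenvalue conditions fail except for trivial contributions --- at $\lambda = -1$ one gets only $d(\mathrm{const}) = 0$, and at $\lambda = -3$, where the formula in (b) is the spurious $r^{-3}dr$, a direct check shows $r^{-3}dr$ is not cone-harmonic --- so $\gamma = 0$; for $\lambda \in (-5,-4)$ only (b) survives, with eigenvalue $(\lambda+3)(\lambda-1) \in (5,12)$; at $\lambda = 1$ one gets the closed family with $\triangle_\Sigma\alpha = 12\alpha$ (here $\tfrac{1}{\lambda+1} = \tfrac12$), the degenerate mode $K\,r\,dr$, and $r^2\beta$ with $\beta^\flat$ Killing; and symmetrically at $\lambda = -5$ one gets $K\,r^{-5}dr$, the non-closed family $r^{-5}\alpha\,dr + \tfrac12 r^{-4}\,d\alpha$ with $\triangle_\Sigma\alpha = 12\alpha$ (here $-\tfrac{1}{\lambda+3} = \tfrac12$), and $r^{-4}\beta$ with $\beta^\flat$ Killing. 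This reproduces the five cases of the statement.

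The bulk of the work --- and the main obstacle --- is the first step: passing from a general homogeneous harmonic $1$-form to the explicit list (a)--(c) with the correct eigenvalue relations, where Theorem \ref{thm:Harmonic:forms:cone:n} does the real work via separation of variables on the cone. Three points require care. First, the failure of ``harmonic $=$ closed and coclosed'' on the non-compact cone: the constant function produces the extra closed-but-not-coclosed solutions $r\,dr$ and $r^{-5}dr$, which are genuinely harmonic because $d(\triangle_{\tu{C}(\Sigma)}r^2) = 0$ and $\triangle_{\tu{C}(\Sigma)}r^{-4} = 0$ even though $r^2$ itself is not harmonic. Second, the pole at $\lambda = -3$ and the coincidence $s = \lambda+1 = 2$ at $\lambda = 1$, where the generic families degenerate and have to be analysed separately. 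Third, the borderline eigenvalue $8$ for coclosed $1$-forms, where Proposition \ref{prop:Spectrum:Laplacian:Regular:SE}(ii) is precisely what identifies the surviving contributions at the critical rates $\lambda = 1$ and $\lambda = -5$ with $1$-forms dual to Killing fields; the remaining spectral gaps then dispose of every other value of $\lambda$.
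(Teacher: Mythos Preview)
Your proposal is correct and follows exactly the approach the paper intends: the paper's proof is the single sentence ``The following two propositions follow from Theorem~\ref{thm:Harmonic:forms:cone:n} and Proposition~\ref{prop:Spectrum:Laplacian:Regular:SE} (i) and (ii),'' and your case analysis is precisely how one unwinds that citation. Your types (a), (b), (c) correspond to the components $\gamma_2$, $\gamma_3$, $\gamma_4$ of Theorem~\ref{thm:Harmonic:forms:cone:n} (for $k=1$, $n=6$), with the ``degenerate closed modes'' $Kr\,dr$ and $Kr^{-5}dr$ being $\gamma_1$; the eigenvalue thresholds $5$ and $8$ are exactly Proposition~\ref{prop:Spectrum:Laplacian:Regular:SE}(i)--(ii). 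One tiny slip: $r^{-5}dr$ is actually coclosed (it is $\gamma_1$ at $\lambda=-5$, where $\lambda+n-k=0$, cf.\ Remark~\ref{rmk:Harmonic:forms:cone:n}), though this does not affect your argument.
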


\begin{remark}\label{rmk:Harmonic:1:forms}
Remark \ref{rmk:Harmonic:forms:cone:n} shows that a harmonic $1$-form $\gamma$ homogeneous of order $\lambda\in [-5,1]$ is coclosed if and only if $\lambda\in [-4,1)$, or $\lambda=1$ and $\gamma$ does not contain a term of the form $Krdr$ for $K\in\R$, or $\lambda=-5$ and $\gamma = Kr^{-5}dr + r^{-4}\beta$ for some $K\in\R$ and $\beta^\flat\in\mathcal{K}(\Sigma)$. We will use this fact in the proof of Proposition \ref{prop:Dirac:cone}.
\end{remark}

We will also have to consider the operators
\begin{equation}\label{eq:Gauge:fixing:operator:AC:CY}
g \longmapsto \pi_1 dd^\ast \bigl( \tfrac{1}{2}g\omega_0^2\bigr), \qquad (f,\gamma)\longmapsto \pi_{1\oplus 6}d^\ast d \bigl( f\omega_0 + \gamma^\sharp\lrcorner\Real\Omega_0\bigr).
\end{equation}

\begin{prop}\label{prop:Gauge:Cone}
For $\lambda\in (-4,0)$ there are no elements homogeneous of order $\lambda$ in the kernel of the operators in \eqref{eq:Gauge:fixing:operator:AC:CY}.
\proof
By Lemma \ref{lem:Gauge:fixing:operator:AC:CY} we have to study the existence of homogeneous functions and $1$-forms on the Calabi--Yau cone $\tu{C}(\Sigma)$ in the kernel of $\triangle$ and $\triangle - \tfrac{1}{3}d^\ast d$ respectively. Proposition \ref{prop:Harmonic:Functions:Cone} already shows that there are no harmonic functions on $\tu{C}(\Sigma)$ homogeneous of order $\lambda\in (-4,0)$.

For $\beta_0\in\Omega^0(\Sigma)$ and $\beta_1\in\Omega^1(\Sigma)$ let then $\beta = r^\lambda (\beta_0\, dr + r\beta_1)$ be a homogeneous $1$-form on $\tu{C}(\Sigma)$. We calculate that $(\triangle - \tfrac{1}{3}d^\ast d)\beta=0$ if and only if
\begin{equation}\label{eq:Gauge:Cone}
\begin{cases}
\tfrac{2}{3}d^\ast d \beta_0 = (\lambda-1)(\lambda+5)\beta_0 -\tfrac{1}{3}(\lambda-5)d^\ast \beta_1,\\
(dd^\ast + \tfrac{2}{3}d^\ast d) \beta_1= \tfrac{2}{3}(\lambda+1)(\lambda+3)\beta_1 + \tfrac{1}{3}(\lambda+9)d\beta_0.
\end{cases}
\end{equation}
Note that integration over $\Sigma$ of the first equation of \eqref{eq:Gauge:Cone} already shows that $\beta_0$ has mean value zero whenever $\lambda\neq -5,1$.

Now, algebraic manipulations of \eqref{eq:Gauge:Cone} imply that
\[
\begin{gathered}
(\lambda-5)d^\ast\beta_1 -(\lambda-1)(\lambda+9)\beta_0 \text{ is an eigenfunction of eigenvalue } (\lambda+1)(\lambda+5),\\
d^\ast\beta_1 -(\lambda+5)\beta_0 \text{ is an eigenfunction of eigenvalue }(\lambda-1)(\lambda+3).
\end{gathered}
\]

If $\lambda\in [-4,0]$ then both $(\lambda+1)(\lambda+5)\leq 5$ and $(\lambda-1)(\lambda+3)\leq 5$. $L^2$--orthogonality of $d^\ast\beta_1$ and $\beta_0$ to constant functions and Proposition \ref{prop:Spectrum:Laplacian:Regular:SE} (i) imply
\[
(\lambda-5)d^\ast\beta_1 =(\lambda-1)(\lambda+9)\beta_0=(\lambda+5)(\lambda-5)\beta_0.
\]
Thus $\beta_0=0=d^\ast\beta_1$ unless $\lambda=-2$. In the latter case however $d^\ast\beta_1=3\beta_0$ and the first equation in \eqref{eq:Gauge:Cone} becomes $\triangle\beta_0 + 3\beta_0=0$. Thus $\beta_0=0=d^\ast\beta_1$ for all $\lambda\in [-4,0]$. The second equation of \eqref{eq:Gauge:Cone} then implies that $\beta_1$ is a coclosed eigenform of the Laplacian with eigenvalue $(\lambda+1)(\lambda+3)$. Since $(\lambda+1)(\lambda+3) <8$ for $\lambda\in (-5,1)$ Proposition \ref{prop:Spectrum:Laplacian:Regular:SE} (ii) forces $\beta_1$ to vanish.
\endproof
\end{prop}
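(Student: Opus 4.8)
The plan is to reduce both operators, via Lemma~\ref{lem:Gauge:fixing:operator:AC:CY}, to the scalar Laplacian and to the first-order-coupled $1$-form operator $dd^\ast+\tfrac23 d^\ast d$ on the cone, and then to separate variables and invoke the eigenvalue estimates of Proposition~\ref{prop:Spectrum:Laplacian:Regular:SE} together with the classification of homogeneous harmonic functions in Proposition~\ref{prop:Harmonic:Functions:Cone}. Concretely, Lemma~\ref{lem:Gauge:fixing:operator:AC:CY} identifies $g\mapsto \pi_1 dd^\ast(\tfrac12 g\,\omega_\tu{C}^2)$ with $\tfrac13(\triangle g)\,\omega_\tu{C}^2$ and $(f,\gamma)\mapsto\pi_{1\oplus 6}d^\ast d(f\omega_\tu{C}+\gamma^\sharp\lrcorner\Real\Omega_\tu{C})$ with $\bigl(\tfrac23\triangle f,\,dd^\ast\gamma+\tfrac23 d^\ast d\gamma\bigr)$. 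So a homogeneous element of order $\lambda$ in the kernel of the first operator is a homogeneous harmonic function of order $\lambda$, and one in the kernel of the second splits as a homogeneous harmonic function $f$ together with a homogeneous $1$-form $\gamma$ satisfying $dd^\ast\gamma+\tfrac23 d^\ast d\gamma=0$. Since $(-4,0)\subset[-5,1]\setminus\{-4,0\}$, Proposition~\ref{prop:Harmonic:Functions:Cone} rules out nonzero homogeneous harmonic functions of order $\lambda\in(-4,0)$, which settles the first operator and the $f$--component of the second.

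It then remains to show that a homogeneous $1$-form $\gamma$ of order $\lambda\in(-4,0)$ with $dd^\ast\gamma+\tfrac23 d^\ast d\gamma=0$ must vanish. I would write $\gamma=r^\lambda(\beta_0\,dr+r\beta_1)$ with $\beta_0\in\Omega^0(\Sigma)$ and $\beta_1\in\Omega^1(\Sigma)$, and compute $dd^\ast\gamma$ and $d^\ast d\gamma$ using the standard separation-of-variables formulas for $d$ and $d^\ast$ on a cone. This turns the equation into a coupled second-order system on $\Sigma$ for the pair $(\beta_0,\beta_1)$: schematically, $\tfrac23 d^\ast d\beta_0$ equals a quadratic-in-$\lambda$ multiple of $\beta_0$ plus a ($\lambda$--dependent) multiple of $d^\ast\beta_1$, while $(dd^\ast+\tfrac23 d^\ast d)\beta_1$ equals a quadratic-in-$\lambda$ multiple of $\beta_1$ plus a multiple of $d\beta_0$. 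The decisive algebraic step is to decouple this: two suitable linear combinations of $d^\ast\beta_1$ and $\beta_0$ turn out to be genuine eigenfunctions of $\triangle_\Sigma$, with eigenvalues $(\lambda+1)(\lambda+5)$ and $(\lambda-1)(\lambda+3)$; and integrating the system over $\Sigma$ shows $\beta_0$ and $d^\ast\beta_1$ have vanishing mean (here one uses $\lambda\neq -5,1$), so these eigenfunctions are $L^2$--orthogonal to the constants.

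The spectral estimates now close the argument. For $\lambda\in[-4,0]$ both $(\lambda+1)(\lambda+5)$ and $(\lambda-1)(\lambda+3)$ are at most $5$, so by Proposition~\ref{prop:Spectrum:Laplacian:Regular:SE}(i) --- the first nonzero eigenvalue of the scalar Laplacian on $\Sigma$ exceeds $5$ --- the two combinations vanish, giving $\beta_0=0=d^\ast\beta_1$, except at the value $\lambda=-2$ where the two combinations become proportional and one only recovers the relation $d^\ast\beta_1=3\beta_0$; substituting this back into the $\beta_0$--equation yields $\triangle_\Sigma\beta_0=-3\beta_0$, whence $\beta_0=0$ by non-negativity of $\triangle_\Sigma$. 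With $\beta_0=0=d^\ast\beta_1$, the surviving equation says $\beta_1$ is a coclosed eigenform of $\triangle_\Sigma$ with eigenvalue $(\lambda+1)(\lambda+3)<8$ for $\lambda\in(-5,1)$, so Proposition~\ref{prop:Spectrum:Laplacian:Regular:SE}(ii) --- the first eigenvalue on coclosed $1$-forms is at least $8$ --- forces $\beta_1=0$, and hence $\gamma=0$. I expect the only real difficulty to be computational rather than conceptual: getting the exact coefficients in the coupled cone system right, correctly guessing the decoupling combinations, and dealing carefully with the borderline case $\lambda=-2$ where that decoupling degenerates.
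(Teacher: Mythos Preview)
Your proposal is correct and follows essentially the same approach as the paper: reduce via Lemma~\ref{lem:Gauge:fixing:operator:AC:CY} to the scalar Laplacian and to $dd^\ast+\tfrac23 d^\ast d$ on $1$-forms, separate variables on the cone to obtain the coupled system for $(\beta_0,\beta_1)$, decouple into two scalar eigenfunctions with eigenvalues $(\lambda+1)(\lambda+5)$ and $(\lambda-1)(\lambda+3)$, and conclude using the eigenvalue bounds of Proposition~\ref{prop:Spectrum:Laplacian:Regular:SE}, with the degenerate case $\lambda=-2$ handled separately exactly as you describe. The paper's proof differs only in that it records the explicit coefficients of the coupled system \eqref{eq:Gauge:Cone} and the precise decoupling combinations; your sketch identifies all the same ingredients and the same borderline case.
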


\subsubsection{The Dirac operator}

Recall that by Lemma \ref{lem:Dirac:cone} the Dirac operator of the cone $\tu{C}(\Sigma)$ is identified with the operator
\[
\slashed{D}(f,g,\gamma) = (d^\ast\gamma, -d^\ast \!J\gamma, \tu{curl}\,\gamma + df-Jdg)
\]
acting on $\Omega^0\oplus\Omega^0\oplus\Omega^1$.

\begin{prop}\label{prop:Dirac:cone}
Let $(f,g,\gamma)$ be a harmonic spinor on the Calabi--Yau cone $\tu{C}(\Sigma)$, where $f,g,\gamma$ are homogeneous of order $\lambda$. Then
\[
(f,g,\gamma) = \begin{cases}
\bigl( 0,0,r^2\beta + d\bigl( \tfrac{1}{2}r^2\alpha\bigr) \bigr) \text{ where }\beta^\flat\in\mathcal{K}_0(\Sigma) \text{ and }\triangle\alpha=12\alpha & \text{if } \lambda=1,\\
\bigl( 0,0, d\bigl( \tfrac{1}{\lambda+1}r^{\lambda+1}\alpha\bigr)\bigr) \text{ where }\triangle\alpha=(\lambda+5)(\lambda+1)\alpha & \text{if } \lambda\in (0,1),\\
\left( K_1,K_2,0\right) \text{ where }K_1,K_2\in\R & \text{if } \lambda=0,\\
\left( 0,0,0\right) & \text{if } \lambda \in (-5,0),\\
\left( 0,0,K_1 r^{-5}dr + K_2 r^{-4}\eta \right) \text{ where }K_1,K_2\in\R & \text{if } \lambda=-5.
\end{cases}
\]
\proof
Since $\tu{C}(\Sigma)$ is Ricci-flat, by the Lichnerowicz Formula $\slashed{D}(f,g,\gamma)=0$ implies that $f,g,\gamma$ are all harmonic. We can therefore appeal to Propositions \ref{prop:Harmonic:Functions:Cone} and \ref{prop:Harmonic:1Forms:Cone} to deduce that $f=0=g$ if $\lambda\in [-5,1]\setminus\{ -4,0\}$ and $\gamma=0$ if $\lambda\in [-4,0]$. Furthermore, a straightforward computation shows that $\slashed{D}(K_1 r^{-4},K_2 r^{-4},0)=0$ implies $K_1=0=K_2$. This already proves the statement for all $\lambda\in [-4,0]$. Moreover, for $\lambda\in[-5,-4) \cup (0,1]$ $f=0=g$ and by Lemma \ref{lem:Dirac:cone} we have to understand whether a harmonic $1$-form $\gamma$ homogeneous of order $\lambda$ is also coclosed and such that $d\gamma$ is a primitive $(1,1)$--form. 

When $\lambda\in (0,1)$, $\gamma$ is exact by Proposition \ref{prop:Harmonic:1Forms:Cone}. By Remark \ref{rmk:Harmonic:1:forms}, $\gamma$ is also coclosed and therefore it defines an element in the kernel of $\slashed{D}$.

When $\lambda=1$, Proposition \ref{prop:Harmonic:1Forms:Cone} implies that $\gamma = Krdr + d\left( \tfrac{r^2}{2}\alpha\right) + r^2\beta$ for some $K\in\R$, a function $\alpha$ such that $\triangle\alpha=12\alpha$ and a $1$-form $\beta$ dual to a Killing field. By Remark \ref{rmk:Harmonic:1:forms}, $\gamma$ is coclosed if and only if $K=0$. Thus $\gamma$ defines an element in the kernel of $\slashed{D}$ if and only if $d(r^2\beta)= 2r dr\wedge \beta + r^2 d\beta$ is a primitive $(1,1)$--form. Since $\omega_\tu{C} = rdr\wedge\eta + r^2\omega_1$ and $\Real\Omega_\tu{C} = r^2 dr\wedge\omega_2 - r^3\eta\wedge\omega_3$, by the characterisation of primitive $(1,1)$--forms in Lemma \ref{lem:identities:2forms} we have that $d(r^2\beta)$ is primitive if and only if
\[
d\beta \wedge\eta\wedge\omega_1 +\beta\wedge\omega_1^2=0,
\]
(together with a second equation which is $d$ of this one) while $d(r^2\beta)$ is of type $(1,1)$ if and only if 
\[
d\beta\wedge\eta\wedge\omega_3=0, \qquad -2\beta\wedge\eta\wedge\omega_3 +d\beta\wedge\omega_2=0. 
\]

Now, by Lemma \ref{lem:Killing:vectors:SE} (iii), $d\beta = f_1 \omega_1 + 2(J_1 \beta)\wedge\eta + \sigma_0$, with $\sigma_0\in\Lambda^{1,1}_0\mathcal{H}^\ast$. Then we find
\[
d\beta\wedge\omega_i = \begin{cases}
f_1\omega_1^2 + 2J_1\beta\wedge\eta\wedge\omega_1 & \mbox{ if }i=1,\\
2J_1\beta\wedge\eta\wedge\omega_i & \mbox{ otherwise},
\end{cases}
\qquad \mbox{ and }\qquad
d\beta\wedge\eta\wedge\omega_i = \begin{cases}
f_1\eta\wedge\omega_1^2 & \mbox{ if }i=1,\\
0 & \mbox{ otherwise}.
\end{cases}
\]
Moreover
\begin{equation}\label{eq:Dirac:cone:proof}
2\beta\wedge\eta\wedge\omega_3 = -2(J_3^2\beta)\wedge\eta\wedge\omega_3 =    -2(J_2 J_3\beta)\wedge\eta\wedge\omega_2 = 2(J_1\beta)\wedge\eta\wedge\omega_2,
\end{equation}
where we used Lemma \ref{lem:Forms:Sasaki:Einstein} (ii) in the second equality and the identity $J_2 J_3=-J_1$ (as endomorphisms of $\mathcal{H}^\ast$) in the final one.

Using these facts we see that $d(r^2\beta)$ is always of type $(1,1)$, while the requirement that $d(r^2\beta)$ be primitive implies that $f_1 = -\beta(\xi)$. Lemma \ref{lem:Killing:vectors:SE} then shows that $\beta^\flat \in \mathcal{K}_0(\Sigma)$, \ie $\beta$ is dual to a Killing field that also preserves the Killing spinor on $\Sigma$.

By Proposition \ref{prop:Harmonic:1Forms:Cone} and Remark \ref{rmk:Harmonic:1:forms} $\gamma$ is never coclosed if $\lambda\in (-5,0]$. When $\lambda=-5$ then $\gamma$ is coclosed if and only if $\gamma = Kr^{-5}dr +r^{-4}\beta$ for some $K\in\R$ and a $1$-form $\beta$ dual to a Killing field. Thus $\gamma$ defines an element in the kernel of $
\slashed{D}$ if and only if $d(r^{-4}\beta) = -4r^{-5}dr\wedge\beta + r^{-4}d\beta$ is a primitive $(1,1)$--form. Note that $d(r^{-4}\beta) \wedge\Real\Omega_C=0$ if and only if $4\beta\wedge\eta\wedge\omega_3 +d\beta\wedge\omega_2=0$ and $d\beta\wedge\eta\wedge\omega_3=0$. As we saw above the second constraint is satisfied for any $1$-form $\beta$ dual to a Killing field. However \eqref{eq:Dirac:cone:proof} shows that $4\beta\wedge\eta\wedge\omega_3 +d\beta\wedge\omega_2=0$ if and only if $\beta\wedge\eta\wedge\omega_3=0$, \ie $\beta=f\eta$ for some function $f$. Since $\beta$ is dual to a Killing field and therefore in particular $d^\ast\beta=0=\mathcal{L}_{\beta^\sharp}\eta$, we deduce that $f$ must be constant. Finally, a straightforward computation shows that $d(r^{-4}\eta)$ is also primitive.
\endproof
\end{prop}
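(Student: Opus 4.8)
The plan is to turn the harmonic--spinor equation into harmonicity of the scalar and $1$-form components and then feed in the classification of homogeneous harmonic functions and $1$-forms on the cone. First I would observe that $\tu{C}(\Sigma)$ is Calabi--Yau, hence Ricci-flat and in particular scalar-flat, so the Lichnerowicz formula gives $\slashed{D}^2=\nabla^\ast\nabla$; moreover the orthogonal splitting $\slashed{S}\cong\underline{\R}\oplus\underline{\R}\oplus T^\ast B$ is parallel (both $\psi$ and $\vol\cdot\psi$ are parallel), so $\nabla^\ast\nabla$ acts diagonally with respect to it, namely as the Laplacian on functions and, via the Weitzenb\"ock formula together with Ricci-flatness, as the Hodge Laplacian on $1$-forms. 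Hence $\slashed{D}(f,g,\gamma)=0$ forces $f,g$ to be homogeneous harmonic functions and $\gamma$ a homogeneous harmonic $1$-form, all of order $\lambda$. Proposition~\ref{prop:Harmonic:Functions:Cone} then kills $f$ and $g$ unless $\lambda\in\{-4,0\}$, and Proposition~\ref{prop:Harmonic:1Forms:Cone} kills $\gamma$ for $\lambda\in[-4,0]$. For $\lambda=0$ the surviving data $f=K_1$, $g=K_2$ constant, $\gamma=0$ automatically lie in the kernel (since $df=dg=0$), giving the case $(K_1,K_2,0)$; for $\lambda=-4$ one has $\gamma=0$ and, by Lemma~\ref{lem:Dirac:cone}, the equation becomes $K_1\,d(r^{-4})-K_2\,J\,d(r^{-4})=0$, which forces $K_1=K_2=0$ because $dr$ and $\eta$ are pointwise independent and $J\,dr$ is proportional to $\eta$. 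This disposes of all $\lambda\in[-4,0]$ and leaves only $\lambda\in(0,1]$ and $\lambda=-5$, with $f=g=0$ throughout.

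For these remaining orders, the last sentence of Lemma~\ref{lem:Dirac:cone} reduces the problem to finding all homogeneous harmonic $1$-forms $\gamma$ of order $\lambda$ that are \emph{coclosed} and whose differential $d\gamma$ is a \emph{primitive $(1,1)$-form}. Remark~\ref{rmk:Harmonic:1:forms} tells us exactly which of the $1$-forms listed in Proposition~\ref{prop:Harmonic:1Forms:Cone} are coclosed: for $\lambda\in(0,1)$ all of them (they are in fact exact, so $d\gamma=0$ is trivially a primitive $(1,1)$-form, and such $\gamma$ lie in the kernel); for $\lambda=1$ one must discard the $Krdr$ term, leaving $\gamma=d(\tfrac12 r^2\alpha)+r^2\beta$ with $\triangle\alpha=12\alpha$ and $\beta^\flat$ a Killing field; for $\lambda=-5$ coclosedness forces $\gamma=Kr^{-5}dr+r^{-4}\beta$ with $\beta^\flat$ a Killing field. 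Since the exact summands contribute nothing to $d\gamma$, the question in the two boundary cases becomes: when is $d(r^2\beta)$, respectively $d(r^{-4}\beta)$, a primitive $(1,1)$-form for the conical Calabi--Yau structure \eqref{eq:conical:CY}?

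This last reduction is where the real work lies, and I expect it to be the main obstacle. The tool is Lemma~\ref{lem:Killing:vectors:SE}(iii), which expands $d\beta=f_1\omega_1+2(J_1\beta)\wedge\eta+\sigma_0$ with $\sigma_0\in\Lambda^{1,1}_0\mathcal{H}^\ast$ and $f_1$ determined by the $\xi$-component of $\beta^\flat$. Plugging $\omega_\tu{C}=r\,dr\wedge\eta+r^2\omega_1$ and $\Real\Omega_\tu{C}=r^2\,dr\wedge\omega_2-r^3\,\eta\wedge\omega_3$ into the characterisation of primitive $(1,1)$-forms from Lemma~\ref{lem:identities:2forms}, and using the Hodge-star identities of Lemma~\ref{lem:Forms:Sasaki:Einstein} and the quaternionic relations among $J_1,J_2,J_3$ on $\mathcal{H}^\ast$ --- the decisive step being the identity $2\beta\wedge\eta\wedge\omega_3=2(J_1\beta)\wedge\eta\wedge\omega_2$ --- one checks that $d(r^2\beta)$ is always of type $(1,1)$ and is primitive precisely when $f_1=-\beta(\xi)$, which by Lemma~\ref{lem:Killing:vectors:SE} is exactly the condition $\beta^\flat\in\mathcal{K}_0(\Sigma)$. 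The parallel computation for $\lambda=-5$ shows that $d(r^{-4}\beta)$ is of type $(1,1)$ only if $\beta\wedge\eta\wedge\omega_3=0$, i.e.\ $\beta=f\eta$, whence $f$ is constant because $\beta^\flat$ is Killing, and a final direct check confirms that $d(r^{-4}\eta)$ is indeed primitive. Assembling the five cases yields the stated list. The difficulty is genuinely in this type/primitivity bookkeeping: one has to carry the three transverse complex structures and the $dr$- and $\eta$-components simultaneously, and it is the quaternionic identity above that collapses the two $\Real\Omega_\tu{C}$-wedge conditions to a single scalar constraint on $f_1$.
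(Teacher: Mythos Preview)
Your proof is essentially identical to the paper's: the same Lichnerowicz reduction to harmonicity, the same invocation of Propositions~\ref{prop:Harmonic:Functions:Cone} and~\ref{prop:Harmonic:1Forms:Cone} and Remark~\ref{rmk:Harmonic:1:forms}, the same key identity $2\beta\wedge\eta\wedge\omega_3=2(J_1\beta)\wedge\eta\wedge\omega_2$ via the quaternionic relations, and the same endgame distinguishing $\mathcal{K}_0(\Sigma)$ at $\lambda=1$ from the $\beta=f\eta$ constraint at $\lambda=-5$.

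One small slip: when you write ``This disposes of all $\lambda\in[-4,0]$ and leaves only $\lambda\in(0,1]$ and $\lambda=-5$'', you have silently dropped the open interval $(-5,-4)$, where $f=g=0$ but Proposition~\ref{prop:Harmonic:1Forms:Cone} still allows nonzero harmonic $\gamma$. The paper covers this with the observation (from Remark~\ref{rmk:Harmonic:1:forms}) that such $\gamma$ are never coclosed, so the kernel is trivial there; you should add that one line.
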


\begin{remark*}
As an aside we note that the fact that the kernel of $\slashed{D}$ is invariant under the complex structure $J$ on $\tu{C}(\Sigma)$ combined with the characterisation of harmonic spinors on $\tu{C}(\Sigma)$ homogeneous of order $1$ has the following corollary, \cf \cite[Theorem 5.1]{Futaki:Ono:Wang} and \cite[Theorem 2.14]{Hein:Sun}: the space $\mathcal{K}_0(\Sigma)$ of Killing fields that preserve the Killing spinor on $\Sigma$ is isomorphic to the space of basic functions on $\Sigma$ which are eigenfunctions of the Laplacian of eigenvalue $12$. The isomorphism is given by the map $f \longmapsto (f\eta - \tfrac{1}{2}J_1df)^\sharp$. Here a function $f$ on $\Sigma$ is basic if and only if $\mathcal{L}_\xi f=0$. In fact, by \cite[Theorem 2.14 (2)]{Hein:Sun} $12$ is the first non-zero eigenvalue of the Laplacian acting on basic functions on $\Sigma$; this is the extension of the Lichnerowicz--Matsushima Theorem for K\"ahler--Einstein del Pezzo surfaces to Sasaki--Einstein $5$--manifolds.
\end{remark*}

\subsubsection{Closed and coclosed forms}

In order to understand indicial roots of the first-order operator $d+d^\ast$, we will use the following two propositions about homogeneous harmonic $2$-forms and $3$-forms; both propositions follow immediately from Theorem \ref{thm:Harmonic:forms:cone:n}, the fact that $b_1(\Sigma)=0$ and the eigenvalue estimates given in Proposition \ref{prop:Spectrum:Laplacian:Regular:SE}.

\begin{prop}\label{prop:Harmonic:2forms:cone}
Let $\gamma$ be a harmonic $2$-form on $\tu{C}(\Sigma)$ homogeneous of order $\lambda$. Decompose $\gamma = \gamma_1+\gamma_2+\gamma_3+\gamma_4$ as in Theorem \ref{thm:Harmonic:forms:cone:n}.
\begin{enumerate}
\item $\gamma_1=r^{\lambda+1}dr\wedge df$ where $f$ is a function on $\Sigma$ satisfying $\triangle f=\lambda(\lambda+4)f$. In particular, $\gamma_1=0$ for all $\lambda\in [-5,1]$.
\item $\gamma_2=0$ for all $\lambda\in (-6,0)$. If $\lambda=-6$ or $\lambda=0$ then $\gamma_2 = d\left( \tfrac{1}{\lambda+2}r^{\lambda+2}\alpha\right)$, where $\alpha^\flat\in\mathcal{K}(\Sigma)$.
\item $\gamma_3=0$ for all $\lambda\in (-4,2)$. If $\lambda=-4$ or $\lambda=2$ then $\gamma_3 = r^{\lambda+1}dr\wedge\alpha - \tfrac{1}{\lambda+2}r^{\lambda+2}d\alpha$, where $\alpha^\flat\in\mathcal{K}(\Sigma)$.
\item $\gamma_4= r^{\lambda+2}\beta$ for a coclosed $2$-form $\beta$ on $\Sigma$ satisfying $\triangle\beta = (\lambda+2)^2\beta$. In particular, if $\Sigma$ is regular then $\gamma_4=0$ for all $\lambda\in [-4,0]\setminus\{ -2\}$.
\end{enumerate}
\end{prop}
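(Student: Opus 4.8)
The plan is to extract everything from Theorem \ref{thm:Harmonic:forms:cone:n}, which writes a harmonic $2$-form $\gamma$ on $\tu{C}(\Sigma)$ homogeneous of order $\lambda$ as $\gamma_1+\gamma_2+\gamma_3+\gamma_4$ with each summand assembled from a single eigenform on the link: $\gamma_1=r^{\lambda+1}dr\wedge df$ for $f$ a scalar eigenfunction, $\gamma_2=d\!\left(\tfrac{1}{\lambda+2}r^{\lambda+2}\alpha\right)$ and $\gamma_3=r^{\lambda+1}dr\wedge\alpha-\tfrac{1}{\lambda+2}r^{\lambda+2}d\alpha$ for $\alpha$ a coclosed eigen-$1$-form, and $\gamma_4=r^{\lambda+2}\beta$ for $\beta$ a coclosed eigen-$2$-form. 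Theorem \ref{thm:Harmonic:forms:cone:n} also records the eigenvalue each datum must carry for the associated cone form to be harmonic, namely $\triangle f=\lambda(\lambda+4)f$, $\triangle\alpha=(\lambda+2)(\lambda+4)\alpha$ for $\gamma_2$, $\triangle\alpha=\lambda(\lambda+2)\alpha$ for $\gamma_3$, and $\triangle\beta=(\lambda+2)^2\beta$. The whole proof is then the observation that, in the $\lambda$-ranges asserted in the statement, these target eigenvalues fall below the spectral gaps of Proposition \ref{prop:Spectrum:Laplacian:Regular:SE}, so the only surviving contributions are those coming from the zero eigenvalue (constants on the function side, harmonic forms on the $1$- and $2$-form sides).

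Concretely, for (i) one has $\lambda(\lambda+4)\le 5$ exactly when $\lambda\in[-5,1]$, and Proposition \ref{prop:Spectrum:Laplacian:Regular:SE}(i) leaves no scalar eigenvalue in $(0,5]$, so $f$ is constant and $df=0$ throughout $[-5,1]$. For (ii) and (iii) the form $\alpha$ is coclosed on $\Sigma$: if its eigenvalue vanishes it is harmonic, hence zero because $b_1(\Sigma)=0$; otherwise Proposition \ref{prop:Spectrum:Laplacian:Regular:SE}(ii) forces that eigenvalue to be $\ge 8$, attained only by duals of Killing fields. Since $(\lambda+2)(\lambda+4)<8$ throughout $(-6,0)$ and $\lambda(\lambda+2)<8$ throughout $(-4,2)$, both $\gamma_2$ and $\gamma_3$ vanish on these open intervals, whereas at the endpoints $\lambda\in\{-6,0\}$ (resp. $\lambda\in\{-4,2\}$) the eigenvalue equals $8$, so $\alpha^\flat\in\mathcal{K}(\Sigma)$ and one reads off the stated formulas. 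For (iv), using regularity of $\Sigma$, Proposition \ref{prop:Spectrum:Laplacian:Regular:SE}(iii) leaves no coclosed $2$-form eigenvalue in $(0,4]$; since $(\lambda+2)^2\le 4$ for $\lambda\in[-4,0]$, the only admissible value there is $(\lambda+2)^2=0$, \ie $\lambda=-2$, where $\gamma_4=\beta$ is a harmonic $2$-form on $\Sigma$, and $\gamma_4=0$ for every other $\lambda\in[-4,0]$.

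I do not anticipate any genuine obstacle: the substance lies in Theorem \ref{thm:Harmonic:forms:cone:n} and Proposition \ref{prop:Spectrum:Laplacian:Regular:SE}, and what remains is the bookkeeping of matching the quadratic eigenvalue polynomials to the thresholds $5$, $8$ and $4$, together with care at the borderline values of $\lambda$ --- invoking Proposition \ref{prop:Spectrum:Laplacian:Regular:SE}(ii) to identify the critical eigen-$1$-forms as Killing duals, and remembering in (iv) that harmonic $2$-forms on $\Sigma$ genuinely occur, which is precisely why $\lambda=-2$ must be excepted. The only point demanding thought rather than computation is to confirm that the "closed $\alpha$" case in (ii)--(iii) truly collapses via $b_1(\Sigma)=0$ and that the surviving pieces are repackaged into the exact/closed forms written in the statement, both immediate once the link data is pinned down.
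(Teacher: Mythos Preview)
Your proposal is correct and follows exactly the approach the paper indicates: the paper simply states that the proposition follows immediately from Theorem \ref{thm:Harmonic:forms:cone:n}, the fact that $b_1(\Sigma)=0$, and the eigenvalue estimates in Proposition \ref{prop:Spectrum:Laplacian:Regular:SE}. Your write-up spells out precisely this bookkeeping---matching the quadratic eigenvalue polynomials $\lambda(\lambda+4)$, $(\lambda+2)(\lambda+4)$, $\lambda(\lambda+2)$, $(\lambda+2)^2$ against the spectral thresholds $5$, $8$, $8$, $4$ and handling the borderline cases---and is exactly what the paper leaves implicit.
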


\begin{remark}\label{rmk:Log:Harmonic:2forms:Cone}
We should also count harmonic $2$-forms on $\tu{C}(\Sigma)$ which are polynomials in $\log{r}$ with coefficients given by $2$-forms homogeneous of order $\lambda$. By Proposition \ref{prop:log:homogeneous:harmonic} we only need to consider the case $\lambda=-2$. In this case, for every harmonic $2$-form $\tau$ on $\Sigma$, the forms $\tau\,\log{r}$ and $\tau$ are harmonic on $\tu{C}(\Sigma)$.
\end{remark}

\begin{prop}\label{prop:Harmonic:3forms:Cone}
Let $\gamma$ be a harmonic $3$-form on $\tu{C}(\Sigma)$ homogeneous of order $\lambda$. Decompose $\gamma = \gamma_1+\gamma_2+\gamma_3+\gamma_4$ as in Theorem \ref{thm:Harmonic:forms:cone:n}.
\begin{enumerate}
\item $\gamma_1=0$ for all $\lambda\in (-5,1)\setminus\{-3,-1\}$. If $\lambda=-1$ or $\lambda=-3$ then $\gamma_1 = r^{\lambda+2}dr\wedge \alpha$ where $\alpha$ is a harmonic $2$-form. If $\lambda=1$ or $\lambda=-5$ then $\gamma_1 = r^{\lambda+2}dr\wedge d\beta$ for some $\beta^\flat\in\mathcal{K}(\Sigma)$.
\item $\gamma_2 = d\left( \tfrac{1}{\lambda+3}r^{\lambda+3}\alpha\right)$, where $\alpha$ is a coexact $2$-form on $\Sigma$ satisfying $\triangle\alpha = (\lambda+3)^2\alpha$. In particular, if $\Sigma$ is regular then $\gamma_2=0$ for all $\lambda\in [-5,-1]$.
\item $\gamma_3 = r^{\lambda+2}dr\wedge\alpha - \tfrac{1}{\lambda+1}d\alpha$, where $\alpha$ is a coexact $2$-form on $\Sigma$ satisfying $\triangle\alpha = (\lambda+1)^2\alpha$. In particular, if $\Sigma$ is regular then $\gamma_3=0$ for all $\lambda\in [-3,1]$.
\item $\ast\gamma_4$ satisfies the same conditions as $\gamma_1$.
\end{enumerate}
\end{prop}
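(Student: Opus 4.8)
The plan is to obtain both propositions by specialising the general structure theorem for homogeneous harmonic forms on Riemannian cones, Theorem~\ref{thm:Harmonic:forms:cone:n}, to $n=6$, $k=2,3$, with $\Sigma$ a Sasaki--Einstein $5$-manifold, and then to decide which indicial roots $\lambda$ actually occur in the window $[-5,1]$ using the spectral information of Proposition~\ref{prop:Spectrum:Laplacian:Regular:SE} together with $b_1(\Sigma)=0$ (which holds since $\Sigma$ is Einstein with positive Ricci curvature). Theorem~\ref{thm:Harmonic:forms:cone:n} presents an arbitrary homogeneous harmonic form on $\tu{C}(\Sigma)$ as $\gamma_1+\gamma_2+\gamma_3+\gamma_4$, where each summand is built from an eigenform on $\Sigma$ of a prescribed degree and closedness type, subject to a quadratic relation between its eigenvalue $\mu$ and the order $\lambda$; moreover the Hodge star $\ast_{g_\tu{C}}$ preserves the homogeneity order and, for $3$-forms on the $6$-dimensional cone, interchanges the types of $\gamma_1$ and $\gamma_4$, so it suffices to analyse $\gamma_1$, $\gamma_2$ and $\gamma_3$.

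For Proposition~\ref{prop:Harmonic:3forms:Cone}, the datum controlling $\gamma_1$ is a \emph{closed} $2$-form $\alpha$ on $\Sigma$ with $\triangle\alpha=\mu\alpha$, with $\gamma_1=r^{\lambda+2}\,dr\wedge\alpha$ and harmonicity on the cone equivalent to $(\lambda+2)^2=\mu+1$. Since $b_1(\Sigma)=0$, such an $\alpha$ is either harmonic ($\mu=0$) or exact, hence of the form $d\beta$ for a coexact eigen-$1$-form $\beta$; by Proposition~\ref{prop:Spectrum:Laplacian:Regular:SE}(ii) the eigenvalue of the latter is $\geq 8$, with equality exactly for duals of Killing fields. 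Thus in the range $\lambda\in[-5,1]$, equivalently $(\lambda+2)^2\in[0,9]$ and $\mu\in[-1,8]$, only $\mu=0$ (giving $\lambda=-1,-3$, $\alpha$ harmonic) and $\mu=8$ (giving $\lambda=1,-5$, $\alpha=d\beta$ with $\beta^\flat\in\mathcal{K}(\Sigma)$) are possible; this is (i), and (iv) follows by applying $\ast_{g_\tu{C}}$. The data controlling $\gamma_2$ and $\gamma_3$ are \emph{coexact} $2$-forms $\alpha$ on $\Sigma$ with $\triangle\alpha=(\lambda+3)^2\alpha$ and $\triangle\alpha=(\lambda+1)^2\alpha$ respectively; a coexact form is orthogonal to the harmonic forms, so its eigenvalue is a non-zero eigenvalue of the Laplacian on coclosed $2$-forms, which exceeds $4$ in the regular case by Proposition~\ref{prop:Spectrum:Laplacian:Regular:SE}(iii). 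Hence $(\lambda+3)^2>4$ forces $\lambda\notin[-5,-1]$ and $(\lambda+1)^2>4$ forces $\lambda\notin[-3,1]$, which is (ii) and (iii).

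Proposition~\ref{prop:Harmonic:2forms:cone} is proved identically: $\gamma_1$ is governed by an eigenfunction and vanishes on $[-5,1]$ because the first non-zero eigenvalue of the scalar Laplacian exceeds $5$ (Proposition~\ref{prop:Spectrum:Laplacian:Regular:SE}(i)); $\gamma_2$ and $\gamma_3$ are governed by coclosed $1$-forms, whose non-zero eigenvalues are $\geq 8$ (part (ii)), so only $\lambda=-6,0$ and $\lambda=-4,2$ survive, each realised exactly at Killing fields; and $\gamma_4$ is governed by coclosed $2$-forms, so in the regular case (part (iii)) it vanishes except at $\lambda=-2$, where it is a harmonic $2$-form on $\Sigma$.

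The real content is not this bookkeeping, which is essentially forced once Theorem~\ref{thm:Harmonic:forms:cone:n} is available, but the two inputs behind it: Cheeger's cone decomposition (Appendix~\ref{Appendix:homogenous:harmonic}) and, more delicately, the spectral gap for coclosed $2$-forms in Proposition~\ref{prop:Spectrum:Laplacian:Regular:SE}(iii) --- which is precisely why the regularity of $\Sigma$ must be assumed in the statements about $\gamma_2$ and $\gamma_3$ (and, in the $2$-form case, about $\gamma_4$ near $\lambda=-2$). The one slightly delicate point in assembling the argument is tracking, in the analysis of $\gamma_1$ and hence of $\gamma_4$, the closed but \emph{not} coclosed $2$-forms $d\beta$ with $\beta$ a Killing dual: these produce the extra indicial roots $\lambda=1,-5$, and one must also verify that $\ast_{g_\tu{C}}$ really does fix the homogeneity order on the $6$-cone, so that the $\gamma_4$ contribution mirrors $\gamma_1$ at the same $\lambda$.
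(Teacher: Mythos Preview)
Your proposal is correct and follows exactly the approach the paper indicates: specialise Theorem~\ref{thm:Harmonic:forms:cone:n} to $n=6$, $k=3$ (and $k=2$), use $b_1(\Sigma)=0$, and apply the eigenvalue bounds of Proposition~\ref{prop:Spectrum:Laplacian:Regular:SE}. The paper's own proof is a one-line reference to these ingredients, and you have filled in the bookkeeping accurately, including the duality $\gamma_1\leftrightarrow\gamma_4$ under $\ast_{g_\tu{C}}$ and the identification of the Killing-field contributions at the boundary rates $\lambda=1,-5$.
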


\begin{remark}\label{rmk:Log:Harmonic:3forms:Cone}
In the regular case, Proposition \ref{prop:log:homogeneous:harmonic} shows that there are no harmonic $3$-forms on the cone which are non-trivial polynomials in $\log{r}$ with coefficients in the space of homogeneous forms of order $\lambda$.
\end{remark}

We will now use these results to study indicial roots of the first-order operator $d+d^\ast$. We are particularly interested in understanding closed and coclosed even-degree forms of rate $-2$ and odd-degree forms of rate $-3$.

\begin{prop}\label{prop:Closed:Even:Forms}
Let $\gamma$ be a closed and coclosed form of even degree on $\tu{C}(\Sigma)$ homogeneous of rate $\lambda=-2$. Then $\gamma$ has only components of pure degree $2$ and $4$, both individually closed and coclosed:
\[
\gamma = \tau_1 + rdr\wedge \eta\wedge\tau_2
\]
for harmonic $2$-forms $\tau_1,\tau_2$ on $\Sigma$. Moreover, if $\Sigma$ is a regular Sasaki--Einstein $5$-manifold then there are no closed and coclosed even-degree forms on $\tu{C}(\Sigma)$ homogeneous of rate $\lambda\in (-4,0) \setminus\{ -2\}$. 
\proof
Write $\gamma = \gamma_0 + \gamma_2 +\gamma_4 + \gamma_6$, with $\gamma_k \in \Omega^k$. Since each pure degree component is harmonic, Proposition \ref{prop:Harmonic:Functions:Cone} shows that $\gamma_0=0=\gamma_6$ whenever $\lambda\in (-4,0)$.

Writing $\gamma _k = r^\lambda (r^{k-1}dr\wedge\alpha_{k-1} + r^k \beta_k)$ we are then left to solve
\begin{align*}
&d^\ast\alpha_1 =0, & &d^\ast\beta_2 -(\lambda+4)\alpha_1=0,\\
&d\alpha_1 + d^\ast\alpha_3 - (\lambda+2)\beta_2 =0, & &d\beta_2 + d^\ast\beta_4 - (\lambda+2)\alpha_3 =0,\\
&d\alpha_3 - (\lambda+4)\beta_4 =0, & &d\beta_4=0.
\end{align*}
If $\lambda=-2$ then $\alpha_1$ and $\beta_4$ are both closed and coclosed and therefore vanish since $H^1(\Sigma) =0=H^4(\Sigma)$. Then $\beta_2$ and $\alpha_3$ also are closed and coclosed.

If $\Sigma$ is a regular Sasaki--Einstein $5$-manifold then Proposition \ref{prop:Harmonic:2forms:cone} shows that there are no harmonic forms of degree $2$ and $4$ of rate $\lambda \in (-4,0)\setminus\{ -2\}$.  
\endproof
\end{prop}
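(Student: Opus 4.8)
The plan is to push the whole question down to the compact Sasaki--Einstein link $\Sigma$, where ordinary Hodge theory applies, using only the cone calculus and the classification of homogeneous harmonic forms already recorded in this section. First I would decompose $\gamma = \gamma_0 + \gamma_2 + \gamma_4 + \gamma_6$ into pure-degree components. Since $d$ and $d^\ast$ shift degree by $\pm 1$, the forms $d\gamma_0, d\gamma_2, d\gamma_4$ lie in distinct (odd) degrees, so $d\gamma = 0$ forces each $d\gamma_k = 0$, and likewise $d^\ast\gamma = 0$ forces each $d^\ast\gamma_k = 0$; hence every $\gamma_k$ is separately closed and coclosed, harmonic, and still homogeneous of order $\lambda$. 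A harmonic homogeneous function of order $\lambda \in (-4,0)$ vanishes by Proposition~\ref{prop:Harmonic:Functions:Cone}, and applying the Hodge star of $\tu{C}(\Sigma)$ the same kills $\gamma_6$; so $\gamma = \gamma_2 + \gamma_4$ with both summands individually closed and coclosed, and it only remains to identify them.

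For the case $\lambda = -2$ I would separate variables, writing $\gamma_2 = r^{\lambda+1}\,dr\wedge\alpha_1 + r^{\lambda+2}\beta_2$ and $\gamma_4 = r^{\lambda+3}\,dr\wedge\alpha_3 + r^{\lambda+4}\beta_4$ with $\alpha_1 \in \Omega^1(\Sigma)$, $\alpha_3 \in \Omega^3(\Sigma)$, $\beta_2 \in \Omega^2(\Sigma)$, $\beta_4 \in \Omega^4(\Sigma)$, and expanding $d\gamma = 0 = d^\ast\gamma$ with the standard cone formulas for $d$ and $\ast$. This produces a first-order linear system on $\Sigma$ coupling $(\alpha_1,\beta_2,\alpha_3,\beta_4)$ in which every undifferentiated term carries a coefficient equal to $\lambda+2$ or $\lambda+4$. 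Specialising to $\lambda = -2$ annihilates the $\lambda+2$ terms, and a short integration by parts on the compact manifold $\Sigma$ then forces $\alpha_1$ and $\beta_4$ to be harmonic forms on $\Sigma$ of degrees $1$ and $4$, hence to vanish because $H^1(\Sigma) = 0 = H^4(\Sigma)$. With those gone the remaining equations say that $\beta_2$ and $\alpha_3$ are harmonic $2$- and $3$-forms on $\Sigma$, so $\gamma_2 = \beta_2 =: \tau_1$ and $\gamma_4 = r\,dr\wedge\alpha_3$. Writing $\alpha_3 = \ast_\Sigma\tau_2$ for a harmonic $2$-form $\tau_2$, Remark~\ref{rmk:Harmonic:2:forms:SE} gives $\tau_2 \in \Lambda^{1,1}_0\mathcal{H}^\ast$ and Lemma~\ref{lem:Forms:Sasaki:Einstein}(iv) gives $\ast_\Sigma\tau_2 = -\eta\wedge\tau_2$, so after absorbing the sign into $\tau_2$ we get $\gamma = \tau_1 + r\,dr\wedge\eta\wedge\tau_2$, as claimed. (One could also deduce this directly from Proposition~\ref{prop:Harmonic:2forms:cone} applied to $\gamma_2$ and to $\ast\gamma_4$, but the explicit system keeps the bookkeeping transparent.)

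For the ``moreover'' statement nothing further is needed: when $\Sigma$ is regular and $\lambda \in (-4,0)\setminus\{-2\}$, every homogeneous harmonic $2$-form on $\tu{C}(\Sigma)$ of order $\lambda$ vanishes by Proposition~\ref{prop:Harmonic:2forms:cone} (parts (i)--(iii) hold unconditionally in this range and part (iv) uses regularity), hence so does every homogeneous harmonic $4$-form by Hodge duality; together with the vanishing of $\gamma_0$ and $\gamma_6$ from the first step this forces $\gamma = 0$. I expect the only genuinely delicate point to be the middle step: carrying out the cone calculus so that the first-order system --- and in particular the disappearance of the $\lambda+2$ terms at $\lambda=-2$ --- is correct, and making the integration-by-parts arguments on $\Sigma$ (which use compactness of $\Sigma$ and $b_1(\Sigma) = b_4(\Sigma) = 0$) rigorous. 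Everything else is bookkeeping or a direct citation of the harmonic-form classifications of this section.
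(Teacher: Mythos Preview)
Your proposal is correct and follows essentially the same route as the paper. One small remark: your opening observation that $d\gamma=0$ and $d^\ast\gamma=0$ \emph{separately} force each $\gamma_k$ to be closed and coclosed is actually slightly cleaner than what the paper does---the paper writes the coupled six-equation system for $(d+d^\ast)\gamma=0$ and then decouples it implicitly. With your decoupling already in hand, at $\lambda=-2$ you get $d\alpha_1=0=d^\ast\alpha_1$ and $d\beta_4=0=d^\ast\beta_4$ directly from the equations, so no integration by parts is needed; that step in your outline is redundant (though harmless). Your explicit identification of $\alpha_3=\eta\wedge\tau_2$ via Remark~\ref{rmk:Harmonic:2:forms:SE} and Lemma~\ref{lem:Forms:Sasaki:Einstein}(iv) fills in a detail the paper leaves to the reader.
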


When $\Sigma$ is not necessarily assumed to be regular we can still exclude some decay rates for homogeneous closed and coclosed $2$-forms on $\tu{C}(\Sigma)$ using Remark \ref{rmk:Harmonic:forms:cone:n}.

\begin{prop}\label{prop:Closed:Even:Forms:2}
There are no closed and coclosed $2$-forms on $\tu{C}(\Sigma)$ homogeneous of rate $\lambda\in (-6,0)\setminus \{ -2\}$.\proof
By Remark \ref{rmk:Harmonic:forms:cone:n} if $\gamma$ is closed and coclosed then $\gamma=\gamma_1+\gamma_2+\gamma_4$. Moreover, $\gamma_1=0$ since $b_1(\Sigma)=0$ and $\gamma_4=0$ unless $\lambda=-2$. Proposition \ref{prop:Harmonic:2forms:cone} now concludes the proof.
\endproof 
\end{prop}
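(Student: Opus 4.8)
The plan is to run the homogeneous closed-and-coclosed $2$-form $\gamma$ through the Cheeger-type decomposition of harmonic forms on $\tu{C}(\Sigma)$, in the same spirit as the proof of Proposition \ref{prop:Closed:Even:Forms}, but being careful to kill every piece using only \emph{closedness}, harmonicity, and the vanishing of $b_1(\Sigma)$, so that no regularity hypothesis on $\Sigma$ enters.

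First I would note that a closed and coclosed $2$-form is a fortiori harmonic, so Theorem \ref{thm:Harmonic:forms:cone:n} applies and gives a decomposition $\gamma=\gamma_1+\gamma_2+\gamma_3+\gamma_4$ into harmonic summands, each homogeneous of the same rate $\lambda$. By Remark \ref{rmk:Harmonic:forms:cone:n}, the closed-and-coclosed hypothesis forces $\gamma_3=0$, so the task reduces to showing $\gamma_1=\gamma_2=\gamma_4=0$ whenever $\lambda\in(-6,0)\setminus\{-2\}$.

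Then I would dispose of the three remaining components in turn. The component $\gamma_2$ is immediate: Proposition \ref{prop:Harmonic:2forms:cone}(ii) already gives $\gamma_2=0$ for every $\lambda\in(-6,0)$, which contains our range. For $\gamma_1$: by Remark \ref{rmk:Harmonic:forms:cone:n} (combined with Proposition \ref{prop:Harmonic:2forms:cone}(i)), the $\gamma_1$-summand of a closed and coclosed $2$-form is $r^{\lambda+1}\,dr$ wedged with the pullback of a harmonic $1$-form on the compact manifold $\Sigma$; since $\Sigma$ is Sasaki--Einstein it has finite fundamental group, hence $b_1(\Sigma)=0$, so $\gamma_1=0$ for all $\lambda$. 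Finally, for $\gamma_4$: Proposition \ref{prop:Harmonic:2forms:cone}(iv) writes $\gamma_4=r^{\lambda+2}\beta$ with $\beta$ a coclosed $2$-form on $\Sigma$; computing $d\gamma_4=(\lambda+2)\,r^{\lambda+1}\,dr\wedge\beta+r^{\lambda+2}\,d\beta$ and separating the $dr$-component from the base component, the equation $d\gamma_4=0$ forces $(\lambda+2)\beta=0$ together with $d\beta=0$, so either $\lambda=-2$ or $\beta=0$; the former is excluded by hypothesis, hence $\gamma_4=0$. Assembling these three facts gives $\gamma=0$.

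The only step that is not a direct quotation of a previously established result is the treatment of $\gamma_4$, and this is precisely where the argument has to differ from the regular case of Proposition \ref{prop:Closed:Even:Forms}: on an irregular Sasaki--Einstein $5$-manifold there is no available lower bound for the first nonzero eigenvalue of the Laplacian on coclosed $2$-forms, so $\gamma_4$ cannot be killed by harmonicity alone. The extra ingredient is that $\gamma$ is \emph{closed}; $\gamma_4$ can fail to be closed only at the single resonant rate $\lambda=-2$, where it corresponds to the (possibly nontrivial) $b_2(\Sigma)$-dimensional space of harmonic $2$-forms on $\Sigma$ pulled back to the cone. Everything else is routine bookkeeping with the description of homogeneous harmonic $2$-forms on $\tu{C}(\Sigma)$.
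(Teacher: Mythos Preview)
Your proof is correct and follows essentially the same route as the paper: both invoke Remark~\ref{rmk:Harmonic:forms:cone:n} to reduce to $\gamma_1+\gamma_2+\gamma_4$, kill $\gamma_1$ via $b_1(\Sigma)=0$, kill $\gamma_4$ via closedness at $\lambda\neq -2$, and dispatch $\gamma_2$ by Proposition~\ref{prop:Harmonic:2forms:cone}(ii). Your explicit computation of $d\gamma_4$ merely unpacks what Remark~\ref{rmk:Harmonic:forms:cone:n}(iv) already records, and your closing discussion correctly identifies why the argument, unlike that of Proposition~\ref{prop:Closed:Even:Forms}, does not require $\Sigma$ to be regular.
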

 
\begin{prop}\label{prop:Closed:Odd:Forms}
Let $\gamma$ be a closed and coclosed form of odd degree on $\tu{C}(\Sigma)$ homogeneous of rate $\lambda\in [-4,0]$. Then $\gamma$ is of pure degree $3$. Moreover, if $\lambda=-3$ then
\[
\gamma = \eta\wedge\tau_1 + \frac{dr}{r}\wedge\tau_2
\]
for harmonic $2$-forms $\tau_1,\tau_2$ on $\Sigma$ and if $\lambda\in [-4,0]\setminus \{ -3\}$ then $\gamma = d\bigl( \frac{r^{\lambda+3}}{\lambda+3}\alpha\bigr)$, where $\alpha$ is a coclosed $2$-form on $\Sigma$ satisfying $\triangle\alpha=(\lambda+3)^2\alpha$. In particular, if $\Sigma$ is a regular Sasaki--Einstein $5$-manifold then there are no closed and coclosed odd-degree forms on $\tu{C}(\Sigma)$ homogeneous of rate $\lambda\in [-4,-1]\setminus\{ -3\}$.
\proof
Write $\gamma = \gamma_1 + \gamma_3 +\gamma_5$, with $\gamma_k \in \Omega^k$. Since each pure degree component is harmonic, Proposition \ref{prop:Harmonic:1Forms:Cone} shows that $\gamma_1=0=\gamma_5$ whenever $\lambda\in [-4,0]$. Since $\gamma$ has pure degree $3$, the result now follows combining Remark \ref{rmk:Harmonic:forms:cone:n} with Proposition \ref{prop:Harmonic:3forms:Cone}.
\endproof
\end{prop}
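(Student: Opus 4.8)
The plan is to reduce the statement to the structure theory for homogeneous harmonic forms on $\tu{C}(\Sigma)$ recorded above, following the pattern of Propositions \ref{prop:Closed:Even:Forms} and \ref{prop:Closed:Even:Forms:2}. First I would decompose $\gamma$ into pure-degree components $\gamma = \gamma_1 + \gamma_3 + \gamma_5$ with $\gamma_k\in\Omega^k(\tu{C}(\Sigma))$ homogeneous of order $\lambda$; since $d$ raises degree by one and $d^\ast$ lowers it by one, the equations $d\gamma=0=d^\ast\gamma$ decouple by degree, so each $\gamma_k$ is separately closed, coclosed and hence harmonic. Then $\gamma_1$ is a homogeneous harmonic $1$-form of rate $\lambda\in[-4,0]$, so $\gamma_1=0$ by Proposition \ref{prop:Harmonic:1Forms:Cone}; and since $\gamma_5$ is closed and coclosed, $\ast\gamma_5$ is a closed, coclosed (hence harmonic) $1$-form of the same rate $\lambda\in[-4,0]$---here one uses that $\ast$ commutes with $\triangle$ and, by a direct computation with the cone metric, carries forms homogeneous of order $\lambda$ to forms homogeneous of order $\lambda$---so $\ast\gamma_5=0$ and $\gamma_5=0$. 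Hence $\gamma=\gamma_3$ has pure degree $3$.

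Next I would write $\gamma$ in the four-term decomposition of Theorem \ref{thm:Harmonic:forms:cone:n} and use Remark \ref{rmk:Harmonic:forms:cone:n} to see that simultaneous closedness and coclosedness eliminates (or restricts to a special border form) the summands belonging to the closed-but-not-coclosed and the coclosed-but-not-closed families, so that what remains is pinned down by Proposition \ref{prop:Harmonic:3forms:Cone}. For $\lambda\in[-4,0]\setminus\{-3\}$ the only compatible summand is the exact term of Proposition \ref{prop:Harmonic:3forms:Cone}(ii), giving $\gamma = d\bigl(\tfrac{1}{\lambda+3}r^{\lambda+3}\alpha\bigr)$ for a coexact---in particular coclosed---$2$-form $\alpha$ on $\Sigma$ with $\triangle\alpha=(\lambda+3)^2\alpha$ (and one checks conversely that any such $\gamma$ is closed and coclosed; the extra terms allowed at $\lambda=-1$ by Proposition \ref{prop:Harmonic:3forms:Cone}(i) and (iv) are excluded respectively by coclosedness and by closedness, via Remark \ref{rmk:Harmonic:forms:cone:n}). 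For $\lambda=-3$ the surviving summands are the $\gamma_1$- and $\gamma_4$-type terms of Proposition \ref{prop:Harmonic:3forms:Cone}(i),(iv) built from harmonic $2$-forms on $\Sigma$, which after rewriting the Hodge star on the cone (Lemma \ref{lem:Forms:Sasaki:Einstein}) assemble into $\gamma = \eta\wedge\tau_1 + \tfrac{dr}{r}\wedge\tau_2$ with $\tau_1,\tau_2$ harmonic $2$-forms on $\Sigma$, no $\log r$ term surviving once $d\gamma=0=d^\ast\gamma$ is imposed. Finally, when $\Sigma$ is regular and $\lambda\in[-4,-1]\setminus\{-3\}$ one has $(\lambda+3)^2\in(0,4]$, so Proposition \ref{prop:Spectrum:Laplacian:Regular:SE}(iii) forces the coclosed eigenform $\alpha$ to vanish, whence $\gamma=0$.

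The main obstacle is the bookkeeping in the second step: one must match, rate by rate, Remark \ref{rmk:Harmonic:forms:cone:n}'s description of which summands of the four-term decomposition are closed and which are coclosed against the rate-dependent descriptions of Proposition \ref{prop:Harmonic:3forms:Cone}, taking particular care at the borderline rates $\lambda=-3$ (where the two harmonic-$2$-form families of Proposition \ref{prop:Harmonic:3forms:Cone}(i),(iv) coincide and a $\log r$ term is a priori possible, \cf Remark \ref{rmk:Log:Harmonic:3forms:Cone}) and $\lambda=-1$ (where Proposition \ref{prop:Harmonic:3forms:Cone}(i) and (iv) admit additional harmonic-$2$-form terms that must be discarded). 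The residual verifications---that the displayed forms are genuinely closed and coclosed, and the Hodge-star computation producing $\eta\wedge\tau_1 + \tfrac{dr}{r}\wedge\tau_2$---are routine.
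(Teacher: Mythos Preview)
Your proposal is correct and follows essentially the same approach as the paper's proof: decompose into pure degrees, kill $\gamma_1$ and $\gamma_5$ via Proposition~\ref{prop:Harmonic:1Forms:Cone}, then invoke Remark~\ref{rmk:Harmonic:forms:cone:n} together with Proposition~\ref{prop:Harmonic:3forms:Cone} to pin down the degree-$3$ component. Two minor remarks: your worry about a $\log r$ term at $\lambda=-3$ is unnecessary since $\gamma$ is assumed \emph{homogeneous}, and the parenthetical that the families of Proposition~\ref{prop:Harmonic:3forms:Cone}(i),(iv) ``coincide'' at $\lambda=-3$ is imprecise (they are distinct summands; rather, $\lambda=-3$ is the unique rate at which both are simultaneously closed and coclosed), but neither affects the argument.
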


\section{Differential forms on AC Calabi--Yau $3$-folds}
\label{sec:forms:ac:cy}

In this section we study differential forms on an AC Calabi--Yau $3$-fold $(B,\omega_0,\Omega_0)$. We are interested in describing the space of closed and coclosed forms on $B$, in particular $2$-forms, of prescribed decay at infinity. We will also derive ``normal forms'' for exact $4$-forms on $B$, \ie write every exact $4$-form $\sigma$ on $B$ with prescribed decay as $\sigma = d\rho$ for some appropriate choice of $3$-form $\rho$. These results will be used in the next two sections to study the linearisation of the Apostolov--Salamon equations.

\subsection{$L^2$--cohomology}

The starting point of our discussion is the description of the $L^2$--cohomology of $B$, \ie the spaces
\[
L^2\mathcal{H}^k(B) = \{ \sigma \in \Omega^k(B)\cap L^2\, |\, d\sigma =0=d^\ast\sigma\}.
\]
The description of the $L^2$--cohomology of AC manifolds in terms of topological data is well known, \cf \cite[Example 0.15]{Lockhart} and \cite[Theorem 1A]{HHM}. Here we give the statement only in the $6$-dimensional case relevant to us.

Regard $B$ as a manifold with boundary $\Sigma$. Then we have the long exact sequence in cohomology (with real coefficients) for the pair $(B,\Sigma)$:
\begin{equation}\label{eq:Exact:Sequence:Cohomology:Boundary}
\cdots \ra H^{k-1}(\Sigma)\ra H^k_c(B)\ra H^k(B)\ra H^k(\Sigma)\ra \cdots
\end{equation}

\begin{theorem}\label{thm:L2:coho}
Let $(B,\omega_0,\Omega_0)$ be an AC Calabi--Yau $3$-fold. Then there is a natural isomorphism
\[
L^2\mathcal{H}^k(B)\simeq\begin{dcases}
H^k_c(B) & \mbox{if } k=0,1,2\\
\tu{im}\, H^3_c(B)\ra H^3(B) & \mbox{if } k=3\\
H^k(B) & \mbox{if } k=4,5,6.
\end{dcases}
\] 
\end{theorem}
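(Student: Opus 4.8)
The plan is to deduce the statement from the general description of the $L^2$--cohomology of an asymptotically conical manifold, namely the natural identification $L^2\mathcal{H}^k(B)\cong \mathrm{Im}\bigl(H^k_c(B)\to H^k(B)\bigr)$ in \emph{every} degree $k$, and then to simplify the right-hand side in degrees $k\neq 3$ using the topology of the link $\Sigma$ in the long exact sequence \eqref{eq:Exact:Sequence:Cohomology:Boundary}. First I would recall why the general identification holds, since the same analytic input recurs throughout the paper (and one may instead simply quote \cite{Lockhart,HHM}). An element $\alpha\in L^2\mathcal{H}^k(B)$ is closed and coclosed on the complete manifold $B$, so $\alpha\mapsto[\alpha]$ defines a linear map $\Phi\colon L^2\mathcal{H}^k(B)\to H^k(B)$. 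By the weighted elliptic theory for $d+d^\ast$ on AC manifolds recalled in Appendix \ref{Appendix:Analysic:AC}, any such $\alpha$ has an asymptotic expansion at infinity whose leading term is a homogeneous harmonic form on the cone $\tu{C}(\Sigma)$; the $L^2$ condition forces this term to have order strictly below the critical rate $-3\,(=-6/2)$, so $\alpha=O(r^{-3-\varepsilon})$ for some $\varepsilon>0$.

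Next I would use this decay to establish the three ingredients making $\Phi$ an isomorphism onto $\mathrm{Im}\bigl(H^k_c(B)\to H^k(B)\bigr)$. (i) $\Phi$ is injective: if $[\alpha]=0$ then by the weighted Hodge theory on $B$ one can solve $\alpha=d\beta$ with $\beta$ in a weighted space of negative rate (the obstruction to choosing the rate small enough lies in spaces of homogeneous harmonic forms on $\tu{C}(\Sigma)$ of rates that are excluded by the results of Section \ref{sec:CY:cones}, e.g. Proposition \ref{prop:Harmonic:1Forms:Cone}), and then $\|\alpha\|_{L^2}^2=\langle d\beta,\alpha\rangle=\langle\beta,d^\ast\alpha\rangle=0$ after an integration by parts whose boundary term over $\{r=R\}$ vanishes as $R\to\infty$ by the decay of $\beta$ and $\alpha$. (ii) The image of $\Phi$ is contained in $\ker\bigl(H^k(B)\to H^k(\Sigma)\bigr)=\mathrm{Im}\bigl(H^k_c(B)\to H^k(B)\bigr)$, because the leading cone asymptotic of a decaying closed and coclosed form is, restricted to $\Sigma$, an eigenform of $\triangle_\Sigma$ with positive eigenvalue, hence exact on $\Sigma$, so the pull-back of $\alpha$ to the slices $\{r=R\}$ is exact in $H^k(\Sigma)$. (iii) $\Phi$ surjects onto this image: a compactly supported closed representative $\eta$ of a class in $\mathrm{Im}(H^k_c\to H^k)$ can be $L^2$--orthogonally projected to $\Pi\eta\in L^2\mathcal{H}^k(B)$, and $\eta-\Pi\eta$ is exact (it is $d$ of a decaying potential, as $\eta$ has compact support), so $[\eta]=[\Pi\eta]=\Phi(\Pi\eta)$ in $H^k(B)$.

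It then remains to simplify the image group. For $k=3$ there is nothing to do. For $k\le 2$ it suffices that $H^k_c(B)\to H^k(B)$ be injective, i.e. by exactness of \eqref{eq:Exact:Sequence:Cohomology:Boundary} that $H^{k-1}(\Sigma;\R)\to H^k_c(B)$ vanish: trivial for $k=0$; for $k=1$ because $H^0(B)\to H^0(\Sigma)$ is an isomorphism ($B$ is connected); and for $k=2$ because $H^1(\Sigma;\R)=0$ — the link $\Sigma$ is a closed Sasaki--Einstein $5$--manifold, hence has positive Ricci curvature and therefore finite fundamental group by Myers, so $b_1(\Sigma)=0$. Dually, for $k\ge 4$ it suffices that $H^k_c(B)\to H^k(B)$ be surjective, i.e. that $H^k(B)\to H^k(\Sigma)$ vanish: trivial for $k=6$ ($B$ is an open $6$--manifold); true for $k=4$ because $H^4(\Sigma;\R)\cong H^1(\Sigma;\R)^\ast=0$ by Poincar\'e duality on $\Sigma$; and true for $k=5$ because in \eqref{eq:Exact:Sequence:Cohomology:Boundary} the map $H^5(\Sigma)\to H^6_c(B)$ is a surjection between two copies of $\R$ (using $H^6(B)=H^6(\Sigma)=0$ and $H^6_c(B)\cong H^0(B)^\ast$), hence an isomorphism, so that $H^5(B)\to H^5(\Sigma)$ is zero. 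This gives $\mathrm{Im}\bigl(H^k_c(B)\to H^k(B)\bigr)=H^k_c(B)$ for $k=0,1,2$ and $=H^k(B)$ for $k=4,5,6$, which is the claimed isomorphism.

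The main obstacle is the analytic input behind the general identification: showing that an $L^2$--harmonic form genuinely decays faster than the critical rate $r^{-3}$ rather than merely lying in $L^2$ in an integrated sense, and controlling the potentials used in the injectivity and surjectivity arguments. This is exactly the point at which one needs the explicit list of indicial roots of $d+d^\ast$, equivalently the classification of homogeneous harmonic forms on the Calabi--Yau cone $\tu{C}(\Sigma)$ carried out in Section \ref{sec:CY:cones} (Propositions \ref{prop:Closed:Even:Forms}, \ref{prop:Closed:Odd:Forms} and their companions), so as to guarantee that there is no obstructing rate between the $L^2$ threshold and the weights in play. In the present AC Calabi--Yau setting all of these rate computations are available, so the proof reduces to the standard weighted Hodge-theoretic argument above followed by the purely topological simplification — or, if one prefers, to citing \cite{Lockhart,HHM} and supplying only the last paragraph.
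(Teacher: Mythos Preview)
Your argument is correct but is structured around a misconception about what the ``general'' result says. The paper does not prove this theorem at all: it simply cites \cite[Example 0.15]{Lockhart} and \cite[Theorem 1A]{HHM}, which for an $n$-dimensional AC manifold already give the three-case answer $L^2\mathcal{H}^k\cong H^k_c$ for $k<n/2$, $\cong\mathrm{Im}(H^k_c\to H^k)$ for $k=n/2$, and $\cong H^k$ for $k>n/2$. With $n=6$ this is exactly the statement of the theorem, so no topological simplification is needed once those references are invoked.

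Your intermediate claim --- that $L^2\mathcal{H}^k(B)\cong\mathrm{Im}(H^k_c\to H^k)$ in \emph{every} degree --- is not the general AC result. Given Lockhart/HHM, it is equivalent to the injectivity of $H^k_c\to H^k$ for $k\le 2$ and its surjectivity for $k\ge 4$, i.e.\ precisely to the vanishing $b_1(\Sigma)=b_4(\Sigma)=0$ that you establish in your ``simplification'' step. So the two halves of your argument are not independent: the direct proof you sketch of injectivity of $\Phi$ for $k=2$ (finding a decaying primitive $\beta$ with $d\beta=\alpha$) already requires $H^1(\Sigma)=0$ in order to patch the radial primitive on the end with a global one on $B$, and dually the containment $\mathrm{Im}\,\Phi\subset\mathrm{Im}(H^k_c\to H^k)$ for $k=4$ uses $H^4(\Sigma)=0$. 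These facts are available because $\Sigma$ is Sasaki--Einstein, but they should be flagged as inputs to the first half of the argument rather than only to the second. If your intention is to give a self-contained proof rather than a citation, the sketch is sound once this dependence is made explicit; but your last line --- ``citing \cite{Lockhart,HHM} and supplying only the last paragraph'' --- has it backwards: citing them makes the last paragraph superfluous.
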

For $k>3$ the map $L^2\mathcal{H}^k(B)\ra H^k(B)$ is the obvious map that sends a closed form to its cohomology class. When $k<3$ every $L^2$--integrable closed $k$-form is exact outside a compact set; by radial integration the choice of primitive at infinity can be made canonical, \cf \cite[Lemma 2.11]{Karigiannis}, and therefore there is a well-defined map $L^2\mathcal{H}^k(B)\ra H^k_c(B)$. The borderline case $k=3$ is slightly more involved because $d+d^\ast$ fails to be Fredholm, \cf Proposition \ref{prop:Closed:Odd:Forms}. 

In fact we will need to work with forms on $B$ that decay at a much slower rate than what would be necessary to be $L^2$--integrable. For this reason we have to work with the weighted Sobolev and H\"older spaces introduced in Appendix \ref{Appendix:Analysic:AC}.

\begin{definition}\label{def:H^k_nu}
For $\nu\in\R$ outside of the discrete set of indicial roots of $d+d^\ast$ let $\mathcal{H}^k_\nu (B)$ denote the finite-dimensional space of closed and coclosed $k$-forms on $B$ in $C^\infty_\nu$.
\end{definition}
By weighted elliptic regularity and embeddings Theorems \ref{thm:Weighted:Embedding} and \ref{thm:Weighted:Regularity}, $\mathcal{H}^k_\nu (B)$ coincides with the space of closed and coclosed $k$-forms on $B$ of class $L^p_{l,\nu}$ or $C^{l,\alpha}_\nu$ for any $l\geq 0$, $p\geq 1$ and $\alpha\in (0,1)$.

\begin{definition}\label{def:W^k_nu}
For $\nu\in\R$ outside of the discrete set of indicial roots of $d+d^\ast$ let $\mathcal{W}^k_\nu$ be the $L^2_{\nu}$--orthogonal complement of $\mathcal{H}^k_\nu (B)$ in the space of $k$-forms on $B$ of class $L^2_{\nu}$. 
\end{definition}

\begin{remark}\label{rmk:W^k_nu}
Since $\nu$ is not an indicial root of $d+d^\ast$, we have $\mathcal{H}^k_\nu (B)\subset L^2_{\nu-\delta}$ for any sufficiently small $\delta >0$, \cf Proposition \ref{prop:Decay:Solutions}. Hence the $L^2_\nu$--inner product with an element in $\mathcal{H}^k_\nu (B)$ defines a linear map on the space of $k$-forms of class $C^{0,\alpha}_\nu$ for any $\alpha\in (0,1)$. We can therefore also define in an analogous way a subspace $\mathcal{W}^k_\nu$ of the space of $k$-forms of class $C^{0,\alpha}_\nu$. 
\end{remark}

\subsection{Functions and $1$-forms}

Assume now that $(B,\omega_0,\Omega_0)$ is an \emph{irreducible} AC Calabi--Yau $3$-fold, \ie there are no parallel $1$-forms on $B$. Since the Sasaki--Einstein manifold $\Sigma$ is smooth, this condition holds automatically under our standing assumption that the universal cover of $\Sigma$ is not isometric to the round $5$-sphere. An important consequence of the irreducibility of $B$ is the following vanishing result for decaying harmonic functions and $1$-forms.

\begin{lemma}\label{lem:AC:CY:Vanishing:Harmonic:0:1:Forms}
For any $\nu< 0$ there are no harmonic functions and $1$-forms on $B$ in $C^\infty_\nu$.
\proof
Let $u$ and $\gamma$ be a harmonic function and $1$-form in $C^\infty_{\nu}$. If $\nu< -2$ then Lemma \ref{lem:integration:parts} guarantees that we can integrate by parts:
\[
0=\langle \triangle u , u\rangle _{L^2} = \|\nabla u \|^2_{L^2}, \qquad 0=\langle \triangle \gamma, \gamma\rangle_{L^2} = \|\nabla\gamma\|^2_{L^2},
\]
where, since $B$ is Ricci-flat, we used the fact that $\triangle = \nabla^\ast\nabla$ on $1$-forms. We conclude that $u$ is constant and therefore vanishes since it decays at infinity and $\gamma=0$ since $B$ is irreducible. On the other hand, by Propositions \ref{prop:Harmonic:Functions:Cone} and \ref{prop:Harmonic:1Forms:Cone} there are no indicial roots for the Laplacian acting on functions and $1$-forms in the interval $[-2,0)$. 
\endproof 
\end{lemma}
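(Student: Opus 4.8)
The plan is to split the weight range $\nu<0$ into the fast-decay window $\nu<-2$, which yields to a Weitzenb\"ock/integration-by-parts argument, and the remaining window $\nu\in[-2,0)$, which is handled by the weighted elliptic machinery of Appendix~\ref{Appendix:Analysic:AC} together with the cone computations of Section~\ref{sec:CY:cones}.

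First I would treat the case $\nu<-2$. For such $\nu$ the decay is fast enough that Lemma~\ref{lem:integration:parts} permits integration by parts with no boundary contribution at infinity. If $u\in C^\infty_\nu$ is harmonic then $0=\langle\triangle u,u\rangle_{L^2}=\|\nabla u\|_{L^2}^2$, so $u$ is constant and hence $u=0$ since it decays. If $\gamma\in C^\infty_\nu$ is a harmonic $1$-form, then since $B$ is Ricci-flat the Weitzenb\"ock formula reads $\triangle=\nabla^\ast\nabla$ on $1$-forms, so $0=\langle\triangle\gamma,\gamma\rangle_{L^2}=\|\nabla\gamma\|_{L^2}^2$ and $\gamma$ is parallel; irreducibility of $B$ (valid under the standing assumption that the universal cover of $\Sigma$ is not the round $S^5$) then forces $\gamma=0$.

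For the remaining range $\nu\in[-2,0)$ I would use that the dimension of the space of harmonic functions, resp.\ harmonic $1$-forms, lying in $C^\infty_\nu$ is constant as $\nu$ ranges over any interval free of indicial roots of the scalar Laplacian, resp.\ the Hodge Laplacian on $1$-forms (standard weighted elliptic theory, recalled in Appendix~\ref{Appendix:Analysic:AC}, combined with weighted elliptic regularity). By Proposition~\ref{prop:Harmonic:Functions:Cone} the only homogeneous harmonic functions on $\tu{C}(\Sigma)$ of order $\lambda\in[-5,1]$ occur at $\lambda=-4,0$, and by Proposition~\ref{prop:Harmonic:1Forms:Cone} there are no nonzero homogeneous harmonic $1$-forms of order $\lambda\in[-4,0]$; hence neither operator has an indicial root in $(-4,0)$. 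Choosing any $\nu_0\in(-4,-2)$, where the first step already gives vanishing, and moving the weight up to the given $\nu\in[-2,0)$ without crossing an indicial root, we conclude that there are no nonzero harmonic functions or $1$-forms in $C^\infty_\nu$.

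I do not anticipate a serious obstacle: all the inputs — the Weitzenb\"ock identity on the Ricci-flat $B$, the definition of irreducibility, and the cone spectrum estimates — are already in place. The one point requiring care is the precise decay threshold in the integration-by-parts step: on the $5$-dimensional cross-section the boundary flux over $\{r=R\}$ has size $O(R^{2\nu+4})$, which tends to zero exactly when $\nu<-2$, and this is precisely why the window $[-2,0)$ cannot be reached by the energy argument alone and must be covered by the indicial-root deformation.
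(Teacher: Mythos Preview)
Your proposal is correct and follows essentially the same two-step argument as the paper: an integration-by-parts/Weitzenb\"ock step for $\nu<-2$ using Lemma~\ref{lem:integration:parts} and irreducibility, followed by the observation from Propositions~\ref{prop:Harmonic:Functions:Cone} and~\ref{prop:Harmonic:1Forms:Cone} that there are no indicial roots in $[-2,0)$ so the kernel cannot grow. Your version spells out the weight-deformation reasoning and the boundary-flux threshold a bit more explicitly than the paper does, but the substance is the same.
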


A similar proof using Lemma \ref{lem:Gauge:fixing:operator:AC:CY} and Proposition \ref{prop:Gauge:Cone} instead of Propositions \ref{prop:Harmonic:Functions:Cone} and \ref{prop:Harmonic:1Forms:Cone} yields the following result.

\begin{lemma}\label{lem:AC:CY:Vanishing:Gauge:Fixing}
For $\nu< 0$ there exist no functions $g$ and $f$ and $1$-forms $\gamma$ on $B$ in $C^\infty_\nu$ such that
\[
\pi_1 dd^\ast \bigl( g\omega_0^2\bigr)=0,\qquad  \pi_{1\oplus 6}dd^\ast \bigl( f\omega_0 + \gamma^\sharp\lrcorner\Real\Omega_0\bigr)=0.
\]
\end{lemma}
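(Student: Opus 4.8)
The plan is to mimic the proof of Lemma~\ref{lem:AC:CY:Vanishing:Harmonic:0:1:Forms}, replacing the pair of propositions on the cone by Proposition~\ref{prop:Gauge:Cone} and replacing the Laplacian on functions and $1$-forms by the two second-order operators described in Lemma~\ref{lem:Gauge:fixing:operator:AC:CY}, namely $g\mapsto \triangle g$ (coming from $\pi_1 dd^\ast(\tfrac12 g\omega_0^2)$) and $(f,\gamma)\mapsto(\tfrac23\triangle f,\, dd^\ast\gamma+\tfrac23 d^\ast d\gamma)$ (coming from $\pi_{1\oplus6}d^\ast d(f\omega_0+\gamma^\sharp\lrcorner\Real\Omega_0)$). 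Note first that by Lemma~\ref{lem:Gauge:fixing:operator:AC:CY} the equations in the statement are, up to nonzero constants, equivalent to $\triangle g=0$, $\triangle f=0$ and $dd^\ast\gamma+\tfrac23 d^\ast d\gamma=0$.

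The argument splits into two ranges of the weight $\nu$. First, for $\nu<-2$ one integrates by parts using Lemma~\ref{lem:integration:parts}: for $g$ and $f$ this immediately gives $\|\nabla g\|^2_{L^2}=\|\nabla f\|^2_{L^2}=0$, hence $f,g$ are constant and therefore vanish because they decay; for $\gamma$, pairing $dd^\ast\gamma+\tfrac23 d^\ast d\gamma$ with $\gamma$ gives $\|d^\ast\gamma\|^2_{L^2}+\tfrac23\|d\gamma\|^2_{L^2}=0$, so $\gamma$ is closed and coclosed, hence harmonic of negative rate, and then Lemma~\ref{lem:AC:CY:Vanishing:Harmonic:0:1:Forms} (or directly the vanishing of $\nabla\gamma$ via Ricci-flatness and $\triangle=\nabla^\ast\nabla$ on $1$-forms, together with irreducibility of $B$) forces $\gamma=0$. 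Second, for $\nu\in[-2,0)$ one invokes the general theory of Appendix~\ref{Appendix:Analysic:AC}: the relevant operators are elliptic on weighted spaces and the only obstruction to pushing the weight down from the $\nu<-2$ range to the $\nu\in[-2,0)$ range is the presence of indicial roots in the interval $[-2,0)$. Proposition~\ref{prop:Gauge:Cone} states precisely that there are no homogeneous elements of order $\lambda\in(-4,0)$ in the kernel of the corresponding operators on the Calabi--Yau cone $\tu C(\Sigma)$; in particular there are no indicial roots in $(-4,0)$, and a fortiori none in $[-2,0)$. Hence any solution in $C^\infty_\nu$ with $\nu\in[-2,0)$ in fact lies in $C^\infty_{\nu'}$ for some $\nu'<-2$, reducing to the first case.

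The only point requiring a little care — and the step I would expect to be the minor obstacle — is the bookkeeping at the two endpoints. The boundary value $\nu=-2$ is itself excluded as an indicial root by Proposition~\ref{prop:Gauge:Cone} (the statement covers the open interval $(-4,0)$, which contains $-2$), so one may assume $\nu\in(-2,0)$ and then decrease the weight across $-2$ without meeting an indicial root; strictly speaking one decreases to any $\nu'\in(-4,-2)$, applies the first-case integration by parts there, and concludes. One should also double-check that Lemma~\ref{lem:integration:parts} indeed licenses the integration by parts for $1$-forms of rate $<-2$ in dimension $6$ (the decay $O(r^{\nu'})$ with $\nu'<-2$ makes the boundary terms $O(r^{2\nu'+5})=o(1)$), which is exactly the regime used in Lemma~\ref{lem:AC:CY:Vanishing:Harmonic:0:1:Forms}. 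With these observations the proof is a direct transcription of the earlier argument, so I would simply write: \emph{``A similar proof using Lemma~\ref{lem:Gauge:fixing:operator:AC:CY} and Proposition~\ref{prop:Gauge:Cone} instead of Propositions~\ref{prop:Harmonic:Functions:Cone} and \ref{prop:Harmonic:1Forms:Cone} yields the result,''} expanding only the integration-by-parts identities for the $\gamma$-component if a referee wants more detail.
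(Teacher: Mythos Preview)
Your proposal is correct and matches the paper's approach exactly: the paper's entire proof is precisely the sentence you quote at the end, and you have simply unpacked the details (integration by parts for $\nu<-2$, absence of indicial roots in $(-4,0)$ via Proposition~\ref{prop:Gauge:Cone} to reduce the general case) that the paper leaves implicit. One small cosmetic point: the statement as printed has $dd^\ast$ in the second equation whereas Lemma~\ref{lem:Gauge:fixing:operator:AC:CY} computes $\pi_{1\oplus 6}d^\ast d$; this is a typo in the paper (compare the application in Proposition~\ref{prop:Exact:4form:AC:CY}), and your reading via Lemma~\ref{lem:Gauge:fixing:operator:AC:CY} is the intended one.
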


An important role in our subsequent analysis will be played by the Dirac operator. Recall that under the isomorphism of the spinor bundle of $B$ with $\underline{\R}\oplus\underline{\R}\oplus T^\ast B$ the Dirac operator $\slashed{D}$ is identified with the first-order operator of Lemma \ref{lem:Dirac:cone}.

\begin{prop}\label{prop:Dirac:AC:CY}
Let $(B,\omega_0,\Omega_0)$ be an irreducible AC Calabi--Yau $3$-fold. The Dirac operator $\slashed{D}\co C^{k+1,\alpha}_{\nu+1}\ra C^{k,\alpha}_\nu$ is an isomorphism for all $\nu\in (-6,-1)$.
\proof
By Proposition \ref{prop:Dirac:cone} there are no indicial roots for the Dirac operator in the range specified and therefore $\slashed{D}\co C^{k+1,\alpha}_{\nu+1}\ra C^{k,\alpha}_\nu$ is Fredholm for all $\nu\in (-6,-1)$. Since $B$ is Ricci-flat, the Lichnerowicz and Weitzenb\"ock formulae imply that $\slashed{D}^2=\triangle$ as an operator from $\Omega^0\oplus\Omega^0\oplus\Omega^1$ into itself. Lemma \ref{lem:AC:CY:Vanishing:Harmonic:0:1:Forms} then implies that $\slashed{D}$ has no kernel and (by duality) cokernel in the range specified.
\endproof
\end{prop}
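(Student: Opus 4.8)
The plan is to run the standard Fredholm-plus-vanishing argument on the weighted H\"older spaces recalled in Appendix \ref{Appendix:Analysic:AC}, feeding in two facts already established: the Weitzenb\"ock identity $\slashed{D}^2=\triangle$ on $\Omega^0\oplus\Omega^0\oplus\Omega^1$ --- valid because $B$ is Ricci-flat, so the scalar-curvature term in the Lichnerowicz formula and the Ricci term in the $1$-form Weitzenb\"ock formula both vanish --- and the vanishing of decaying harmonic functions and $1$-forms (Lemma \ref{lem:AC:CY:Vanishing:Harmonic:0:1:Forms}).

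First I would record that $\slashed{D}$, viewed via the isomorphism of Lemma \ref{lem:Dirac:cone} as a first-order operator on $\Omega^0\oplus\Omega^0\oplus\Omega^1$, is elliptic and formally self-adjoint, and since $B$ is AC Calabi--Yau it is asymptotic at infinity to the corresponding Dirac operator on the cone $\tu{C}(\Sigma)$. Its indicial roots are therefore exactly the orders of homogeneous elements of $\ker\slashed{D}$ on $\tu{C}(\Sigma)$, and Proposition \ref{prop:Dirac:cone} shows there are none of order $\lambda\in(-5,0)$. By the general AC Fredholm package it follows that $\slashed{D}\co C^{k+1,\alpha}_{\nu+1}\ra C^{k,\alpha}_{\nu}$ is Fredholm for every $\nu\in(-6,-1)$, since then its domain weight $\nu+1$ lies in $(-5,0)$.

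Injectivity is immediate: if $\slashed{D}s=0$ with $s\in C^{k+1,\alpha}_{\nu+1}$, elliptic regularity promotes $s$ to $C^\infty_{\nu+1}$ and then $\triangle s=\slashed{D}^2 s=0$, so the two scalar components and the $1$-form component of $s$ are harmonic and decaying with rate $\nu+1<0$; Lemma \ref{lem:AC:CY:Vanishing:Harmonic:0:1:Forms} forces $s=0$. For surjectivity I would use the weighted duality: $\operatorname{coker}\bigl(\slashed{D}\co C^{k+1,\alpha}_{\nu+1}\ra C^{k,\alpha}_{\nu}\bigr)$ is isomorphic to the kernel of the formal adjoint $\slashed{D}^{\ast}=\slashed{D}$ at the reflected weight $-6-\nu$, the weight for which the $L^2$-pairing against $C^{k,\alpha}_{\nu}$ is the borderline one. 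That weight is not an indicial root either (it lies in $(-5,0)$ when $\nu\in(-6,-1)$), and it is negative precisely because $\nu>-6$, so the injectivity argument applies verbatim to show this kernel vanishes. Hence $\slashed{D}$ is also onto, and therefore an isomorphism.

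There is no genuinely hard step; the substance is entirely contained in the cited inputs. The one place that needs care is the bookkeeping of weights: the interval $(-6,-1)$ is forced because injectivity needs the domain weight $\nu+1$ to be negative while surjectivity needs the reflected weight $-6-\nu$ to be negative, and only then does Lemma \ref{lem:AC:CY:Vanishing:Harmonic:0:1:Forms} apply on both sides. One should also double-check that the relevant weights avoid the indicial set of $\slashed{D}$, but Proposition \ref{prop:Dirac:cone} guarantees this throughout $(-5,0)$.
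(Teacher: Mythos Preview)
Your proof is correct and follows essentially the same approach as the paper: use $\slashed{D}^2=\triangle$ (from Ricci-flatness via Lichnerowicz/Weitzenb\"ock) together with Lemma \ref{lem:AC:CY:Vanishing:Harmonic:0:1:Forms} to kill both kernel and, by duality, cokernel. You have simply spelled out in detail what the paper compresses into the phrase ``by duality'', including the explicit identification of the reflected weight $-6-\nu$ and the verification via Proposition \ref{prop:Dirac:cone} that no indicial roots intervene.
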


\begin{remark}\label{rmk:Dirac:AC:CY}
It will be useful to observe that if $(f,g,\gamma)\in C^{1,\alpha}_{\nu+1}$ for some $\nu<-1$ and $\slashed{D}(f,g,\gamma)=(u,v,\alpha)$ with $d^\ast\alpha=0$, then $f=0$. Indeed, by Lemma \ref{lem:Dirac:cone} we have $\alpha=\tu{curl}\,\gamma + df-Jdg$. Moreover $\tu{curl}\,\gamma$ and $Jdg$ are both coclosed since $\tu{curl}\,\gamma=\ast (d\gamma\wedge\Real\Omega_0)$ and $-2Jdg = \ast d\bigl( g\,\omega_0^2\bigr)$. Thus $f$ is a harmonic function of rate $\nu+1<0$ and therefore vanishes by Lemma \ref{lem:AC:CY:Vanishing:Harmonic:0:1:Forms}.
\end{remark}

Finally, we note that every AC Calabi--Yau manifold has finite fundamental group and therefore without loss of generality we can reduce to the case that $B$ is simply connected. This observation is useful when considering principal circle bundles on $B$, which will then be classified by their first Chern class. 
\begin{prop}\label{prop:AC:CY:Fundamental:group}
Let $B$ be an AC Calabi--Yau $3$-fold. Then $B$ has finite fundamental group.
\proof
Fix a large compact set $K\subset B$. Since $\Sigma$ has positive Ricci curvature, the fundamental group of the end $B\setminus K$ is finite. Moreover, $\pi_1 (B\setminus K)\ra \pi_1 (B)$ is surjective: otherwise by \cite[Lemma 2.18]{ACyl:CY} the cover of $B$ with characteristic group the image of $\pi_1 (B\setminus K)$ in $\pi_1 (B)$ would be a Ricci-flat manifold with at least two AC ends, which is impossible by the splitting theorem.
\endproof
\end{prop}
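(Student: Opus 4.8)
The plan is to reduce the proposition to the single claim that, for a sufficiently large compact set $K\subset B$, the inclusion $B\setminus K\hookrightarrow B$ induces a surjection $\pi_1(B\setminus K)\to\pi_1(B)$. Granting this, the proposition follows immediately: since $B$ is AC we may choose $K$ so that $B\setminus K$ is diffeomorphic to $(R,\infty)\times\Sigma$ as in Definition~\ref{def:AC:CY}, hence homotopy equivalent to $\Sigma$, so that $\pi_1(B\setminus K)\cong\pi_1(\Sigma)$; and because $\Sigma$ is Sasaki--Einstein it is Einstein with positive Ricci curvature, so by Bonnet--Myers it is compact with finite fundamental group, as already observed above. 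Thus $\pi_1(B)$ would be a quotient of the finite group $\pi_1(\Sigma)$, hence finite.

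To prove the surjectivity claim I would argue by contradiction. Suppose $H:=\mathrm{im}\bigl(\pi_1(B\setminus K)\to\pi_1(B)\bigr)$ is a proper subgroup and let $\pi\co\hat B\to B$ be the connected covering corresponding to $H$, equipped with the pulled-back Calabi--Yau metric. Since the image of $\pi_1(B\setminus K)$ lies in $H$, the connected set $B\setminus K$ lifts through $\pi$; more precisely $\pi^{-1}(B\setminus K)$ is a disjoint union of $[\pi_1(B):H]\geq 2$ copies of $B\setminus K$, each mapped \emph{isometrically} onto $B\setminus K$ because $\pi$ is a local isometry that is injective on each such component. Hence $\hat B$ is a connected, complete, Ricci-flat $6$-manifold possessing at least two asymptotically conical ends, each asymptotic to the Calabi--Yau cone $\tu{C}(\Sigma)$ over the \emph{connected} Sasaki--Einstein manifold $\Sigma$.

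This contradicts \cite[Lemma~2.18]{ACyl:CY}, whose content is that a complete manifold with nonnegative Ricci curvature admits at most one end of asymptotically conical type: two such ends would produce a line, and the Cheeger--Gromoll splitting theorem would force $\hat B$ to split isometrically as $\R\times N$ with $N$ a compact Ricci-flat $5$-manifold; but then the geodesic spheres of large radius in $\hat B$ have uniformly bounded diameter, whereas the cross-section at radius $r$ of an AC end with link $\Sigma$ has diameter comparable to $r\,\mathrm{diam}(\Sigma)\to\infty$. This contradiction establishes the surjectivity claim and hence the proposition. The only step requiring real care is this surjectivity claim: the difficulty is concentrated in combining the covering-space bookkeeping with the splitting-theorem obstruction to two asymptotically conical ends, and once \cite[Lemma~2.18]{ACyl:CY} is invoked the argument is short.
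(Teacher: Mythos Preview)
Your proposal is correct and follows essentially the same argument as the paper's proof. One small inaccuracy worth noting: it is not true in general that $\pi^{-1}(B\setminus K)$ splits into $[\pi_1(B):H]$ components each mapped bijectively onto $B\setminus K$; the components are indexed by double cosets $H\backslash\pi_1(B)/H$, and each is a (possibly nontrivial) connected cover of $B\setminus K$. Since $H\neq\pi_1(B)$ there are still at least two such components, and since $\pi_1(B\setminus K)\cong\pi_1(\Sigma)$ is finite each component is a \emph{finite} cover of $(R,\infty)\times\Sigma$, hence an AC end over a finite cover of $\Sigma$ --- so the contradiction via the splitting theorem goes through unchanged.
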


\subsection{Closed and coclosed $2$-forms and $3$-forms}

We can now describe the space of closed and coclosed $2$-forms and $3$-forms on $B$ with certain decay rates.
We will need the following well-known fact which follows from the behaviour of the exact sequence \eqref{eq:Exact:Sequence:Cohomology:Boundary} under Poincar\'e duality on $B$ and $\Sigma$:
see \cite[Corollary 4.63]{Karigiannis:Lotay} for a proof of the analogous result in $7$ dimensions.

\begin{lemma}\label{lem:Closed:2Forms:AC:CY} Identify cohomology classes on $\Sigma$ with their harmonic representatives. Then we have an $L^2$--orthogonal decomposition
\[
H^2(\Sigma) = \tu{im}\left( H^2(B)\ra H^2(\Sigma)\right) \oplus \ast_\Sigma\,\tu{im}\left( H^3(B)\ra H^3(\Sigma)\right).
\]
\end{lemma}

\begin{theorem}\label{thm:Closed:2Forms:AC:CY}
Let $B$ be an AC Calabi--Yau $3$-fold asymptotic to the Calabi--Yau cone $\tu{C}(\Sigma)$.
\begin{enumerate}
\item For all $\nu\in (-6,-2)$ there is a natural isomorphism $\mathcal{H}^2_{\nu}(B)\simeq H^2_{c}(B) \simeq L^2\mathcal{H}^2(B)$.
\item For all $\nu\in (-2,0)$ we have
\[
\dim\mathcal{H}^2_\nu(B) = \dim H^2_c(B) + \dim\textup{im}\left( H^2(B)\ra H^2(\Sigma)\right).
\]
Moreover, for every harmonic $2$-form $\tau$ on $\Sigma$ with $[\tau]\in \textup{im}\left( H^2(B)\ra H^2(\Sigma)\right)$ there exists $\sigma \in \mathcal{H}^2_{\nu}(B)$ for every $\nu>-2$ such that for some $\mu>0$ we have
\[
\sigma = \tau + O(r^{-2-\mu}).
\]
In particular, the natural map $\mathcal{H}^2_\nu (B)\ra H^2 (B)$ is an isomorphism.
\item For all sufficiently small $\delta>0$ there is a natural isomorphism
\[
\mathcal{H}^3_{-3-\delta}(B)\simeq \textup{im}\left( H^3_c(B)\ra H^3(B)\right) \simeq  L^2\mathcal{H}^3(B).
\]
\item For all sufficiently small $\delta>0$
every form $\rho\in \mathcal{H}^3_{-3+\delta}(B)$ has an expansion of the form
\[
\rho = -\eta\wedge\tau_1 + \tfrac{1}{r}dr\wedge\tau_2 + O(r^{-3-\mu})
\]
for some small $\mu>0$ and harmonic $2$-forms $\tau_1,\tau_2$ on $\Sigma$ such that $\ast_\Sigma\tau_i=-\eta\wedge\tau_i$ represents a cohomology class in the image of $H^3(B)\ra H^3(\Sigma)$. Moreover, the map $\rho\mapsto \left( [\eta\wedge\tau_1],[\eta\wedge\tau_2]\right)$ induces an isomorphism
\[
\mathcal{H}^3_{-3+\delta}(B)/\mathcal{H}^3_{-3-\delta}(B)\simeq \textup{im}\left( H^3(B)\ra H^3(\Sigma)\right)\oplus \textup{im}\left( H^3(B)\ra H^3(\Sigma)\right).
\]
 \end{enumerate}
\proof
The statement (iii) follows immediately from Theorem \ref{thm:L2:coho}.
Also by Theorem \ref{thm:L2:coho} there is a natural isomorphism $L^2\mathcal{H}^2(B)\simeq H^2_c(B)$. 
Note that $\mathcal{H}^2_\nu(B)\subseteq L^2\mathcal{H}^2(B)$ for $\nu<-3$. Moreover, by Proposition \ref{prop:Closed:Even:Forms:2} the dimension of $\mathcal{H}^2_\nu$ cannot jump in $(-6,-2)$. This proves (i).

To prove (ii) and (iv) we need to understand 
the change in $\mathcal{H}^2_\nu(B)$ and $\mathcal{H}^3_\nu (B)$ as we cross the indicial roots $-2$ and $-3$ respectively. 
We achieve this in the same way as \cite[Proposition 4.65]{Karigiannis:Lotay}, using Lemma \ref{lem:Closed:2Forms:AC:CY} instead of \cite[Corollary 4.63]{Karigiannis:Lotay}. 

To prove (ii) Proposition \ref{prop:Closed:Even:Forms:2} implies that $\mathcal{H}^2_\nu(B)$ remains constant for all $\nu\in (-2,0)$ and the statement that $\mathcal{H}^2_\nu (B)\ra H^2 (B)$ is an isomorphism for $\nu 
\in (-2,0)$ follows from the fact that $H^2(B) \simeq H^2_c(B)\oplus \tu{im}\left( H^2(B)\ra H^2(\Sigma)\right)$ (using $H^1(\Sigma)=0$
in the exact sequence \eqref{eq:Exact:Sequence:Cohomology:Boundary}).

To establish (iv), Proposition \ref{prop:Harmonic:3forms:Cone} implies that every $\rho\in \mathcal{H}^3_{-3+\delta}(B)$ has an expansion of the form
\[
\rho = -\eta\wedge\tau_1 + \tfrac{1}{r}dr\wedge\tau_2 + O(r^{-3-\mu})
\]
for some small $\mu>0$ and harmonic $2$-forms $\tau_1,\tau_2$ on $\Sigma$. By Remark \ref{rmk:Harmonic:2:forms:SE} both $\tau_i$ are basic primitive $(1,1)$ forms and therefore $\ast_\Sigma \tau_i=-\eta\wedge\tau_i$ by Lemma \ref{lem:Forms:Sasaki:Einstein} (iv). In particular, $\ast\rho$ has an expansion of the form
\[
\ast\rho = -\eta\wedge\tau_2 - \tfrac{1}{r}dr\wedge\tau_1 + O(r^{-3-\mu}).
\]
Hence $[-\eta\wedge\tau_1]$ and $[-\eta\wedge\tau_2]$ represent the images of, respectively, $[\rho]$ and $[\ast\rho]$ in $H^3(\Sigma)$ and the map $\rho\mapsto\left( [\eta\wedge\tau_1],[\eta\wedge\tau_2]\right)$ induces an injection of $\mathcal{H}^3_{-3+\delta}(B)/\mathcal{H}^3_{-3-\delta}(B)$ into $\textup{im}\left( H^3(B)\ra H^3(\Sigma)\right)^{\oplus 2}$. The fact that this map is also surjective follows from Lemma \ref{lem:Laplacian:2Forms:AC:CY} (iii) below.  
\endproof
\end{theorem}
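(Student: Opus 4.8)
The plan is to deduce all four parts of Theorem~\ref{thm:Closed:2Forms:AC:CY} from the cone computations of Section~\ref{sec:CY:cones} --- Propositions~\ref{prop:Closed:Even:Forms}, \ref{prop:Closed:Even:Forms:2}, \ref{prop:Closed:Odd:Forms}, \ref{prop:Harmonic:2forms:cone}, \ref{prop:Harmonic:3forms:Cone} --- together with the $L^2$-cohomology identification of Theorem~\ref{thm:L2:coho} and the standard bookkeeping for elliptic operators on weighted spaces recalled in Appendix~\ref{Appendix:Analysic:AC}. Part~(iii) is immediate from Theorem~\ref{thm:L2:coho}: $L^2$-integrability of a $3$-form on a $6$-manifold forces decay strictly faster than $r^{-3}$, so $\mathcal{H}^3_{-3-\delta}(B)=L^2\mathcal{H}^3(B)\cong\textup{im}\,(H^3_c(B)\to H^3(B))$ for all sufficiently small $\delta>0$.

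For (i) I would observe that $\mathcal{H}^2_\nu(B)\subseteq L^2\mathcal{H}^2(B)\cong H^2_c(B)$ for $\nu<-3$ by Theorem~\ref{thm:L2:coho}, while by Proposition~\ref{prop:Closed:Even:Forms:2} the dimension of $\mathcal{H}^2_\nu(B)$ cannot jump as $\nu$ ranges over $(-6,-2)$; hence it is constant there, equal to its value on $(-6,-3)$, yielding the natural isomorphism $\mathcal{H}^2_\nu(B)\cong H^2_c(B)\cong L^2\mathcal{H}^2(B)$ throughout $(-6,-2)$.

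Parts~(ii) and~(iv) both come down to computing the jump in $\dim\mathcal{H}^k_\nu(B)$ as $\nu$ crosses the exceptional weight $-2$ ($k=2$) or $-3$ ($k=3$) and identifying the newly-allowed leading asymptotic terms with boundary cohomology classes; I would run the argument exactly as in the proof of \cite[Proposition~4.65]{Karigiannis:Lotay}, using Lemma~\ref{lem:Closed:2Forms:AC:CY} in place of the Poincar\'e-duality statement invoked there. For $k=2$: by Proposition~\ref{prop:Harmonic:2forms:cone}(iv) and Remark~\ref{rmk:Harmonic:2:forms:SE} the closed and coclosed homogeneous $2$-forms on $\tu{C}(\Sigma)$ of rate exactly $-2$ are the $r$-independent harmonic $2$-forms $\tau$ on $\Sigma$ (automatically basic primitive $(1,1)$), and a radial-integration argument shows that such a $\tau$ occurs as the leading term of some $\sigma\in\mathcal{H}^2_\nu(B)$, $\nu>-2$, with $\sigma=\tau+O(r^{-2-\mu})$, precisely when $[\tau]\in\textup{im}\,(H^2(B)\to H^2(\Sigma))$, and then $[\sigma]\mapsto[\tau]$ under restriction. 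This yields the dimension formula in (ii); combined with the splitting $H^2(B)\cong H^2_c(B)\oplus\textup{im}\,(H^2(B)\to H^2(\Sigma))$ coming from \eqref{eq:Exact:Sequence:Cohomology:Boundary} and $H^1(\Sigma)=0$, the map $\mathcal{H}^2_\nu(B)\to H^2(B)$ is an isomorphism for $\nu\in(-2,0)$. For $k=3$: by Proposition~\ref{prop:Harmonic:3forms:Cone} a closed and coclosed $3$-form on $\tu{C}(\Sigma)$ homogeneous of rate $-3$ equals $\eta\wedge\tau_1+\tfrac1r\,dr\wedge\tau_2$ with $\tau_1,\tau_2$ harmonic (basic primitive $(1,1)$) $2$-forms on $\Sigma$, so $\ast_\Sigma\tau_i=-\eta\wedge\tau_i$; hence each $\rho\in\mathcal{H}^3_{-3+\delta}(B)$ has the stated expansion, $\ast\rho$ has one of the same shape with $(\tau_1,\tau_2)$ interchanged up to sign, and the leading coefficients compute the images of $[\rho]$ and $[\ast\rho]$ in $H^3(\Sigma)$. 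This gives an injection $\mathcal{H}^3_{-3+\delta}(B)/\mathcal{H}^3_{-3-\delta}(B)\hookrightarrow\textup{im}\,(H^3(B)\to H^3(\Sigma))^{\oplus 2}$, and surjectivity follows as in \cite[Proposition~4.65]{Karigiannis:Lotay} via Lemma~\ref{lem:Closed:2Forms:AC:CY}.

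The main obstacle is the borderline weight analysis at $-2$ and $-3$: there $d+d^\ast$ fails to be Fredholm (\cf the discussion after Theorem~\ref{thm:L2:coho}), so one cannot simply count cokernel dimensions. The substantive work is (a) the refined asymptotics --- showing the leading coefficients are harmonic on $\Sigma$ of the restricted shape forced by Remark~\ref{rmk:Harmonic:2:forms:SE}, with the remainders decaying an extra power $r^{-\mu}$; and (b) the realisation/surjectivity step, \ie producing a closed and coclosed form on $B$ with a prescribed boundary profile, which is exactly where Poincar\'e duality on $B$ and $\Sigma$ (Lemma~\ref{lem:Closed:2Forms:AC:CY}) enters essentially.
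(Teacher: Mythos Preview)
Your proposal is correct and follows essentially the same route as the paper: part~(iii) from Theorem~\ref{thm:L2:coho}, part~(i) by combining the $L^2$ identification with the absence of indicial roots in $(-6,-2)$ via Proposition~\ref{prop:Closed:Even:Forms:2}, and parts~(ii) and~(iv) by analysing the jump across the critical weights $-2$ and $-3$ exactly as in \cite[Proposition~4.65]{Karigiannis:Lotay} with Lemma~\ref{lem:Closed:2Forms:AC:CY} supplying the Poincar\'e-duality input. The specific ingredients you name --- Propositions~\ref{prop:Harmonic:2forms:cone} and~\ref{prop:Harmonic:3forms:Cone}, Remark~\ref{rmk:Harmonic:2:forms:SE}, Lemma~\ref{lem:Forms:Sasaki:Einstein}(iv), and the splitting of $H^2(B)$ from $H^1(\Sigma)=0$ --- match the paper's proof.
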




\begin{lemma}\label{lem:Laplacian:2Forms:AC:CY}
Let $(B,\omega_0,\Omega_0)$ be an irreducible AC CY $3$-fold. For $\nu\in \R$ let $\ker{\triangle}^2_\nu$ denote the space of harmonic $2$-forms on $B$ of class $C^{\infty}_\nu$.
\begin{enumerate}
\item If $\nu<-2$, $\ker{\triangle}^2_\nu=\mathcal{H}^2_\nu (B)$, \ie every harmonic $2$-form of rate $\nu$ is closed and coclosed.
\item If $\nu<1$ every $\sigma\in \ker{\triangle}^2_\nu$ is coclosed. In particular, $\ast d\sigma\in \mathcal{H}^3_{\nu-1}(B)$ for every $\sigma\in \ker{\triangle}^2_\nu$.  
\item The composition of the map $\sigma\mapsto [\ast d\sigma]$ and the restriction map $H^3(B)\ra H^3(\Sigma)$ induces an isomorphism between $\ker{\triangle}^2_{-2+\mu}/\mathcal{H}^2_{-2+\mu}(B)$ and $\tu{im}\, \left(H^3(B)\ra H^3(\Sigma)\right)$ for every sufficiently small $\mu>0$. 
\end{enumerate}
\proof
Part (i) follows immediately from the integration by parts formula of Lemma \ref{lem:integration:parts} and part (ii) is a consequence of Lemma \ref{lem:AC:CY:Vanishing:Harmonic:0:1:Forms} since $d^\ast\sigma$ is a decaying harmonic $1$-form on $B$. The proof of part (iii) is more involved.

By Theorem \ref{thm:Closed:2Forms:AC:CY} the dimension of the space of closed and coclosed $2$-forms of rate $-2+\mu$ is $b^2_c(B)+\text{dim}\,\text{im}\left( H^2(B)\ra H^2(\Sigma)\right)$, where $b^2_c(B)=\dim H^2_c(B)$. On the other hand, let $d$ denote the dimension of the space of harmonic $2$-forms in $C^\infty_{-2+\mu}$. Proposition \ref{prop:Harmonic:2forms:cone}, Remark \ref{rmk:Log:Harmonic:2forms:Cone} and the index jump formula for the index of the Laplacian on $2$-forms as we cross the indicial root $-2$ show that $2\left( d-b^2_c(B) \right)=2b_2(\Sigma)$, \ie $d=b^2_c(B)+b_2(\Sigma)$. In particular, $\dim\, \ker{\triangle}^2_{-2+\mu}/\mathcal{H}^2_{-2+\mu}(B) = \dim\, \tu{im}\, H^3(B)\ra H^3(\Sigma)$ by Lemma \ref{lem:Closed:2Forms:AC:CY} and Theorem \ref{thm:Closed:2Forms:AC:CY} (ii).

Now, let $\sigma$ be a harmonic $2$-form of rate $-2+\mu$ for some small $\mu>0$. By part (ii) $\sigma$ is always coclosed and therefore $d\sigma$ is a closed and coclosed $3$-form. By Theorem \ref{thm:Closed:2Forms:AC:CY} (iii) if $d\sigma\in \mathcal{H}^3_{-3-\mu}(B)$ then we must have $d\sigma=0$. Hence $\sigma\mapsto \ast d\sigma$ induces an injective map $\ker{\triangle}^2_{-2+\mu}/\mathcal{H}^2_{-2+\mu}(B) \ra \mathcal{H}^3_{-3+\mu}(B)/\mathcal{H}^3_{-3-\mu}(B)$. Since the image of $[d\sigma]$ in $H^3(\Sigma)$ vanishes, the injectivity of the map in Theorem \ref{thm:Closed:2Forms:AC:CY} (iv) implies that the composition of the map $\sigma\mapsto [\ast d\sigma]$ and the restriction map $H^3(B)\ra H^3(\Sigma)$ induces a linear embedding of $\ker{\triangle}^2_{-2+\mu}/\mathcal{H}^2_{-2+\mu}(B)$ into $\tu{im}\, \left(H^3(B)\ra H^3(\Sigma)\right)$ for every sufficiently small $\mu>0$. The map is in fact an isomorphism for dimensional reasons.
\endproof
\end{lemma}

\begin{remark}\label{rmk:Laplacian:2Forms:AC:CY}
Identify $H^2(\Sigma)$ with the space $\mathcal{H}^2(\Sigma)$ of harmonic $2$-forms on $\Sigma$ and set $V_2 = \tu{im}\, H^2(B)\ra H^2(\Sigma)$. By Lemma \ref{lem:Closed:2Forms:AC:CY} the $L^2$--orthogonal complement of $V_2$ in $H^2(\Sigma)\simeq\mathcal{H}^2(\Sigma)$ is a subspace $V_3$ isomorphic to $\tu{im}\, H^3(B)\ra H^3(\Sigma)$ under the Hodge star operator  ${\ast}$ on $\Sigma$. Choose an $L^2$--orthonormal basis $\tau_1,\dots, \tau_b$ of harmonic $2$-forms on $\Sigma$ with $\tau_i\in V_2$ if $i\leq k$ and $\tau_i\in V_3$ if $i\geq k+1$. Here $b=b_2(\Sigma)$ and $k=\dim\tu{im}\, H^2(B)\ra H^2(\Sigma)$.
By Theorem \ref{thm:Closed:2Forms:AC:CY} (ii) and (iv) there are closed and coclosed $2$-forms $\sigma_1,\dots, \sigma_k$ on $B$ with $\sigma_i = \tau_i +O(r^{-2-\mu})$ and closed and coclosed $3$--forms $\rho_{k+1},\dots, \rho_b$ on $B$ with $\rho_i = \eta\wedge\tau_i + O(r^{-3-\mu})$ for some $\mu>0$ sufficiently small. To make these choices unique we use Theorem \ref{thm:Closed:2Forms:AC:CY} (i) and (iii) and require that $\ast\sigma_i$ and $\rho_i$ integrate to zero on the closed cycles which are Poincar\'e dual to the cohomology classes of closed and coclosed harmonic $L^2$--forms.
Moreover, for every $j=k+1,\dots, b$ there exists a harmonic $2$-form $\overline{\sigma}_j$ on $B$ coclosed but \emph{not} closed such that
\begin{equation}\label{eq:Laplacian:2Forms:AC:CY}
\overline{\sigma}_j = -\left(\log{r}\right)\,\tau_j + \sum_{i=k+1}^{b}{\alpha^i_j\, \tau_i} +O(r^{-2-\mu})
\end{equation}
for some $\alpha^i_j\in \R$ and $\mu>0$ sufficiently small. Indeed, by Lemma \ref{lem:Laplacian:2Forms:AC:CY} (iii) for all $j=k+1,\dots, b$ there exists a unique harmonic $2$-form $\overline{\sigma}_j$ modulo $\vspan\{ \sigma_1,\dots, \sigma_k\}$ and $L^2\mathcal{H}^2(B)$ such that $d\overline{\sigma}_j=\ast\rho_j$. In order to determine the asymptotic behaviour of $\overline{\sigma}_j$, we solve $d\overline{\sigma}_j=\ast\rho_j$ on the cone itself using the expansion $\ast\rho_j = -r^{-1}dr\wedge\tau_j + O(r^{-3-\mu})$ and deduce that we can choose $\overline{\sigma}_j$ satisfying \eqref{eq:Laplacian:2Forms:AC:CY} as claimed.
\end{remark}

Lemma \ref{lem:Laplacian:2Forms:AC:CY} shows that in general there are decaying harmonic $2$-forms on $B$ that are not closed. One way this fact causes complications compared to the compact setting is that the equation $\triangle\sigma = d^\ast\psi\in C^{0,\alpha}_{\nu-1}$ for a $2$-form $\sigma$ might have no solutions in $C^{2,\alpha}_{\nu+1}$. We will now show that for a certain range of rates $\nu$, this equation can always be solved with $d\sigma\in C^{1,\alpha}_\nu$, while $\sigma$ itself is allowed to decay slower than $r^{\nu+1}$.

\begin{prop}\label{prop:Laplacian:2Forms:AC:CY}
Let $(B,\omega_0,\Omega_0)$ be an irreducible AC CY $3$-fold. Fix $k\geq 1$, $\alpha\in (0,1)$, sufficiently small $\delta>0$ depending only on the rate of decay of $B$ to $\tu{C}(\Sigma)$ and the Sasaki--Einstein manifold $\Sigma$ and $\nu > -3-\delta$ so that $\nu+1$ is not an indicial root for the Laplacian acting on $2$-forms. Then there exists a constant $C>0$ such that for every $3$-form $\psi\in C^{k,\alpha}_{\nu}$ the equation 
\[\triangle\sigma = d^\ast\psi
\]
has a solution $\sigma$ with $d\sigma\in C^{k,\alpha}_{\nu}$ and satisfying
\[
\|d\sigma\|_{C^{k,\alpha}_{\nu}} \leq C\| d^\ast\psi\|_{C^{k-1,\alpha}_{\nu-1}}.
\]
\proof
By elliptic regularity it is enough to prove the result with $k=1$.

The obstructions to solve $\triangle\sigma = d^\ast\psi$ with $\sigma\in C^{2,\alpha}_{\nu+1}$ lie in the space of harmonic $2$-forms in $C^\infty_{-5-\nu}$ which are not closed. By Lemma \ref{lem:Laplacian:2Forms:AC:CY} (i), 
if $\nu> -3$ then there exists a unique $2$-form $\sigma\in C^{2,\alpha}_{\nu+1}$ which is $L^2_{\nu+1}$--orthogonal to the space of harmonic $2$-forms in $C^\infty_{\nu+1}$ and satisfies $\triangle\sigma = d^\ast\psi$. The estimate for $\| d\sigma\|_{C^{1,\alpha}_{\nu}}$ (even better, $\|\sigma\|_{C^{2,\alpha}_{\nu+1}}$) follows in a standard way. 

Similarly, if $\nu\in (-3-\delta,-3)$ and 
$H^2(B)\ra H^2(\Sigma)$ is surjective then every harmonic $2$-form of rate $-2+\mu$ is closed and coclosed by Lemma \ref{lem:Laplacian:2Forms:AC:CY} (iii) and therefore there are no obstructions to solve $\triangle\sigma=d^\ast\psi$ with $\sigma\in C^{2,\alpha}_{\nu+1}$. Moreover, the unique solution which is $L^2_{\nu+1}$--orthogonal to harmonic forms satisfies $\|\sigma\|_{C^{2,\alpha}_{\nu+1}}\leq C\| d^\ast\psi\|_{C^{0,\alpha}_{\nu-1}}$ for some constant $C>0$ independent of $\sigma$ and $\psi$. 

We have therefore reduced to the case where $\nu\in (-3-\delta,-3)$ and $H^2(B)\ra H^2(\Sigma)$ is \emph{not} surjective. In this case there are genuine obstructions to solve $\triangle\sigma=d^\ast\psi$ with $\sigma\in C^{2,\alpha}_{\nu+1}$. However, for every small $\mu>0$ we can always solve the equation $\triangle\sigma=d^\ast\psi$ for some $\sigma\in C^{2,\alpha}_{-2+\mu}$, unique up to the addition of a harmonic form. We will show that we can choose $\sigma$ so that $d\sigma\in C^{1,\alpha}_{\nu}$, \ie $d\sigma$ decays faster than expected. More precisely, Proposition \ref{prop:Harmonic:2forms:cone} and Remark \ref{rmk:Log:Harmonic:2forms:Cone} imply that $\sigma = \tau\log{r} + \tau' + O(r^{\nu+1}) $ for some $\tau,\tau'\in\mathcal{H}^2(\Sigma)$. We want to show that we can make $\tau=0$ by adding a harmonic form to $\sigma$, \ie there are no $\log{r}$ terms. This is the key point of the proof.

Consider the harmonic coclosed but not closed $2$-forms $\overline{\sigma}_{k+1}, \dots, \overline{\sigma}_b$ constructed in Remark \ref{rmk:Laplacian:2Forms:AC:CY} and note that $\{ \overline{\sigma}_{k+1}, \dots, \overline{\sigma}_b\}$ is a basis of the space of obstructions to solve $\triangle\sigma = d^\ast\psi$ with $\sigma\in C^{2,\alpha}_{\nu+1}$. Fix a cut-off function $\chi\equiv 1$ on $B\setminus K$ for some large compact set $K\subset B$. For all $j=k+1,\dots, b$ consider the $2$-form $\sigma'_j=\chi\, \tau_j$. Since $\tau_j$ is closed and coclosed on the cone $\tu{C}(\Sigma)$ (but not necessarily coclosed on $B$), note that $d\sigma'_j$ is compactly supported and $d^\ast\sigma'_j\in C^\infty_{\nu}$ if $\delta>0$ is chosen small enough.
On the other hand $\overline{\sigma}_h\in C^\infty_{-2+\mu}$ for any $\mu>0$ and, since $\nu<-3$, we can fix $\mu$ so that $\nu+(-2+\mu)<-5$. Then we can integrate by parts a first time appealing to Lemma \ref{lem:integration:parts}, but we need to calculate boundary terms explicitly when integrating by parts a second time: 
\[
\begin{aligned}
\langle \triangle \sigma'_j,\overline{\sigma}_h\rangle_{L^2} &= \langle d^\ast \sigma'_j,d^\ast\overline{\sigma}_h\rangle_{L^2} + \langle d \sigma'_j,d\overline{\sigma}_h\rangle_{L^2} = \langle d \sigma'_j,\ast \rho_h\rangle_{L^2}\\
&= -\lim_{R\ra\infty}{\int_{r\leq R}{d\left( \sigma'_j\wedge\rho_h\right)}} = -\lim_{R\ra\infty}{\int_{r= R}{\tau_j \wedge \left( \eta\wedge\tau_h + O(r^{-3-\mu})\right)}}\\
&=-\int_{\Sigma}{\tau_j\wedge\eta\wedge\tau_h}=\delta_{jh},
\end{aligned}
\]
for every $h=k+1,\dots,b$. Here we used the fact that $d^\ast\overline{\sigma}_h=0$, $d\overline{\sigma}_h=\ast\rho_h$ and $d\rho_h=0$.

Now, given $\psi\in C^{1,\alpha}_{\nu}$ set $a_i = \langle d^\ast\psi, \overline{\sigma}_i\rangle_{L^2}$. Note that $|a_i|\leq C\| d^\ast \psi\|_{C^{0,\alpha}_{\nu-1}}$ since $|d^\ast\psi| \leq \| d^\ast \psi\|_{C^{0,\alpha}_{\nu-1}} r^{\nu-1}$, $|\overline{\sigma}_i|\leq C r^{-2}\log{r}$ and $r^{\nu-3}\log{r}$ is integrable when $\nu<-3$. Consider $d^\ast\psi - \sum_{i=k+1}^b{a_i\, \triangle\sigma'_i}\in C^{0,\alpha}_{\nu-1}$. Since
\[
\langle d^\ast\psi - \sum_{i=k+1}^b{a_i\, \triangle\sigma'_i}, \overline{\sigma}_h\rangle_{L^2}=0
\]
for all $h=k+1,\dots, b$, there exists a unique $2$-form $\sigma'\in C^{2,\alpha}_{\nu+1}$ which is $L^2_{\nu+1}$--orthogonal to harmonic $2$-forms and satisfies $\triangle \sigma' = d^\ast\psi - \sum_{i=k+1}^b{a_i\, \triangle \sigma'_i}$. Moreover,
\[
\|\sigma'\|_{C^{2,\alpha}_{\nu+1}} \leq C\left( \| d^\ast\psi\|_{C^{0,\alpha}_{\nu-1}} + \sum_{i=k+1}^b{|a_i|}\right) \leq C\| d^\ast\psi\|_{C^{0,\alpha}_{\nu-1}}
\]
since $\|\triangle\sigma'_i\|_{C^{0,\alpha}_{\nu-1}}$ is uniformly bounded. We then set $\sigma = \sigma' + \sum_{i=k+1}^b{a_i\, \sigma'_i}$.
\endproof
\end{prop}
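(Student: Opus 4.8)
The plan is a weighted Fredholm argument in which one tracks the obstructions to solving $\triangle\sigma = d^\ast\psi$ carefully and, in the borderline range of rates, settles for slow decay of $\sigma$ itself while still forcing $d\sigma$ to decay at rate $\nu$. By the weighted Schauder estimates of Theorem~\ref{thm:Weighted:Regularity} it is enough to treat $k=1$: applying $d^\ast$ and using Lemma~\ref{lem:AC:CY:Vanishing:Harmonic:0:1:Forms} shows that any solution $\sigma$ is automatically coclosed, so $d\sigma$ is a closed $3$-form with $\triangle(d\sigma) = dd^\ast\psi$, and elliptic regularity then lifts $d\sigma\in C^{1,\alpha}_\nu$ to $C^{k,\alpha}_\nu$ with the corresponding bound. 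For $k=1$ the operator $\triangle\colon C^{2,\alpha}_{\nu+1}\to C^{0,\alpha}_{\nu-1}$ is Fredholm, since $\nu+1$ avoids the indicial roots, with cokernel identified with the space of harmonic $2$-forms in $C^\infty_{-5-\nu}$. As $d^\ast\psi$ is coexact it pairs to zero with every \emph{closed and coclosed} harmonic $2$-form, so the only genuine obstruction comes from harmonic $2$-forms of rate $-5-\nu$ that fail to be closed. When $\nu > -3$ we have $-5-\nu < -2$, and Lemma~\ref{lem:Laplacian:2Forms:AC:CY}(i) rules such forms out; one then takes the solution $\sigma\in C^{2,\alpha}_{\nu+1}$ that is $L^2_{\nu+1}$-orthogonal to harmonic forms, with $\|d\sigma\|_{C^{1,\alpha}_\nu}\le\|\sigma\|_{C^{2,\alpha}_{\nu+1}}\le C\|d^\ast\psi\|_{C^{0,\alpha}_{\nu-1}}$ from the a priori estimate for $\triangle$ on the complement of its cokernel. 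The identical argument works when $\nu\in(-3-\delta,-3)$ and $H^2(B)\to H^2(\Sigma)$ is surjective, because Lemma~\ref{lem:Laplacian:2Forms:AC:CY}(iii) then says every harmonic $2$-form of rate just above $-2$ is closed and coclosed, so again no obstructions arise.

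The substantive case, and the main obstacle, is $\nu\in(-3-\delta,-3)$ with $H^2(B)\to H^2(\Sigma)$ \emph{not} surjective. By Lemma~\ref{lem:Laplacian:2Forms:AC:CY}(iii) and Remark~\ref{rmk:Laplacian:2Forms:AC:CY} the genuine obstruction space is now spanned by harmonic-but-not-closed $2$-forms $\overline{\sigma}_{k+1},\dots,\overline{\sigma}_b$ on $B$ with leading behaviour $-(\log r)\,\tau_j + O(1)$, where $\tau_{k+1},\dots,\tau_b$ is an $L^2$-orthonormal basis of harmonic $2$-forms on $\Sigma$ Hodge-dual to $\mathrm{im}\bigl(H^3(B)\to H^3(\Sigma)\bigr)$, and where $d\overline{\sigma}_h = \ast\rho_h$ for the closed and coclosed $3$-forms $\rho_h = \eta\wedge\tau_h + O(r^{-3-\mu})$ on $B$ furnished by Theorem~\ref{thm:Closed:2Forms:AC:CY}. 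The strategy is to allow a $\log r$ term in $\sigma$ but still demand $d\sigma\in C^{1,\alpha}_\nu$. For each $j$ set $\sigma_j' = \chi\,\tau_j$, with $\chi$ a cutoff equal to $1$ near infinity; since $\tau_j$ is closed and coclosed \emph{on the cone}, $d\sigma_j'$ is compactly supported and $d^\ast\sigma_j'\in C^\infty_\nu$ provided $\delta$ is small. Green's identity followed by Stokes' theorem (using $d^\ast\overline{\sigma}_h = 0$, $d\overline{\sigma}_h = \ast\rho_h$ and $d\rho_h = 0$) reduces $\langle\triangle\sigma_j',\overline{\sigma}_h\rangle_{L^2}$ to a boundary integral over $\{r=R\}$ computed from the leading terms of $\sigma_j'$ and $\rho_h$, giving $\langle\triangle\sigma_j',\overline{\sigma}_h\rangle_{L^2} = \delta_{jh}$; thus $\{\triangle\sigma_j'\}$ is a basis dual to the obstruction space.

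It remains to assemble the solution. Given $\psi$, set $a_i = \langle d^\ast\psi,\overline{\sigma}_i\rangle_{L^2}$; the pairing converges and $|a_i|\le C\|d^\ast\psi\|_{C^{0,\alpha}_{\nu-1}}$ because $|d^\ast\psi|\le C\,r^{\nu-1}\|d^\ast\psi\|_{C^{0,\alpha}_{\nu-1}}$, $|\overline{\sigma}_i|\le C\,r^{-2}\log r$ and $\nu < -3$. Then $d^\ast\psi - \sum_i a_i\,\triangle\sigma_i'$ lies in $C^{0,\alpha}_{\nu-1}$ and is $L^2$-orthogonal to every $\overline{\sigma}_h$, hence there is a unique $\sigma'\in C^{2,\alpha}_{\nu+1}$, orthogonal to harmonic $2$-forms, with $\triangle\sigma' = d^\ast\psi - \sum_i a_i\,\triangle\sigma_i'$ and $\|\sigma'\|_{C^{2,\alpha}_{\nu+1}}\le C\|d^\ast\psi\|_{C^{0,\alpha}_{\nu-1}}$ (using that the $\|\triangle\sigma_i'\|_{C^{0,\alpha}_{\nu-1}}$ are uniformly bounded). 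Setting $\sigma = \sigma' + \sum_i a_i\,\sigma_i'$ one gets $\triangle\sigma = d^\ast\psi$, and $d\sigma = d\sigma' + \sum_i a_i\,d\sigma_i'\in C^{1,\alpha}_\nu$ with the claimed estimate, since $d\sigma'$ decays at rate $\nu$ (as $\sigma'\in C^{2,\alpha}_{\nu+1}$) while the $d\sigma_i'$ are compactly supported. The crux throughout is this last case: the cokernel of $\triangle$ at weight $-5-\nu$, forced by the non-surjectivity of $H^2(B)\to H^2(\Sigma)$, produces an unavoidable $\log r$ term in $\sigma$, and one must arrange that this term does not pollute $d\sigma$; the mechanism is that the obstructing forms $\overline{\sigma}_j$ are detected by pairing against the cutoff models $\chi\,\tau_j$, whose exterior derivatives are compactly supported and hence harmless for the decay of $d\sigma$.
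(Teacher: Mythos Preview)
Your proposal is correct and follows essentially the same approach as the paper's own proof: the same case split on $\nu$ and on surjectivity of $H^2(B)\to H^2(\Sigma)$, the same obstruction basis $\overline{\sigma}_j$ from Remark~\ref{rmk:Laplacian:2Forms:AC:CY}, the same dual test forms $\sigma'_j=\chi\tau_j$, the same boundary computation giving $\langle\triangle\sigma'_j,\overline{\sigma}_h\rangle_{L^2}=\delta_{jh}$, and the same final assembly $\sigma=\sigma'+\sum_i a_i\sigma'_i$. Your added remarks (coclosedness of $\sigma$ via Lemma~\ref{lem:AC:CY:Vanishing:Harmonic:0:1:Forms}, and the explicit observation that $d\sigma'_i$ being compactly supported is what keeps $d\sigma$ in $C^{1,\alpha}_\nu$) are correct and make the argument slightly more explicit than the paper's version, but do not change the strategy.
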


\subsection{Normal forms for exact $4$-forms}\label{sec:Normal:form:4:forms}

We now use the previous results to give ``normal forms'' for exact $4$-forms on $B$ that exploit the interplay between the mapping properties of the operator $d+d^\ast$ and the type-decomposition of differential forms on an AC Calabi--Yau $3$-fold $(B,\omega_0,\Omega_0)$. These results will be essential to understand the image of the linearisation of the Apostolov--Salamon equations; they are mostly direct consequences of algebraic identities involving the type-decomposition of differential forms and the mapping properties of the Dirac operator on $B$.

We begin with the following immediate corollary of Proposition \ref{prop:Dirac:AC:CY}.

\begin{lemma}\label{cor:3forms:AC:CY}
For all $\nu\in (-6,-1)$, $k\geq 0$ and $\alpha\in (0,1)$ there exists a constant $C>0$ such that for any $3$-form $\rho\in C^{k,\alpha}_{\nu}$ there exist unique $\rho_0\in C^{k,\alpha}_{\nu}\cap \Omega^3_{12}$ and $f,g,\gamma\in C^{k+1,\alpha}_{\nu+1}$ with
\[
\| (f,g,\gamma)\|_{C^{k+1,\alpha}_{\nu+1}}+ \| \rho_0\|_{C^{k,\alpha}_{\nu}} \leq C\| \rho\|_{C^{k,\alpha}_{\nu}}
\]
and
\[
\rho = d(f\omega_0 + \gamma^\sharp\lrcorner\Real\Omega_0) + d^\ast\bigl( \tfrac{1}{2}g\omega_0^2\bigr) + \rho_0.
\]
\proof
The statement follows immediately from Proposition \ref{prop:Dirac:AC:CY} since the operator $\Omega^2_{1\oplus 6}\oplus \Omega^4_1 \ra \Omega^3_{1\oplus 1\oplus 6}$ defined by $(\alpha, \beta)\mapsto \pi_{1\oplus 1\oplus 6}(d\alpha + d^\ast\beta)$ can be identified with the Dirac operator $\slashed{D}$ of $B$, \cf \cite[Equation (3.26)]{Nordstrom:Thesis}.
\endproof
\end{lemma}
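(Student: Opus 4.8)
The plan is to deduce Lemma~\ref{cor:3forms:AC:CY} directly from the isomorphism property of the Dirac operator established in Proposition~\ref{prop:Dirac:AC:CY}, together with the type-decomposition of $3$-forms and the spinorial identification of the operator $(\alpha,\beta)\mapsto\pi_{1\oplus 1\oplus 6}(d\alpha+d^\ast\beta)$ from $\Omega^2_{1\oplus 6}\oplus\Omega^4_1$ to $\Omega^3_{1\oplus 1\oplus 6}$. Concretely, the spinor bundle of $B$ is $\underline{\R}\oplus\underline{\R}\oplus T^\ast B$, matching the source bundle $\Omega^2_1\oplus\Omega^4_1\oplus\Omega^2_6\cong\underline{\R}\oplus\underline{\R}\oplus T^\ast B$ via $f\omega_0$, $\tfrac12 g\omega_0^2$ and $\gamma^\sharp\lrcorner\Real\Omega_0$, while the target bundle $\Omega^3_{1\oplus 1\oplus 6}=\R\Real\Omega_0\oplus\R\Imag\Omega_0\oplus\Omega^3_6$ is likewise isomorphic to the spinor bundle; under these identifications the composite $(f,g,\gamma)\mapsto\pi_{1\oplus 1\oplus 6}\bigl(d(f\omega_0+\gamma^\sharp\lrcorner\Real\Omega_0)+d^\ast(\tfrac12 g\omega_0^2)\bigr)$ is exactly $\slashed{D}$, as recorded in \cite[Equation (3.26)]{Nordstrom:Thesis} (and consistent with Proposition~\ref{prop:differential:2forms}(i),(ii),(iv) and Lemma~\ref{lem:Dirac:cone}).

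The key steps, in order, are the following. First I would split an arbitrary $\rho\in C^{k,\alpha}_\nu$ pointwise as $\rho=\pi_{1\oplus 1\oplus 6}\rho+\pi_{12}\rho$, setting $\rho_0:=\pi_{12}\rho\in C^{k,\alpha}_\nu\cap\Omega^3_{12}$; this is a bounded algebraic projection, so $\|\rho_0\|_{C^{k,\alpha}_\nu}\le C\|\rho\|_{C^{k,\alpha}_\nu}$ with $C$ depending only on the structure. Second, I would apply Proposition~\ref{prop:Dirac:AC:CY}: since $\nu\in(-6,-1)$, $\slashed{D}\co C^{k+1,\alpha}_{\nu+1}\to C^{k,\alpha}_\nu$ is an isomorphism, so there exist unique $(f,g,\gamma)\in C^{k+1,\alpha}_{\nu+1}$ with $\slashed{D}(f,g,\gamma)$ equal to (the spinor corresponding to) $\pi_{1\oplus 1\oplus 6}\rho$, and the open mapping theorem gives $\|(f,g,\gamma)\|_{C^{k+1,\alpha}_{\nu+1}}\le C\|\pi_{1\oplus 1\oplus 6}\rho\|_{C^{k,\alpha}_\nu}\le C\|\rho\|_{C^{k,\alpha}_\nu}$. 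Third, unwinding the spinorial dictionary, $\slashed{D}(f,g,\gamma)=\pi_{1\oplus 1\oplus 6}\bigl(d(f\omega_0+\gamma^\sharp\lrcorner\Real\Omega_0)+d^\ast(\tfrac12 g\omega_0^2)\bigr)$, so by construction $d(f\omega_0+\gamma^\sharp\lrcorner\Real\Omega_0)+d^\ast(\tfrac12 g\omega_0^2)+\rho_0$ has the same $\pi_{1\oplus 1\oplus 6}$-part as $\rho$; it remains only to check that its $\pi_{12}$-part equals $\rho_0=\pi_{12}\rho$, i.e.\ that $\pi_{12}\bigl(d(f\omega_0+\gamma^\sharp\lrcorner\Real\Omega_0)+d^\ast(\tfrac12 g\omega_0^2)\bigr)=0$. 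From Proposition~\ref{prop:differential:2forms}(i),(ii) one has $d(f\omega_0)=df\wedge\omega_0\in\Omega^3_6$ and $d^\ast(\tfrac12 g\omega_0^2)=-Jdg\in\Omega^1$ which is not a $3$-form at all (so this term should instead be read as the $\Omega^4_1$-coordinate producing a $\Omega^3$-output via the spinor identification) — the clean way is to observe that the source/target bundle identification already builds in that the full image of the operator lies in $\Omega^3_{1\oplus1\oplus6}$, so there is simply no $\Omega^3_{12}$-component to cancel. Uniqueness of $\rho_0$, $f$, $g$, $\gamma$ then follows from uniqueness of the pointwise type-decomposition together with injectivity of $\slashed{D}$ on $C^{k+1,\alpha}_{\nu+1}$.

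The only real subtlety — and hence the step I would be most careful about — is making the spinorial identification of $(\alpha,\beta)\mapsto\pi_{1\oplus1\oplus6}(d\alpha+d^\ast\beta)$ with $\slashed{D}$ precise enough that the bounded-inverse estimate transfers cleanly, i.e.\ checking that the bundle isomorphisms $\Omega^2_1\oplus\Omega^2_6\oplus\Omega^4_1\cong\slashed{S}(B)\cong\Omega^3_1\oplus\Omega^3_1\oplus\Omega^3_6$ are parallel (so they induce isomorphisms of the weighted H\"older spaces with norms comparable up to a constant depending only on $(B,\omega_0,\Omega_0)$), and that the precise normalisation constants in Proposition~\ref{prop:differential:2forms}, Lemma~\ref{lem:Dirac:cone} and \cite[Equation (3.26)]{Nordstrom:Thesis} are consistent with the claimed formula. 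Everything else is a formal consequence: the pointwise projection onto $\Omega^3_{12}$, the isomorphism from Proposition~\ref{prop:Dirac:AC:CY}, and the open mapping theorem for the resulting bounded bijection between Banach spaces. I expect the proof to be short, essentially the three or four lines already indicated in the excerpt, with the cited equation from \cite{Nordstrom:Thesis} doing the bookkeeping of constants.
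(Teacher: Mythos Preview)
Your overall strategy matches the paper's: identify the operator $(f,g,\gamma)\mapsto\pi_{1\oplus 1\oplus 6}\bigl(d(f\omega_0+\gamma^\sharp\lrcorner\Real\Omega_0)+d^\ast(\tfrac12 g\omega_0^2)\bigr)$ with $\slashed{D}$ and invoke Proposition~\ref{prop:Dirac:AC:CY}. However, there is a genuine error in your execution.

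You set $\rho_0:=\pi_{12}\rho$ first and then try to argue that $\pi_{12}\bigl(d(f\omega_0+\gamma^\sharp\lrcorner\Real\Omega_0)+d^\ast(\tfrac12 g\omega_0^2)\bigr)=0$. This is false: Proposition~\ref{prop:differential:2forms}(v) shows explicitly that $d(\gamma^\sharp\lrcorner\Real\Omega_0)$ has a nonzero $\Omega^3_{12}$--component in general. Your attempted rescue (``the source/target bundle identification already builds in that the full image lies in $\Omega^3_{1\oplus 1\oplus 6}$'') conflates the raw operator $d\alpha+d^\ast\beta$ with its projection $\pi_{1\oplus 1\oplus 6}(d\alpha+d^\ast\beta)$; only the latter is identified with $\slashed{D}$. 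The fix is simply to reverse the order: first solve $\slashed{D}(f,g,\gamma)=\pi_{1\oplus 1\oplus 6}\rho$ via Proposition~\ref{prop:Dirac:AC:CY}, and \emph{then} define $\rho_0:=\rho-d(f\omega_0+\gamma^\sharp\lrcorner\Real\Omega_0)-d^\ast(\tfrac12 g\omega_0^2)$. By construction $\pi_{1\oplus 1\oplus 6}\rho_0=0$, so $\rho_0\in\Omega^3_{12}$, and the estimate on $\|\rho_0\|_{C^{k,\alpha}_\nu}$ follows since the first-order operator $(f,g,\gamma)\mapsto d(f\omega_0+\gamma^\sharp\lrcorner\Real\Omega_0)+d^\ast(\tfrac12 g\omega_0^2)$ is bounded $C^{k+1,\alpha}_{\nu+1}\to C^{k,\alpha}_\nu$.

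A minor separate slip: $d^\ast(\tfrac12 g\omega_0^2)$ is a $3$-form, not a $1$-form. Proposition~\ref{prop:differential:2forms}(ii) gives $d^\ast(g\omega_0^2)=2Jdg\wedge\omega_0\in\Omega^3_6$; you quoted part~(i), which concerns $d^\ast(f\omega_0)$.
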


Recall that $\mathcal{W}^3_\nu$ was defined in Definition \ref{def:W^k_nu} and Remark \ref{rmk:W^k_nu} as a complement of $\mathcal{H}^3_\nu (B)$ in $C^{0,\alpha}_\nu$.

\begin{prop}\label{prop:Exact:4form:AC:CY}
Let $(B,\omega_0,\Omega_0)$ be an irreducible AC Calabi--Yau $3$-fold asymptotic to the Calabi--Yau cone $\tu{C}(\Sigma)$. Fix $k\geq 1$, $\alpha\in (0,1)$, $\delta>0$ as in Proposition \ref{prop:Laplacian:2Forms:AC:CY} and $\nu\in (-3-\delta,-1)$ away from a discrete set of indicial roots. Then every exact $4$-form $\sigma=d\rho'$ with $\rho'\in C^{k,\alpha}_{\nu}$ can be written uniquely as
\[
\sigma = d\!\ast\! d\bigl( f\omega_0 + \gamma^\sharp\lrcorner\Real\Omega_0\bigr) + d\rho_0,
\]
where $f,\gamma\in C^{k+1,\alpha}_{\nu+1}$, $\rho_0\in C^{k,\alpha}_\nu\cap \mathcal{W}^{3}_{\nu}\cap \Omega^3_{12}$ with $d^\ast\rho_0=0$ and
\[
\| (f,\gamma)\| _{C^{k+1,\alpha}_{\nu+1}} + \|\rho_0\|_{C^{k,\alpha}_{\nu}} \leq C\|\rho'\|_{C^{k,\alpha}_{\nu}}
\]
for a constant $C>0$ independent of $\rho'$. Moreover, $f=0=\gamma$ if $\sigma\in\Omega^4_8$.
\proof
We first establish that we can write $\sigma = d\rho$ for some $3$-form $\rho\in C^{k,\alpha}_{\nu}$ with $d^\ast\rho=0$. If it exists, $\rho$ is uniquely defined up to the addition of a closed and coclosed $3$-form in $C^{k,\alpha}_{\nu}$. Moreover every such harmonic $3$-form lies in $\Omega^3_{12}$ by Lemma \ref{lem:AC:CY:Vanishing:Harmonic:0:1:Forms}.

By assumption, $\sigma=d\rho'$ for some $\rho'\in C^{k,\alpha}_{\nu}$. By Proposition \ref{prop:Laplacian:2Forms:AC:CY} there exists a $2$-form $\alpha$ such that $\triangle \alpha = -d^\ast\rho'$ and $d\alpha\in C^{k,\alpha}_{\nu}$. From the proof of that Proposition we know that $\alpha\in C^{k+1,\alpha}_{\nu+1}$ if $\nu\geq -3$ or if $H^2(B)\ra H^2(\Sigma)$ is surjective and $\alpha\in C^{k+1,\alpha}_{-2+\mu}$ for any small $\mu>0$ otherwise. Since $\nu<0$, $d^\ast \alpha$ is a decaying harmonic $1$-form and therefore vanishes by Lemma \ref{lem:AC:CY:Vanishing:Harmonic:0:1:Forms}. Thus $\rho=\rho'+d\alpha\in C^{k,\alpha}_{\nu}$ is coclosed and $\sigma = d\rho$. 

Using Lemma \ref{cor:3forms:AC:CY} we write
\[
\ast\rho = -d\bigl( f\omega_0 + \gamma^\sharp\lrcorner\Real\Omega_0\bigr) + \tfrac{1}{2}d^\ast (g\omega_0^2) + \ast \rho_0
\] 
with $\rho_0\in\Omega^3_{12}\cap C^{k,\alpha}_{\nu}$ and $f,g,\gamma\in C^{k+1,\alpha}_{\nu+1}$. Since $\rho$ is uniquely defined up to addition of a closed and coclosed $3$-form of type $\Omega^3_{12}$ we make $\rho$ unique by requiring that $\rho_0\in\mathcal{W}^3_{\nu}$.

Now, the closure of $\ast \rho$ is equivalent to the vanishing of the $4$-form $\tfrac{1}{2}dd^\ast (g\omega_0^2) + d\ast \rho_0$. In particular we have
\[
0=\pi_1 \bigl(\tfrac{1}{2}dd^\ast (g\omega_0^2) + d\!\ast\!\rho_0\bigr)=\pi_1 dd^\ast \bigl( \tfrac{1}{2}g\omega_0^2\bigr)
\]
since $d(\ast\rho_0)\wedge\omega_0 = d(\ast\rho_0\wedge\omega_0)=0$. Lemma \ref{lem:Gauge:fixing:operator:AC:CY} then shows that $g$ is a harmonic function and therefore vanishes since $\nu+1<0$. The co-closure of $\rho$ now forces $d^\ast\rho_0=0$.

In order to prove the last statement, assume moreover that $\sigma = d\ast d\bigl( f\omega_0 + \gamma^\sharp\lrcorner\Real\Omega_0\bigr) + d\rho_0\in \Omega^4_8$. Since $d^\ast\rho_0=0$ we have $d\rho_0 \in \Omega^4_8$ by Remark \ref{rmk:d:3forms:type6:1}. We therefore conclude that $d^\ast d\bigl( f\omega_0 + \gamma^\sharp\lrcorner\Real\Omega_0\bigr)\in \Omega^2_8$. By Lemma \ref{lem:Gauge:fixing:operator:AC:CY}, $f$ is therefore harmonic and $\gamma$ satisfies $dd^\ast\gamma + \tfrac{2}{3}d^\ast d\gamma=0$. Since $\nu+1<0$ Proposition \ref {lem:AC:CY:Vanishing:Gauge:Fixing}
shows that $f=0=\gamma$.
\endproof
\end{prop}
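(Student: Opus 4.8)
The plan is to construct a coclosed representative $\rho$ of the exact $4$-form $\sigma$, then apply the Dirac-operator normal form of Lemma \ref{cor:3forms:AC:CY} to $\ast\rho$, and finally use the algebraic identities of Section \ref{sec:su3} together with the vanishing results of Lemmas \ref{lem:AC:CY:Vanishing:Harmonic:0:1:Forms} and \ref{lem:AC:CY:Vanishing:Gauge:Fixing} to kill the unwanted components. First I would show $\sigma = d\rho$ with $d^\ast\rho = 0$ and $\rho\in C^{k,\alpha}_\nu$: write $\sigma = d\rho'$ with $\rho'\in C^{k,\alpha}_\nu$ by hypothesis, solve $\triangle\alpha = -d^\ast\rho'$ using Proposition \ref{prop:Laplacian:2Forms:AC:CY} so that $d\alpha\in C^{k,\alpha}_\nu$ (noting that Proposition \ref{prop:Laplacian:2Forms:AC:CY} is precisely set up for $\nu>-3-\delta$), observe that $d^\ast\alpha$ is then a decaying harmonic $1$-form hence zero by Lemma \ref{lem:AC:CY:Vanishing:Harmonic:0:1:Forms}, and set $\rho = \rho' + d\alpha$; this is coclosed and $d\rho = \sigma$. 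The estimate on $\rho$ follows from the estimate for $d\alpha$ in Proposition \ref{prop:Laplacian:2Forms:AC:CY}, and $\rho$ is unique up to a closed-and-coclosed $3$-form, which by Lemma \ref{lem:AC:CY:Vanishing:Harmonic:0:1:Forms} must lie in $\Omega^3_{12}$; imposing $\rho_0\in\mathcal{W}^3_\nu$ in the next step pins it down.

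Next I would apply Lemma \ref{cor:3forms:AC:CY} to the $3$-form $\ast\rho\in C^{k,\alpha}_\nu$ (legitimate since $\nu\in(-6,-1)$), writing
\[
\ast\rho = -d\bigl( f\omega_0 + \gamma^\sharp\lrcorner\Real\Omega_0\bigr) + \tfrac{1}{2}d^\ast(g\omega_0^2) + \ast\rho_0,
\]
with $\rho_0\in\Omega^3_{12}\cap C^{k,\alpha}_\nu$, $f,g,\gamma\in C^{k+1,\alpha}_{\nu+1}$, and the accompanying estimate. Taking $d$ of this identity, closedness of $\ast\rho$ (equivalent to coclosedness of $\rho$) forces $\tfrac{1}{2}dd^\ast(g\omega_0^2) + d\!\ast\!\rho_0 = 0$. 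Projecting onto $\Omega^4_1$ and using $d(\ast\rho_0\wedge\omega_0)=0$ to kill the $\rho_0$ contribution gives $\pi_1 dd^\ast(\tfrac{1}{2}g\omega_0^2)=0$, so by Lemma \ref{lem:Gauge:fixing:operator:AC:CY} $g$ is harmonic of rate $\nu+1<0$ and vanishes by Lemma \ref{lem:AC:CY:Vanishing:Harmonic:0:1:Forms}. With $g=0$, coclosedness of $\rho$ now reads $d^\ast\rho_0 = 0$, and applying $\ast$ to the remaining identity gives $\sigma = d\rho = d\!\ast\!d\bigl(f\omega_0+\gamma^\sharp\lrcorner\Real\Omega_0\bigr) + d\rho_0$ as claimed. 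Uniqueness of the decomposition follows from the uniqueness in Lemma \ref{cor:3forms:AC:CY} together with the normalisation $\rho_0\in\mathcal{W}^3_\nu$, and the estimate is inherited.

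For the final assertion, suppose $\sigma\in\Omega^4_8$. Since $d^\ast\rho_0 = 0$ and $\rho_0\in\Omega^3_{12}$, Remark \ref{rmk:d:3forms:type6:1} gives $d\rho_0\in\Omega^4_8$, so $d\!\ast\!d\bigl(f\omega_0+\gamma^\sharp\lrcorner\Real\Omega_0\bigr) = \sigma - d\rho_0\in\Omega^4_8$; applying $\ast$, $d^\ast d\bigl(f\omega_0+\gamma^\sharp\lrcorner\Real\Omega_0\bigr)\in\Omega^2_8$, i.e.\ its $\Omega^2_{1\oplus 6}$-component vanishes. By Lemma \ref{lem:Gauge:fixing:operator:AC:CY} this says exactly that $\triangle f = 0$ and $dd^\ast\gamma + \tfrac{2}{3}d^\ast d\gamma = 0$, whence Lemma \ref{lem:AC:CY:Vanishing:Gauge:Fixing} (applicable since $\nu+1<0$) forces $f=0=\gamma$. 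I expect the only genuinely delicate point to be the first step: one must verify that the solution $\alpha$ of $\triangle\alpha = -d^\ast\rho'$ furnished by Proposition \ref{prop:Laplacian:2Forms:AC:CY} really does yield $\rho=\rho'+d\alpha$ of class $C^{k,\alpha}_\nu$ and coclosed across the two cases ($\nu\geq -3$ or $H^2(B)\to H^2(\Sigma)$ surjective versus the obstructed case where $\alpha$ only lies in $C^{k+1,\alpha}_{-2+\mu}$), and that $d^\ast\alpha$ is still decaying enough to invoke Lemma \ref{lem:AC:CY:Vanishing:Harmonic:0:1:Forms}; everything after that is a bookkeeping exercise in the $\sunitary{3}$ type-decomposition.
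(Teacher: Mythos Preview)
Your proposal is correct and follows essentially the same approach as the paper's own proof: construct a coclosed primitive $\rho$ via Proposition \ref{prop:Laplacian:2Forms:AC:CY}, apply Lemma \ref{cor:3forms:AC:CY} to $\ast\rho$, eliminate $g$ by projecting the closedness condition onto $\Omega^4_1$, and handle the $\Omega^4_8$ case via Remark \ref{rmk:d:3forms:type6:1} and Lemma \ref{lem:AC:CY:Vanishing:Gauge:Fixing}. Your identification of the only delicate point (the two regimes in the construction of $\alpha$) matches the paper's own treatment of that step.
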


For our applications it will also be necessary to write every exact $4$-form $\sigma\in d\rho'$, $\rho'\in C^{k,\alpha}_\nu$, as $\sigma = d\rho$ for a different choice of $\rho$.

\begin{corollary}\label{cor:Exact:4form:AC:CY}
In the notation of Proposition \ref{prop:Exact:4form:AC:CY}, fix $\nu \in (-3-\delta,-1)$ away from a discrete set of indicial roots. For every exact $4$-form $\sigma=d\rho'$ with $\rho'\in C^{k,\alpha}_{\nu}$ there exist unique $\rho_0\in C^{k,\alpha}_\nu\cap\mathcal{W}^3_{\nu}\cap \Omega^3_{12}$ with $d^\ast\rho_0=0$ and $f,\gamma\in C^{k+1,\alpha}_{\nu+1}$ such that
\[
\sigma = d\!\ast\!d(f\omega_0) + d \bigl(\ast d(\gamma^\sharp\lrcorner\Real\Omega_0)-d(\gamma^\sharp\lrcorner\Imag\Omega_0)\bigr) + d\rho_0.
\]
Moreover,
\[
\ast d(\gamma^\sharp\lrcorner\Real\Omega_0)-d(\gamma^\sharp\lrcorner\Imag\Omega_0)+\rho_0 \in \Omega^3_{12}.
\] 
\proof
This is an immediate consequence of Proposition \ref{prop:Exact:4form:AC:CY} and Lemma \ref{lem:d:3forms:type6}.
\endproof
\end{corollary}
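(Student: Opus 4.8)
The plan is to deduce the Corollary directly from Proposition~\ref{prop:Exact:4form:AC:CY} and the algebraic Lemma~\ref{lem:d:3forms:type6}, with no further analysis on $B$. First I would apply Proposition~\ref{prop:Exact:4form:AC:CY} to the given exact $4$-form $\sigma=d\rho'$: since $\nu\in(-3-\delta,-1)$ avoids the indicial roots, this yields unique $f,\gamma\in C^{k+1,\alpha}_{\nu+1}$ and $\rho_0\in C^{k,\alpha}_\nu\cap\mathcal{W}^3_\nu\cap\Omega^3_{12}$ with $d^\ast\rho_0=0$, together with the estimate $\|(f,\gamma)\|_{C^{k+1,\alpha}_{\nu+1}}+\|\rho_0\|_{C^{k,\alpha}_\nu}\le C\|\rho'\|_{C^{k,\alpha}_\nu}$, such that
\[
\sigma=d\!\ast\! d\bigl(f\omega_0+\gamma^\sharp\lrcorner\Real\Omega_0\bigr)+d\rho_0 .
\]

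Next I would rewrite the first term. Since $\gamma^\sharp\lrcorner\Imag\Omega_0\in C^{k+1,\alpha}_{\nu+1}$ is a $2$-form, $d\bigl(d(\gamma^\sharp\lrcorner\Imag\Omega_0)\bigr)=0$, so by linearity of $d$ and of the Hodge star,
\[
d\!\ast\! d\bigl(f\omega_0+\gamma^\sharp\lrcorner\Real\Omega_0\bigr)=d\!\ast\! d(f\omega_0)+d\bigl(\ast d(\gamma^\sharp\lrcorner\Real\Omega_0)-d(\gamma^\sharp\lrcorner\Imag\Omega_0)\bigr).
\]
This already gives the claimed decomposition of $\sigma$ with the same $f,\gamma,\rho_0$, hence the same estimate. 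Uniqueness is inherited from Proposition~\ref{prop:Exact:4form:AC:CY}: any decomposition of $\sigma$ of the form stated in the Corollary becomes, upon reabsorbing the exact term $d(\gamma^\sharp\lrcorner\Imag\Omega_0)$ back into $\ast d(\gamma^\sharp\lrcorner\Real\Omega_0)$ via $d^2=0$, a decomposition of the form in Proposition~\ref{prop:Exact:4form:AC:CY}, so the triple $(f,\gamma,\rho_0)$ is pinned down.

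Finally, for the ``moreover'' statement I would invoke Lemma~\ref{lem:d:3forms:type6} with $X=\gamma^\sharp$, which gives $\ast d(\gamma^\sharp\lrcorner\Real\Omega_0)-d(\gamma^\sharp\lrcorner\Imag\Omega_0)\in\Omega^3_{12}$; since $\rho_0\in\Omega^3_{12}$ by the previous step, the sum $\ast d(\gamma^\sharp\lrcorner\Real\Omega_0)-d(\gamma^\sharp\lrcorner\Imag\Omega_0)+\rho_0$ lies in $\Omega^3_{12}$. There is no genuine obstacle here: all the work is already done in Proposition~\ref{prop:Exact:4form:AC:CY}, and the only point requiring a moment's care is checking that the rearrangement in the second step does not disturb uniqueness, which holds precisely because $d$ annihilates the extra term $d(\gamma^\sharp\lrcorner\Imag\Omega_0)$.
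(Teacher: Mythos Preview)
Your proposal is correct and follows exactly the approach the paper intends: apply Proposition~\ref{prop:Exact:4form:AC:CY}, insert the harmless exact term $d(\gamma^\sharp\lrcorner\Imag\Omega_0)$ via $d^2=0$, and then invoke Lemma~\ref{lem:d:3forms:type6} for the type statement. You have simply spelled out in detail what the paper summarises in one line.
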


\section{Approximate solutions}
\label{sec:approx:solns}

With these preliminaries out of the way, we now return to the main goal of the paper. Let $(B,\omega_0,\Omega_0)$ be an AC Calabi--Yau $3$-fold asymptotic to the Calabi--Yau cone $\tu{C}(\Sigma)$ and let $M^7$ be a principal circle bundle over $B$. We want to construct $\tu{S}^1$--invariant ALC \gtmetric s on $M$ by constructing solutions to the Apostolov--Salamon equations starting from an abelian Hermitian Yang--Mills (HYM) connection on $(B,\omega_0,\Omega_0)$. We look for solutions of the rescaled equations \eqref{eq:GH:G2:collapsing:sequence} for small $\epsilon>0$. In this section we solve the linearised equations \eqref{eq:GH:G2:linearised} on $(B,\omega_0,\Omega_0)$ and therefore construct  a $1$-parameter family of closed ALC \gtstr s $\varphi^{(1)}_\epsilon$ on $M$ with torsion of order $O(\epsilon^2)$. In the next two sections we will then show that for $\epsilon$ sufficiently small any such approximate solution can be perturbed to a solution of the Apostolov--Salamon equations \eqref{eq:GH:G2:collapsing:sequence}.

Following the discussion of Section \ref{sec:GH:G2}, we look for solutions of \eqref{eq:GH:G2:linearised} on an AC Calabi--Yau $3$-fold $(B,\omega_0,\Omega_0)$ with vanishing $\sigma$ and $h$: the resulting coupled linear system for a $U(1)$--connection $\theta$ and a $3$-form $\rho$ on $B$ is
\begin{equation}
\label{eq:GH:G2:linearised:HYM}
\begin{gathered}
d\theta\wedge\Imag\Omega_0=0, \quad d\theta\wedge\omega_0^2=0,\qquad
d\rho = -d\theta\wedge\omega_0,\quad
d\hat{\rho}=0,\\
\omega_0\wedge\left( \rho + i\hat{\rho}\right) =0,\quad
\Real\Omega_0\wedge\hat{\rho}+\rho\wedge\Imag\Omega_0 = 0.
\end{gathered}
\end{equation}

By Lemma \ref{lem:identities:2forms} the first two equations imply that $d\theta$ is a primitive $(1,1)$--form, \ie $\theta$ is a HYM connection. Equivalently, $\ast d\theta=-d\theta\wedge\omega_0$ by Lemma \ref{lem:identities:2forms}.

In order to understand the equations for $\rho$, recall that $\hat{\rho}$ is given explicitly in Proposition \ref{prop:Linearisation:Hitchin:dual} in terms of the type decomposition 
of $\rho$. 
Using this one sees that the algebraic constraints in \eqref{eq:GH:G2:linearised:HYM} force $\rho$ to be of the form $\rho = f\Imag\Omega_0 + \rho_0$ for a function $f$ and a $3$-form $\rho_0\in\Omega^3_{12}$. Given a HYM connection $\theta$ we will look for a $3$-form $\rho$ that satisfies \eqref{eq:GH:G2:linearised:HYM} by solving instead the inhomogeneous linear \emph{elliptic} system
\begin{equation}\label{eq:GH:G2:linearised:HYM:3form}
d\rho = -d\theta \wedge \omega_0=\ast d\theta, \qquad \quad d^\ast\rho=0.
\end{equation}
It will turn out that in our situation any such $3$-form $\rho$ will be of pure type $\Omega^3_{12}$, and therefore $\hat{\rho}=\!-\!\ast\!\rho$ by Proposition \ref{prop:Linearisation:Hitchin:dual}. 
Hence  any pair $(\theta,\rho)$ consisting of any HYM connection $\theta$ and $\rho$ a solution of \eqref{eq:GH:G2:linearised:HYM:3form} gives rise to  a solution of the linearised equations \eqref{eq:GH:G2:linearised:HYM} 
(in particular the algebraic constraints in \eqref{eq:GH:G2:linearised:HYM} are automatically satisfied).

Therefore to solve the coupled linear system  \eqref{eq:GH:G2:linearised:HYM} for the pair $(\theta,\rho)$ we proceed in two steps: 
first we find a HYM connection $\theta$ on the circle bundle $M \to B$; 
second we solve the inhomogeneous linear elliptic system \eqref{eq:GH:G2:linearised:HYM:3form} with source term 
determined by the HYM connection $\theta$ constructed 
in the first step.
To approach both these steps analytically the results about the space of closed and coclosed $2$-forms on $B$ with certain decay rates obtained in the previous section are crucial.

The following theorem is the basic existence result for solutions of \eqref{eq:GH:G2:linearised:HYM}. 

\begin{theorem}\label{thm:GH:G2:linearised:HYM}
Let $(B,\omega_0,\Omega_0)$ be a simply connected irreducible AC Calabi--Yau $3$-fold and let $M\ra B$ a principal circle bundle. Assume that
\begin{equation}\label{eq:sigma_tau:orthogonality:1}
c_1(M)\cup[\omega_0]=0\in H^4(B).
\end{equation}
Fix $\nu=-1+\delta$ for some small $\delta>0$. Then there exists a unique solution $(\theta,\rho)$ of \eqref{eq:GH:G2:linearised:HYM} with $d\theta\in C^\infty_{-2}$ and $\rho\in C^\infty_{-1}\cap \mathcal{W}^3_\nu\cap \Omega^3_{12}$.
\end{theorem}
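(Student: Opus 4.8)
The plan is to split the proof into the two steps already signposted in the text: first construct a Hermitian Yang--Mills connection $\theta$ on $M\to B$ with $d\theta\in C^\infty_{-2}$, and then solve the inhomogeneous elliptic system \eqref{eq:GH:G2:linearised:HYM:3form} for $\rho$.

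\medskip

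\emph{Step 1: existence and uniqueness of the HYM connection.} Pick any smooth connection $\theta'$ on $M\to B$; its curvature $F'=d\theta'$ is a closed $2$-form representing $-2\pi c_1(M)$ (up to normalisation), and we may arrange that $F'\in C^\infty_{-2}$ by correcting $\theta'$ by a compactly supported $1$-form, using that the end of $B$ retracts onto $\Sigma$ and $H^2(\Sigma)$ is finite-dimensional with harmonic representatives pulled back and cut off to the cone. Since $\nu=-1+\delta$ is not an indicial root of $d+d^\ast$ for $\delta$ small and $-2\in(-6,-2]$ — more precisely we work just below $-2$ — I would seek $\theta=\theta'+a$ with $a\in\Omega^1$ of suitable decay so that $F'+da$ is harmonic; concretely one solves $d^\ast(F'+da)=0$, i.e. $d^\ast d a=-d^\ast F'$. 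By Lemma~\ref{lem:AC:CY:Vanishing:Harmonic:0:1:Forms} there are no decaying harmonic $1$-forms, so $d^\ast d=\triangle$ on $1$-forms has no cokernel in the relevant weighted space and $a$ exists; then $F=F'+da$ is closed and coclosed, hence (since $B$ is Ricci-flat and $6$-dimensional) harmonic of rate $-2$. Now invoke Theorem~\ref{thm:Closed:2Forms:AC:CY}(i) (or the jump analysis across $-2$): a closed and coclosed $2$-form of rate approaching $-2$ lies in $\mathcal{H}^2_\nu(B)$, which is identified with $H^2(B)$. Its class is $-c_1(M)$, which need not vanish — that is fine, we only need $F\in C^\infty_{-2}$. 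The point that $\theta$ is HYM, i.e. $F$ is a primitive $(1,1)$-form, follows from the Calabi--Yau structure being parallel: a harmonic $2$-form of rate in $(-2,0)$ on an irreducible AC CY $3$-fold has no $\Lambda^2_1$ or $\Lambda^2_6$ component — the $\Lambda^2_1$ component is $\ast$-dual to a harmonic function which decays, hence vanishes by Lemma~\ref{lem:AC:CY:Vanishing:Harmonic:0:1:Forms}, and the $\Lambda^2_6$ component corresponds to a decaying harmonic $1$-form via $X\lrcorner\Real\Omega_0$, which also vanishes. So $F=d\theta$ is primitive $(1,1)$, i.e. $\theta$ is HYM, and $\ast d\theta=-d\theta\wedge\omega_0$ by Lemma~\ref{lem:identities:2forms}. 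Uniqueness of $\theta$ up to the ambiguity allowed by the statement follows because any two such differ by a closed $1$-form of rate $<0$ which is then exact and the normalisation $\rho_0\in\mathcal{W}^3_\nu$ pins down $\rho$.

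\medskip

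\emph{Step 2: solving for $\rho$.} Set $\sigma=\ast d\theta=-d\theta\wedge\omega_0\in C^\infty_{-2}$. Since $d\theta$ is closed, $\sigma$ is coclosed; and $\sigma$ is exact: indeed $[\sigma]=[\ast d\theta]=-c_1(M)\cup[\omega_0]=0\in H^4(B)$ by the hypothesis \eqref{eq:sigma_tau:orthogonality:1}. We wish to solve $d\rho=\sigma$, $d^\ast\rho=0$ with $\rho\in C^\infty_{-1}\cap\mathcal{W}^3_\nu\cap\Omega^3_{12}$. The key is Proposition~\ref{prop:Exact:4form:AC:CY} applied to the exact $4$-form $\sigma=d\rho'$: it gives $\sigma=d\!\ast\!d(f\omega_0+\gamma^\sharp\lrcorner\Real\Omega_0)+d\rho_0$ with $f,\gamma\in C^{k+1,\alpha}_{\nu+1}$, $\rho_0\in C^{k,\alpha}_\nu\cap\mathcal{W}^3_\nu\cap\Omega^3_{12}$, $d^\ast\rho_0=0$. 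But $\sigma=\ast d\theta\in\Omega^4_8$: because $d\theta$ is primitive $(1,1)$, Lemma~\ref{lem:identities:2forms} gives $\ast d\theta=-d\theta\wedge\omega_0$ is of type $\Omega^4_8$. Hence the last clause of Proposition~\ref{prop:Exact:4form:AC:CY} forces $f=0=\gamma$, so $\sigma=d\rho_0$ with $\rho_0\in\Omega^3_{12}$ coclosed. Take $\rho:=\rho_0$. Then $d\rho=\ast d\theta=-d\theta\wedge\omega_0$, $d^\ast\rho=0$, $\rho\in\Omega^3_{12}$ so $\hat\rho=-\!\ast\!\rho$ by Proposition~\ref{prop:Linearisation:Hitchin:dual} and $d\hat\rho=-d\!\ast\!\rho=0$ since $\ast\rho$ is closed (as $\rho$ is coclosed). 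The algebraic constraints in \eqref{eq:GH:G2:linearised:HYM} hold automatically for $\rho$ of pure type $\Omega^3_{12}$ (wedging with $\omega_0$ and with $\Omega_0$ both give zero, by Lemma~\ref{lem:identities:3forms}). Smoothness $\rho\in C^\infty_{-1}$ follows from weighted elliptic regularity (Theorem~\ref{thm:Weighted:Regularity}) since $\nu=-1+\delta$ and one can run the argument for all $k$; the decay rate $\rho\in C^\infty_{-1}$ is exactly the weight of $\rho_0$. Uniqueness: if $(\theta,\rho)$ and $(\theta,\rho')$ are two solutions then $\rho-\rho'$ is closed, coclosed, in $\mathcal{W}^3_\nu\cap\Omega^3_{12}$, hence in $\mathcal{H}^3_\nu(B)\cap\mathcal{W}^3_\nu=\{0\}$ by definition of $\mathcal{W}^3_\nu$.

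\medskip

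\emph{Main obstacle.} The crux is not the formal bookkeeping but ensuring the HYM connection can be chosen with curvature of rate exactly $-2$ and, more delicately, that Proposition~\ref{prop:Exact:4form:AC:CY} applies at the weight $\nu=-1+\delta$: this requires $\nu\in(-3-\delta_0,-1)$ and $\nu+1$ not an indicial root for the Laplacian on $2$-forms, and the whole force of Sections~\ref{sec:CY:cones}--\ref{sec:forms:ac:cy} — in particular the eigenvalue estimates of Proposition~\ref{prop:Spectrum:Laplacian:Regular:SE} and the structure of harmonic $2$- and $3$-forms on the cone — is what makes the weighted analysis go through. The subtle point in Step 1 is verifying that the harmonic $2$-form we obtain has no component along $\Lambda^2_1\oplus\Lambda^2_6$; this is exactly where irreducibility of $B$ (no decaying harmonic $1$-forms or functions, Lemma~\ref{lem:AC:CY:Vanishing:Harmonic:0:1:Forms}) is used, and it is what forces the curvature to be genuinely HYM rather than merely harmonic.
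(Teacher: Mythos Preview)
Your Step~1 is essentially what the paper does (the paper phrases it as directly invoking Theorem~\ref{thm:Closed:2Forms:AC:CY}(ii) to get the closed and coclosed representative $\kappa=d\theta\in\mathcal{H}^2_{-2+\mu}(B)$, then uses Lemma~\ref{lem:AC:CY:Vanishing:Harmonic:0:1:Forms} exactly as you do to kill the $\Lambda^2_1\oplus\Lambda^2_6$ components). That part is fine.

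The gap is in Step~2. You invoke Proposition~\ref{prop:Exact:4form:AC:CY}, but that proposition requires the weight $\nu\in(-3-\delta_0,-1)$, strictly \emph{below} $-1$; the theorem's weight $\nu=-1+\delta$ lies \emph{above} $-1$. You even quote the hypothesis in your final paragraph, but $-1+\delta\notin(-3-\delta_0,-1)$. Worse, the hypothesis of Proposition~\ref{prop:Exact:4form:AC:CY} is not merely a weight bookkeeping issue: it asks for $\sigma=d\rho'$ with $\rho'\in C^{k,\alpha}_{\nu}$ for such a $\nu$, i.e.\ $\rho'$ decaying faster than $r^{-1}$. But when $c_1(M)$ has nonzero image $[\tau]\in H^2(\Sigma)$, the $4$-form $\ast\kappa$ has leading term $rdr\wedge\eta\wedge\tau$ of rate exactly $-2$, so any primitive $\rho'$ has rate at best $-1$; there is no $\rho'$ of rate $<-1$ with $d\rho'=\ast\kappa$. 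Hence Proposition~\ref{prop:Exact:4form:AC:CY} is simply inapplicable here. (The restriction $\nu<-1$ in that proposition comes from the Dirac-operator isomorphism of Proposition~\ref{prop:Dirac:AC:CY} via Lemma~\ref{cor:3forms:AC:CY}, so there is no cheap extension.)

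The paper proceeds differently: it attacks $(d+d^\ast)\rho=\ast\kappa$ directly as an elliptic equation in $C^{1,\alpha}_{\nu}$ with $\nu=-1+\delta$. The cokernel consists of closed and coclosed even-degree forms of rate $-5-\nu=-4-\delta$, which lie in $L^2$ and hence, by Theorem~\ref{thm:L2:coho}, are parametrised by $H^2_c(B)\oplus H^4(B)$. The only nontrivial pairings are $\langle\kappa,\sigma\rangle_{L^2}$ for $\sigma\in\mathcal{H}^2_{-5-\nu}$, and these vanish precisely because $[\ast\kappa]=-c_1(M)\cup[\omega_0]=0$ (one writes $\sigma=\sigma_c+d\zeta$ with $\sigma_c$ compactly supported and integrates by parts). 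This is where the hypothesis~\eqref{eq:sigma_tau:orthogonality:1} actually enters. Finally, the conclusion $\rho\in C^\infty_{-1}$ (as opposed to merely $C^\infty_{-1+\delta}$) is not automatic from regularity: the paper obtains it by solving on the cone to identify the leading asymptotic term $\rho_\infty=\tfrac12 r^2\eta\wedge\tau$ and then invoking Proposition~\ref{prop:Closed:Odd:Forms}; your argument gives no mechanism for this improvement.
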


\begin{remark*}
Given any solution $(\theta,\rho)$ of \eqref{eq:GH:G2:linearised:HYM} the addition of any closed and coclosed $3$-form in $\Omega^3_{12}$ to $\rho$ 
yields another solution 
(any such form corresponds to an infinitesimal Calabi--Yau deformation of $B$); insisting that $\rho\in\mathcal{W}^3_\nu$ fixes this ambiguity. 
\end{remark*}

\proof
By Theorem \ref{thm:Closed:2Forms:AC:CY} (ii) we can represent $c_1 (M)\in H^2(B)$ by a closed and coclosed $\kappa\in\mathcal{H}^2_{-2+\mu}(B)$ for any sufficiently small $\mu>0$. Since $c_1 (M)$ is an integral class, $\kappa = d\theta$ for a connection $\theta$ on $M\ra B$ unique up to gauge transformations. Note that $\kappa = \tau + O(r^{-2-\mu})$ for some $\mu>0$, where $\tau\in\mathcal{H}^2(\Sigma)$ is the harmonic representative of the image of $c_1 (M)$ in $H^2(\Sigma)$. Moreover, Lemma \ref{lem:AC:CY:Vanishing:Harmonic:0:1:Forms} guarantees that $\kappa\in\Omega^2_8$, \ie $\theta$ is HYM. Indeed, since $\triangle\kappa=0$ and the Laplacian preserves the type decomposition, $\pi_1\kappa$ and $\pi_6\kappa$ are forced to vanish. Hence \eqref{eq:sigma_tau:orthogonality:1} can be rewritten as
\begin{equation}\label{eq:sigma_tau:orthogonality}
[\ast \kappa]=0\in H^4(B).
\end{equation}

Now, the fact that $\ast\kappa$ is exact implies that $\kappa$ is $L^2$--orthogonal to $\mathcal{H}^2_\nu(B)$, $\nu\in (-6,-2)$. Indeed, first of all note that the $L^2$--inner product $\langle \kappa, \sigma\rangle_{L^2}$ for $\sigma\in \mathcal{H}^2_\nu(B)$, $\nu\in (-6,-2)$, is well defined since $\sigma$ decays at least as fast as $r^{-6+\mu}$ for every $\mu>0$ by Proposition \ref{prop:Closed:Even:Forms:2}. Moreover, the isomorphism $\mathcal{H}^2_{\nu}(B)\simeq H^2_{c}(B)$, $\nu \in (-6,-2)$, can be made explicit as follows: by integration in the radial direction outside a compact set as in \cite[Lemma 2.11]{Karigiannis}, every $\sigma\in \mathcal{H}^2_\nu(B)$, $\nu\in (-6,-2)$, can be written as
\[
\sigma = \sigma_c + d\zeta
\] 
with $\sigma_c$ closed and compactly supported and $\zeta \in C^{\infty}_{-5+\mu}$ for every $\mu>0$. Then if $\ast\kappa = dv$ for some $3$-form $v$ we have
\[
\langle \kappa, \sigma\rangle_{L^2} = \lim_{R\ra \infty}\int_{r\leq R}{ dv \wedge (\sigma_c + d\zeta)} = \lim_{R\ra \infty}\int_{r\leq R}{d\left( v\wedge\sigma_c + \ast\kappa\wedge\zeta\right) }=0.
\]

In order to solve \eqref{eq:GH:G2:linearised:HYM} we consider the elliptic equation \eqref{eq:GH:G2:linearised:HYM:3form} for a $3$-form $\rho$. We look for an odd-degree form $\rho$ such that $(d+d^\ast)\rho = \ast\kappa$ with $\rho\in C^{1,\alpha}_{\nu}$ for $\nu>-1$ arbitrarily close to $-1$. Note that if $\rho$ exists then it must be of pure degree $3$. Indeed, any solution is harmonic and Lemma \ref{lem:AC:CY:Vanishing:Harmonic:0:1:Forms} guarantees that the components of degree $1$ and $5$ of $\rho$ vanish. For the same reason, if a solution exists then it must be of type $\Omega^3_{12}$. By Proposition \ref{prop:Linearisation:Hitchin:dual}, we then conclude that, given $\kappa$, solving \eqref{eq:GH:G2:linearised:HYM:3form} is equivalent to solving \eqref{eq:GH:G2:linearised:HYM}.

Now, the obstructions to solve $(d+d^\ast)\rho=\ast\kappa$ lie in the space of closed and coclosed even-degree forms of rate $-5-\nu<-4$. These are in particular $L^2$--integrable and by Theorem \ref{thm:L2:coho} are parametrised by a subset of $H^2_c(M) \oplus H^4(M)$. Note that since we are in the $L^2$--range, each pure-degree component is individually closed and coclosed. Thus the only possibly non-vanishing obstructions to solve $(d+d^\ast)\rho=\ast\kappa$ arise from inner products $\langle \kappa,\sigma\rangle_{L^2}$ for $\sigma\in\mathcal{H}^2_{-5-\nu}$. We have already seen that these $L^2$--inner products all vanish by \eqref{eq:sigma_tau:orthogonality}.

We conclude that if $c_1 (M)$ satisfies \eqref{eq:sigma_tau:orthogonality:1} then there exists a harmonic $3$-form $\rho$ on $B$ that solves \eqref{eq:GH:G2:linearised:HYM:3form}. The $3$-form $\rho$ is uniquely defined up to the addition of a closed and coclosed $3$-form in $\mathcal{H}^3_\nu (B)$. We fix the choice of $\rho$ by requiring that $\rho\in \mathcal{W}^3_{\nu}$. By elliptic regularity $\rho$ is smooth.

Finally, we derive the more precise asymptotic behaviour of $\rho$ at infinity claimed in the statement. We already know that $\rho\in C^{1,\alpha}_{-1+\delta}$ for every $\delta>0$: we want to show that $\rho\in C^\infty_{-1}$ as claimed. In order to understand the asymptotic behaviour of $\rho$, we solve \eqref{eq:GH:G2:linearised:HYM:3form} on the cone $\tu{C}(\Sigma)$ itself with $\tau$ in place of $\kappa$: note that $\ast\tau = rdr\wedge\eta\wedge\tau$ on $\tu{C}(\Sigma)$ and therefore $\rho_\infty = \tfrac{1}{2}r^2\eta\wedge\tau$ satisfies $d\rho_\infty =\ast \tau$ and $d^\ast\rho_\infty=0$ on $\tu{C}(\Sigma)$. Taking into account Proposition \ref{prop:Closed:Odd:Forms} we conclude that 
\begin{equation}\label{eq:Asymptotics:rho:tau}
\rho = \tfrac{1}{2}r^2\eta\wedge\tau + rdr\wedge\alpha +\tfrac{1}{2}r^2d\alpha + O(r^{-1-\delta})
\end{equation}
for some $\delta>0$ and a coclosed $2$-form $\alpha$ on $\Sigma$ such that $\triangle\alpha = 4\alpha$ (by Proposition \ref{prop:Spectrum:Laplacian:Regular:SE}, $\alpha=0$ if $\Sigma$ is a regular Sasaki--Einstein $5$-manifold). 
\endproof

\begin{remark*}
If $c_1 (M)=0$ then $M = B\times \tu{S}^1$ and $\theta$ is the trivial connection. Moreover, $\rho\in\Omega^3_{12}$ is a solution of \eqref{eq:GH:G2:linearised:HYM} with $d\theta=0$ if and only if $\rho$ is closed and coclosed. Hence $\rho=0$ if $\rho\in \mathcal{W}^3_\nu$.	
\end{remark*}

Fix a principal circle bundle $M\ra B$ satisfying \eqref{eq:sigma_tau:orthogonality:1} and let $(\theta,\rho)$ be the solution of \eqref{eq:GH:G2:linearised:HYM} given by Theorem \ref{thm:GH:G2:linearised:HYM}. For all $\epsilon>0$ sufficiently small we now define a $1$-parameter family
\begin{equation}\label{eq:Approximate:GH:G2:1}
\varphi^{(1)}_\epsilon = \epsilon\, \theta \wedge \omega_0 + \Real\Omega_0 + \epsilon\rho
\end{equation}
of highly collapsed \emph{closed} ALC \gtstr s on $M$ with torsion of order $O(\epsilon^2)$. Here $\epsilon$ should be chosen small enough to ensure that $\Real\Omega_0 + \epsilon\rho$ is a stable $3$-form on $B$.

We now would like to deform $\varphi^{(1)}_\epsilon$ to a torsion-free \gtstr. Joyce gave a general and flexible result that allows one to deform closed \gtstr s with small torsion to torsion-free \gtstr s on \emph{compact} manifolds. Provided that one develops a good Fredholm theory for operators on ALC manifolds, it is relatively straightforward to adapt Joyce's results to the non-compact ALC setting when one not only assumes that the torsion is small,  but also that it decays fast enough. The approximate solutions $\varphi^{(1)}_\epsilon$ that we constructed do not satisfy this fast decay condition and therefore it would be necessary to correct them by hand as a preliminary step. In fact, since we are looking to deform $\varphi^{(1)}_\epsilon$ as an $\tu{S}^1$--invariant torsion-free \gtstr, we find it more straightforward to solve directly the Apostolov--Salamon equations \eqref{eq:GH:G2:collapsing:sequence} on the AC manifold $B$
(bypassing the need to adapt Joyce's results to the ALC setting). This is the goal of the next two sections.

\section{The linearisation of the Apostolov--Salamon equations}
\label{sec:lin:AS}

In this section we develop the requisite tools to solve the Apostolov--Salamon equations as a power series in $\epsilon$ for $\epsilon>0$ sufficiently small. Understanding the mapping properties of the linearisation of the Apostolov--Salamon system and showing that these equations can be solved to all orders in $\epsilon$ requires some effort. As an indication that this must be expected, note that when $d\theta=0$ and $\rho$ is therefore a closed and coclosed $3$-form on $B$, then the problem reduces to showing that deformations of complex structure of AC Calabi--Yau $3$-folds that preserve the asymptotic cone at infinity are unobstructed. 

Our proof is divided into various steps. Firstly, we show how to work transversely to the obvious kernel of the linearisation arising from the action of diffeomorphisms and the deformations of the AC Calabi--Yau structure on the base. As a consequence of Proposition \ref{prop:Torsion:SU(3):structures}, which gives constraints on the torsion of an {\suthreestr} on a $6$-manifold, we then show that one can add additional free parameters to the Apostolov--Salamon equations. These additional parameters are exploited to understand the mapping properties of the linearisation of the Apostolov--Salamon equations. The ``normal forms'' for exact $4$-forms obtained in Section \ref{sec:Normal:form:4:forms} will also be essential in our analysis. In the next section we apply the results of this section to deform $\varphi^{(1)}_\epsilon$ to an $\tu{S}^1$--invariant torsion-free {\gtstr} on $M$.

\subsection{Gauge-fixing and Calabi--Yau deformations}\label{sec:Gauge:Fixing}

Non-uniqueness of solutions of the linearisation \eqref{eq:GH:G2:linearised} of the Apostolov--Salamon equations arises from two obvious sources: the action of diffeomorphisms and a (possibly) non-trivial moduli space of AC Calabi--Yau structures on the collapsed limit $B$. In this preliminary section we explain how to work transversally to these sources of non-uniqueness.

Consider the sequence of symplectic manifolds $(B,\omega_\epsilon)$ which converges to $(B,\omega_0)$. Without loss of generality we can assume that $\omega_\epsilon$ represents the fixed cohomology class $[\omega_0]\in H^2(B)$. Indeed, by Conlon--Hein \cite[Theorem 2.4]{Conlon:Hein:I} we know that every K\"ahler class on $B$ can be represented by a unique K\"ahler Ricci-flat metric $\omega$ such that $\tfrac{1}{6}\omega^3=\tfrac{1}{4}\Real\Omega_0\wedge\Imag\Omega_0$. On the other hand, every cohomology class close to $[\omega_0]$ is a K\"ahler class on the given complex manifold $(B,\Omega_0)$. Indeed, by Theorem \ref{thm:Closed:2Forms:AC:CY} (ii) every cohomology class on $B$ can be represented by a closed and coclosed $2$-form $\sigma$ with respect to the metric induced by $(\omega_0,
\Omega_0)$ of rate $\leq -2$. Every such harmonic representative must be a primitive $(1,1)$--form on $(B,\omega_0,\Omega_0)$ by Lemma \ref{lem:AC:CY:Vanishing:Harmonic:0:1:Forms} and therefore $\omega_0+\delta\sigma$ is a K\"ahler form on $(B,\Omega_0)$ for $\delta$ sufficiently small.

If $[\omega_\epsilon]=[\omega_0]$, then the next lemma and Moser's deformation argument for a path of symplectic structures imply that up to diffeomorphism we can assume that $\omega_\epsilon \equiv \omega_0$ for all $\epsilon$ sufficiently small.

\begin{lemma}\label{lem:AC:CY:Exact:2:forms}
Let $\sigma$ be an exact $2$-form on $(B,\omega_0,\Omega_0)$ in $C^{k,\alpha}_\nu$ for some $k\geq 1$, $\alpha\in (0,1)$ and $\nu\in (-5,-1)$. Then there exists a unique coclosed $1$-form $\gamma\in C^{k+1,\alpha}_{\nu+1}$ such that $\sigma = d\gamma$.
\proof
We first solve $\triangle \gamma = d^\ast \sigma\in C^{k-1,\alpha}_{\nu-1}$. By Lemma \ref{lem:AC:CY:Vanishing:Harmonic:0:1:Forms}, since $-5-\nu<0$ there are no obstructions to solve this equation and since $\nu+1<0$ the solution is unique.

Now, $d^\ast\gamma$ is a harmonic function of rate $\nu<0$ and therefore vanishes by Lemma \ref{lem:AC:CY:Vanishing:Harmonic:0:1:Forms}. On the other hand $d\gamma-\sigma$ is a closed and coclosed $2$-form of rate $\nu<0$ representing the trivial cohomology class. Since $\mathcal{H}^2_{\nu}\ra H^2(B)$ is injective by Theorem \ref{thm:Closed:2Forms:AC:CY} (ii) we conclude that $\sigma = d\gamma$.
\endproof
\end{lemma}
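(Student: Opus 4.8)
The plan is to produce $\gamma$ by solving a weighted elliptic problem for the Hodge Laplacian on $1$-forms and then reading off the required properties. Since $\sigma$ is exact it is in particular closed, so $d^\ast\sigma\in C^{k-1,\alpha}_{\nu-1}$. Using the Fredholm theory of the Laplacian on $1$-forms over an AC manifold (Appendix \ref{Appendix:Analysic:AC}), I would solve
\[
\triangle\gamma = d^\ast\sigma, \qquad \gamma\in C^{k+1,\alpha}_{\nu+1}.
\]
The cokernel of $\triangle\co C^{k+1,\alpha}_{\nu+1}\to C^{k-1,\alpha}_{\nu-1}$ is identified by $L^2$-duality with the space of harmonic $1$-forms of the conjugate rate $-5-\nu$; since $\nu\in(-5,-1)$ this rate is negative, so by Lemma \ref{lem:AC:CY:Vanishing:Harmonic:0:1:Forms} the cokernel is trivial and the equation is solvable. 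The same lemma, applied at the negative rate $\nu+1$, shows the solution $\gamma$ is unique in $C^{k+1,\alpha}_{\nu+1}$, and since $\nu+1\in(-4,0)$ it avoids the indicial roots of the Laplacian on $1$-forms by Proposition \ref{prop:Harmonic:1Forms:Cone}.

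Next I would check that $\gamma$ is coclosed. Applying $d^\ast$ to $\triangle\gamma=d^\ast\sigma$ gives $\triangle(d^\ast\gamma)=0$, so $d^\ast\gamma$ is a harmonic function on $B$ of rate $\nu<0$, hence identically zero by Lemma \ref{lem:AC:CY:Vanishing:Harmonic:0:1:Forms}. Consequently $\triangle\gamma = d^\ast d\gamma$, so $d^\ast(d\gamma-\sigma)=0$; and $d(d\gamma-\sigma)=-d\sigma=0$ because $\sigma$ is closed. Thus $d\gamma-\sigma$ is a closed and coclosed $2$-form on $B$ lying in $C^{k,\alpha}_\nu$.

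Finally, because both $d\gamma$ and $\sigma$ are exact, the class $[d\gamma-\sigma]$ vanishes in $H^2(B)$; on the other hand the period map $\mathcal{H}^2_\nu(B)\to H^2(B)$ is injective by Theorem \ref{thm:Closed:2Forms:AC:CY} (part (ii) for $\nu\in(-2,-1)$, and for $\nu\in(-5,-2)$ one first improves the decay of the closed and coclosed form via Proposition \ref{prop:Closed:Even:Forms:2} and then applies part (i) together with Theorem \ref{thm:L2:coho}). Hence $d\gamma-\sigma=0$, i.e.\ $\sigma=d\gamma$, as required. I do not expect a serious obstacle: the whole argument is bookkeeping of decay rates against the two vanishing inputs, Lemma \ref{lem:AC:CY:Vanishing:Harmonic:0:1:Forms} and Theorem \ref{thm:Closed:2Forms:AC:CY}; the only point requiring minor care is the identification of the cokernel of the weighted Laplacian and the indicial roots, which is routine given the spectral analysis of Section \ref{sec:CY:cones}.
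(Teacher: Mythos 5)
Your proposal is correct and follows essentially the same route as the paper's proof: solve $\triangle\gamma=d^\ast\sigma$ in $C^{k+1,\alpha}_{\nu+1}$ using Lemma \ref{lem:AC:CY:Vanishing:Harmonic:0:1:Forms} to kill both cokernel and kernel, deduce $d^\ast\gamma=0$ from the vanishing of decaying harmonic functions, and conclude via injectivity of $\mathcal{H}^2_\nu(B)\to H^2(B)$. Your extra care for $\nu\in(-5,-2)$ is harmless but unnecessary, since $\mathcal{H}^2_\nu(B)\subset\mathcal{H}^2_{-2+\mu}(B)$ for such $\nu$ and injectivity already follows from Theorem \ref{thm:Closed:2Forms:AC:CY}~(ii), which is all the paper invokes.
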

Now, if $[\omega_\epsilon]=[\omega_0]$ then the Lemma shows that $\omega_\epsilon=\omega_0 + d\gamma$ for a \emph{decaying} $\gamma$ (since $\nu+1<0$). The decay of $\gamma$ is crucial for two reasons: firstly, when exponentiating $\gamma^\sharp$ to define a time dependent flow connecting $\omega_\epsilon$ to $\omega_0$ we do not have to worry about the non-compactness of $B$; secondly, the diffeomorphism so defined preserves the AC asymptotics of the Calabi--Yau structure. 

Making the assumption $\omega_\epsilon\equiv\omega_0$ allows us to break the diffeomorphism invariance up to vector fields (with decay conditions) preserving $\omega_0$. It remains to understand how to work transversally to such vector fields. Consider then a vector field $X\in C^{k+1,\alpha}_{\nu+1}$ with $\nu\in (-5,-1)$ such that $d(JX^\flat)=0$. The action of $X$ on $\Real\Omega_0$ is by Lie derivative $\mathcal{L}_X\Real\Omega_0 = d(X\lrcorner\Real\Omega_0)$. Proposition \ref{prop:differential:2forms} (iv) and (v) and Remark \ref{rmk:d:3forms:type6} imply that
\begin{equation}\label{eq:Gauge:fixing:transverse:CY:def}
d(X \lrcorner\Real\Omega_0)  = \tfrac{1}{2}(d^\ast\! JX^\flat)\Imag\Omega_0 + \rho_0
\end{equation}
for some $\rho_0\in \Omega^3_{12}$. Moreover, by Lemma \ref{lem:AC:CY:Vanishing:Harmonic:0:1:Forms}, $X=0$ if $d^\ast\! JX^\flat=0=d(JX^\flat)$, \ie $X$ is uniquely determined by $d^\ast\! JX^\flat$. On the other hand every $3$-form $f\Imag\Omega_0$ with $f\in C^{k,\alpha}_{\nu}$, $\nu>-6$, can be written as
\[
f\Imag\Omega_0 = d\left( (J\nabla u)\lrcorner\Real\Omega_0\right) + \rho_0
\] 
for some $\rho_0\in C^{k,\alpha}_{\nu}$ and $u\in C^{k+2,\alpha}_{\nu+2}$. Indeed, it is enough to look for $u$ such that $\triangle u = 2f$ and then use \eqref{eq:Gauge:fixing:transverse:CY:def} with $X=J\nabla u$. It follows that for any $3$-form $\rho$ of class $C^{k,\alpha}_\nu$ there exists a unique $X\in C^{k+1,\alpha}_\nu$ with $d(JX^\flat)=0$ such that $\rho+\mathcal{L}_X\Real\Omega_0$ contains no component of the form $f\Imag\Omega_0$, \ie $\left( \rho+\mathcal{L}_X\Real\Omega_0\right) \wedge\Real\Omega_0=0$. We will therefore work transversally to all diffeomorphisms of class $C^{k+1,\alpha}_{\nu+1}$ by requiring that
\[
\omega_\epsilon = \omega_0, \qquad \Real\Omega_\epsilon\wedge\Real\Omega_0=0.
\]

We summarise our discussion in the following proposition.

\begin{prop}\label{prop:Gauge:fixing:transverse:CY:def}
Fix $\nu\in (-5,-1)$, $k\geq 1$ and $\alpha\in (0,1)$ and let $(\sigma,\rho)\in C^{k,\alpha}_{\nu}$ be an infinitesimal deformation of the {\suthreestr} $(\omega_0,\Real\Omega_0)$ on $B$ with $[\sigma]=0\in H^2(B)$. Then there exists a vector field $X\in C^{k+1,\alpha}_{\nu+1}$ such that $\sigma + \mathcal{L}_{X}\omega_0 =0$ and $\rho'=\rho + \mathcal{L}_{X}\Real\Omega_0$ satisfies $\rho'\wedge\Real\Omega_0=0$. Moreover $(0,\rho')$ is an infinitesimal Calabi--Yau deformation of $(B,\omega_0,\Omega_0)$ if and only if $\rho'$ is a closed and coclosed $3$-form in $\Omega^3_{12}$.
\proof
Only the last statement requires proof: $(0,\rho)$ is an infinitesimal Calabi--Yau deformation if and only if
\[
d\rho = 0= d\hat{\rho}, \qquad \rho\wedge\omega_0=0=\Real\Omega_0\wedge\hat{\rho} + \rho\wedge\Imag\Omega_0.
\]
If moreover $\rho\wedge\Real\Omega_0=0$ then $\rho\in\Omega^3_{12}$ and $d\hat{\rho}=0$ is equivalent to $d^\ast\rho=0$ by Proposition \ref{prop:Linearisation:Hitchin:dual}. 
\endproof
\end{prop}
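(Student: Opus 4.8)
The plan is to separate the two assertions: the first is a restatement of the gauge-fixing discussion preceding the statement, while the second is a short consequence of the algebra of {\suthreestr}s from Section \ref{sec:su3} together with Proposition \ref{prop:Linearisation:Hitchin:dual}.

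For the vector field $X$, I would first use $[\sigma]=0$ and Lemma \ref{lem:AC:CY:Exact:2:forms} to write $\sigma=d\gamma$ with $\gamma\in C^{k+1,\alpha}_{\nu+1}$ coclosed, and set $X_1=-(J\gamma)^\sharp$, so that by Lemma \ref{lem:identities:1forms}(i) and $d\omega_0=0$ one has $\mathcal{L}_{X_1}\omega_0=d(X_1\lrcorner\omega_0)=-d(JX_1^\flat)=-d\gamma=-\sigma$, with $X_1$ of the required decay. Writing $\rho'':=\rho+\mathcal{L}_{X_1}\Real\Omega_0\in C^{k,\alpha}_\nu$ and decomposing it into types as in Lemma \ref{lem:identities:3forms}, I would then cancel the $\Imag\Omega_0$-component $\mu\,\Imag\Omega_0$ of $\rho''$ by a second vector field preserving $\omega_0$: solving $\triangle u=2\mu$ with $u\in C^{k+2,\alpha}_{\nu+2}$ (possible by the weighted theory of Appendix \ref{Appendix:Analysic:AC}, the only obstruction being a decaying or slowly growing homogeneous harmonic function, which vanishes by Lemma \ref{lem:AC:CY:Vanishing:Harmonic:0:1:Forms} and Proposition \ref{prop:Harmonic:Functions:Cone}) and taking $X_2=J\nabla u$, one has $d(JX_2^\flat)=0$, hence $\mathcal{L}_{X_2}\omega_0=0$, and by \eqref{eq:Gauge:fixing:transverse:CY:def} $\mathcal{L}_{X_2}\Real\Omega_0=-\mu\,\Imag\Omega_0+\rho_0$ with $\rho_0\in\Omega^3_{12}$. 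Then $X:=X_1+X_2\in C^{k+1,\alpha}_{\nu+1}$ satisfies $\sigma+\mathcal{L}_X\omega_0=0$, and $\rho':=\rho+\mathcal{L}_X\Real\Omega_0$ has vanishing $\Imag\Omega_0$-part, so $\rho'\wedge\Real\Omega_0=0$ by Lemma \ref{lem:identities:3forms}(ii).

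For the last statement, recall that $(0,\rho')$ is an infinitesimal Calabi--Yau deformation precisely when $d\rho'=0=d\hat\rho'$ and the linearised {\suthreestr} constraints $\omega_0\wedge(\rho'+i\hat\rho')=0$, $\Real\Omega_0\wedge\hat\rho'+\rho'\wedge\Imag\Omega_0=0$ hold, with $\hat\rho'$ determined by Proposition \ref{prop:Linearisation:Hitchin:dual}. Assuming $\rho'\wedge\Real\Omega_0=0$: by Lemma \ref{lem:identities:3forms} the constraint $\omega_0\wedge\rho'=0$ removes the $\Omega^3_6$-part of $\rho'$ and $\rho'\wedge\Real\Omega_0=0$ removes its $\Imag\Omega_0$-part, so $\rho'=\lambda\,\Real\Omega_0+\rho_0$ with $\rho_0\in\Omega^3_{12}$; substituting $\hat\rho'=\lambda\,\Imag\Omega_0-\ast\rho_0$ (Proposition \ref{prop:Linearisation:Hitchin:dual} and Lemma \ref{lem:Hodge:star}(v)) into the remaining constraint and using $\Real\Omega_0\wedge\Imag\Omega_0=\tfrac{2}{3}\omega_0^3$, $\rho_0\wedge\Omega_0=0$ and the orthogonality of $\Real\Omega_0$ to $\rho_0$ gives $\tfrac{4}{3}\lambda\,\omega_0^3=0$, hence $\lambda=0$ and $\rho'\in\Omega^3_{12}$. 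Conversely, any $\rho'\in\Omega^3_{12}$ satisfies all the algebraic constraints, since $\rho'\wedge\omega_0=0=\rho'\wedge\Omega_0$ and $\ast$ preserves $\Omega^3_{12}$ (Lemmas \ref{lem:identities:3forms} and \ref{lem:Hodge:star}). Finally, for $\rho'\in\Omega^3_{12}$ we have $\hat\rho'=-\ast\rho'$, so $d\hat\rho'=0$ is equivalent to $d^\ast\rho'=0$; thus $(0,\rho')$ is an infinitesimal Calabi--Yau deformation if and only if $\rho'$ is a closed and coclosed form in $\Omega^3_{12}$.

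The algebraic bookkeeping with types is routine. The substantive point in the first assertion is not the pointwise algebra but that the gauge transformations employed (the time-one flows of $X_1$ and $X_2$) lie in a controlled weighted H\"older class and hence preserve the asymptotically conical Calabi--Yau structure; this is exactly where the weighted versions of Lemma \ref{lem:AC:CY:Exact:2:forms} and of the solvability of $\triangle u=2\mu$ — and thus the irreducibility and AC hypotheses on $B$ — are needed, and it is the step I expect to require the most care. If one instead takes the first assertion as given from the preceding discussion, as the paper's own proof does, the remaining task is just the chain of implications above, whose single mildly delicate point is the vanishing of $\lambda$.
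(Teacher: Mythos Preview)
Your proposal is correct and follows essentially the same approach as the paper: the first assertion is the gauge-fixing discussion immediately preceding the proposition (which the paper simply cites, while you spell out the two-step construction $X=X_1+X_2$), and the last statement is reduced to type-decomposition algebra together with Proposition~\ref{prop:Linearisation:Hitchin:dual}. Your treatment of the last statement is in fact more complete than the paper's, which asserts ``$\rho\wedge\Real\Omega_0=0$ then $\rho\in\Omega^3_{12}$'' without explicitly extracting the vanishing of the $\Real\Omega_0$-coefficient $\lambda$ from the remaining constraint $\Real\Omega_0\wedge\hat\rho+\rho\wedge\Imag\Omega_0=0$; your computation $\tfrac{4}{3}\lambda\,\omega_0^3=0$ fills that gap.
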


\subsection{The deformation problem}

The next step is to rewrite the Apostolov--Salamon system in a way that makes it easier to identify the image of its linearisation. The strategy to achieve this is to exploit Proposition \ref{prop:Torsion:SU(3):structures}, which states that if $(\omega,\Omega)$ is an {\suthreestr} on a $6$-manifold then there are relations between the differentials of the defining differential forms. 

\begin{prop}\label{prop:Modified:GH:G2}
Let $\Lie{c}_0 =(\omega_0,\Omega_0)$ be an AC Calabi--Yau structure on a $6$-manifold $B$ and denote by $g_0$ and $\nabla_0$ the induced metric and Levi--Civita connection. Fix $k\geq 1$, $\alpha\in (0,1)$ and $\nu<-1$. Then there exists a constant $\epsilon_0>0$ such that the following holds. Let $\Lie{c} = (\omega,\Omega)$ be a second {\suthreestr} on $B$ such that $r|\Lie{c}-\Lie{c}_0| + r^2 |\nabla_0 (\Lie{c}-\Lie{c}_0)|<\epsilon_0$. Suppose that there exists a function $h$ and an integral closed $2$-form $\kappa=d\theta$ on $B$ such that
\[
\kappa\wedge\omega^2=0, \qquad \tfrac{1}{2}dh\wedge\omega^2 = h^{\frac{1}{4}}\kappa\wedge\Imag\Omega.
\]
Moreover assume the existence of functions $u,v$ and a vector field $X$ in $C^{k+1,\alpha}_{\nu+1}$ such that
\[d\omega=0, \quad d\bigl( h^{\frac{3}{4}}\Real\Omega\bigr) + \kappa\wedge\omega = d\!\ast\! d(u\,\omega),\quad
d\bigl( h^{\frac{1}{4}}\Imag\Omega\bigr)  = d\!\ast\!d\left( X\lrcorner\Real\Omega + v\,\omega\right).
\]
Here the Hodge $\ast$ is computed with respect to the metric induced by $\Lie{c}_0$.

Then $u=v=X=0$, \ie $(\omega,\Omega,h,\theta)$ is a solution of the Apostolov--Salamon equations \eqref{eq:GH:G2}.
\proof
We first prove that $u=0$. Integrating the equation satisfied by $\Real\Omega$ against $u'\in L^2_{0,\mu}$ with $\mu<-7-\nu$ yields
\[
\langle d^\ast d(u\,\omega),u'\omega\rangle_{L^2} = -\int{h^{\frac{3}{4}}\left( d\Real\Omega + \tfrac{3}{4}h^{-1}dh\wedge\Real\Omega + h^{-\frac{3}{4}}\kappa\wedge\omega\right) \wedge u'\omega}.
\]
Since $\omega$ is closed, the assumption $\kappa\wedge\omega^2=0$ and Proposition \ref{prop:Torsion:SU(3):structures} imply that the integral on the right-hand-side vanishes. We therefore conclude that $u$ lies in the kernel of $\pi^\Lie{c}_1 d^\ast d (u\,\omega)$, where $\pi^\Lie{c}_1$ is the pointwise adjoint of $u'\mapsto u' \omega$ with respect to the metric induced by $\Lie{c}_0$.

Now, by the assumptions on the closeness of $\Lie{c}$ to $\Lie{c}_0$, the operator $C^{k+1,\alpha}_{\nu+1}\ra C^{k-1,\alpha}_{\nu-1}$ defined by $u\mapsto \pi^\Lie{c}_1 d^\ast d (u\,\omega)$ differs from $u\mapsto \pi_1 d^\ast d (u\,\omega_0)$ by a bounded operator of norm controlled by $\epsilon_0$. Lemma \ref{lem:Gauge:fixing:operator:AC:CY} and the assumption $\nu<-1$ show that $\pi_1 d^\ast d (u\,\omega_0)=0$ implies $u=0$. If $\epsilon_0$ is sufficiently small we conclude that $u=0$ even if $\Lie{c}_0$ is replaced with $\Lie{c}$.

We will now consider the equation satisfied by $\Imag\Omega$ and deduce that $X=0=v$. Using Lemmas \ref{lem:identities:1forms} (v) and \ref{lem:identities:2forms} (ii) one can show that the equation $\tfrac{1}{2}dh\wedge\omega^2 = h^{\frac{1}{4}}\kappa\wedge\Imag\Omega$ is equivalent to $\pi_6 \kappa = -\tfrac{1}{2}h^{-\frac{1}{4}}(J\nabla h)\lrcorner\Real\Omega$. Algebraic manipulations of $d\bigl( h^{\frac{3}{4}}\Real\Omega\bigr) + \kappa\wedge\omega =0$ using Lemmas \ref{lem:Hodge:star} (ii) and \ref{lem:identities:2forms} (i) then yield $\pi_6 \left( d\Real\Omega\right) = -\tfrac{1}{4}h^{-1}dh\wedge\Real\Omega$. Combined with the fact that $d\omega=0$, Proposition \ref{prop:Torsion:SU(3):structures} implies that $h^{\frac{1}{4}}\ast\bigl( d\Imag\Omega + \frac{1}{4}h^{-1}dh\wedge\Imag\Omega\bigr)$ is a primitive $(1,1)$--form with respect to $(\omega,\Omega)$. Thus a similar integration by parts as above shows that $(X,v)$ lies in the kernel of the operator $(X,v)\mapsto \pi_{1\oplus 6}d^\ast d (X\lrcorner\Real\Omega + v\,\omega)$. Closeness of $\Lie{c}$ to $\Lie{c}_0$, the assumption $\nu<-1$ and Lemma \ref{lem:Gauge:fixing:operator:AC:CY} finally imply that $X=0=v$.
\endproof
\end{prop}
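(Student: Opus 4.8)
The plan is to treat the two auxiliary unknowns in turn, first $u$ and then the pair $(v,X)$. In each case the strategy is the same: derive from the hypotheses that the unknown lies in the kernel of a second-order operator which, on the reference structure $\Lie{c}_0$, reduces by Lemma \ref{lem:Gauge:fixing:operator:AC:CY} to a scalar Laplacian (respectively a Laplacian-type system) having no decaying kernel by Lemmas \ref{lem:AC:CY:Vanishing:Harmonic:0:1:Forms} and \ref{lem:AC:CY:Vanishing:Gauge:Fixing}; then a perturbation argument, using the $C^1$-closeness hypothesis $r|\Lie{c}-\Lie{c}_0| + r^2|\nabla_0(\Lie{c}-\Lie{c}_0)| < \epsilon_0$, transfers this conclusion to the operator built from $\Lie{c}$.

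First I would show $u=0$. Pairing the equation for $\Real\Omega$ with a test form $u'\,\omega$, where $u'\in L^2_{0,\mu}$ with $\mu$ sufficiently negative (so that the integration by parts of Lemma \ref{lem:integration:parts} applies and all boundary terms at infinity vanish, using $d\!\ast\!d(u\,\omega)\in C^{k-1,\alpha}_{\nu-1}$), gives $\langle d^\ast d(u\,\omega), u'\omega\rangle_{L^2} = -\int h^{3/4}\bigl(d\Real\Omega + \tfrac34 h^{-1}dh\wedge\Real\Omega + h^{-3/4}\kappa\wedge\omega\bigr)\wedge u'\omega$. Since $\omega$ is closed, the hypothesis $\kappa\wedge\omega^2 = 0$ kills the last term after wedging with $u'\omega$, and Proposition \ref{prop:Torsion:SU(3):structures} shows the remaining integrand vanishes identically (the torsion components of $d\Real\Omega$ are of the wrong $\sunitary 3$-type to survive wedging with $\omega$). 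Hence $u$ lies in the kernel of $u\mapsto \pi^{\Lie{c}}_1 d^\ast d(u\,\omega)$, where $\pi^{\Lie{c}}_1$ is the $g_0$-orthogonal projection onto the $\omega$-component. By Lemma \ref{lem:Gauge:fixing:operator:AC:CY} this operator agrees with $u\mapsto \tfrac23 \triangle_0 u$ when $\Lie{c}$ is replaced by $\Lie{c}_0$; the closeness hypothesis makes the difference a bounded operator $C^{k+1,\alpha}_{\nu+1}\to C^{k-1,\alpha}_{\nu-1}$ of norm $O(\epsilon_0)$, and since $\triangle_0$ has no kernel in $C^\infty_{\nu+1}$ for $\nu+1<0$ (Lemma \ref{lem:AC:CY:Vanishing:Harmonic:0:1:Forms}) it is injective with closed range there and stays injective after an $O(\epsilon_0)$ perturbation. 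For $\epsilon_0$ small this forces $u=0$, so the $\Real\Omega$-equation becomes $d\bigl(h^{3/4}\Real\Omega\bigr) + \kappa\wedge\omega = 0$.

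Next I would run the algebraic preparation needed to repeat the argument for $(v,X)$. Using Lemmas \ref{lem:identities:1forms}(i) and \ref{lem:identities:2forms}(ii), the constraint $\tfrac12 dh\wedge\omega^2 = h^{1/4}\kappa\wedge\Imag\Omega$ is equivalent to $\pi_6\kappa = -\tfrac12 h^{-1}(J\nabla h)\lrcorner\Real\Omega$; substituting this into $d\bigl(h^{3/4}\Real\Omega\bigr) + \kappa\wedge\omega = 0$ and applying Lemmas \ref{lem:Hodge:star}(ii) and \ref{lem:identities:2forms}(i) yields $\pi_6(d\Real\Omega) = -\tfrac14 h^{-1}dh\wedge\Real\Omega$. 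Since $d\omega=0$ forces $w_1=\hat w_1=0$, the fact from Proposition \ref{prop:Torsion:SU(3):structures} that one and the same $1$-form $w_5$ occurs in $d\Real\Omega$ and $d\Imag\Omega$ then shows $h^{1/4}\ast\bigl(d\Imag\Omega + \tfrac14 h^{-1}dh\wedge\Imag\Omega\bigr)$ is a primitive $(1,1)$-form for $(\omega,\Omega)$. Pairing the $\Imag\Omega$-equation with a sufficiently rapidly decaying test form $X'\lrcorner\Real\Omega + v'\,\omega$ and integrating by parts as before, this primitivity makes the boundary-free integral vanish, so $(X,v)$ lies in the kernel of $(X,v)\mapsto \pi^{\Lie{c}}_{1\oplus 6} d^\ast d(X\lrcorner\Real\Omega + v\,\omega)$. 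Lemma \ref{lem:Gauge:fixing:operator:AC:CY} identifies the $\Lie{c}_0$-version with $(v,\gamma)\mapsto (\tfrac23\triangle_0 v,\ dd^\ast\gamma + \tfrac23 d^\ast d\gamma)$, which has no kernel in $C^\infty_{\nu+1}$ when $\nu+1<0$ by Lemma \ref{lem:AC:CY:Vanishing:Gauge:Fixing}; a further $O(\epsilon_0)$-perturbation argument then gives $X=v=0$, so $(\omega,\Omega,h,\theta)$ solves \eqref{eq:GH:G2}.

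The main obstacle will be the two integration-by-parts steps: one must check carefully that the weighted decay of the unknowns ($\nu<-1$) together with the choice of weight for the test elements makes every boundary term at infinity vanish, and — more delicately — that the resulting orthogonality can genuinely be read as "the unknown lies in the kernel of $\pi^{\Lie{c}}_\ast d^\ast d(\cdot)$" even though $\Lie{c}$ (not $\Lie{c}_0$) sits inside the exterior derivative while the Hodge star and the type projections are those of $g_0$. Closely tied to this is turning the comparison with the model operators of Lemma \ref{lem:Gauge:fixing:operator:AC:CY} into a genuine $O(\epsilon_0)$ bounded perturbation on the relevant weighted Hölder spaces, which is precisely where the hypothesis $r|\Lie{c}-\Lie{c}_0| + r^2|\nabla_0(\Lie{c}-\Lie{c}_0)| < \epsilon_0$ is consumed.
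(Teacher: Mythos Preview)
Your proposal is correct and follows essentially the same route as the paper's proof: the same two-step integration-by-parts argument (first for $u$, then for $(v,X)$), the same use of Proposition \ref{prop:Torsion:SU(3):structures} to make the right-hand integrals vanish, and the same $O(\epsilon_0)$-perturbation of the model operators from Lemma \ref{lem:Gauge:fixing:operator:AC:CY}. Your identification of the delicate points (justifying the weighted integration by parts and making the perturbation bound genuinely small on the weighted H\"older spaces) is also spot on.
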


\subsection{The linearised problem}

We now exploit Proposition \ref{prop:Exact:4form:AC:CY} and Corollary \ref{cor:Exact:4form:AC:CY} to study the mapping properties of the linearisation of the Apostolov--Salamon equations \eqref{eq:GH:G2}, reformulated as in Proposition \ref{prop:Modified:GH:G2}.

\begin{theorem}\label{thm:Linearisation:GH:G2}
Let $(B,\omega_0,\Omega_0)$ be an AC Calabi--Yau $3$-fold. Fix $k\geq 1$, $\alpha\in (0,1)$, $\delta>0$ as in Proposition \ref{prop:Laplacian:2Forms:AC:CY} and $\nu\in (-3-\delta,-1)$ away from a discrete set of indicial roots. Then there exists a constant $C>0$ such that the following holds.

Let $\alpha_0$ be a function in $C^{k,\alpha}_{\nu}$, $\alpha_1$ a closed $5$-form in $C^{k-1,\alpha}_{\nu-1}$ and $\alpha_2=d\beta_2$ and $\alpha_3=d\beta_3$ exact $4$-forms with $\beta_2, \beta_3\in C^{k,\alpha}_{\nu}$. Then there exist a unique function $h$, $1$-form $\gamma$, $3$-form $\rho$ of the form $\tfrac{1}{2}\alpha_0\Real\Omega_0 + \Omega^3_{12}$, all in $C^{k,\alpha}_{\nu}$, and functions $f_1,f_2$ and a vector field $X$ in $C^{k+1,\alpha}_{\nu+1}$ such that
\[
\begin{gathered}
d^\ast\gamma =0, \qquad d\gamma\wedge\omega_0^2=0, \qquad \tfrac{1}{2}dh\wedge\omega_0^2-d\gamma\wedge\Imag\Omega_0 = \alpha_1,\\
d\rho + \tfrac{3}{4}dh\wedge\Real\Omega_0 + d\gamma\wedge\omega_0 + d\!\ast\! d(f_1\omega_0)=\alpha_2,\\
d\hat{\rho}+\tfrac{1}{4}dh\wedge\Imag\Omega_0+d\!\ast\!d(X\lrcorner\Real\Omega_0 + f_2\omega_0) =\alpha_3.
\end{gathered} 
\]
Moreover,
\[
\| (h,\gamma,\rho)\| _{C^{k,\alpha}_{\nu}} + \| (f_1,f_2,X)\|_{C^{k+1,\alpha}_{\nu+1}} \leq C\left( \| (\alpha_0, \beta_2,
\beta_3)\|_{C^{k,\alpha}_{\nu}} + \|\alpha_1\|_{C^{k-1,\alpha}_{\nu-1}}\right).
\]
\proof
We start by looking for $(h,\gamma)$. After recalling Lemma \ref{lem:Dirac:cone} we notice that the three equations that only involve the pair $(h,\gamma)$ can be interpreted as the inhomogeneous Dirac equation
\[
\slashed{D}(0,h,-J\gamma) = (0,0,\ast\alpha_1).
\]
By Proposition \ref{prop:Dirac:AC:CY} and Remark \ref{rmk:Dirac:AC:CY} there exists a unique solution $(h,\gamma)$ with
\[
\| (h,\gamma)\| _{C^{k,\alpha}_{\nu}}\leq C \|\alpha_1\| _{C^{k-1,\alpha}_{\nu-1}}.
\]

Once $h$ and $\gamma$ are given, by Corollary \ref{cor:Exact:4form:AC:CY} we can write
\[
\alpha_2 - \tfrac{1}{2}d\left( \alpha_0\Real\Omega_0\right) -\tfrac{3}{4}dh\wedge\Real\Omega_0 - d\gamma\wedge\omega_0  = d\ast d(u_1\omega_0) + d\left( \ast d( Y_1\lrcorner\Real\Omega_0) -d(Y_1\lrcorner\Imag\Omega_0)\right) +d\rho_0
\]
for unique $(u_1,Y_1,\rho_0)$ with $\rho_0\in \Omega^3_{12}\cap\mathcal{W}^3_{\nu}$ and $d^\ast\rho_0=0$. Moreover, the $C^{k,\alpha}_{\nu}$--norm of $\rho_0$ and the $C^{k+1,\alpha}_{\nu+1}$--norm of $(u_1,Y_1)$ are uniformly controlled by $\|(\alpha_0,\beta_2)\|_{C^{k,\alpha}_{\nu}}$ and $\| (h,\gamma)\| _{C^{k,\alpha}_{\nu}}$.

We now take $f_1=u_1$ and look for $\rho$ of the form
\[
\rho = \tfrac{1}{2}\alpha_0\Real\Omega_0 + \ast d( Y_1\lrcorner\Real\Omega_0) -d(Y_1\lrcorner\Imag\Omega_0) +\rho_0 + \ast\rho'_0 
\]
for some $\rho'_0\in \Omega^3_{12}\cap\mathcal{W}^3_{\nu}$ with $d^\ast\rho'_0=0$. Moreover, Lemma \ref{lem:d:3forms:type6} implies that the sum of the second and third term lies in $\Omega^3_{12}$. Using Proposition \ref{prop:Linearisation:Hitchin:dual} we calculate
\[
\hat{\rho} = \tfrac{1}{2}\alpha_0\Imag\Omega_0 + d( Y_1\lrcorner\Real\Omega_0) +\ast d(Y_1\lrcorner\Imag\Omega_0) -\!\ast\rho_0 + \rho'_0.
\]
By Proposition \ref{prop:Exact:4form:AC:CY} we furthermore write in a unique way
\[
\alpha_3 -\tfrac{1}{4}dh\wedge\Imag\Omega_0-d\left( \tfrac{1}{2}\alpha_0\Imag\Omega_0 + d( Y_1\lrcorner\Real\Omega_0) +\ast d(Y_1\lrcorner\Imag\Omega_0)\right) = d\!\ast\!d\left( Y_2\lrcorner\Real\Omega_0 + u_2\omega_0\right) + d\rho''_0
\]
with $d^\ast\rho''_0 = 0$ and $\rho''_0\in \Omega^3_{12}\cap\mathcal{W}^3_{\nu}$. We then set $f_2=u_2$, $X=Y_2$ and $\rho'_0=\rho''_0$.
\endproof
\end{theorem}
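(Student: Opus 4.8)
The plan is to solve the system in stages, using that the three equations involving only $(h,\gamma)$ decouple from the rest and then feeding the remaining two equations, one at a time, into the normal-form results of Section~\ref{sec:Normal:form:4:forms}. Throughout, the hypothesis $\nu\in(-3-\delta,-1)$ is exactly what makes Corollary~\ref{cor:Exact:4form:AC:CY} and Proposition~\ref{prop:Exact:4form:AC:CY} applicable, and it also lies in $(-5,0)$, the range in which the Dirac operator is an isomorphism between the weighted spaces we need. First I would observe, using Lemma~\ref{lem:Dirac:cone} together with Proposition~\ref{prop:differential:2forms}\,(iv) and Lemma~\ref{lem:identities:2forms}\,(ii), that the equations $d^\ast\gamma=0$, $d\gamma\wedge\omega_0^2=0$ and $\tfrac12 dh\wedge\omega_0^2-d\gamma\wedge\Imag\Omega_0=\alpha_1$ are equivalent to the single inhomogeneous Dirac equation $\slashed{D}(0,h,-J\gamma)=(0,0,\ast\alpha_1)$ on $B$. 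Since $\alpha_1$ is closed, $\ast\alpha_1$ is coclosed, so Proposition~\ref{prop:Dirac:AC:CY} (applied with weights shifted by one) provides a unique solution in $C^{k,\alpha}_\nu$ with $\|(h,\gamma)\|_{C^{k,\alpha}_\nu}\le C\|\alpha_1\|_{C^{k-1,\alpha}_{\nu-1}}$, and Remark~\ref{rmk:Dirac:AC:CY} forces the $\Omega^0$-component of the solution to vanish, so it genuinely has the form $(0,h,-J\gamma)$.

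With $(h,\gamma)$ fixed, I would write $\rho=\tfrac12\alpha_0\Real\Omega_0+\rho_{12}$ with $\rho_{12}\in\Omega^3_{12}$, so that the fourth equation becomes $d\rho_{12}+d\ast d(f_1\omega_0)=\sigma$, where $\sigma:=\alpha_2-\tfrac12 d(\alpha_0\Real\Omega_0)-\tfrac34 dh\wedge\Real\Omega_0-d\gamma\wedge\omega_0$. Since $\omega_0$ and $\Real\Omega_0$ are closed and $\alpha_2$ is exact, $\sigma$ is an exact $4$-form with a primitive in $C^{k,\alpha}_\nu$, so Corollary~\ref{cor:Exact:4form:AC:CY} applies and gives unique $(u_1,Y_1,\rho_0)$ with $\rho_0\in\Omega^3_{12}\cap\mathcal{W}^3_\nu$, $d^\ast\rho_0=0$, and $\sigma=d\ast d(u_1\omega_0)+d\bigl(\ast d(Y_1\lrcorner\Real\Omega_0)-d(Y_1\lrcorner\Imag\Omega_0)\bigr)+d\rho_0$. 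I would then set $f_1=u_1$ and take $\rho=\tfrac12\alpha_0\Real\Omega_0+\ast d(Y_1\lrcorner\Real\Omega_0)-d(Y_1\lrcorner\Imag\Omega_0)+\rho_0+\ast\rho_0'$ for a still-undetermined $\rho_0'\in\Omega^3_{12}\cap\mathcal{W}^3_\nu$ with $d^\ast\rho_0'=0$; this solves the fourth equation because $d^\ast\rho_0'=0$ makes $\ast\rho_0'$ closed, and Lemma~\ref{lem:d:3forms:type6} together with the fact that $\ast$ preserves $\Omega^3_{12}$ shows that $\rho$ has the asserted shape $\tfrac12\alpha_0\Real\Omega_0+\Omega^3_{12}$.

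Finally I would compute $\hat\rho$ via the linearisation of Hitchin's duality map (Proposition~\ref{prop:Linearisation:Hitchin:dual}), using $\ast\Real\Omega_0=\Imag\Omega_0$ and $\ast\ast=-1$ on $3$-forms, to obtain $\hat\rho=\tfrac12\alpha_0\Imag\Omega_0+d(Y_1\lrcorner\Real\Omega_0)+\ast d(Y_1\lrcorner\Imag\Omega_0)-\ast\rho_0+\rho_0'$. Since $d^\ast\rho_0=0$ implies $d\ast\rho_0=0$, the term $-\ast\rho_0$ does not contribute to $d\hat\rho$, and the fifth equation reduces to $d\ast d(X\lrcorner\Real\Omega_0+f_2\omega_0)+d\rho_0'=\alpha_3-\tfrac14 dh\wedge\Imag\Omega_0-d\bigl(\tfrac12\alpha_0\Imag\Omega_0+d(Y_1\lrcorner\Real\Omega_0)+\ast d(Y_1\lrcorner\Imag\Omega_0)\bigr)$, whose right-hand side is again an exact $4$-form with a $C^{k,\alpha}_\nu$ primitive (using $d\Imag\Omega_0=0$). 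Proposition~\ref{prop:Exact:4form:AC:CY} then produces unique $(u_2,Y_2,\rho_0'')$ with $\rho_0''\in\Omega^3_{12}\cap\mathcal{W}^3_\nu$ and $d^\ast\rho_0''=0$, and I would set $f_2=u_2$, $X=Y_2$, $\rho_0'=\rho_0''$, closing the system. Uniqueness follows by reversing these steps — the Dirac isomorphism pins down $(h,\gamma)$, and the uniqueness clauses of Corollary~\ref{cor:Exact:4form:AC:CY} and Proposition~\ref{prop:Exact:4form:AC:CY}, together with the normalisations $\rho_0,\rho_0'\in\mathcal{W}^3_\nu$ which remove the infinitesimal Calabi--Yau deformation ambiguity, determine the rest — and the claimed estimate is the concatenation of the bounds supplied at each step. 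The one genuine difficulty beyond bookkeeping is to organise the algebra so that each $4$-form fed into a normal-form statement really is exact with a primitive of the correct decay, and to keep careful track of the interplay between $\rho$ and its Hitchin dual $\hat\rho$ — which $\Omega^3_{12}$-components are free and which are determined — which is exactly why the auxiliary term $\ast\rho_0'$ has to be carried through the construction.
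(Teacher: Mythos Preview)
Your proof is correct and follows essentially the same route as the paper: solve the Dirac equation for $(h,\gamma)$, apply Corollary~\ref{cor:Exact:4form:AC:CY} to the residual exact $4$-form to determine $f_1$ and the closed part of $\rho_{12}$, insert the undetermined piece $\ast\rho_0'$, compute $\hat\rho$, and then apply Proposition~\ref{prop:Exact:4form:AC:CY} to fix $(f_2,X,\rho_0')$. The extra justifications you supply (why $\ast\rho_0'$ is closed, why each right-hand side is exact with a $C^{k,\alpha}_\nu$ primitive, and the uniqueness argument) are accurate elaborations of points the paper leaves implicit.
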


\section{Existence of highly collapsed circle-invariant torsion-free \gtstr s}
\label{sec:g2:existence}

Return now to the setting of Theorem \ref{thm:GH:G2:linearised:HYM}. Let $(B,\omega_0,\Omega_0)$ be a simply connected AC Calabi--Yau $3$-fold and let $M\ra B$ be a non-trivial principal $\tu{S}^1$--bundle satisfying \eqref{eq:sigma_tau:orthogonality:1}. Theorem \ref{thm:GH:G2:linearised:HYM} guarantees the existence of a HYM connection $\theta$ on $M$ and a $3$-form $\rho\in\Omega^3_{12}$ such that \eqref{eq:GH:G2:linearised:HYM} are satisfied where $\hat{\rho}=-\!\ast\!\rho$. In other words there exists $\epsilon_0>0$ such that for all $\epsilon \in (0,\epsilon_0)$
\[
\varphi^{(1)}_\epsilon = \epsilon\,\theta\wedge\omega_0 + \Real\Omega_0 + \epsilon\rho
\]
is a \emph{closed} $\tu{S}^1$-invariant {\gtstr} on $M$ with torsion of order $O(\epsilon^2)$. Moreover, $d\theta = O(r^{-2})$ and $\rho = O(r^{-1})$ as $r\ra\infty$ with analogous estimates on all higher derivatives and therefore the metric on $M$ induced by $\varphi^{(1)}_\epsilon$ is ALC. We now aim to perturb $\varphi^{(1)}_\epsilon$ to an $\tu{S}^1$--invariant \emph{torsion-free} ALC \gtstr. In fact, in view of the $\tu{S}^1$--invariance we work directly on the $6$-manifold $B$ and aim to perturb $(\theta,\rho)$ to a solution of the Apostolov--Salamon equations \eqref{eq:GH:G2:collapsing:sequence} for $\epsilon$ sufficiently small.

\begin{theorem}\label{thm:GH:G2:Existence}
Let $(B,\omega_0,\Omega_0)$ be a simply connected irreducible AC Calabi--Yau $3$-fold and let $M\ra B$ a non-trivial principal circle bundle that satisfies
\[
c_1(M)\cup[\omega_0]=0 \in H^4(B).
\]
Let $(\theta,\rho)$ be the solution of \eqref{eq:GH:G2:linearised:HYM} with $d\theta\in C^\infty_{-2}$ and $\rho\in C^\infty_{-1}$ given by Theorem \ref{thm:GH:G2:linearised:HYM}.

Fix $l\geq 1$, $\alpha\in (0,1)$ and $\nu\in (-2,-1)$ away from a discrete set of indicial roots. Then there exist $\epsilon_0, C>0$ such that for all $\epsilon\in (0,\epsilon_0)$ there exists a unique solution
\[
h_\epsilon = 1 + h', \qquad \theta_\epsilon = \theta + \theta', \qquad \omega_\epsilon=\omega_0, \qquad \Real\Omega_\epsilon = \Real\Omega_0 + \epsilon \rho+ \rho',
\]
of the Apostolov--Salamon equations \eqref{eq:GH:G2:collapsing:sequence} with $d^\ast\theta'\!=0$ and 
$\rho'$ of the form $\frac{1}{2}\alpha_0 \Real\Omega_0 + \Omega^3_{12}$. Moreover,
\[
\| (h',\epsilon\, \theta',\rho')\|_{C^{l,\alpha}_\nu} \leq C\epsilon^2.
\]

\end{theorem}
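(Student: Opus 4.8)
The plan is to construct the solution of the Apostolov--Salamon equations \eqref{eq:GH:G2:collapsing:sequence} as a convergent power series in $\epsilon$, following the Kodaira--Nirenberg--Spencer strategy for deformations of complex structures. First I would set up the iteration. Write $h_\epsilon = 1 + \sum_{k\geq 2}\epsilon^k h_k$, $\epsilon\,\theta_\epsilon = \epsilon\,\theta + \sum_{k\geq 2}\epsilon^k \theta_k$, $\omega_\epsilon = \omega_0$, $\Real\Omega_\epsilon = \Real\Omega_0 + \epsilon\rho + \sum_{k\geq 2}\epsilon^k\rho_k$, where $(\theta,\rho)$ is the solution of \eqref{eq:GH:G2:linearised:HYM} supplied by Theorem \ref{thm:GH:G2:linearised:HYM}. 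Substituting into \eqref{eq:GH:G2:collapsing:sequence} and collecting the coefficient of $\epsilon^k$, the order-$k$ terms yield a linear system for the triple $(h_k,\theta_k,\rho_k)$ of exactly the form solved in Theorem \ref{thm:Linearisation:GH:G2}, but reformulated via Proposition \ref{prop:Modified:GH:G2}: the right-hand sides $\alpha_0^{(k)}, \alpha_1^{(k)}, \alpha_2^{(k)}, \alpha_3^{(k)}$ are algebraic (polynomial) expressions in $h_1,\dots,h_{k-1}$, $\theta_1,\dots,\theta_{k-1}$, $\rho_1,\dots,\rho_{k-1}$ and their first derivatives, together with the geometry of $(B,\omega_0,\Omega_0)$. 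The gauge-fixing and Calabi--Yau deformation discussion of Section \ref{sec:Gauge:Fixing} (in particular Proposition \ref{prop:Gauge:fixing:transverse:CY:def}) justifies taking $\omega_\epsilon = \omega_0$ and imposing $d^\ast\theta_k = 0$ and $\rho_k \in \tfrac12\alpha_0^{(k)}\Real\Omega_0 + \Omega^3_{12}$, which pins down the solution uniquely. One needs to check that the inhomogeneous $4$-forms $\alpha_2^{(k)}, \alpha_3^{(k)}$ are exact with the right decay: exactness is automatic because the degree-$k$ truncation of the closed/coclosed conditions is a differential identity, and the decay rate $\nu \in (-3-\delta,-1)$ is preserved under the quadratic (and higher) nonlinearities because $\nu < -1$ means $\nu + \nu - (\text{something}) \leq \nu$ is comfortably satisfied after accounting for the derivative losses. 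The auxiliary functions $f_1^{(k)}, f_2^{(k)}$ and vector field $X^{(k)}$ produced by Theorem \ref{thm:Linearisation:GH:G2} must then be shown to vanish; this is exactly the content of Proposition \ref{prop:Modified:GH:G2}, provided the zeroth-order structure $\Lie{c}_0 = (\omega_0,\Omega_0)$ and the perturbed structure $\Lie{c}$ stay $\epsilon_0$-close, which holds for $\epsilon$ small since the partial sums are $O(\epsilon)$ in $C^1$.

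Having established solvability to all orders, the second half of the plan is to prove convergence in $C^{l,\alpha}_\nu$ for $\epsilon$ small. Here I would adapt the majorant-series argument of Kodaira--Nirenberg--Spencer \cite[\S 5]{Kodaira:Nirenberg:Spencer}. The key inputs are: (i) the uniform estimate from Theorem \ref{thm:Linearisation:GH:G2}, namely $\|(h_k,\theta_k,\rho_k)\|_{C^{l,\alpha}_\nu} \leq C(\|(\alpha_0^{(k)},\beta_2^{(k)},\beta_3^{(k)})\|_{C^{l,\alpha}_\nu} + \|\alpha_1^{(k)}\|_{C^{l-1,\alpha}_{\nu-1}})$, with $C$ independent of $k$; and (ii) a product estimate for weighted H\"older spaces, of the form $\|uv\|_{C^{l,\alpha}_{\mu+\nu}} \leq c\,\|u\|_{C^{l,\alpha}_\mu}\|v\|_{C^{l,\alpha}_\nu}$ — which, combined with the fact that all nonlinear terms in \eqref{eq:GH:G2:collapsing:sequence} are at least quadratic in the $\epsilon^k$-coefficients, lets one bound $\|\Theta_k\|$ (schematically the source data at order $k$) by a convolution-type sum over products of lower-order norms. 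One then introduces the formal majorant $A(t) = \sum_{k\geq 1}a_k t^k$ defined by $a_1$ a constant majorizing $\|(\theta,\rho)\|$ and the recursion $a_k = c\sum_{i+j=k}a_i a_j$ (plus the contribution of higher-degree terms, absorbed by enlarging $c$), so that $A$ satisfies a quadratic algebraic equation $A = a_1 t + cA^2$ (or a suitable polynomial equation) with $A(0)=0$; this has a solution holomorphic near $t=0$, hence $\sum a_k t^k$ has positive radius of convergence $\epsilon_0$. A routine induction then shows $\|(h_k,\theta_k,\rho_k)\|_{C^{l,\alpha}_\nu} \leq a_k$ for all $k$, so the series for $(h_\epsilon,\theta_\epsilon,\Real\Omega_\epsilon)$ converges in $C^{l,\alpha}_\nu$ for $|\epsilon| < \epsilon_0$ to a genuine solution. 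The stated bound $\|(h',\epsilon\theta',\rho')\|_{C^{l,\alpha}_\nu} \leq C\epsilon^2$ then follows since the series for these remainders starts at order $\epsilon^2$ and the tail is controlled by the geometric-type majorant. Real analyticity in $\epsilon$ up to $\epsilon = 0$ is a byproduct, as emphasised in the introduction.

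The main obstacle, in my view, is bookkeeping the decay rates and derivative losses consistently through the iteration so that Theorem \ref{thm:Linearisation:GH:G2} (which requires $\nu \in (-3-\delta,-1)$ away from indicial roots and loses one derivative in $\alpha_1$ but not in $\alpha_2,\alpha_3$) can be applied at every step with the \emph{same} $\nu$ and the same Hölder index $l$. One must check that the nonlinear terms, which involve products of the $\rho_i$ (decaying like $r^{-1}$) and $\theta_i$ (whose curvatures decay like $r^{-2}$) and their first derivatives, never produce a source term decaying slower than $r^\nu$; since $\nu \in (-2,-1)$ and the linear solution $\rho$ already has the borderline decay $O(r^{-1})$, the quadratic terms like $\rho\wedge\rho$ or $d\theta\wedge\rho$ decay like $O(r^{-2})$ or faster, comfortably inside the allowed range, but one has to be careful with the exterior-derivative terms and the Hitchin duality nonlinearity (which involves the pointwise algebra of stable $3$-forms and hence contractions that can a priori worsen decay by constants but not by powers of $r$). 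A secondary, more technical point is ensuring that the weighted Hölder product estimate is uniform and that the constant $C$ in Theorem \ref{thm:Linearisation:GH:G2} genuinely does not depend on $k$ — this is guaranteed by the theorem as stated, since $\mathcal{L}$ depends only on the fixed background $(B,\omega_0,\Omega_0)$, but it is worth a remark. The rest is, as in Kodaira--Nirenberg--Spencer, a fairly mechanical majorant computation once these uniformities are in place.
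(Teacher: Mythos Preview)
Your proposal is correct and follows essentially the same two-step strategy as the paper: build a formal power series solution by applying Theorem \ref{thm:Linearisation:GH:G2} at each order (with Proposition \ref{prop:Modified:GH:G2} killing the auxiliary parameters $f_1,f_2,X$), then prove convergence via a Kodaira--Nirenberg--Spencer majorant argument. The only point where the paper is sharper than your sketch is the handling of the first-order term: since $\rho\in C^\infty_{-1}$ does \emph{not} lie in $C^{l,\alpha}_\nu$ for $\nu<-1$, the paper measures $\varphi_1=(0,\theta,\rho)$ in the weaker norm $\|\cdot\|_{C^{1,\alpha}_{-1}}$ and then checks case-by-case the weighted inequality $-i_1+(m-i_1)\nu\leq\nu$ (with $i_1$ the multiplicity of $\varphi_1$ in a monomial of degree $m$), which is exactly where the restriction $\nu>-2$ enters; the paper also uses the specific KNS majorant $A(\epsilon)=\tfrac{b}{16c}\sum_{m\geq 1}\tfrac{c^m}{m^2}\epsilon^m$ rather than a quadratic recursion, but this is a cosmetic difference.
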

Our main existence result Theorem \ref{thm:Main:Theorem:technical} in the Introduction follows immediately from Theorems \ref{thm:GH:G2:linearised:HYM} and \ref{thm:GH:G2:Existence}.

\begin{proof}[Proof of Theorem \ref{thm:Main:Theorem:technical}]
By Lemma \ref{lem:GH:G2} every solution $(h_\epsilon,\theta_\epsilon,\omega_\epsilon,\Omega_\epsilon)$ of the Apostolov--Salamon equations \eqref{eq:GH:G2:collapsing:sequence} on $B$ corresponds to an $\tu{S}^1$--invariant torsion-free {\gtstr} on $M$. The estimates in Theorems \ref{thm:GH:G2:linearised:HYM} and \ref{thm:GH:G2:Existence} guarantee that, with $\nu<-1$,
\[
h_\epsilon = 1 +O(\epsilon^2r^{\nu}), \qquad \Omega_\epsilon = \Omega_0 +O(\epsilon r^{-1}),
\]
and $\theta_\epsilon$ approaches a connection $\theta_\infty$ on $\tu{C}(\Sigma)$ up to terms of order $O(\epsilon r^{\nu})$, where all estimates hold in H\"older spaces. It is then clear that, if $g_\epsilon$ denotes the metric induced by $\varphi_\epsilon$, $(M,g_\epsilon)$ is an ALC manifold and $g_\epsilon$ can be made arbitrarily $C^{l,\alpha}$--close to the Riemannian submersion $g_0 + \epsilon ^2 \theta^2$ as $\epsilon\ra 0$. It is left to prove that the restricted holonomy group of $(M,g_\epsilon)$ is necessarily the whole \gtwo.

First of all note that $M$ has finite fundamental group since $c_1 (M)\neq 0$. Hence without loss of generality we can assume that $M$ is simply connected. Then the holonomy of $(M,g_\epsilon)$ reduces to a strict subgroup of \gtwo~if and only if $(M,g_\epsilon)$ carries non-trivial parallel $1$-forms \cite[Lemma 1]{Bryant:1987}.

Let then $\Gamma$ be a parallel $1$-form on $(M,g_\epsilon)$. We have to show that $\Gamma =0$. An argument exploiting the adiabatic limit seems possible, even though the results of the papers  \cite{Dai,Mazzeo:Melrose,Forman}
cannot be immediately applied to our situation because of the non-compactness of $M$ and $B$. We sketch instead a proof that exploits the ALC structure and holds more generally even when $\epsilon$ is not small. This argument requires the extension of the analytic results of Appendix \ref{Appendix:Analysic:AC} from the AC to the ALC setting. 
We are preparing a paper that develops such a systematic theory in the ALC setting, however the results of Hausel--Hunsicker--Mazzeo \cite{HHM} (which hold more generally for fibred boundary metrics) already suffice for our current purposes.

Now, since $\Gamma$ is parallel it is in particular closed,  coclosed and bounded. By \cite[Proposition 16]{HHM} and Proposition \ref{prop:Harmonic:1Forms:Cone}, outside of a compact set
\[
\Gamma = a\,\theta_\infty +O(r^{\delta})
\]
for some $a\in\R$ and $\delta <0$. Moreover $\delta \leq-4$ if $a=0$. In this case $\Gamma\in L^2$. Now, the $L^2$--cohomology of $M$ is topological \cite[Corollary 1]{HHM}: $L^2\mathcal{H}^1(M)\simeq H^1_c(M)$. We can then use a Gysin sequence to identify $H^1_c(M)$ with $H^1_c(B)\simeq H^1(B)$ and then conclude that $L^2\mathcal{H}^1(M)=0$ using the simply-connectedness of $B$. As a consequence, the map $\Gamma\mapsto a$ is injective and therefore there exists at most one parallel $1$-form $\Gamma$ up to scale. Moreover, since the metric is $\tu{S}^1$--invariant, the $\tu{S}^1$ action must preserve the space of parallel $1$-forms and therefore $\Gamma$ is $\tu{S}^1$--invariant. Hence $\Gamma = u\,\theta + \gamma$ where $u,\gamma$ are pulled-back from $B$, $\gamma =O(r^\delta)$ for some $\delta<0$ and $u$ is a bounded function on $B$ that approaches the constant function $1$ at infinity with rate $\delta$. A straightforward calculation shows that the equation $d\,\Gamma=0$ forces $u\equiv 1$ and $d\gamma = -d\theta$. However this is impossible since $[d\theta]=c_1(M)\neq 0\in H^2(B,\Z)$.
\end{proof}

\begin{remark*}\label{rmk:Symmetries}
Note that in Theorem \ref{thm:Main:Theorem:technical} we start from an AC Calabi--Yau structure $(\omega_0,\Omega_0)$ on $B$ and not only the induced K\"ahler Ricci-flat metric $g_0$. Different choices of {\suthreestr} $(\omega_0,\Omega_0)$ inducing the same metric $g_0$ give rise to different ALC \gtmetric s on $M$ (possibly related by a diffeomorphism). Since we are assuming that $B$ is irreducible, and therefore $(B,g_0)$ has full holonomy $\sunitary{3}$, the space of Calabi--Yau structures on $B$ inducing the same metric $g_0$ is $\tu{S}^1\rtimes \Z_2$, where $\Z_2$ is generated by complex conjugation $(\omega_0,\Omega_0)\mapsto (-\omega_0,\overline{\Omega}_0)$ and $e^{it}\in \tu{S}^1$ corresponds to a change of phase $\Omega_0\mapsto e^{it}\Omega_0$ of the complex volume form. Note that the involution $(\theta,h,\omega,\Omega)\mapsto (-\theta,h,-\omega,\overline{\Omega})$ is a symmetry of the Apostolov--Salamon equations \eqref{eq:GH:G2} and therefore for every $\epsilon$ sufficiently small applying Theorem \ref{thm:Main:Theorem:technical} to $M\to (B,\omega_0,\Omega_0)$ and to its dual $M^\vee\ra (B,-\omega_0,\overline{\Omega}_0)$ yields the same ALC {\gtmetric} $g_\epsilon$ on $M$. Note also that in the examples considered in the next section the AC Calabi--Yau $3$-fold $B$ is a crepant resolution of a Calabi--Yau cone: in this case there is a diffeomorphism of $B$ (generated by the lift of the Reeb vector field of the cone, which has linear growth) that relates Calabi--Yau structures differing by a phase of the complex volume form.   
\end{remark*}

\vbox{The rest of this section contains the proof of Theorem \ref{thm:GH:G2:Existence}. Our strategy is to construct a solution of the Apostolov--Salamon equations as a power series in $\epsilon$ by solving  \eqref{eq:GH:G2:collapsing:sequence} iteratively to all orders in $\epsilon$. We first use Theorem \ref{thm:Linearisation:GH:G2} to construct a formal power series solution and then prove that the series converges in weighted H\"older spaces for $\epsilon$ sufficiently small.}

\subsection{Existence of formal power series solutions}
\label{sec:as:formal}

Fix $l\geq 1$, $\alpha\in (0,1)$ and $\nu\in (-2,-1)$ away from a discrete set of indicial roots. We look for an {\suthreestr} $(\omega_\epsilon,\Omega_\epsilon)$ and a positive function $h_\epsilon$ on $B$ and a connection $\theta_\epsilon$ on $M\ra B$ expressed as power series in $\epsilon$
\begin{subequations}
\begin{equation}\label{eq:Power:Series:Data}
\begin{gathered}
\omega_\epsilon = \omega_0,\qquad \quad \Real\Omega_\epsilon = \Real\Omega_0 + \epsilon\,\rho + \sum_{k\geq 2}{\epsilon^k \rho_k},\\
h_\epsilon = 1 + \sum_{k\geq 2}{\epsilon^k\, h_k},\qquad  \quad \epsilon\,\theta_\epsilon = \epsilon\,\theta + \sum_{k\geq 2}{\epsilon^k\, \gamma_k},
\end{gathered}
\end{equation}
with $\rho_k, h_k, \gamma_k\in C^{l,\alpha}_\nu$ for all $k\geq 2$. Working transversally to gauge transformations we require  further that $d^\ast \gamma_k=0$ and, in view of Proposition \ref{prop:Gauge:fixing:transverse:CY:def}, that $\rho_k = \frac{1}{2}\alpha_{0,k}\Real\Omega_0 + \pi_{12}\,\rho_k$ for some function $\alpha_{0,k}$. It will be convenient to set $h_1 =0$, $\rho_1 = \rho$ and $\gamma_1 = \theta$.

For all $k\geq 2$ there exists a map $Q_k = Q_k (\rho_1, \rho_2,\dots, \rho_{k-1})$ depending real-analytically on its arguments such that
\begin{equation}\label{eq:Power:Series:Dual}
\Imag\Omega_\epsilon = \Imag\Omega_0 + \epsilon\, \hat{\rho} + \sum_{k\geq 2}{\epsilon^k\left( \hat{\rho}_k + Q_k \right)}.
\end{equation}
\end{subequations}
Here $\rho_k\mapsto \hat{\rho}_k$ is the linearisation of Hitchin's duality map for stable $3$-forms on $6$-manifolds. In fact, if $\rho_i$ is given degree $i$ for all $i\geq 1$, then $Q_k$ is a weighted homogeneous polynomial of degree $k$ with values in the space of $3$-forms. We set $Q_1=0$.

In order for $(\omega_\epsilon,\Omega_\epsilon)$ to be an {\suthreestr} we have to impose the algebraic constraints \eqref{eq:SU(3):structure:Constraints}. While the condition $\omega_\epsilon\wedge\Real\Omega_\epsilon=0$ is automatically satisfied with our choice of $\rho_k$, the condition $\frac{1}{4}\Real\Omega_\epsilon\wedge\Imag\Omega_\epsilon = \frac{1}{6}\omega^3_\epsilon$ implies that $\alpha_{0,k} = \alpha_{0,k}(\rho_1, \rho_2,\dots, \rho_{k-1})$ is a degree-$k$ weighted homogeneous polynomial in its arguments. Indeed, imposing $\Real\Omega_\epsilon\wedge\Imag\Omega_\epsilon = \Real\Omega_0\wedge\Imag\Omega_0$ to all orders in $\epsilon$ defines $\alpha_{0,k}$ uniquely as a function of $\rho_1, \rho_2,\dots, \rho_{k-1}$ by
\begin{equation}\label{eq:GH:G2:Power:Series:k:RHS:0}
\alpha_{0,k}\Real\Omega_0 \wedge\Imag\Omega_0 + \Real\Omega_0\wedge Q_k + \sum_{m=1}^{k-1}{\rho_{k-m}\wedge\left( \hat{\rho}_m + Q_m\right)}=0.
\end{equation}
Here we used the fact that $\rho_k\wedge\Imag\Omega_0 + \Real\Omega_0\wedge\hat{\rho}_k=\alpha_{0,k}\Real\Omega_0\wedge\Imag\Omega_0$ with our definition of $\rho_k$.

Finally there are open conditions to be satisfied: $\Real\Omega_\epsilon$ must be a stable $3$-form and $h_\epsilon>0$; equivalently, $\Real\Omega_\epsilon-\Real\Omega_0$ and $h_\epsilon-1$ are sufficiently small in $C^0$--norm. In fact we will need to require that $(\omega_\epsilon,\Omega_\epsilon)$ is close to $(\omega_0,\Omega_0)$ in the weighted $C^1$--norm of Proposition \ref{prop:Modified:GH:G2}. In this subsection we aim to show that we can construct successive polynomial approximations to a solution of the Apostolov--Salamon equations \eqref{eq:GH:G2:collapsing:sequence}. Hence we can always assume that these open conditions are satisfied by taking $\epsilon$ sufficiently small. More precisely, for every $k\geq 1$ there exists $\epsilon_k>0$ such that the degree-$k$ truncations of $\Real\Omega_\epsilon$ and $h_\epsilon$ are close to $\Real\Omega_0$ and $1$ in the relevant norms for all $\epsilon \in (0,\epsilon_k)$. In the next subsection we will show that the sequence $\epsilon_k$ is bounded away from zero. 

Fix $k\geq 2$. Suppose that $(h_i,\gamma_i,\rho_i)\in C^{l,\alpha}_\nu$ have been determined for $i=1,\dots, k-1$ and that the Apostolov--Salamon equations \eqref{eq:GH:G2:collapsing:sequence} are satisfied up to terms of order $O(\epsilon^k)$. We now show that we can choose $(h_k,\gamma_k,\rho_k)$ so that the Apostolov--Salamon equations are satisfied up to terms of order $O(\epsilon^{k+1})$. The vanishing of the degree-$k$ component of the Apostolov--Salamon system is equivalent to the linear system
\begin{equation}\label{eq:GH:G2:Power:Series:k}
\begin{gathered}
d^\ast\gamma_k =0, \qquad d\gamma_k\wedge\omega_0^2=0, \qquad \tfrac{1}{2}dh_k\wedge\omega_0^2-d\gamma_k\wedge\Imag\Omega_0 = \alpha_{1,k},\\
d\rho_k + \tfrac{3}{4}dh_k\wedge\Real\Omega_0 + d\gamma_k\wedge\omega_0 =\alpha_{2,k},\qquad d\hat{\rho}_k+\tfrac{1}{4}dh_k\wedge\Imag\Omega_0=\alpha_{3,k},
\end{gathered}
\end{equation}
where
\begin{equation}\label{eq:GH:G2:Power:Series:k:RHS}
\begin{aligned}
\alpha_{1,k}& = \sum_{m=1}^{k-1}{(\epsilon\, d\theta_\epsilon)_{[m]} \wedge \bigl( h_\epsilon^{\frac{1}{4}}\Imag\Omega_\epsilon\bigr)_{[k-m]}},\\
\alpha_{2,k}&=-d\left( \sum_{m=1}^{k-1}{\bigl( h_\epsilon^{\frac{3}{4}}\bigr)_{[m]}\left( \Real\Omega_\epsilon\right)_{[k-m]}}\right),\\
 \alpha_{3,k}&=-d\left( Q_k + \sum_{m=1}^{k-1}{\bigl( h_\epsilon^{\frac{1}{4}}\bigr)_{[m]}\left( \Imag\Omega_\epsilon\right)_{[k-m]}}\right).
 \end{aligned}
\end{equation}
Here the subscript ${}_{[m]}$ denotes the coefficient of $\epsilon^m$ in the power series expansion of the given quantity. By the inductive hypothesis $\alpha_{1,k}$ is closed and $\alpha_{2,k},\alpha_{3,k}$ are manifestly exact.


Now, Proposition \ref{prop:Modified:GH:G2} can be used order by order in $\epsilon$ to show that we can in fact rewrite \eqref{eq:GH:G2:Power:Series:k} in the form of Theorem \ref{thm:Linearisation:GH:G2} by adding functions $f_1,f_2$ and a vector field $X$ as additional free parameters. More precisely, apply Theorem \ref{thm:Linearisation:GH:G2} to find triples $(h_k,\gamma_k,\rho_k)\in C^{l,\alpha}_\nu$ and $(f_1,f_2,X)\in C^{l+1,\alpha}_{\nu+1}$ satisfying
\begin{equation}\label{eq:GH:G2:Power:Series:k:Primed}
\begin{gathered}
d^\ast\gamma_k =0, \qquad d\gamma_k\wedge\omega_0^2=0, \qquad \tfrac{1}{2}dh_k\wedge\omega_0^2-d\gamma_k\wedge\Imag\Omega_0 = \alpha_{1,k},\\
d\rho_k + \tfrac{3}{4}dh_k\wedge\Real\Omega_0 + d\gamma_k\wedge\omega_0 + d\!\ast\!d(f_1\omega_0) =\alpha_{2,k},\\
d\hat{\rho}_k+\tfrac{1}{4}dh_k\wedge\Imag\Omega_0 + d\!\ast\!d(X\lrcorner\Real\Omega_0 + f_2\omega_0)=\alpha_{3,k}.
\end{gathered}\tag{\ref*{eq:GH:G2:Power:Series:k}${}^\prime$}	
\end{equation}
and all the additional algebraic constraints. Now, if $\epsilon$ is small enough the $3$-form $\Real\Omega'$ obtained by truncating the power series for $\Real\Omega_\epsilon$ in \eqref{eq:Power:Series:Data} to order $k$ is a stable $3$-form with Hitchin dual $\Imag\Omega'$ that coincides modulo $\epsilon^{k+1}$ with the truncation to order $k$ of the power series in \eqref{eq:Power:Series:Dual}. Define in a similar way a function $h'$ and a closed integral $2$-form $\kappa'$ truncating the power series of $h_\epsilon$ and $\epsilon\, d\theta_\epsilon$ in \eqref{eq:Power:Series:Data} to order $k$. Now, since $\alpha_{0,k}, \dots, \alpha_{3,k}$ are given by \eqref{eq:GH:G2:Power:Series:k:RHS:0}, the inductive hypothesis and \eqref{eq:GH:G2:Power:Series:k:Primed} imply that $(\omega_0,\Omega',h',\kappa')$ and $(u,v,X)=(-\epsilon^k\,f_1,-\epsilon^k\,f_2,-\epsilon^k\,X)$ satisfy all the assumptions of Proposition \ref{prop:Modified:GH:G2} (including the algebraic constraints for $(\omega_0,\Omega')$ to be an \sunitary{3}--structure) modulo $\epsilon^{k+1}$. In particular, the torsion constraints of Proposition \ref{prop:Torsion:SU(3):structures} also hold modulo $\epsilon^{k+1}$ and the calculations in the proof of Proposition \ref{prop:Modified:GH:G2} show that necessarily $f_1,f_2,X=0$.

\subsection{Convergence}
\label{sec:ac:convergence}

Given the linear estimate of Theorem \ref{thm:Linearisation:GH:G2}, the proof that the formal power series $\varphi_\epsilon$ constructed in the previous subsection converges for $\epsilon$ small enough follows the arguments of Kodaira--Nirenberg--Spencer in their proof of the existence of analytic deformations of complex structures \cite[\S 5]{Kodaira:Nirenberg:Spencer}, \cf also \cite[\S 5.3.(c)]{Kodaira}. Here we sketch the proof that the power series $\varphi_\epsilon$ converges in $C^{1,\alpha}_\nu$. The only slight complication with respect to Kodaira--Nirenberg--Spencer's argument is that the first-order term of the expansion $(h_1,\gamma_1,\rho_1)\notin C^{1,\alpha}_\nu$ (but any non-linear expression involving these quantities decays at least as fast as $O(r^{-2})$ and therefore does lie in $C^{1,\alpha}_\nu$---this is the reason for choosing $\nu>-2$).

Set $\mathcal{C}=\Omega^0 \times\Omega^1\times\Omega^3_{12}$ and for $k\geq 2$ write $\varphi_k$ for the triple $(h_k,\gamma_k,\rho_k)\in \mathcal{C}$. The iteration procedure described in the previous subsection can be summarised schematically as follows: for each $k\geq 1$ we can iteratively determine $\varphi_{k+1}$ from $\varphi_1,\dots, \varphi_{k}$ in a unique way by solving an equation of the form
\[
\mathcal{L}\left( \varphi_{k+1} \right) = \Theta_{k+1},
\]
where $\mathcal{L}$ is a linear first-order operator and $\Theta_{k+1}=\Theta_{k+1}(\varphi_1,\dots,\varphi_{k})$ is an algebraic expression in $\varphi_1,\dots,\varphi_{k}$. Moreover, by Theorem \ref{thm:Linearisation:GH:G2} there exists a uniform constant $C>0$ such that
\begin{equation}\label{eq:Linearisation:GH:G2:1}
\| \varphi_{k+1}\|_{C^{1,\alpha}_\nu} \leq C \| \Theta_{k+1} \|_{C^{1,\alpha}_\nu}.
\end{equation}

In fact $\Theta_{k+1}(\varphi_1,\dots,\varphi_{k})$ is the coefficient of $\epsilon^{k+1}$ in the Taylor series of $\Theta (\epsilon\,\varphi_1 + \dots +\epsilon^{k}\varphi_{k})$, where $\Theta\co \mathcal{C}\ra\mathcal{C}$ is a pointwise real analytic map vanishing at $0$ together with its derivative ($\Theta$ is a composition of the non-linear part of Hitchin's duality map for stable $3$-forms on $6$-manifolds, maps like $f\mapsto (1+f)^a-1$, $a\in\R$, and wedging together differential forms). We therefore conclude that there exists a power series $\sum_{m\geq 2}{C_m\, x^m}$ with radius of convergence $R>0$ such that
\[
|\Theta (\varphi)| \leq \sum_{m\geq 2}{C_m |\varphi|^m}, \qquad |\nabla\Theta (\varphi)| \leq |\nabla\varphi|\sum_{m\geq 2}{m\, C_m |\varphi|^{m-1}}
\]
for all $\varphi\in\mathcal{C}$ with $|\varphi| + |\nabla\varphi|<R$. Using Theorem \ref{thm:Weighted:Embedding} (v), we can therefore estimate, for all $k\geq 1$,
\begin{equation}\label{eq:Power:Series:Estimate:Theta}
\| \Theta_{k+1}\|_{C^{1,\alpha}_\nu} \leq Q\sum_{m=2}^{k+1}{
C_m\left(
\sum_{I\in \mathcal{I}_{m,k}}{
\|\varphi_1\|_{C^{1,\alpha}_{-1}}^{i_1}\|\varphi_2\|_{C^{1,\alpha}_{\nu}}^{i_2}\dots \|\varphi_{k}\|_{C^{1,\alpha}_{\nu}}^{i_{k}}
}
\right)
}
\end{equation}
for some uniform constant $Q>0$. Here $\mathcal{I}_{m,k}$ is the set of multi-indices $I=(i_1,\dots, i_{k})$ such that $i_1+\dots+i_k=m$ and $i_1 + 2\,i_2 + \dots+ k\,i_k=k+1$. In order to see why \eqref{eq:Power:Series:Estimate:Theta} is true, first note that $\|\varphi_1\|_{C^{1,\alpha}_{-1}}<\infty$ since $|\nabla ^k\varphi_1|=O(r^{-1-k})$ for all $k\geq 0$ by \eqref{eq:Asymptotics:rho:tau}. Moreover, in order to justify the application of Theorem \ref{thm:Weighted:Embedding} (v) we need to check that for every $I\in \mathcal{I}_{m,k}$ we have
\[
-i_1 + (i_2+\dots+i_k)\nu = -i_1 + (m-i_1)\nu\leq \nu.
\]
This is the case, since $-i_1 + (m-i_1)\nu =m\nu\leq 2\nu$ if $i_1=0$, $-i_1 + (m-i_1)\nu=-1 + (m-1) \nu\leq -1+\nu$ if $i_1=1$ and $-i_1 + (m-i_1)\nu\leq -2$ if $i_1\geq 2$. Here the choice $\nu >-2$ is crucial (a weak inequality would suffice, but the strict inequality avoids indicial roots).

For constants $b,c>0$ to be determined later consider the power series
\[
A(\epsilon) = \frac{b}{16 c}\sum_{m=1}^\infty{\frac{c^m}{m^2} \epsilon^m}.
\]
It has the crucial property \cite[Equation (5.116)]{Kodaira}
\begin{equation}\label{eq:A(epsilon):property}
\left( A(\epsilon)^m\right)_{[k]} \leq \left( \frac{b}{c}\right) ^{m-1} A_k
\end{equation}
for all integers $m,k\geq 1$. Here $\left( A(\epsilon)^m\right)_{[k]}$ denotes the coefficient of $\epsilon^k$ in the power series $A(\epsilon)^m$. We now introduce the notation $\|\varphi_1\| = \|\varphi_{1}\|_{C^{1,\alpha}_{-1}}$ and $\|\varphi_k\| = \|\varphi_{k}\|_{C^{1,\alpha}_\nu}$ for all $k\geq 2$. We are going to prove that $\|\varphi_k\|\leq A_k$ by induction on $k\geq 1$. Since $A(\epsilon)$ has a positive radius of convergence this will show that the power series $\varphi(\epsilon) = \sum_{k\geq 2}{\varphi_k \epsilon^k}$ converges in $C^{1,\alpha}_\nu$ for $\epsilon$ small enough.

For the base of the induction, choose $b$ so that
\[
\|\varphi_1\|_{C^{1,\alpha}_{-1}} \leq A_1 = \frac{b}{16}.
\]
For $k\geq 1$, \eqref{eq:Linearisation:GH:G2:1}, \eqref{eq:Power:Series:Estimate:Theta} and the induction hypothesis yield
\[
\|\varphi_{k+1}\| \leq C\|\Theta_{k+1}\|_{C^{1,\alpha}_\nu} \leq CQ \sum_{m=2}^{k+1}{C_m \left( \sum_{I\in\mathcal{I}_{m,k}}{A_1^{i_1}\dots A_k^{i_k}}\right)} = CQ \sum_{m=2}^{k+1}{C_m \left( A(\epsilon)^m\right)_{[k+1]}}.
\]
Property \eqref{eq:A(epsilon):property} therefore implies
\[
\|\varphi_{k+1}\| \leq CQ \left( \sum_{m=2}^{k+1}{C_m \left( \frac{b}{c}\right)^{m-1}}\right) A_{k+1}.
\] 
In order to conclude that $\|\varphi_{k+1}\| \leq A_{k+1}$ we now choose $c$ sufficiently large: the choice of $c$ can be made independent of $k$ since the power series $\sum_{m\geq 2}{C_m x^{m-1}}$ has a positive radius of convergence.

The proof of Theorem \ref{thm:GH:G2:Existence} with $l=1$ is now complete. The case $l>1$ is similar.

\begin{remark*}
In Theorem \ref{thm:GH:G2:Existence} we choose $\nu\in (-2,-1)$ while in Theorem \ref{thm:Linearisation:GH:G2} we are allowed to take $\nu \in (-3-\delta,-1)$ for some $\delta>0$ sufficiently small. In fact, pushing the linear analysis to the case $\nu<-3$, \ie the $L^2$--range, required most of the work in the proof of Proposition \ref{prop:Laplacian:2Forms:AC:CY}. The full strength of Theorem \ref{thm:Linearisation:GH:G2} could be used to derive sharper asymptotics for the metrics produced by Theorem \ref{thm:Main:Theorem:technical}. A finite number of iterations of our recursive method to solve the Apostolov--Salamon equations as a power series in $\epsilon$ makes the torsion of the approximate solution decay faster than $r^{-4}$. Using Theorem \ref{thm:Linearisation:GH:G2} with $\nu \in (-3-\delta,-3)$ then shows that all subsequent corrections to the approximate solution decay faster than $r^{-3}$. 
\end{remark*}

\begin{remark*}
Note that the proof of Theorem \ref{thm:GH:G2:Existence} did not use the fact that $c_1 (M)\neq 0$. If $M=B \times{S}^1$ is endowed with the trivial connection $\theta$ and $\rho\in \Omega^3_{12}$ is a closed and coclosed $3$-form of rate $\nu\leq -1$ then our proof shows that for $\epsilon$ sufficiently small $(\omega_0, \Real\Omega_0 + \epsilon \rho)$ can be perturbed to an AC Calabi--Yau structure.
\end{remark*}

\section{Examples from crepant resolutions of Calabi--Yau cones}
\label{sec:examples}

In this final section we apply Theorem \ref{thm:Main:Theorem:technical} to produce examples of complete ALC \gtmfd s. A rich theory of AC Calabi--Yau manifolds has been developed in recent years thanks to the work of many authors. Theorem \ref{thm:Main:Theorem:technical} allows us to exploit these advances in Calabi--Yau geometry to obtain non-trivial results about \gtmfd s.

\subsection{Crepant resolutions of Calabi--Yau cones}

Let $\tu{C}(\Sigma)$ be a Calabi--Yau cone with conical Calabi--Yau structure $(\omega_\tu{C},\Omega_\tu{C})$. In the wide spectrum of AC Calabi--Yau manifolds asymptotic to $\tu{C}(\Sigma)$, two extreme classes of examples arise from affine smoothings (by \cite[Theorem 3.1]{VanCoevering:Examples} every K\"ahler cone is biholomorphic to an affine variety) and crepant resolutions of $\tu{C}(\Sigma)$. Here a crepant resolution $\pi\co B\ra\tu{C}(\Sigma)$ is a resolution of singularity with trivial canonical bundle; in other words, $B$ is a smooth complex $3$-fold isomorphic to $\tu{C}(\Sigma)$ outside the exceptional locus $\pi^{-1}(o)$, where $o$ is the vertex of $\tu{C}(\Sigma)$, and $\pi^\ast\Omega_{\tu{C}}$ extends to a holomorphic complex volume form $\Omega_0$ on $B$. In view of Theorem \ref{thm:Main:Theorem:technical}, we look for non-trivial circle bundles over AC Calabi--Yau $3$-folds. In general affine smoothings of $3$-dimensional isolated singularities have vanishing second cohomology group. For example, this is always the case for hypersurface singularities by a well-known result of Milnor \cite[Theorem 6.5]{Milnor} and for complete intersections by \cite[Satz 1.7 (iv)]{Hamm}. In this section we therefore consider the class of AC Calabi--Yau manifolds given by crepant resolutions of Calabi--Yau cones.

The following theorem summarises the known existence and uniqueness results for AC Calabi--Yau structures on crepant resolutions of Calabi--Yau cones. We state the result in complex dimension $3$ but an analogous statement holds in every complex dimension.  

\begin{theorem}\label{thm:AC:CY:crepant:resolutions}
Let $\tu{C}(\Sigma)$ be a $3$-dimensional Calabi--Yau cone with Calabi--Yau cone structure $(\omega_{\tu{C}},\Omega_{\tu{C}})$ and metric $g_\tu{C}$. Let $\pi\co B\ra\tu{C}(\Sigma)$ be a crepant resolution with complex volume form $\Omega_0$ extending $\pi^\ast\Omega_{\tu{C}}$. Then in every cohomology class containing K\"ahler metrics there exists a unique AC K\"ahler Ricci-flat metric $\omega_0$ on $B$ with $\frac{1}{6}\omega_0^3 = \frac{1}{4}\Real\Omega_0 \wedge\Imag\Omega_0$. Moreover, $(B,\omega_0,\Omega_0)$ is asymptotic to the Calabi--Yau cone $\tu{C}(\Sigma)$ with rate $-6$ if the K\"ahler class $[\omega_0]$ is compactly supported and with rate $-2$ otherwise. 
\end{theorem}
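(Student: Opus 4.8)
The statement to be proved is Theorem~\ref{thm:AC:CY:crepant:resolutions}, which is essentially a citation-based synthesis of the Calabi--Yau existence and uniqueness theory for crepant resolutions; I will therefore organise the proof as a reduction to the work of Conlon--Hein and van Coevering rather than redo any PDE analysis from scratch. First I would fix notation: let $\pi\co B\ra\tu C(\Sigma)$ be the given crepant resolution, with $\Omega_0$ the holomorphic volume form extending $\pi^\ast\Omega_\tu C$ (its existence being part of the definition of crepant). The key analytic input is \cite[Theorem~2.4]{Conlon:Hein:I} (already invoked in Section~\ref{sec:Gauge:Fixing}), which asserts that on an AC K\"ahler manifold that admits \emph{some} AC K\"ahler metric asymptotic to a given Calabi--Yau cone, every K\"ahler class contains a unique AC Ricci-flat K\"ahler metric $\omega_0$ with the Monge--Amp\`ere normalisation $\tfrac16\omega_0^3=\tfrac14\Real\Omega_0\wedge\Imag\Omega_0$; moreover that theorem records the decay rate of $\omega_0$ to $\omega_\tu C$ in terms of the rate of decay of the comparison K\"ahler metric and of the cohomology class. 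So the first step is simply to verify that $B$ \emph{does} admit a background AC K\"ahler metric asymptotic to $\tu C(\Sigma)$: this follows from the resolution structure, since one can pull back the cone K\"ahler form on the complement of the exceptional set and patch it with a K\"ahler form supported near $\pi^{-1}(o)$ (using that $B$ is K\"ahler, or invoking \cite{VanCoevering:Examples}); the patched form is asymptotic to $\omega_\tu C$ with the appropriate rate.

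The second step is the rate computation. Here one distinguishes the two cases in the statement. If $[\omega_0]$ is compactly supported --- i.e.\ lies in the image of $H^2_c(B)\ra H^2(B)$ --- then the harmonic representative of $[\omega_0]-[\omega_{\mathrm{bg}}]$ decays like $O(r^{-6})$ by the cone analysis (cf.\ Proposition~\ref{prop:Closed:Even:Forms:2} and Theorem~\ref{thm:Closed:2Forms:AC:CY}(i), giving $\mathcal H^2_\nu(B)\simeq H^2_c(B)$ for $\nu<-2$, and in fact forcing decay faster than $r^{-4}$, indeed to $O(r^{-6})$ on the cone since the slowest nonzero indicial contribution would be at rate $-6$), so the Monge--Amp\`ere correction inherits rate $-6$; Conlon--Hein's error estimates then propagate this to $g_0-g_\tu C=O(r^{-6})$. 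If $[\omega_0]$ is not compactly supported, then $[\omega_0]$ has nonzero image in $H^2(\Sigma)$, and the harmonic representative of that image decays only like $O(r^{-2})$ by Theorem~\ref{thm:Closed:2Forms:AC:CY}(ii); hence the Ricci-flat metric is asymptotic to the cone with rate $-2$, again by Conlon--Hein. The third step is uniqueness within a fixed K\"ahler class, which is immediate from the uniqueness clause of \cite[Theorem~2.4]{Conlon:Hein:I} (or, alternatively, from the maximum principle applied to the difference of K\"ahler potentials together with the decay hypotheses).

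\textbf{The main obstacle.} The genuinely delicate point is not the existence theorem, which is quoted, but pinning down the \emph{sharp} decay rate $-2$ versus $-6$ honestly: one must know that the only obstruction to faster decay comes from the restriction $H^2(B)\ra H^2(\Sigma)$, and that when that restriction vanishes on $[\omega_0]$ one really does get rate $-6$ (rather than merely ``faster than $-2$''). This requires the cone computation that there are no homogeneous harmonic $2$-forms on $\tu C(\Sigma)$ of rate in $(-6,-2)\setminus\{-4\}$ together with the absence of an indicial root at $-4$ for closed-and-coclosed $2$-forms --- precisely the content of Propositions~\ref{prop:Closed:Even:Forms} and \ref{prop:Closed:Even:Forms:2} and the discussion around Theorem~\ref{thm:Closed:2Forms:AC:CY}. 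I would cite these results rather than reprove them, but would spell out carefully how they feed into the Conlon--Hein framework to yield the stated rate, since that bookkeeping (matching the Monge--Amp\`ere correction's weight to the cohomological input) is where the one substantive argument lies. Everything else is a matter of assembling standard references.
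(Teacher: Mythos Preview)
The paper does not give a proof of this theorem at all: it is stated as a summary of known results, followed by a paragraph of attributions --- existence to Joyce (ALE case), van Coevering (compactly supported K\"ahler classes) and Goto (general case), and the sharp uniqueness statement to Conlon--Hein \cite[Theorem 3.1]{Conlon:Hein:I}. Your proposal is therefore more detailed than the paper's treatment, which simply cites.

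Two points of comparison. First, your attribution differs: you route everything through \cite[Theorem 2.4]{Conlon:Hein:I} (the reference used in Section~\ref{sec:Gauge:Fixing}), whereas the paper credits the existence separately to Joyce, van Coevering and Goto, and cites a different Conlon--Hein theorem for uniqueness. Second, your attempt to \emph{derive} the decay rates $-2$ versus $-6$ from the paper's own indicial-root analysis of harmonic $2$-forms is not how the cited sources obtain them, and is somewhat imprecise: the rate $-6$ in the compactly supported case comes from the scalar Monge--Amp\`ere analysis (Green's function decay $r^{2-2n}=r^{-4}$ for the potential, hence $r^{-6}$ for $dd^c u$), not from Proposition~\ref{prop:Closed:Even:Forms:2} about closed-and-coclosed $2$-forms on the cone. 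Your harmonic-form bookkeeping would at best show that the \emph{difference of K\"ahler forms} has no homogeneous harmonic term of rate in $(-6,-2)$, but the Ricci-flat metric is not obtained by adding a harmonic $2$-form to a background --- one solves a complex Monge--Amp\`ere equation for a scalar potential, and the rate is governed by that. So your ``main obstacle'' paragraph, while well-motivated, is aimed at the wrong operator. The honest proof here is to cite the references, as the paper does.
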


The theorem is the combination of work of various people. The existence in the special case of ALE manifolds was established by Joyce in \cite{Joyce:ALE:CY}, in the case of compactly supported K\"ahler classes by van Coevering \cite[Theorem 1.2]{VanCoevering:Existence} and in the general case by Goto \cite[Theorem 5.1]{Goto:Crepant}. The optimal uniqueness statement in Theorem \ref{thm:AC:CY:crepant:resolutions} is due to Conlon--Hein \cite[Theorem 3.1]{Conlon:Hein:I}.

Theorem \ref{thm:AC:CY:crepant:resolutions} reduces the problem of constructing AC Calabi--Yau metrics to the construction of Calabi--Yau cones and the classification of their K\"ahler crepant resolutions. For example, the existence of Calabi--Yau cone metrics in the toric setting was completely understood by Futaki--Ono--Wang \cite{Futaki:Ono:Wang} and toric crepant resolutions of a toric singularity correspond to nonsingular subdivisions of the fan of the singular toric variety. Exploiting his result on the existence of AC K\"ahler Ricci-flat metrics with compactly supported K\"ahler classes, van Coevering constructs infinitely many examples of toric AC Calabi--Yau $3$-folds \cite[Theorem 1.3]{VanCoevering:Examples}, in fact at least one such example asymptotic to each Gorenstein toric K\"ahler cone with an isolated singularity.

Existence of Calabi--Yau cone metrics, or equivalently, Sasaki--Einstein manifolds, is a difficult problem. In the regular and quasi-regular cases, the problem is equivalent to the existence of K\"ahler--Einstein (orbifold) metrics with positive scalar curvature. Many examples were obtained exploiting calculations of $\alpha$--invariants, \cf for example \cite{Boyer:Galicki:JDG} and \cite{Boyer:Galicki:Kollar}. The first known examples of irregular Sasaki--Einstein manifolds were constructed by Gauntlett--Martelli--Sparks--Waldram \cite{GMSW} and are completely explicit. We refer to the survey paper \cite{Sparks:SE} and the monograph \cite{Boyer:Galicki}, in particular Chapter 11, for further details on all these constructions.

Very recently, Collins--Sz\'ekelyhidi \cite{Collins:Szekelyhidi:GT} proved that the existence of a Calabi--Yau cone metric on $\tu{C}(\Sigma)$ is implied by K-stability. In the conical setting, K-stability is an algebro-geometric notion for the affine variety $\tu{C}(\Sigma)\subset \C^N$ with an isolated singularity at the origin together with a holomorphic $(\C^\ast)^m$--action generated by a vector field $\xi$ which acts with positive weights on the coordinate functions of $\C^N$. The definition of K-stability involves all possible degenerations of $(\tu{C}(\Sigma),\xi)$ and is therefore difficult to check in practice. However, in cases with a large automorphism group only \emph{equivariant} degenerations need be considered and therefore checking K-stability can be reduced to combinatorial calculations. For example, in the toric case every degeneration must be isomorphic to $\tu{C}(\Sigma)$ itself and the result of \cite{Futaki:Ono:Wang} is recovered in this way. Collins--Sz\'ekelyhidi also consider certain new examples with a complexity-one torus action, \ie $n$-dimensional examples acted upon by an $(n-1)$--torus. In the next subsection we use these new examples of $3$-dimensional Calabi--Yau cones to construct infinitely many ALC \gtmfd s. 

\subsection{Examples from small resolutions}

Given a Calabi--Yau cone $(\tu{C}(\Sigma),\omega_\tu{C},\Omega_\tu{C})$, let $(B,\omega_0,\Omega_0)$ be a crepant resolution of $\tu{C}(\Sigma)$ endowed with an AC Calabi--Yau structure as in Theorem \ref{thm:AC:CY:crepant:resolutions}. Let $M\ra B$ be a non-trivial circle bundle. In general it is not straightforward to check condition \eqref{eq:sigma_tau:orthogonality:1}, \ie $c_1 (M)\cup [\omega_0]=0\in H^4(B)$. In this subsection we consider examples where \eqref{eq:sigma_tau:orthogonality:1} is automatically satisfied because $H^4(B)=0$. By \cite[Theorem 5.2]{Caibar} this is the case if and only if $B\ra \tu{C}(\Sigma)$ is a \emph{small} resolution, \ie the exceptional set of the resolution has no divisorial component.
 
\begin{example}\label{eg:Small:res:conifold}
The conifold $\tu{C}(\Sigma) = \{ z_1^2+\dots +z_4^2=0\}\subset \C^4$ admits an explicit Calabi--Yau cone metric: the corresponding Sasaki--Einstein $5$-manifold $\Sigma$ is $S^2\times S^3 = \sunitary{2}\times \sunitary{2}/\triangle \unitary{1}$ endowed with a homogeneous Sasaki--Einstein metric. The conifold admits two small resolutions related by a flop and both isomorphic to the rank $2$ complex vector bundle $\mathcal{O}(-1)\oplus \mathcal{O}(-1)\ra \CP^1$. Let $B$ be one such small resolution. Note that $H^2(B)\simeq H^2(\Sigma)$ is $1$-dimensional and $H^4(B)=0$. Candelas--de la Ossa \cite{Candelas:delaOssa} constructed an explicit cohomogeneity one AC Calabi--Yau structure $(\omega_0,\Omega_0)$ on $B$, unique up to scale. Hence Theorem \ref{thm:Main:Theorem:technical} yields the existence of a $1$-parameter family of ALC \gtmetric s up to scale (and orientation) on the unique simply connected circle bundle $M\ra B$. Because of the uniqueness modulo diffeomorphisms of our construction, the ALC \gtmetric s produced by Theorem \ref{thm:Main:Theorem:technical} in this case must in fact admit an isometric cohomogeneity one action of $\sunitary{2}\times\sunitary{2}\times\unitary{1}$ and therefore they must belong to the $\mathbb{D}_7$ family of cohomogeneity one ALC \gtmfd s studied numerically by Cveti\v{c}--Gibbons--L\"u--Pope \cite{CGLP:M:Conifolds}.
\end{example}

Let $\tu{C}(\Sigma)$ be a $3$-dimensional Calabi--Yau  cone admitting a crepant resolution. By \cite[Proposition 3.5]{VanCoevering:Examples}, $\tu{C}(\Sigma)$ is then a Gorenstein canonical singularity. If $\tu{C}(\Sigma)$ admits a \emph{small} resolution then $\tu{C}(\Sigma)$ must be a terminal singularity and therefore \cite[Main Theorem (I)]{Reid:Pagoda} a so-called \emph{compound Du Val} singularity: $\tu{C}(\Sigma)$ is a $3$-fold hypersurface singularity of the form $\{ f(x,y,z)+tg(x,y,z,t)=0\} \subset \C^4$, where $\{ f(x,y,z)=0\}\subset\C^3$ defines a Du Val singularity. Equivalently, the generic hyperplane section of $\tu{C}(\Sigma)$ is a Du Val singularity. If this Du Val singularity is of type $W_n=A_n$, $D_n$ or $E_6, E_7$ or $E_8$ we say that $\tu{C}(\Sigma)$ is a compound $W_n$ singularity, or $cW_n$ singularity for short. In \cite[\S 8]{Pinkham} Pinkham observes that it is possible to construct small resolutions of compound Du Val singularities as deformations of partial resolutions of Du Val singularities. This approach is pursued by Katz \cite{Katz}, who gives a complete classification of the $cA_n$ and $cD_4$ singularities that admit small resolutions. In the $cA_n$ case partial results were in fact known to various other authors, \cf for example Friedman \cite[\S 2]{Friedman} and, in very special cases, Brieskorn \cite[Satz 0.2]{Brieskorn}.

\begin{theorem}\label{thm:cAp}
For $p\geq 1$ consider the compound $A_p$ singularity $X_p\subset \C^4$ defined by the equation
\begin{equation}\label{eq:cAp}
x^2 + y^2 + z^{p+1} - w^{p+1}=0.
\end{equation}
\begin{enumerate}
\item $X_p$ admits a Calabi--Yau cone metric with Reeb vector field $\xi$ acting on $\C^4$ with weights
\[
\tfrac{3}{4}\left( p+1, p+1, 2, 2\right).
\]
Thus $X_p = \tu{C}(\Sigma)$ is the Calabi--Yau cone over a quasi-regular Sasaki--Einstein structure on $\Sigma \simeq \#_{p} \left( S^2\times S^3\right)$.
\item $X_p$ admits a K\"ahler small resolution $B$. The exceptional set is a chain of $p$ rational curves meeting transversely. Moreover, $B$ is simply connected and its Betti numbers are
\[
b_0 (B)=1, \qquad b_1 (B)=b_3(B)=b_4(B)=b_5(B)=b_6 (B)=0, \qquad b_2 (B) = p.
\]
In particular $B$ admits a $p$-parameter family of Calabi--Yau structures $(\omega_0,\Omega_0)$ asymptotic to the Calabi--Yau cone $X_p = \tu{C}(\Sigma)$ with rate $-2$.
\end{enumerate}
\proof
The existence of a Calabi--Yau cone metric on $X_p$ with the given Reeb vector field follows from \cite[Theorem 7.1 (I)]{Collins:Szekelyhidi:GT}: Collins-Sz\'ekelyhidi consider more generally Brieskorn--Pham singularities of the form
\[
x^2 + y^2 + z^h - w^k=0
\]
for integers $2\leq h \leq k$. They show that, with the most obvious choice of Reeb vector field \cite[\S 3]{GMSY}, this Brieskorn--Pham singularity is K-stable (and therefore admits a Calabi--Yau metric by their existence result \cite[Theorem 1.1]{Collins:Szekelyhidi:GT}) if and only if $k < 2h$. In fact, the necessity of this condition follows from the Lichnerowicz obstruction of Gauntlett--Martelli--Sparks--Yau \cite[\S 3.2]{GMSY}. However, the existence of a Calabi--Yau cone metric on these singularities was previously known only when $h=k=2$ (the conifold) and $h=2, k=3$ (thanks to work of Li--Sun \cite[\S 5.2]{Li:Sun} on K\"ahler--Einstein metrics with cone singularities along divisors). The fact that the link $\Sigma$ of the singularity $X_p$ is diffeomorphic to $\#_{p} \left( S^2\times S^3\right)$ is well known, \cf \cite[Table B.4.4]{Boyer:Galicki}.

Brieskorn \cite[Satz 0.2]{Brieskorn} has shown that a singularity of the form $x^2 + y^2 + z^{h} - w^{k}=0$, $2\leq h\leq k$, has a small resolution if and only if $k$ is a multiple of $h$. More generally, Katz \cite[Theorem 1.1]{Katz} has shown that a $cA_p$ singularity admits a small resolution if and only if the singularity is given by $x^2 + y^2 + g(z,w)=0$ with $g(z,w)=0$ an isolated plane curve singularity with exactly $p+1$ smooth branches. Note that $z^{p+1} - w^{p+1} = \prod_{j=0}^{p}(z-\zeta^{j}w)$, where $\zeta = e^{\frac{2i\pi}{p+1}}$. A small resolution $B$ of $X_p$ is then obtained by blowing-up $X_p$ along $p$ planes of the form $x+iy=0=z-\zeta^j w$ (or $x-iy=0=z-\zeta^j w$). This shows that $B$ is K\"ahler since it can be realised as the complement of a divisor in a blow-up of a weighted projective space. 

Katz has shown that the exceptional locus of the resolution $B\ra X_p$ is a chain of $p$ rational curves meeting transversely \cite[Theorem 1.1]{Katz}. The topology of $B$ can be determined using \cite[Theorems 5.2 and 7.2]{Caibar} or more directly exploiting the fact that the $\C^\ast$--action on $X_p$ induces a retraction of $B$ onto the exceptional set of the resolution.

Let $\Omega_0$ be the holomorphic complex volume form on any such small resolution $B$ obtained by pulling-back the conical complex volume form $\Omega_\tu{C}$ on $X_p$. By Theorem \ref{thm:AC:CY:crepant:resolutions}, $B$ admits a family of AC Calabi--Yau structures $(\omega_0,\Omega_0)$ parametrised by the space of classes in $H^2(B)$ containing a K\"ahler metric. In Section \ref{sec:Gauge:Fixing} we have already noted that the ``K\"ahler cone'' of an AC Calabi--Yau $3$-fold is an open subset of the second cohomology. Since $H^2_c (B)\simeq H_4 (B)=0$, the K\"ahler class of an AC Calabi--Yau structure $(\omega_0,\Omega_0)$ on any small resolution is never compactly supported and therefore 
by the final statement in Theorem \ref{thm:AC:CY:crepant:resolutions},
$(\omega_0,\Omega_0)$ decays to the conical Calabi--Yau structure on $X_p=\tu{C}(\Sigma)$ with rate $-2$.
\endproof
\end{theorem}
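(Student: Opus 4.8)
The plan is to treat the two assertions separately: part (i) will be an application of the K-stability theory of Collins--Sz\'ekelyhidi, while part (ii) will combine the classification of small resolutions of compound Du Val singularities with the general existence and uniqueness theorem for AC Calabi--Yau metrics, Theorem \ref{thm:AC:CY:crepant:resolutions}.

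For part (i), first I would note that $X_p=\{x^2+y^2+z^{p+1}-w^{p+1}=0\}\subset\C^4$ is a weighted-homogeneous affine hypersurface with an isolated singularity at the origin: if $\C^\ast$ acts on $(x,y,z,w)$ with weights $(p+1,p+1,2,2)$ then the defining polynomial is homogeneous of degree $2(p+1)$, so the Poincar\'e residue $\Omega_{\tu{C}}=\operatorname{Res}\frac{dx\wedge dy\wedge dz\wedge dw}{x^2+y^2+z^{p+1}-w^{p+1}}$ is a nowhere-vanishing holomorphic $3$-form, homogeneous of weight $(p+1)+(p+1)+2+2-2(p+1)=4$. Rescaling the generator of the $\C^\ast$-action so that $\Omega_{\tu{C}}$ has weight $3$ --- the normalisation appropriate for a $3$-dimensional Calabi--Yau cone --- yields exactly the candidate Reeb field $\xi=\tfrac{3}{4}(p+1,p+1,2,2)$. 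I would then quote \cite[Theorem 7.1 (I)]{Collins:Szekelyhidi:GT}: for the Brieskorn--Pham singularity $x^2+y^2+z^h-w^k=0$ with $2\le h\le k$ and the natural Reeb field of \cite[\S 3]{GMSY}, K-stability holds --- hence a Ricci-flat K\"ahler cone metric exists, by the existence theorem \cite[Theorem 1.1]{Collins:Szekelyhidi:GT} --- if and only if $k<2h$, necessity being the Lichnerowicz obstruction \cite[\S 3.2]{GMSY}. As $h=k=p+1$ here, $k<2h$ is automatic. Finally I would record that the link $\Sigma$ of $X_p$ is diffeomorphic to $\#_p(S^2\times S^3)$, which is standard for this family of Brieskorn links, citing \cite[Table B.4.4]{Boyer:Galicki}.

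For part (ii), I would invoke Katz's classification \cite[Theorem 1.1]{Katz}: a $cA_n$ singularity $x^2+y^2+g(z,w)=0$ admits a (necessarily K\"ahler) small resolution precisely when $g(z,w)=0$ defines an isolated plane-curve singularity with $n+1$ smooth branches. Since $z^{p+1}-w^{p+1}=\prod_{j=0}^{p}(z-\zeta^{j}w)$ with $\zeta=e^{2\pi i/(p+1)}$ is a product of $p+1$ distinct lines through the origin, $X_p$ carries a small resolution $\pi\co B\to X_p$; concretely $B$ is obtained by blowing $X_p$ up along $p$ of the planes $\{x+iy=0,\ z-\zeta^{j}w=0\}$, and since $B$ can be realised as the complement of a divisor in a blow-up of a weighted projective space it is K\"ahler. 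Katz also identifies the exceptional locus $\pi^{-1}(0)$ as a chain of $p$ transverse copies of $\PP^1$. For the Betti numbers I would use the residual $\C^\ast$-action, which lifts to $B$ and deformation-retracts $B$ onto $\pi^{-1}(0)$; a chain of $p$ transverse $\PP^1$'s is homotopy equivalent to a wedge of $p$ $2$-spheres, so $\pi_1(B)=0$, $b_2(B)=p$, and $b_k(B)=0$ for $k\ge 3$ (one can also cite \cite[Theorems 5.2 and 7.2]{Caibar}). Pulling back $\Omega_{\tu{C}}$ gives a holomorphic volume form $\Omega_0$ on $B$, and Theorem \ref{thm:AC:CY:crepant:resolutions} then furnishes, for each class in the K\"ahler cone --- a nonempty open subset of $H^2(B)\cong\R^p$ by the discussion in Section \ref{sec:Gauge:Fixing} --- a unique AC Calabi--Yau structure $(\omega_0,\Omega_0)$ asymptotic to $X_p$, hence a $p$-parameter family; and since $H^2_c(B)\cong H_4(B)=0$ no K\"ahler class is compactly supported, so the final clause of Theorem \ref{thm:AC:CY:crepant:resolutions} forces the decay rate to be $-2$.

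I expect the main obstacle to be algebro-geometric bookkeeping rather than any analytic estimate, since all the hard analysis --- existence of the Ricci-flat cone metric and existence/uniqueness of the AC Calabi--Yau metrics --- is imported wholesale. The two genuinely self-contained points are (a) extracting the correct Reeb weights from the weight of the residue $3$-form, and (b) determining the homotopy type of the small resolution $B$; of these, (b) demands the most care, as it rests on an accurate grasp of the structure of Katz's small resolution and of the retraction of $B$ onto its exceptional chain. Once Katz's description of $\pi^{-1}(0)$ is granted (with Caibar's computations available as a cross-check), step (b) becomes routine.
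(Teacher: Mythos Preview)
Your proposal is correct and follows essentially the same approach as the paper's own proof: both parts reduce to citing Collins--Sz\'ekelyhidi for the cone metric, Katz (and the explicit factorisation of $z^{p+1}-w^{p+1}$) for the small resolution and its exceptional chain, the $\C^\ast$-retraction (or Caibar) for the topology, and Theorem~\ref{thm:AC:CY:crepant:resolutions} together with $H^2_c(B)=0$ for the family of AC Calabi--Yau metrics with rate $-2$. Your explicit derivation of the Reeb weights from the weight of the residue form and the identification of the exceptional chain with a wedge of $2$-spheres are minor elaborations on points the paper leaves implicit, not a different route.
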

 
\begin{corollary}\label{cor:cAp:ALC:G2}
Let $B$ be a small resolution of the compound Du Val singularity \eqref{eq:cAp} for $p\geq 2$. Let $M\ra B$ be a principal circle bundle. By passing to a finite cover we can assume that $c_1 (M)$ is a primitive element in the lattice $H^2(B;\Z)$ so that $M$ is simply connected.

Then $M$ carries a $p$-dimensional family of complete ALC \gtmetric s up to scale. In particular, there exist families of ALC \gtmfd s of arbitrarily high dimension. Moreover, for $p,p'\geq 2$ with $p\neq p'$ the ALC \gtmfd s $M$ and $M'$ constructed in this way are not diffeomorphic. In particular, there exists infinitely many diffeomorphism types of simply connected complete non-compact \gtmfd s.
\proof
The existence of complete ALC \gtmetric s follows by applying Theorem \ref{thm:Main:Theorem:technical} to the AC Calabi--Yau manifolds produced by Theorem \ref{thm:cAp} (ii). The fact that $H^4(B)=0$ implies that the constraint $c_1 (M)\cup [\omega_0]=0\in H^4(B)$ is trivially satisfied.

Since $H^3(B)=0=H^4(B)$, the Gysin sequence for the circle fibration $M\ra B$ shows that integration along the circle fibres yields an isomorphism between $H^3(M)$ and $H^2(B)$. Since the image of $[\varphi_\epsilon]$ in $H^2(B)$ is $\epsilon [\omega_0]$ we see that ALC \gtmetric s arising in Theorem \ref{thm:Main:Theorem:technical} from AC Calabi--Yau structures on $B$ with different K\"ahler classes cannot be equivalent under a diffeomorphism isotopic to the identity.

In order to show that different choices of $p, p'$ give rise to non-diffeomorphic $7$-manifolds, we use the computation of the Betti numbers of $B$ in Theorem \ref{thm:cAp} (ii) together with the Gysin sequence of the circle fibration $M\ra B$. If $M$ is a non-trivial circle bundle over a small resolution of the $cA_p$ singularity $X_p$ then $b_0 (M)=1$
\[
b_2 (M) = p-1, \qquad b_3 (M)=p,
\]
and all other Betti numbers vanish.
\endproof
\end{corollary}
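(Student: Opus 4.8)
The plan is to feed the AC Calabi--Yau $3$-folds produced by Theorem~\ref{thm:cAp}(ii) into the general construction of Theorem~\ref{thm:Main:Theorem:technical}, and then to read off the topology of the total space $M$ from the Gysin sequence of the circle fibration $M\to B$.

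\emph{Step 1: the family of metrics.} By Theorem~\ref{thm:cAp}(ii) a small resolution $B$ of the $cA_p$ singularity \eqref{eq:cAp} carries a $p$-parameter family of AC Calabi--Yau structures $(\omega_0,\Omega_0)$, one for each K\"ahler class in the open K\"ahler cone inside $H^2(B)\cong\R^p$, each asymptotic to $X_p=\tu{C}(\Sigma)$ with rate $-2$. Since $H^4(B)=0$, the topological hypothesis $c_1(M)\cup[\omega_0]=0\in H^4(B)$ of Theorem~\ref{thm:Main:Theorem:technical} holds automatically, and $c_1(M)\neq 0$ because $M$ is nontrivial; hence for every such $(\omega_0,\Omega_0)$ and every sufficiently small $\epsilon>0$ one obtains a complete ALC \gtmetric{} on $M$. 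This is a priori a $(p+1)$-parameter family, which descends to a $p$-parameter family after quotienting by the scaling $g\mapsto c^2 g$; since $p$ is unbounded this already yields families of ALC \gtmfd s of arbitrarily high dimension.

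\emph{Step 2: genuine $p$-dimensionality up to scale.} To see that the K\"ahler class is a true modulus and not a redundant parametrisation, I would use the de Rham class $[\varphi_\epsilon]\in H^3(M)$. The Gysin sequence
\[
H^{k-2}(B)\xrightarrow{\cup c_1(M)}H^k(B)\to H^k(M)\xrightarrow{\pi_!}H^{k-1}(B)\xrightarrow{\cup c_1(M)}H^{k+1}(B)
\]
together with $H^3(B)=0$ and $H^4(B)=0$ shows that fibre integration $\pi_!$ is an isomorphism $H^3(M)\xrightarrow{\sim}H^2(B)$; and since the only summand of $\varphi_\epsilon=\epsilon\,\theta\wedge\omega_0+\Real\Omega_0+\epsilon\rho$ with nonzero fibre integral is $\epsilon\,\theta\wedge\omega_0$, the class $[\varphi_\epsilon]$ is sent to $\epsilon[\omega_0]$. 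As the fundamental $3$-form is a diffeomorphism invariant of the \gtstr{}, two of our metrics whose K\"ahler classes span different rays in $H^2(B)$ cannot be related by a diffeomorphism isotopic to the identity. (By the Remark following Theorem~\ref{thm:GH:G2:Existence}, changing the phase of $\Omega_0$ yields a diffeomorphic metric, so the K\"ahler class really is the relevant parameter.) Thus the construction genuinely produces a $p$-dimensional family up to scale.

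\emph{Step 3: infinitely many diffeomorphism types, and the main point.} Finally I would compute the Betti numbers of $M$ from those of $B$, namely $b_0(B)=1$, $b_2(B)=p$ and all others zero (Theorem~\ref{thm:cAp}(ii)), using the same Gysin sequence. After passing to a finite cover, $c_1(M)$ is primitive in the torsion-free lattice $H^2(B;\Z)$, so $\cup c_1(M)\colon H^0(B)\to H^2(B)$ is injective and $M$ is simply connected; the sequence then gives $b_0(M)=1$, $b_2(M)=p-1$, $b_3(M)=p$, and all remaining Betti numbers zero. For $p\neq p'$ the manifolds $M$ and $M'$ therefore have different $b_2$ (hence also $b_3$) and are not homotopy equivalent, a fortiori not diffeomorphic, producing infinitely many diffeomorphism types of simply connected complete non-compact \gtmfd s. With Theorems~\ref{thm:cAp} and~\ref{thm:Main:Theorem:technical} in hand, everything here is bookkeeping with the Gysin sequence; the only genuinely substantive point is Step~2, namely ruling out that the $p$-parameter family of K\"ahler classes collapses under diffeomorphisms of $M$, and this is settled completely by the identification of $\pi_![\varphi_\epsilon]$ with $\epsilon[\omega_0]$.
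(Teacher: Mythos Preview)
Your proof is correct and follows essentially the same route as the paper: apply Theorem~\ref{thm:Main:Theorem:technical} using $H^4(B)=0$, identify $\pi_![\varphi_\epsilon]=\epsilon[\omega_0]$ via the Gysin sequence (using $H^3(B)=H^4(B)=0$) to separate K\"ahler classes, and compute the Betti numbers of $M$ from those of $B$ via Gysin to distinguish different $p$. Your write-up is in fact slightly more explicit than the paper's in justifying the simple-connectedness of $M$ and the identification of $\pi_![\varphi_\epsilon]$.
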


\begin{remark*}\label{rmk:Topology:end:ALC:small:res}
While it might be possible to determine fully the diffeomorphism type of $M$, here we content ourselves with the determination of the diffeomorphism type of its end. The end of $M$ is diffeomorphic to $(R,\infty)\times N$, where $N$ is the total space of a non-trivial circle bundle over $\Sigma \simeq \#_{p} \left( S^2\times S^3\right)$. The Gysin sequence for this fibration shows that $b_1(N)=0$, $b_2 (N)=p-1$ and $b_3(N)=2p$. In order to determine fully the diffeomorphism class of $N$ we make the additional assumption that the image of $c_1 (M)$ in $\text{im}\,H^2 (B,\Z)\ra H^2(\Sigma,\Z)$ is primitive (an assumption that can always be achieved by passing to a finite cover). Then $N$ has no torsion in cohomology. Moreover, since $N$ is a hypersurface in a {\gtmfd} it is spin. Hence we can use the classification of such manifolds given by Wall \cite[Theorem 5]{Wall}: the diffeomorphism class of $N$ is determined by the symmetric trilinear map $\mu\co H^2(N,\Z)\times H^2(N,\Z) \times H^2(N,\Z) \ra \Z$ given by cup product and the first Pontrjagin class, which can be regarded as a linear map $p_1\co H^2(N,\Z) \ra \Z$ via cup product. Now, the Gysin sequence for the circle bundle $\pi\co N\ra \Sigma$ shows that $H^2(N)\simeq \pi^\ast H^2(\Sigma)$, hence $\mu=0$ since $\Sigma$ is $5$-dimensional. On the other hand, since $N$ is a principal circle bundle over $\Sigma$ and therefore the vertical tangent bundle of $\pi$ is trivial, we have $TN = T\Sigma \oplus \underline{\R}$. Hence $p_1(N)=\pi^\ast p_1 (\Sigma)=0$ since $H^4(\Sigma)=0$. Wall's result then implies that $N$ is diffeomorphic to the connected sum of $p-1$ copies of $S^2\times S^4$ and $p$ copies of $S^3\times S^3$.
\end{remark*}

\subsection{Other examples}

Let $(B,\omega_0,\Omega_0)$ be an AC Calabi--Yau structure on a crepant resolution of a Calabi--Yau cone $\tu{C}(\Sigma)$ and assume that $H^4(B)\neq 0$. Set $d=\tu{dim}\,H^2_{c}(B)$ and $b=\tu{dim}\,H^2(\Sigma)$. As pointed out in \cite[Equation (60)]{VanCoevering:Examples}, we have an exact sequence $0\ra H^2_c(B)\ra H^2(B)\ra H^2(\Sigma)\ra 0$ and therefore, given a choice of bases, the topological condition $c_1(M)\cup [\omega_0]=0$ in Theorem \ref{thm:Main:Theorem:technical} is a system of $d$ linear equations in $d+b$ variables and we are interested in its integer solutions. Note that by Poincar\'e duality the coefficients of the system are determined by triple intersections of divisors on $B$ (compact and non-compact ones, with at least one compact divisor in each triple). Toric examples show that the system $\kappa\cup [\omega_0]=0\in H^4(B)$ can be highly degenerate, but we expect that Theorem \ref{thm:Main:Theorem:technical} can be applied to a wide class of crepant resolutions of Calabi--Yau cones yielding a plethora of new complete \gtmetric s, many of which are potentially defined on the same underlying smooth $7$-manifold. In fact, an extension of the construction of this paper to the case where $B$ is an AC Calabi--Yau \emph{orbifold} yields infinitely many distinct families of ALC \gtmetric s on a manifold as simple as $S^3\times \R^4$ \cite[Theorem 4.12]{Foscolo:ALC:Spin7}.

We conclude the paper by considering a concrete example of an AC Calabi--Yau $B$ with $b_4(B)=1$. Set $D=\CP^1\times\CP^1$ and consider the canonical line bundle $B=K_D$. Blowing down the zero-section exhibits $B$ as a crepant resolution of the Calabi--Yau cone $\tu{C}(\Sigma)$ over a free $\Z_2$--quotient $\Sigma$ of the homogeneous Sasaki--Einstein structure on $S^2\times S^3$. By Theorem \ref{thm:AC:CY:crepant:resolutions} the complex manifold $B$ carries a $2$-parameter family of AC Calabi--Yau structures. In fact these metrics are all invariant under a cohomogeneity one action of $\sunitary{2}\times\sunitary{2}$ and 
have been determined explicitly \cite[\S C]{Zayas:et:al}.

Now, $H^2(B)\simeq H^2(D)$ is generated over the integers by the classes $[\omega_1]$ and $[\omega_2]$ of the Fubini--Study metrics of the two $\CP^1$--factors in $D$. We therefore denote cohomology classes on $B$ by pairs of real numbers. A cohomology class $(a_1,a_2)$ is a K\"ahler class if and only if $a_1, a_2 >0$. If $c_1 (M) = (\alpha_1,\alpha_2)\in \Z^2$ 
and $[\omega_0]=(a_1,a_2)$ then $c_1(M)\cup [\omega_0]=0$ if and only if
\[
a_1 \alpha_2 + a_2\alpha_1 =0.
\]
Therefore Theorem \ref{thm:Main:Theorem:technical} yields the following result.

\begin{theorem}\label{thm:P1xP1}
Given a pair of coprime integers $m,n$ consider the simply connected $7$-manifold $M$ which is the total space of the principal circle bundle over $K_{\CP^1\times\CP^1}$ with first Chern class $(m,-n)$. Then provided $mn> 0$, for every $a\in \R_+$ the $7$-manifold $M$ carries a $1$-parameter family $\{ g_\epsilon\}_{0<\epsilon<\epsilon_0}$ of ALC \gtmetric s that collapses to $K_{\CP^1\times\CP^1}$ endowed with the AC Calabi--Yau metric with K\"ahler class $a(|m|,|n|)$ as $\epsilon\ra 0$.
\end{theorem}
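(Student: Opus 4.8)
The plan is to present $B=K_{\CP^1\times\CP^1}$, equipped with a suitable AC Calabi--Yau structure, together with the circle bundle $M\to B$ with $c_1(M)=(m,-n)$, as an instance of the hypotheses of Theorem \ref{thm:Main:Theorem:technical}, and then to invoke that theorem directly. As recalled in the text, $B=K_D$ with $D=\CP^1\times\CP^1$ is a crepant resolution of the Calabi--Yau cone $\tu{C}(\Sigma)$ obtained by contracting the zero section, where $\Sigma$ is the unit circle bundle of $K_D$ over $D$, equivalently the free $\Z_2$-quotient of the regular homogeneous Sasaki--Einstein manifold $T^{1,1}\cong S^2\times S^3$. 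In particular $\Sigma$ is a \emph{smooth} Sasaki--Einstein $5$-manifold whose universal cover $S^2\times S^3$ is not the round $S^5$, so our standing assumption holds and $B$ is irreducible; and $B$ deformation retracts onto $D$, hence is simply connected. Theorem \ref{thm:AC:CY:crepant:resolutions} then supplies, for every K\"ahler class on $B$, a unique AC Calabi--Yau structure $(\omega_0,\Omega_0)$ asymptotic to $\tu{C}(\Sigma)$ with some negative rate (rate $-2$ unless the class is compactly supported, in which case rate $-6$); the precise rate is immaterial, both being admissible inputs for Theorem \ref{thm:Main:Theorem:technical}.

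Next I would verify the topological constraint \eqref{eq:sigma_tau:orthogonality:1}. The retraction $B\simeq D$ induces a ring isomorphism $H^\ast(B;\Z)\cong H^\ast(\CP^1\times\CP^1;\Z)$; writing $[\omega_1],[\omega_2]$ for the generators of $H^2$, one has $[\omega_1]^2=[\omega_2]^2=0$ and $H^4(B;\R)=\R\,[\omega_1]\!\cup\![\omega_2]$ (consistent with $b_4(B)=1$). Hence for $c_1(M)=\alpha_1[\omega_1]+\alpha_2[\omega_2]$ and a K\"ahler class $[\omega_0]=a_1[\omega_1]+a_2[\omega_2]$ (so $a_1,a_2>0$, by Theorem \ref{thm:AC:CY:crepant:resolutions}),
\[
c_1(M)\cup[\omega_0]=\left(\alpha_1 a_2+\alpha_2 a_1\right)\,[\omega_1]\!\cup\![\omega_2]\in H^4(B).
\]
With $(\alpha_1,\alpha_2)=(m,-n)$ this vanishes exactly when $m a_2=n a_1$; since $mn>0$, the solutions with $a_1,a_2>0$ are precisely the classes $[\omega_0]=a\,(|m|,|n|)$ with $a\in\R_+$, which is exactly the collapsed limit claimed in the statement. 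Moreover $c_1(M)=(m,-n)\neq 0$ because $\gcd(m,n)=1$, and the same coprimality makes $(m,-n)$ primitive in $H^2(B;\Z)\cong\Z^2$, so the homotopy exact sequence of $\tu{S}^1\to M\to B$ together with $\pi_1(B)=1$ gives $\pi_1(M)=1$, matching the hypothesis of the theorem.

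Finally I would conclude by applying Theorem \ref{thm:Main:Theorem:technical} to $(B,\omega_0,\Omega_0)$ with $[\omega_0]=a(|m|,|n|)$ and to the bundle $M\to B$: this produces, for every $\epsilon\in(0,\epsilon_0)$, an $\tu{S}^1$-invariant torsion-free \gtstr~$\varphi_\epsilon$ on $M$ with $\tu{Hol}^0(g_\epsilon)=\gtwo$, whose induced metric $g_\epsilon$ is ALC and collapses as $\epsilon\to 0$ to $(B,g_0)=(K_{\CP^1\times\CP^1},g_0)$ with the stated K\"ahler class; varying $\epsilon$ yields the asserted $1$-parameter family. I do not expect a genuine obstacle: the substantive analysis is internal to Theorems \ref{thm:Main:Theorem:technical} and \ref{thm:AC:CY:crepant:resolutions}, and the lone non-formal step is the intersection computation above, the only slightly delicate point there being the correct identification of the cup-product structure on $H^\ast(B)$ and of which K\"ahler classes are compactly supported. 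As an optional addendum, one may note --- exactly as in Example \ref{eg:Small:res:conifold} for the conifold --- that since our construction is canonical up to diffeomorphism and all AC Calabi--Yau metrics on $K_{\CP^1\times\CP^1}$ are invariant under a cohomogeneity-one $\sunitary{2}\times\sunitary{2}$ action, and since $c_1(M)$ is fixed by this action, the resulting metrics $g_\epsilon$ are $\sunitary{2}\times\sunitary{2}\times\tu{S}^1$-invariant of cohomogeneity one and should be identified with the $\mathbb{C}_7$ family of the physics literature; this plays no role in the proof of the stated theorem.
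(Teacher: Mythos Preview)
Your proposal is correct and follows essentially the same approach as the paper: the paper derives Theorem \ref{thm:P1xP1} directly from Theorem \ref{thm:Main:Theorem:technical} after identifying $B=K_{\CP^1\times\CP^1}$ as a crepant resolution carrying AC Calabi--Yau metrics via Theorem \ref{thm:AC:CY:crepant:resolutions}, and reducing the constraint $c_1(M)\cup[\omega_0]=0$ to the linear equation $a_1\alpha_2+a_2\alpha_1=0$ in $H^4(B)\cong\R$. You supply slightly more detail than the paper (explicitly verifying the standing assumption on $\Sigma$, the simple connectivity of $M$ via the homotopy exact sequence, and the cohomogeneity-one observation), but the substance is identical.
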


Note that scaling acts simultaneously on $\epsilon$ and $a$ and therefore Theorem \ref{thm:P1xP1} yields a $1$-parameter family of ALC \gtmetric s up to scale on $M$. By uniqueness in our construction the metrics $g_\epsilon$ must in fact be invariant under the cohomogeneity one action of $\sunitary{2}\times\sunitary{2}\times\unitary{1}$ on $M$. In \cite{FHN:Coho1:ALC} we use cohomogeneity one methods to study these families away from the highly collapsed regime of Theorem \ref{thm:P1xP1}. Based on numerical experiments, Cveti\v{c}--Gibbons--L\"u--Pope \cite[\S 4]{CGLP:C7:tilde} argue for the existence of a continuous $2$-parameter family up to scale of cohomogeneity one ALC \gtmetric s collapsing to $K_{\CP^1\times\CP^1}$. This contradicts Theorem \ref{thm:P1xP1}, which only guarantees the existence of countably many $1$-parameter families up to scale of such metrics. However, we note that the authors of \cite{CGLP:C7:tilde} are not very careful about the finite group stabiliser of the principal orbits: the fact that this is non-trivial seems to force some of the parameters determining initial conditions for their numerical solutions to vanish.   

\appendix

\section{Homogeneous harmonic forms on Riemannian cones}
\label{Appendix:homogenous:harmonic}

In this first appendix we study harmonic forms on a Riemannian cone. We give general statements that work in every dimension. These results were first obtained by Cheeger \cite{Cheeger:PNAS,Cheeger:cones, Cheeger:unpublished} as degenerate limiting cases of his more general analysis of eigenforms of the Laplacian on Riemannian cones. For completeness we give a direct proof of the results about harmonic forms that we use in the paper.

Let $\tu{C}=\tu{C}(\Sigma)$ be an $n$-dimensional Riemannian cone over a smooth compact Riemannian $(n-1)$-manifold $\Sigma$. The following lemma is the result of a straightforward computation.

\begin{lemma}\label{lem:log:homogeneous:harmonic}
Let $\gamma = r^{\lambda+k}\left( \frac{dr}{r}\wedge\alpha + \beta\right)$ be a $k$-form on $\tu{C}$ homogeneous of order $\lambda$. For every function $u=u(r)$ we have $\triangle (u\gamma) = r^{\lambda+k-2}\left( \tfrac{dr}{r}\wedge A + B\right)$, where
\[
\begin{gathered}
A = u\Big( \triangle\alpha - (\lambda+k-2)(\lambda+n-k)\alpha -2d^\ast\beta \Big) -r\dot{u}\left( 2\lambda +n-1\right) \alpha - r^2 \ddot{u}\,\alpha,\\
B = u\Big( \triangle\beta - (\lambda+n-k-2)(\lambda+k)\beta -2d\alpha \Big) -r\dot{u}\left( 2\lambda +n-1\right) \beta - r^2 \ddot{u}\,\beta.
\end{gathered}
\]
\end{lemma}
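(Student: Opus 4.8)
The statement is a direct computation, so the plan is simply to compute $\triangle(u\gamma)$ for $\gamma = r^{\lambda+k}\bigl(\tfrac{dr}{r}\wedge\alpha + \beta\bigr)$ with $\alpha\in\Omega^{k-1}(\Sigma)$, $\beta\in\Omega^k(\Sigma)$ and organize the result. First I would record the behaviour of $d$, $d^\ast$ and $\triangle=dd^\ast+d^\ast d$ on homogeneous forms on the cone $\tu{C}=\tu{C}(\Sigma)$ in the standard separation-of-variables form. Writing a general $k$-form as $r^{a}\tfrac{dr}{r}\wedge\alpha + r^{b}\beta$, one has the well-known formulas expressing $d$ on $\tu{C}$ in terms of $d_\Sigma$ and $\partial_r$, and similarly for $d^\ast$ using $g_\tu{C}=dr^2+r^2g_\Sigma$ (so that the codifferential on $\Sigma$-forms picks up the weight $r^{-2}$ together with lower-order terms in $r\partial_r$ coming from the conical warping). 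Composing these gives the radial ODE part plus the $\triangle_\Sigma$ part.

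The cleanest route is to first treat the case $u\equiv 1$, i.e. to verify that on a form homogeneous of order $\lambda$,
\[
\triangle\bigl(r^{\lambda+k}(\tfrac{dr}{r}\wedge\alpha+\beta)\bigr) = r^{\lambda+k-2}\Bigl(\tfrac{dr}{r}\wedge\bigl(\triangle_\Sigma\alpha - (\lambda+k-2)(\lambda+n-k)\alpha - 2d_\Sigma^\ast\beta\bigr) + \bigl(\triangle_\Sigma\beta - (\lambda+n-k-2)(\lambda+k)\beta - 2d_\Sigma\alpha\bigr)\Bigr),
\]
which is the $u\equiv1$ specialization of the claim; this is a bookkeeping exercise in the cone Laplacian on forms (it appears in Cheeger's work and in the references cited, and matches Theorem~\ref{thm:Harmonic:forms:cone:n} in the companion appendix). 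The coupling terms $-2d_\Sigma^\ast\beta$ and $-2d_\Sigma\alpha$ come precisely from the cross terms in $dd^\ast$ and $d^\ast d$ that mix the $\tfrac{dr}{r}\wedge\alpha$ and $\beta$ pieces; the scalar coefficients are the eigenvalues of the radial Euler-type operator $-(r\partial_r)^2 - (n-1)(r\partial_r) + (\text{degree corrections})$ evaluated on $r^{\lambda+k-2}$ and $r^{\lambda+k}$ respectively. Getting the two quadratic coefficients and the sign of the coupling terms exactly right is the only place care is needed.

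Once the $u\equiv1$ case is in hand, the general case follows by the Leibniz rule for $\triangle$ applied to the product of the radial function $u=u(r)$ and the homogeneous form $\gamma$. Concretely, $\triangle(u\gamma) = (\triangle u)\gamma + u\,\triangle\gamma - 2\langle\nabla u,\nabla\gamma\rangle$, where on $\tu{C}$ the function $u(r)$ has $\triangle u = -\ddot u - \tfrac{n-1}{r}\dot u$ and $\nabla u = \dot u\,\partial_r$; the term $-2\langle\nabla u,\nabla\gamma\rangle$ produces exactly the $-r\dot u(2\lambda+n-1)$ contribution after one uses that contracting $\nabla_{\partial_r}$ against $\gamma = r^{\lambda+k}(\tfrac{dr}{r}\wedge\alpha+\beta)$ pulls down the homogeneity exponents, and regrouping $(\triangle u)\gamma$ with the radial pieces of $u\triangle\gamma$ gives the $-\ddot u$ and the remaining $\tfrac1r\dot u$ terms, whence $-r^2\ddot u\,\alpha$, $-r^2\ddot u\,\beta$ and the stated $\dot u$ coefficients. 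I expect the only real obstacle to be the careful separation of the $\tfrac{dr}{r}$-component from the tangential component throughout — in particular checking that the factor $(2\lambda+n-1)$ (rather than something that depends on $k$) multiplies $\dot u$ in \emph{both} $A$ and $B$, which is a consistency check that the cross-terms and the degree shifts have been accounted for correctly. Everything else is routine, and I would present it as a one-paragraph computation referring to the cone Hodge-Laplacian formulas.
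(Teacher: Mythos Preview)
Your proposal is correct and matches the paper, which gives no details beyond calling the lemma ``the result of a straightforward computation.'' Your organization---first the $u\equiv 1$ case, then the Leibniz-type identity $\triangle(u\gamma)=(\triangle u)\gamma+u\,\triangle\gamma-2\nabla_{\operatorname{grad}u}\gamma$ (valid because the Weitzenb\"ock curvature term is zeroth order and commutes with multiplication by $u$), together with the observation that $\nabla_{\partial_r}\gamma=\lambda r^{-1}\gamma$ on a form homogeneous of order $\lambda$---is a clean and efficient way to carry this out.
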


\begin{theorem}\label{thm:Harmonic:forms:cone:n}
Let $\gamma = r^{\lambda+k}\left( \frac{dr}{r}\wedge\alpha + \beta\right)$ be a harmonic $k$-form on $\tu{C}$ homogeneous of order $\lambda$. Then $\gamma$ decomposes into the sum of homogeneous harmonic forms $\gamma = \gamma_1 + \gamma_2 + \gamma_3 + \gamma_4$ where $\gamma_i = r^{\lambda+k}\left( \frac{dr}{r}\wedge\alpha_i + \beta_i\right)$ satisfies the following conditions.
\begin{enumerate}
\item $\beta_1=0$ and $\alpha_1$ satisfies $d\alpha_1=0$ and $\triangle\alpha_1 = (\lambda+k-2)(\lambda+n-k)\alpha_1$.
\item $(\alpha_2,\beta_2) \in\Omega^{k-1}_{coexact}\times\Omega^k_{exact}$ satisfies the first-order system
\[
d\alpha_2 = (\lambda+k)\beta_2, \qquad d^\ast\beta_2 = (\lambda+n-k)\alpha_2.
\]
In particular, if $(\alpha_2,\beta_2)\neq 0$ then $\lambda+k\neq 0\neq \lambda+n-k$ and the pair $(\alpha_2,\beta_2)$ is uniquely determined by either of the two factors, which is a coexact/exact eigenform of the Laplacian with eigenvalue $(\lambda+k)(\lambda+n-k)$.
\item $(\alpha_3,\beta_3) \in\Omega^{k-1}_{coexact}\times\Omega^k_{exact}$ satisfies the first-order system
\[
d\alpha_3 + (\lambda+n-k-2)\beta_3=0=d^\ast\beta_3 + (\lambda+k-2)\alpha_3.
\]
In particular, if $(\alpha_3,\beta_3)\neq 0$ then $\lambda+k-2\neq 0\neq \lambda+n-k-2$ and the pair $(\alpha_3,\beta_3)$ is uniquely determined by either of the two factors, which is a coexact/exact eigenform of the Laplacian with eigenvalue $(\lambda+k-2)(\lambda+n-k-2)$.
\item $\alpha_4=0$ and $\beta_4$ satisfies $d^\ast \beta_4=0$ and $\triangle\beta_4 = (\lambda+n-k-2)(\lambda+k)\beta_4$.
\end{enumerate}
The decomposition $\gamma = \gamma_1 + \gamma_2 + \gamma_3 + \gamma_4$ is unique, except when $\lambda = -\frac{n-2}{2}$; in that case forms of type \textup{(ii)} and \textup{(iii)} coincide, and there is a unique decomposition $\gamma = \gamma_1 + \gamma_2 + \gamma_4$.
\proof
By Lemma \ref{lem:log:homogeneous:harmonic}, $\gamma = r^{\lambda+k}\left( \frac{dr}{r}\wedge\alpha + \beta\right)$ is harmonic if and only if
\begin{equation}\label{eq:Harmonic:k:form}
\begin{cases}
\triangle \alpha = (\lambda+k-2)(\lambda+n-k)\alpha + 2d^\ast\beta, & \\
\triangle\beta = (\lambda+n-k-2)(\lambda+k)\beta + 2d\alpha.
\end{cases}
\end{equation}

By Hodge theory on the compact manifold $\Sigma$ write $\alpha=\alpha_c + \alpha_{ce}$ with $\alpha_c$ closed and $\alpha_{ce}$ coexact and $\beta = \beta_{cc}+\beta_e$ with $\beta_{cc}$ coclosed and $\beta_{e}$ exact. Then $\alpha_c,\beta_{cc}$ satisfy decoupled equations
\begin{align*}
\triangle \alpha_{c} &= (\lambda+k-2)(\lambda+n-k)\alpha_c, \\
\triangle\beta_{cc} &= (\lambda+n-k-2)(\lambda+k)\beta_{cc}.
\end{align*}
Hence for any such $\alpha_c$ and $\beta_{cc}$ the pairs $(\alpha_c,0)$ and $(0,\beta_{cc})$ are solutions to \eqref{eq:Harmonic:k:form} and give rise to harmonic forms of type (i) and (iv) respectively.

In the rest of the proof we can therefore assume that $\alpha=\alpha_{ce}$ and $\beta=\beta_e$. Note that \eqref{eq:Harmonic:k:form} can be rewritten as
\begin{equation}\label{eq:Harmonic:k:form:1}
\begin{cases}
\triangle \alpha - (\lambda+k)(\lambda+n-k)\alpha = 2\Big( d^\ast\beta - (\lambda+n-k)\alpha\Big), & \\
\triangle\beta - (\lambda+n-k)(\lambda+k)\beta = 2\Big( d\alpha - (\lambda+k)\beta\Big).
\end{cases}
\end{equation}
Moreover, a straightforward computation using $d^\ast\alpha=0=d\beta$, \eqref{eq:Harmonic:k:form} and the equations obtained by taking $d$ and $d^\ast$ of the first and second equation in \eqref{eq:Harmonic:k:form}, respectively, shows that $d^\ast\beta - (\lambda+n-k)\alpha$ and $d\alpha - (\lambda+k)\beta$ are eigenforms of the Laplacian with eigenvalue $(\lambda+k-2)(\lambda+n-k-2)$.

We now solve \eqref{eq:Harmonic:k:form:1} using the $L^2$--orthogonal decompositions of $\Omega^{k-1}_{coexact}$ and $\Omega^{k}_{exact}$ into eigenspaces for the Laplacian: we write $(\alpha,\beta) = \sum_{\mu}{(\alpha_\mu,\beta_\mu)}$ where $\alpha_\mu$ and $\beta_\mu$ are eigenforms for the Laplacian with eigenvalue $\mu$. Since $d$ and $d^\ast$ commute with the Laplacian, note that $d^\ast\beta_\mu$ and $d\alpha_\mu$ are eigenforms for the Laplacian with eigenvalue $\mu$. Since eigenspaces corresponding to different eigenvalues are $L^2$--orthogonal, $(\alpha_\mu,\beta_\mu)$ must be an independent solution to \eqref{eq:Harmonic:k:form:1} for every eigenvalue $\mu$. On the other hand, since we know that the right-hand side of \eqref{eq:Harmonic:k:form:1} lies in the eigenspace of the Laplacian with eigenvalue $(\lambda+k-2)(\lambda+n-k-2)$ we conclude that $(\alpha_\mu,\beta_\mu)=0$ unless $\mu = (\lambda+k)(\lambda+n-k)$ or $\mu=(\lambda+k-2)(\lambda+n-k-2)$. The two cases correspond to harmonic forms $\gamma_2$ and $\gamma_3$ of type (ii) and (iii) respectively.

The modified statement in the special case $\lambda=-\frac{n-2}{2}$ is clear since then $\lambda+k=-(\lambda +n-k-2)$ and $\lambda+n-k=-(\lambda+k-2)$.  
\endproof
\end{theorem}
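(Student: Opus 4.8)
The plan is to reduce harmonicity on the cone to a coupled elliptic system on the link $\Sigma$ and then diagonalise that system. First I would apply Lemma~\ref{lem:log:homogeneous:harmonic} with $u\equiv 1$: the form $\gamma = r^{\lambda+k}\bigl(\tfrac{dr}{r}\wedge\alpha + \beta\bigr)$ is harmonic precisely when the pair $(\alpha,\beta)\in\Omega^{k-1}(\Sigma)\times\Omega^k(\Sigma)$ solves $\triangle\alpha = (\lambda+k-2)(\lambda+n-k)\alpha + 2d^\ast\beta$ together with $\triangle\beta = (\lambda+n-k-2)(\lambda+k)\beta + 2d\alpha$. The coupling between $\alpha$ and $\beta$ occurs only through $d^\ast\beta$ and $d\alpha$, so I would split using Hodge theory on the compact manifold $\Sigma$: $\alpha = \alpha_c + \alpha_{ce}$ (closed plus coexact) and $\beta = \beta_{cc} + \beta_e$ (coclosed plus exact). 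Since $\triangle$ preserves Hodge type and coexact forms are orthogonal to closed forms while exact forms are orthogonal to coclosed forms, projecting the two equations shows that the closed part $\alpha_c$ and the coclosed part $\beta_{cc}$ satisfy the decoupled single-operator eigenvalue equations $\triangle\alpha_c = (\lambda+k-2)(\lambda+n-k)\alpha_c$ and $\triangle\beta_{cc} = (\lambda+n-k-2)(\lambda+k)\beta_{cc}$; these produce the pieces $\gamma_1$ of type (i) (note $d\alpha_c=0$) and $\gamma_4$ of type (iv) (note $d^\ast\beta_{cc}=0$). It then remains to treat the genuinely coupled case, where $\alpha$ is coexact and $\beta$ is exact.

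The crucial step — and the one I expect to be the main obstacle, in that it requires spotting the right algebraic combination rather than any hard analysis — is the following. Rewrite the coupled equations so that the right-hand sides become $2\bigl(d^\ast\beta - (\lambda+n-k)\alpha\bigr)$ and $2\bigl(d\alpha - (\lambda+k)\beta\bigr)$, and set $p=\lambda+k$, $q=\lambda+n-k$. Then a short computation shows that $A:=d^\ast\beta - q\alpha$ and $C:=d\alpha - p\beta$ are eigenforms of $\triangle_\Sigma$ with eigenvalue $(p-2)(q-2)$: differentiate the two equations by $d^\ast$ and by $d$ respectively, use $d^\ast\alpha = 0 = d\beta$ to replace $d^\ast d\alpha$ by $\triangle\alpha$ and $dd^\ast\beta$ by $\triangle\beta$, substitute back, and watch the coefficients collapse (the identity $(p-2)(q-2)-pq=-2(p+q-2)$ does the bookkeeping). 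This fact is exactly what forces a homogeneous harmonic form on the cone to be assembled from only two angular eigenvalues.

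With this in hand I would expand $(\alpha,\beta)$ in the $L^2$-orthogonal eigenspace decomposition of $\triangle_\Sigma$ on coexact $(k-1)$-forms and exact $k$-forms. Since $d$ and $d^\ast$ commute with $\triangle_\Sigma$, each eigencomponent $(\alpha_\mu,\beta_\mu)$ solves the coupled system independently; but by the previous step the right-hand side lies entirely in the $(p-2)(q-2)$-eigenspace, so for $\mu\neq (p-2)(q-2)$ the right-hand side vanishes, giving $d^\ast\beta_\mu = q\alpha_\mu$, $d\alpha_\mu = p\beta_\mu$ and $(\mu-pq)\alpha_\mu = 0 = (\mu-pq)\beta_\mu$. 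Hence either $\mu = pq$ or the component is zero. For $\mu = pq$ the coupling terms vanish outright and one obtains exactly the first-order system of type (ii); for $\mu = (p-2)(q-2)$, inserting $(p-2)(q-2)-pq = -2(p+q-2)$ back into the equations yields $d^\ast\beta_3 + (\lambda+k-2)\alpha_3 = 0$ and $d\alpha_3 + (\lambda+n-k-2)\beta_3 = 0$, which is type (iii). The auxiliary claims — that $\lambda+k$ and $\lambda+n-k$ (resp.\ their shifts by $2$) are nonzero whenever the pair is nonzero, and that the pair is determined by either of its factors as a coexact/exact eigenform — are then immediate from these first-order systems.

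Finally, for uniqueness: the Hodge splittings of $\alpha$ and of $\beta$, and the splitting of the coupled part into the $pq$-eigenspace and the $(p-2)(q-2)$-eigenspace, are all canonical, so $\gamma = \gamma_1 + \gamma_2 + \gamma_3 + \gamma_4$ is unique provided $pq\neq (p-2)(q-2)$, i.e.\ $p+q\neq 2$, i.e.\ $2\lambda+n\neq 2$. When $\lambda = -\tfrac{n-2}{2}$ one has $q = 2-p$, the type (ii) and type (iii) first-order systems literally coincide, the two pieces merge into a single $\gamma_2$, and one gets the stated unique three-term decomposition $\gamma = \gamma_1 + \gamma_2 + \gamma_4$.
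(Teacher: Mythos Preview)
Your proof is correct and follows essentially the same route as the paper: reduce via Lemma~\ref{lem:log:homogeneous:harmonic} to the coupled system on $\Sigma$, peel off types (i) and (iv) by the Hodge decomposition, observe that $d^\ast\beta - q\alpha$ and $d\alpha - p\beta$ are $(p-2)(q-2)$-eigenforms, and then use the eigenspace decomposition to isolate the two remaining eigenvalues $pq$ and $(p-2)(q-2)$. Your treatment is in fact slightly more explicit than the paper's in deriving the type (iii) first-order system from the identity $(p-2)(q-2)-pq=-2(p+q-2)$, but the underlying argument is identical.
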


\begin{remark}\label{rmk:Harmonic:forms:cone:n}
A straightforward computation shows that $\gamma$ is closed if and only if $d\alpha - (\lambda+k)\beta=0=d\beta$ and coclosed if and only if $d^\ast\beta - (\lambda+n-k)\alpha=0=d^\ast\alpha$. We conclude that, when $\gamma_i\neq 0$,
\begin{enumerate}
\item $\gamma_1$ is always closed, and coclosed if and only if $\lambda+n-k=0$ (in which case $\alpha_1$ is harmonic);
\item $\gamma_2$ is always closed and coclosed (in fact exact and coexact);
\item $\gamma_3$ is closed and coclosed when $\lambda=-\frac{n-2}{2}$, \ie the degenerate case that forms of type (ii) and (iii) coincide, and neither closed nor coclosed otherwise;
\item $\gamma_4$ is always coclosed, and closed if and only if $\lambda+k=0$ (in which case $\beta_4$ is harmonic).
\end{enumerate}
Moreover, $\gamma$ is (co)closed if and only if each component $\gamma_i$ is.
\end{remark}

We also need to consider harmonic $k$-forms on $\tu{C}$ that are expressed as polynomials in $\log{r}$ with coefficients in the space of $k$-forms on the cone homogeneous of order $\lambda$. The structure of the logarithmic terms in the following lemma is consistent with those that appear in Cheeger's discussion of harmonic forms on cones \cite[Equation (2.17)$^-$]{Cheeger:unpublished}.

\begin{prop}\label{prop:log:homogeneous:harmonic}
Let $\gamma = \sum_{j=0}^m{\gamma_j\, (\log{r})^j}$ be a polynomial in $\log{r}$ with coefficients in the space of $k$-forms on $\tu{C}$ homogeneous of order $\lambda$. If $\triangle\gamma =0$ then either $m=0$ (and $\triangle\gamma_0=0$) or $m=1$, $\lambda = -\frac{n-2}{2}$ and $\gamma =\gamma_1\, \log{r} + \gamma_0$ with $\triangle\gamma_1=0=\triangle\gamma_0$.
\proof
Let $\mathcal{L}\co \Omega^{k-1}(\Sigma)\oplus\Omega^k(\Sigma)\ra \Omega^{k-1}(\Sigma)\oplus\Omega^k(\Sigma)$ be the linear elliptic operator defined by
\[
\mathcal{L}(\alpha, \beta) = \Big( \triangle\alpha - (\lambda+k-2)(\lambda+n-k)\alpha -2d^\ast\beta, \triangle\beta - (\lambda+n-k-2)(\lambda+k)\beta -2d\alpha \Big).
\]
Note that $\mathcal{L}$ is self-adjoint.

For all $j=0,\dots,m$ write $\gamma_j = r^{\lambda+k}\left( \frac{dr}{r}\wedge\alpha_j + \beta_j\right)$. By abuse of notation we use the same symbol $\gamma_j$ to denote the pair $(\alpha_j,\beta_j)\in \Omega^{k-1}(\Sigma)\oplus\Omega^k(\Sigma)$. We set $\gamma_j =0$ if $j>m$ or $j<0$.

Now, using Lemma \ref{lem:log:homogeneous:harmonic} with $u=(\log{r})^j$ it is not difficult to see that $\triangle\gamma=0$ if and only if
\begin{equation}\label{eq:log:homogeneous:harmonic}
\mathcal{L}(\gamma_j) = (j+1)(2\lambda +n-2)\,\gamma_{j+1} + (j+1)(j+2)\,\gamma_{j+2}
\end{equation}
for all $j=0,\dots, m$. The choice $j=m$ in this equation shows that $\gamma_m\neq 0$ lies in the kernel of $\mathcal{L}$.

Assume that $m\geq 1$ and consider the equation \eqref{eq:log:homogeneous:harmonic} with $j=m-1$:
\[
\mathcal{L}(\gamma_{m-1}) = m (2\lambda +n-2)\,\gamma_m.
\]
Since $\mathcal{L}$ is self-adjoint, the right-hand-side (that is simultaneously in the image and the kernel of $\mathcal{L}$) must vanish and therefore $2\lambda +n-2=0$. Assume this is the case and that $m\geq 2$. Then \eqref{eq:log:homogeneous:harmonic} with $j=m-2$ is
\[
\mathcal{L}(\gamma_{m-2}) = m(m-1)\,\gamma_m
\]
and therefore leads to a contradiction.
\endproof 
\end{prop}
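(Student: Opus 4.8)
\emph{Proof proposal.} The plan is to turn the hypothesis $\triangle\gamma=0$ into a finite recursion among the coefficient forms $\gamma_0,\dots,\gamma_m$ and then use the self-adjointness of the relevant operator on the cross-section $\Sigma$ to force the logarithmic tail to collapse. First I would apply Lemma~\ref{lem:log:homogeneous:harmonic} with $u=(\log r)^j$, using $r\dot u = j(\log r)^{j-1}$ and $r^2\ddot u = j(j-1)(\log r)^{j-2}-j(\log r)^{j-1}$; this shows that each $\triangle\big((\log r)^j\gamma_j\big)$ is again a polynomial in $\log r$ of degree $j$ whose top coefficient is $\mathcal{L}(\gamma_j)$ and whose lower coefficients involve only $\gamma_j$, where (identifying $\gamma_j$ with its pair of cross-sectional forms $(\alpha_j,\beta_j)\in\Omega^{k-1}(\Sigma)\oplus\Omega^k(\Sigma)$) $\mathcal{L}$ is the self-adjoint operator
\[
\mathcal{L}(\alpha,\beta)=\big(\triangle\alpha-(\lambda+k-2)(\lambda+n-k)\alpha-2d^\ast\beta,\ \triangle\beta-(\lambda+n-k-2)(\lambda+k)\beta-2d\alpha\big),
\]
whose kernel consists precisely of the homogeneous harmonic $k$-forms of order $\lambda$ (apply Lemma~\ref{lem:log:homogeneous:harmonic} with $u\equiv 1$). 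Collecting the coefficient of each power $(\log r)^i$ in $\triangle\gamma$ and adopting the convention $\gamma_j=0$ for $j\notin\{0,\dots,m\}$, one gets, for $i=0,\dots,m$,
\[
\mathcal{L}(\gamma_i)=(i+1)(2\lambda+n-2)\,\gamma_{i+1}+(i+1)(i+2)\,\gamma_{i+2}.
\]

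Next I would run this recursion downward from $i=m$. Since $\gamma_{m+1}=\gamma_{m+2}=0$, the top equation gives $\mathcal{L}(\gamma_m)=0$, so $\gamma_m$ is a nonzero homogeneous harmonic form. If $m\geq 1$, the equation for $i=m-1$ reads $\mathcal{L}(\gamma_{m-1})=m(2\lambda+n-2)\gamma_m$; pairing with $\gamma_m$ in $L^2(\Sigma)$ and using that $\mathcal{L}$ is self-adjoint and annihilates $\gamma_m$ yields $m(2\lambda+n-2)\|\gamma_m\|^2_{L^2}=0$, hence $\lambda=-\tfrac{n-2}{2}$. With $2\lambda+n-2=0$ in hand the recursion reduces to $\mathcal{L}(\gamma_i)=(i+1)(i+2)\gamma_{i+2}$; if moreover $m\geq 2$, the equation for $i=m-2$ gives $\mathcal{L}(\gamma_{m-2})=m(m-1)\gamma_m$, and the same orthogonality argument forces $m(m-1)=0$, contradicting $m\geq 2$. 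Therefore $m\leq 1$.

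Finally I would record the two surviving cases: if $m=0$ the single equation is $\mathcal{L}(\gamma_0)=0$, i.e. $\triangle\gamma_0=0$; if $m=1$ then $\lambda=-\tfrac{n-2}{2}$, and since $2\lambda+n-2=0$ and $\gamma_2=0$ both remaining equations reduce to $\mathcal{L}(\gamma_j)=0$, giving $\triangle\gamma_1=0=\triangle\gamma_0$. I do not expect any genuine analytic difficulty here: the main thing to get right is the bookkeeping in deriving the recursion (the signs and the combinatorial factors coming from differentiating $(\log r)^j$ twice), and then being careful to apply the self-adjointness and $L^2$-orthogonality step to the correct equation at each stage once the constraint $2\lambda+n-2=0$ has been imposed.
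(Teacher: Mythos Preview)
Your proposal is correct and follows essentially the same approach as the paper: you derive the same recursion $\mathcal{L}(\gamma_i)=(i+1)(2\lambda+n-2)\gamma_{i+1}+(i+1)(i+2)\gamma_{i+2}$ via Lemma~\ref{lem:log:homogeneous:harmonic}, and then use the self-adjointness of $\mathcal{L}$ together with $\gamma_m\in\ker\mathcal{L}$ at levels $i=m-1$ and $i=m-2$ to force first $2\lambda+n-2=0$ and then $m\leq 1$. The only difference is that you spell out the $L^2$-pairing with $\gamma_m$ explicitly, where the paper simply says the equation is ``consistent if and only if''; this is the same argument in slightly different words.
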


\begin{prop}\label{cor:log:homogeneous:harmonic}
Let $\gamma = \sum_{j=0}^m{\gamma_j(\log{r})^j}$ be a polynomial in $\log{r}$ with coefficients in the space of differential forms on $\tu{C}$ homogeneous of order $\lambda$. If $(d+d^\ast)\gamma =0$ then $m=0$.
\proof
Since each pure-degree component of $\gamma$ is harmonic, Proposition \ref{prop:log:homogeneous:harmonic} and a straightforward computation imply that either $m=0$ or $m=1$, $\lambda = -\frac{n-2}{2}$ and $\gamma = \gamma_1 \log{r} + \gamma_0$ with harmonic $\gamma_0,\gamma_1$ satisfying the first-order system $(d+d^\ast)\gamma_1=0=(d+d^\ast)\gamma_0 + \frac{1}{r}dr\wedge\gamma_1 -\frac{1}{r}\partial_r\lrcorner\gamma_1$.

Now, since $\gamma_0,\gamma_1$ are homogeneous of order $\lambda$, we can think of them as elements of $\Omega^\ast (\Sigma)\otimes \R^2$. Under this identification $d+d^\ast$ and $\frac{1}{r}dr\wedge\,\cdot\, -\frac{1}{r}\partial_r\lrcorner\,\cdot\,$ are operators $D$ and $S$ from $\Omega^\ast (\Sigma)\otimes \R^2$ into itself with the property that $S^{-1}D$ is self-adjoint. Indeed, one can write
\[
S^{-1}D\, (\alpha, \beta) = (d+d^\ast) (\beta,\alpha) + \Phi (\alpha,\beta)
\]
where $\Phi$ is a constant multiple of the identity on each subspace $\Lambda^k(T^\ast\Sigma)\oplus\{0\}$ and $\{0\}\oplus \Lambda^k(T^\ast\Sigma)$. Here $d+d^\ast$ on the right-hand side is an operator on $\Sigma$. Since $\gamma_1$ is both in the image and kernel of $S^{-1}D$ we conclude that it must vanish.
\endproof
\end{prop}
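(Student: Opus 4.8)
The plan is to reduce the statement to the analogue for the Hodge Laplacian, already established in Proposition \ref{prop:log:homogeneous:harmonic}, and then eliminate the single borderline case that survives by an algebraic argument on the cross-section $\Sigma$.

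First I would note that $(d+d^\ast)\gamma=0$ forces $\triangle\gamma=(d+d^\ast)^2\gamma=0$. Since the Hodge Laplacian preserves the degree of a form, each pure-degree component of $\gamma$ is again a harmonic polynomial in $\log r$ with coefficients homogeneous of order $\lambda$, so Proposition \ref{prop:log:homogeneous:harmonic} applies to it. Thus either $m=0$ — in which case $\gamma$ is genuinely homogeneous and there is nothing to prove — or $m=1$, $\lambda=-\tfrac{n-2}{2}$, and $\gamma=\gamma_1\log r+\gamma_0$ with $\triangle\gamma_0=0=\triangle\gamma_1$. It remains to rule out the latter.

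Assuming $m=1$, I would expand $(d+d^\ast)\gamma$ using $d(\log r)=\tfrac1r\,dr$ and $d^\ast(f\alpha)=f\,d^\ast\alpha-\iota_{\nabla f}\alpha$ with $\nabla\log r=\tfrac1r\,\partial_r$, and then separate the coefficients of $\log r$ and of $(\log r)^0$ — legitimate because all remaining coefficients are homogeneous of the fixed order $\lambda$ and hence carry no logarithm. This yields the first-order system
\[
(d+d^\ast)\gamma_1=0,\qquad (d+d^\ast)\gamma_0+\tfrac1r\,dr\wedge\gamma_1-\tfrac1r\,\partial_r\lrcorner\,\gamma_1=0.
\]
Next I would pass to $\Sigma$: writing a form homogeneous of order $\lambda$ as the pair formed by its $\tfrac{dr}{r}\wedge(\,\cdot\,)$-part and its tangential part identifies the space of such forms with $\Omega^\ast(\Sigma)\otimes\R^2$. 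Under this identification $d+d^\ast$ becomes an operator $D$ and $\tfrac1r\,dr\wedge\,\cdot\,-\tfrac1r\,\partial_r\lrcorner\,\cdot\,$ becomes an invertible operator $S$ which, up to sign, interchanges the two $\R^2$-factors. A direct computation of $d+d^\ast$ on homogeneous forms should give
\[
S^{-1}D\,(\alpha,\beta)=(d+d^\ast)(\beta,\alpha)+\Phi(\alpha,\beta),
\]
where $d+d^\ast$ on the right is the operator of $\Sigma$ and $\Phi$ acts as a real scalar on each of the subspaces $\Lambda^k T^\ast\Sigma\oplus\{0\}$ and $\{0\}\oplus\Lambda^k T^\ast\Sigma$; in particular $S^{-1}D$ is formally self-adjoint for the natural $L^2$ pairing on $\Sigma$. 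The two displayed equations say precisely that $\gamma_1\in\ker(S^{-1}D)$ (from $D\gamma_1=0$) and $\gamma_1\in\operatorname{im}(S^{-1}D)$ (from $S\gamma_1=-D\gamma_0$). Self-adjointness forces $\ker(S^{-1}D)\perp\operatorname{im}(S^{-1}D)$, hence $\gamma_1=0$, contradicting $\gamma_1\neq 0$. Therefore $m=0$.

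The step I expect to be the main obstacle is the bookkeeping in the middle: computing $d+d^\ast$ on forms homogeneous of order $\lambda$ in the $\Omega^\ast(\Sigma)\otimes\R^2$ model and verifying that the correction term $\Phi$ is genuinely block-scalar, which is what makes $S^{-1}D$ self-adjoint. Everything else — the reduction to $\triangle\gamma=0$, the invocation of Proposition \ref{prop:log:homogeneous:harmonic}, and the matching of powers of $\log r$ — is routine, and once the structure of $S^{-1}D$ is in place the orthogonality argument is immediate.
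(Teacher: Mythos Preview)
Your proposal is correct and follows essentially the same route as the paper's proof: reduce to $\triangle\gamma=0$ and invoke Proposition~\ref{prop:log:homogeneous:harmonic}, compute the first-order system for $\gamma_0,\gamma_1$ in the borderline case $m=1$, $\lambda=-\tfrac{n-2}{2}$, pass to the $\Omega^\ast(\Sigma)\otimes\R^2$ model, and conclude from the self-adjointness of $S^{-1}D$ that $\gamma_1$ lies in both the kernel and the image and hence vanishes. The only difference is that you spell out the product-rule step and the orthogonality argument slightly more explicitly than the paper does.
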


\section{Analysis on asymptotically conical manifolds}\label{Appendix:Analysic:AC}

In this appendix we collect basic facts about analysis on asymptotically conical manifolds. Weighted Banach spaces are a standard tool to work on such manifolds. While their use is widespread by now, we still feel it is useful to collect here the statements of the main results of the theory to make the paper more self-contained. We refer the reader to \cite{Lockhart:McOwen,Lockhart, Melrose} and \cite[\S 4.3]{Marshall} for proofs and a more extensive treatment.

We begin with the formal definition of an asymptotically conical Riemannian manifold.

\begin{definition}\label{def:AC}
Let $(B^n,g)$ be a complete Riemannian manifold. We say that $(B,g)$ is asymptotically conical (AC) of rate $\mu<0$ if there exists a compact set $K\subset B$, $R>0$, a Riemannian cone $\left( \tu{C}(\Sigma), dr^2 + r^2 g_\Sigma\right)$ over a smooth compact Riemannian $(n-1)$-manifold $(\Sigma,g_\Sigma)$ and a diffeomorphism $f\co (R,\infty)\times\Sigma\ra B\setminus K$ such that
\[
|\nabla_{g_\tu{C}}^j\left( f^\ast g - g_\tu{C}\right)|_{g_\tu{C}} = O(r^{\mu-j})
\]
for all $j\geq 0$.
\end{definition}
Since we are interested in Ricci-flat AC manifolds, in view of the Cheeger--Gromoll Splitting Theorem we will always assume that $\Sigma$ is connected. In this case $B$ has only one end.

The vector field $r\partial_r$ generates an $\R^+$--action on the Riemannian cone $(\tu{C}(\Sigma), dr^2 +r^2 g_\Sigma)$ by dilations $r\mapsto \lambda r$. Let $E_\infty\ra\tu{C}(\Sigma)$ be a vector bundle equipped with a lift of this $\R^+$--action.
Since dilations are diffeomorphisms of $\tu{C}(\Sigma)$, they act naturally on the frame bundle of $\tu{C}(\Sigma)$ and when $E_\infty$ is a vector bundle associated with the frame bundle via a representation of $\tu{GL}(n,\R)$ we always assume the lift of the $\R^+$--action to $E_\infty$ is the induced one. Moreover, in this case the restriction of the $\tu{GL}(n,\R)$--action to diagonal matrices induces a scaling action on $E_\infty$: if we assume for simplicity that $E_\infty$ is associated with an irreducible representation of $\tu{GL}(n,\R)$ then $\lambda\in \R^+$ acts via $\lambda^w \,\tu{id}_{E_\infty}$, where $w$ is the conformal weight of $E_\infty$, \cf for example \cite[\S I.1]{Gauduchon}. When $E_\infty$ is not associated with the frame bundle we set $w=0$. 
We say that a section $s$ of $E_\infty$ is \emph{$0$--homogeneous} if $r^w s$ is dilation invariant. Similarly, a triple $(E_\infty,h_\infty,\nabla_\infty)$ of a bundle, bundle metric and metric connection on $\tu{C}(\Sigma)$ is said to be \emph{$0$--homogeneous} if $E_\infty$ is equipped with a lift of dilations as above, $h_\infty$ is a $0$--homogeneous section of $\tu{Sym}^2(E_\infty^\ast)$ and $r\nabla_\infty\co \Gamma (E_\infty)\ra \Gamma (E_\infty\otimes T^\ast\tu{C}(\Sigma))$ preserves $0$--homogeneous sections.    

\begin{definition}\label{def:AC:admissible:connection}
Let $(B,g)$ be an AC manifold asymptotic to the cone $\tu{C}(\Sigma)$ with rate $\mu<0$. Let $(E,h,\nabla)\ra B$ be a bundle $E$ together with a bundle metric $h$ and a metric connection $\nabla$. We say that $(E,h,\nabla)$ is admissible if, under the identification $f\co (R,\infty)\times\Sigma\ra B\setminus K$ of Definition \ref{def:AC} there exists a bundle isomorphism $\Phi\co f^\ast E\ra E_\infty$ such that $\Phi^\ast h =h_\infty + h'$ and $\Phi^\ast\nabla = \nabla_\infty + a$, where the triple $(E_\infty, h_\infty,\nabla_\infty)$ on $\tu{C}(\Sigma)$ is $0$--homogeneous and $(h',a)$ satisfy
\[
|\nabla_\infty ^j h'|_{g_\tu{C}\otimes h_\infty}=O(r^{\mu-j}), \qquad |\nabla_\infty ^j a|_{g_\tu{C}\otimes h_\infty}=O(r^{\mu-1-j}).
\]
\end{definition}

We will mostly be interested in (sub)bundles of $\bigotimes^rTB\otimes\bigotimes^s T^\ast B$. By Definition \ref{def:AC}, any such bundle together with the metric induced by $g$ and the connection induced by the Levi--Civita connection of $g$ is admissible.

\begin{definition}\label{def:AC:weighted:spaces}
Let $(E,h,\nabla)$ be an admissible bundle. For all $p\geq 1, k\in\N_0, \alpha\in (0,1)$ and $\nu\in\R$ we define the weighted Sobolev space $L^p_{k,\nu}$ and the weighted H\"older space $C^{k,\alpha}_\nu$ of sections of $E$ as the completion of $C^\infty _c(B;E)$ with respect to the norms
\[
\| u\|_{L^p_{k,\nu}} = \left( \sum_{j=0}^k{\| r^{-\frac{n}{p}-\nu+j}\nabla^j u\|^p_{L^p}}\right)^{\frac{1}{p}}, \qquad \| u\|_{C^{k,\alpha}_\nu} = \sum_{j=0}^k{\|r^{-\nu+j}\nabla^j u\|_{C^0} + [r^{-\nu+k}\nabla^k u]_\alpha}.
\]
Here $[r^{-\nu+k}\nabla^k u]_\alpha$ is the H\"older seminorm defined using parallel transport of $\nabla$ to identify  fibres of $E$ along minimising geodesics in a small neighbourhood of each point in $B$. By dropping the H\"older seminorm $[r^{-\nu+k}\nabla^k u]_\alpha$ in the definition of the $C^{k,\alpha}_\nu$--norm, we obtain the definition of the space of sections of $E$ of class $C^k_\nu$. Finally, set $C^\infty_\nu = \bigcap_{k\geq 0}{C^k_\nu}$.
\end{definition}

\begin{theorem}\label{thm:Weighted:Embedding}
Let $B$ be an $n$-dimensional AC manifold.
\begin{enumerate}
\item If $k\geq h\geq 0$, $k-\frac{n}{p}\geq h-\frac{n}{q}$, $p\leq q$ and $\nu\leq \nu'$ there is a continuous embedding $L^p_{k,\nu}\subset L^q_{h,\nu'}$. Moreover, if $k>h$, $k-\frac{n}{p}> h-\frac{n}{q}$ and $\nu<\nu'$ then the embedding is compact.
\item If $k\geq h\geq 0$, $k-\frac{n}{p}\geq h-\frac{n}{q}$, $p> q$ and $\nu< \nu'$ there is a continuous embedding $L^p_{k,\nu}\subset L^q_{h,\nu'}$. Moreover, if $k>h$ and $k-\frac{n}{p}> h-\frac{n}{q}$ then the embedding is compact.
\item If $\nu<\nu'$ and $k-\frac{n}{p}\geq h+\alpha$ then there are continuous embeddings $L^p_{k,\nu} \subset C^{h,\alpha}_\nu \subset L^q_{h,\nu'}$ for any $q$.
\item If $\nu\leq \nu'$ and $k+\alpha \geq h+\beta$ then there are continuous embeddings $C^{k+1}_\nu\subset C^{k,\alpha}_\nu\subset C^{h,\beta}_{\nu'}\subset C^h_{\nu'}$. Moreover, if $\nu<\nu'$ the embedding $C^{k,\alpha}_\nu\subset C^h_{\nu'}$ is compact.
\item If $\nu_1 + \nu_2 \leq \nu$ then the product $C^{k,\alpha}_{\nu_1}\times C^{k,\alpha}_{\nu_2}\ra C^{k,\alpha}_\nu$ is continuous.
\end{enumerate}
\end{theorem}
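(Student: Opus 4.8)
Statement (v) of Theorem~\ref{thm:Weighted:Embedding} asserts that if $\nu_1+\nu_2\leq\nu$ then pointwise multiplication defines a continuous map $C^{k,\alpha}_{\nu_1}\times C^{k,\alpha}_{\nu_2}\to C^{k,\alpha}_\nu$. The plan is to reduce everything to the Leibniz rule and the elementary inequality $r^{-\nu}\leq r^{-\nu_1}r^{-\nu_2}$, which holds for $r\geq R$ (where $r\geq 1$, after enlarging $R$ if necessary) precisely because $-\nu\leq -\nu_1-\nu_2$ and $r\geq 1$. On the compact region $K$ the weight $r$ is bounded above and below, so all weighted norms are mutually equivalent to the unweighted $C^{k,\alpha}$ norm there and the estimate is the classical fact that $C^{k,\alpha}$ is a Banach algebra on a compact manifold; hence from the outset one works on the conical end $\{r>R\}$.

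First I would record the covariant Leibniz rule: for sections $u,v$ one has $\nabla^j(uv)=\sum_{i=0}^j\binom{j}{i}\nabla^i u\otimes\nabla^{j-i}v$ (contracting appropriately into the relevant tensor bundle, with all the implied contractions being bounded bundle maps). Taking pointwise norms and using the algebra property of the bundle metrics gives $|\nabla^j(uv)|\leq C_j\sum_{i=0}^j|\nabla^i u|\,|\nabla^{j-i}v|$. Now multiply by the weight $r^{-\nu+j}$ and split it as $r^{-\nu_1+i}\cdot r^{-\nu_2+(j-i)}\cdot r^{\nu_1+\nu_2-\nu}$; since $\nu_1+\nu_2-\nu\leq 0$ and $r\geq 1$ on the end, the last factor is $\leq 1$. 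Taking the $C^0$ norm over the end and bounding each summand by $\|u\|_{C^k_{\nu_1}}\|v\|_{C^k_{\nu_2}}$ (or rather by the product of the $C^{k,\alpha}$ norms) controls the $\sum_{j\leq k}\|r^{-\nu+j}\nabla^j(uv)\|_{C^0}$ part of the norm.

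It remains to control the H\"older seminorm $[r^{-\nu+k}\nabla^k(uv)]_\alpha$. Here I would again expand $\nabla^k(uv)$ by Leibniz into a sum of terms $\nabla^i u\otimes\nabla^{k-i}v$ and use the general principle that for a product $F=f\cdot g$ of tensor-valued functions, $[r^aF]_\alpha$ is controlled by $\|r^{a_1}f\|_{C^0}[r^{a_2}g]_\alpha+[r^{a_1}f]_\alpha\|r^{a_2}g\|_{C^0}$ whenever $a=a_1+a_2+\text{(nonpositive)}$, together with the fact that the weight function $r$ itself is Lipschitz with $[r^b]_\alpha$ locally bounded (indeed $r$ is comparable on any unit ball on the end and its derivative is bounded, so multiplying by a power of $r$ does not worsen a H\"older seminorm beyond a uniform constant on the end). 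Applying this with $a=-\nu+k$, $a_1=-\nu_1+i$, $a_2=-\nu_2+(k-i)$ and the leftover $\nu_1+\nu_2-\nu\leq0$ absorbed into the nonpositive slack, each resulting term is bounded by $\|u\|_{C^{k,\alpha}_{\nu_1}}\|v\|_{C^{k,\alpha}_{\nu_2}}$, and summing over $i$ and $j$ finishes the estimate.

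The only genuinely delicate point—the one I would single out as the main obstacle—is the interaction between the weight factors and the parallel-transport definition of the H\"older seminorm: one must check that replacing $r^a$ by its values at the two endpoints of a short geodesic introduces only a bounded multiplicative error, and that the parallel transport used to compare fibres of the tensor bundle is the same one used for each factor (so that the Leibniz expansion commutes with it up to curvature terms of lower order that are themselves bounded on the end). Both facts follow from the AC structure: on the end, in the conical coordinates, $r$ and the connection coefficients have uniformly bounded derivatives at the relevant scale (Definition~\ref{def:AC} and Definition~\ref{def:AC:admissible:connection}), so all the error constants are uniform. Once this bookkeeping is in place, the estimate is purely mechanical, and one concludes that the bilinear map $C^{k,\alpha}_{\nu_1}\times C^{k,\alpha}_{\nu_2}\to C^{k,\alpha}_\nu$ is bounded, hence continuous.
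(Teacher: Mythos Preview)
The paper does not actually prove Theorem~\ref{thm:Weighted:Embedding}: it is stated in Appendix~\ref{Appendix:Analysic:AC} as a collection of standard facts about weighted spaces on AC manifolds, with the reader referred to Lockhart--McOwen, Lockhart, Melrose and Marshall for proofs. So there is no proof in the paper against which to compare your argument.

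That said, your treatment of part~(v) is the correct and standard one: split into the compact core (where weights are irrelevant and the usual $C^{k,\alpha}$ algebra property applies) and the conical end, then use the Leibniz rule together with the pointwise inequality $r^{-\nu}\leq r^{-\nu_1}r^{-\nu_2}$ for $r\geq 1$ when $\nu_1+\nu_2\leq\nu$. You have also correctly identified the one point that needs care, namely that the parallel-transport-based H\"older seminorm interacts well with both the weight function and the bilinear product; this is exactly where the admissibility hypotheses on the bundle and the AC decay of the metric (Definitions~\ref{def:AC} and~\ref{def:AC:admissible:connection}) enter, guaranteeing that $r$ varies by a bounded factor on unit geodesic balls on the end and that the connection coefficients are uniformly controlled. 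Note, however, that you have only addressed part~(v); parts~(i)--(iv) require separate (also standard) arguments, typically via the scaling/annulus technique sketched just before Theorem~\ref{thm:Weighted:Regularity}.
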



\begin{definition}\label{def:Admissible:Operators}
Let $P\co \Gamma(E)\ra \Gamma (F)$ be an elliptic operator of order $k$ between sections of admissible vector bundles over an AC manifold $B$. Let $f\co (R,\infty)\times\Sigma\ra B\setminus K$ be the identification of Definition \ref{def:AC}. Let $P_\infty\co \Gamma (f^\ast E)\ra \Gamma (f^\ast F)$ be an elliptic operator such that $r^k P_\infty$ preserves $0$--homogeneous sections. Assume that there exists $\mu<0$ such that for every $l\geq 0$
\[
|\nabla _\infty^l \left( f^\ast (Pu) - P_\infty f^\ast u\right) |_{h_\infty}=O(r^{-k+\mu-l}) 
\]
for every smooth section $u$ of $E$ on $B\setminus K$. Then we say that $P$ is an admissible operator asymptotic to $P_\infty$.  
\end{definition}


By Definition \ref{def:AC:admissible:connection}, if $P\co \Gamma (E)\ra \Gamma (F)$ is an elliptic operator of order $k$ between admissible vector bundles defined as the composition of $\nabla ^k\co \Gamma (E)\ra \Gamma \left( \bigotimes ^k T^\ast B\otimes E\right)$ with a constant coefficient bundle map $\bigotimes ^k T^\ast B\otimes E\ra F$, then $P$ is admissible. In particular, the Dirac operator, the Laplacian and $d+d^\ast$ acting on spinors and differential forms on an AC manifold are admissible operators.

\begin{lemma}\label{lem:integration:parts}
Let $P\co \Gamma (E)\ra \Gamma (F)$ be an admissible operator of order $1$ and let $P^\ast$ be its formal adjoint. Then for every $u\in L^2_{1,\nu}$ and $v\in L^2_{1,\nu'}$ with $\nu+\nu'\leq -n+1$ we have
\[
\langle Pu,v\rangle_{L^2} = \langle u, P^\ast v\rangle_{L^2}.
\]
\end{lemma}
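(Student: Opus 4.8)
The plan is to run the classical cut-off/integration-by-parts argument, the only non-formal ingredient being that an admissible \emph{first-order} operator shifts the weight of a weighted Sobolev space by one; this is exactly what makes the Cauchy--Schwarz pairings converge under the hypothesis $\nu+\nu'\le -n+1$.

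First I would record the structural input. Write $P=\sigma_P(\cdot)\circ\nabla + R_0$, where $\sigma_P$ is the principal symbol and $R_0\in\Gamma(\mathrm{Hom}(E,F))$ the zeroth-order part. Definitions \ref{def:AC:admissible:connection} and \ref{def:Admissible:Operators} give that $|\sigma_P|$ is bounded and that $|R_0|=O(r^{-1})$: a translation-invariant first-order operator on the cylinder $\R_t\times\Sigma$ has $t$-independent zeroth-order term, so on the cone (where $r\partial_r=\partial_t$) the zeroth-order part of $P_\infty$ is $r^{-1}$ times a term bounded by compactness of $\Sigma$, and the error $P-P_\infty$ is $O(r^{-1+\mu})$ with $\mu<0$. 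Consequently $P$ maps $L^2_{1,\nu}$ boundedly into $L^2_{0,\nu-1}$ for every $\nu$: the principal part does because $\nabla u\in L^2_{0,\nu-1}$ when $u\in L^2_{1,\nu}$, while $\|R_0u\|_{L^2_{0,\nu-1}}\le C\,\|r^{-1}u\|_{L^2_{0,\nu-1}}=C\,\|u\|_{L^2_{0,\nu}}$. The formal adjoint $P^\ast$ is admissible of order one as well (this is routine from Definitions \ref{def:AC:admissible:connection}--\ref{def:Admissible:Operators}), so likewise $P^\ast\colon L^2_{1,\nu'}\to L^2_{0,\nu'-1}$ boundedly.

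Next I would check that both sides of the identity are absolutely convergent. If $f\in L^2_{0,a}$ and $g\in L^2_{0,b}$ with $a+b\le -n$, then pointwise $|fg|=\bigl(r^{-n/2-a}|f|\bigr)\bigl(r^{-n/2-b}|g|\bigr)\,r^{\,n+a+b}\le\bigl(r^{-n/2-a}|f|\bigr)\bigl(r^{-n/2-b}|g|\bigr)$ (using that the weight function satisfies $r\ge 1$), so $\int_B|fg|\,\dvol\le\|f\|_{L^2_{0,a}}\|g\|_{L^2_{0,b}}$ by Cauchy--Schwarz. Taking $(a,b)=(\nu-1,\nu')$ and $(a,b)=(\nu,\nu'-1)$ — in both cases $a+b=\nu+\nu'-1\le -n$, which is precisely the hypothesis — shows that $\langle Pu,v\rangle_{L^2}$ and $\langle u,P^\ast v\rangle_{L^2}$ are finite for $u\in L^2_{1,\nu}$, $v\in L^2_{1,\nu'}$, with $|Pu||v|$ and $|u||P^\ast v|$ integrable dominating functions.

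Finally I would run the cut-off. Fix $\chi\in C^\infty_c([0,\infty))$ with $\chi\equiv 1$ on $[0,1]$, $\chi\equiv 0$ on $[2,\infty)$, and set $\chi_R=\chi(r/R)$, so $|d\chi_R|\le C/R$ with support in $\{R\le r\le 2R\}$. Since $\chi_Ru$ is compactly supported, the defining property of the formal adjoint gives $\langle P(\chi_R u),v\rangle_{L^2}=\langle \chi_R u,P^\ast v\rangle_{L^2}$ (proved first for smooth sections and extended by density of $C^\infty_c$ in the weighted spaces, Definition \ref{def:AC:weighted:spaces}; on the fixed compact support everything reduces to local $L^2$-convergence). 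Expanding $P(\chi_R u)=\chi_R\,Pu+\sigma_P(d\chi_R)\,u$ and letting $R\to\infty$: by dominated convergence $\langle\chi_R Pu,v\rangle\to\langle Pu,v\rangle$ and $\langle\chi_R u,P^\ast v\rangle\to\langle u,P^\ast v\rangle$, while
\[
\bigl|\langle\sigma_P(d\chi_R)u,v\rangle_{L^2}\bigr|\le\frac{C}{R}\int_{\{R\le r\le 2R\}}|u||v|\,\dvol\le C\,R^{\,n-1+\nu+\nu'}\,\|r^{-n/2-\nu}u\|_{L^2(\{r\ge R\})}\,\|r^{-n/2-\nu'}v\|_{L^2(\{r\ge R\})},
\]
using $r\le 2R$ on the annulus together with Cauchy--Schwarz. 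Since $n-1+\nu+\nu'\le 0$ and the two $L^2$-tails tend to $0$ as $R\to\infty$, this term vanishes in the limit, and the identity $\langle Pu,v\rangle_{L^2}=\langle u,P^\ast v\rangle_{L^2}$ follows.

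The main obstacle — really the only point beyond bookkeeping — is the weight-shift $P\colon L^2_{1,\nu}\to L^2_{0,\nu-1}$ (and the same for $P^\ast$), which is where admissibility, rather than mere ellipticity with bounded coefficients, is used, via the $O(r^{-1})$ decay of the zeroth-order part. I note also that the borderline case $\nu+\nu'=-n+1$ needs no separate argument once one estimates the commutator term using tails of the $L^2$-norms rather than powers of $R$ alone.
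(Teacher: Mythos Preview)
The paper states this lemma without proof, treating it as a standard fact in the theory of weighted spaces on AC manifolds (the appendix refers the reader to \cite{Lockhart:McOwen,Lockhart,Melrose} and \cite[\S 4.3]{Marshall} for proofs). Your argument is the standard cut-off proof and is correct: the two substantive points---that admissibility forces the zeroth-order part to be $O(r^{-1})$ so that $P\colon L^2_{1,\nu}\to L^2_{0,\nu-1}$ boundedly, and that the commutator term vanishes via the $L^2$--tails even in the borderline case $\nu+\nu'=-n+1$---are handled correctly.
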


Many local estimates on domains in $\R^n$ extend to weighted estimates on AC manifolds. One of the basic techniques to obtain these estimates is the following scaling argument. On a fixed compact set $K\subset B$ estimates are proved as on compact manifolds. Identify instead $M\setminus K$ with an exterior region $\{ r \geq R\}$ in $\tu{C} (\Sigma)$. For $R$ (and therefore $K$) large enough, up to small errors we can work with the model operator $P_\infty$. We apply the scaling technique of \cite[Theorem 1.2]{Bartnik}. Decompose the region $\{ r\geq R \}$ in $\tu{C}(\Sigma)$ into the union of annuli $\{ 2^k R \leq r \leq 2^{k+1}R\}$. Up to a factor of $(2^k R)^{-\nu}$, on each annulus the weighted Sobolev/H\"older norms are equivalent (with constants independent of $R$ and $k$) to the standard Sobolev/H\"older norms on the fixed annulus $\{ 1\leq r\leq 2\}$. The required estimates can then be proved by applying standard estimates on these rescaled annuli, rescaling back and summing/taking supremums over $k\in\Z_{\geq 0}$. For instance, consider the standard elliptic estimates in Sobolev spaces, the Schauder estimates and the local estimate
\[
\| u \|_{C^{l+k,\alpha}(B)} \leq C \left( \| Pu\|_{C^{l,\alpha}(2B)} + \| u \|_{L^2 (2B)}\right)
\]
for an elliptic operator $P$. Here $B\subset \R^n$ is a ball and $2B$ is a ball of twice the radius. The scaling argument we have sketched yields the following weighted estimates.

\begin{theorem}\label{thm:Weighted:Regularity}
Let $P\co \Gamma (E)\ra \Gamma (F)$ be an admissible operator of order $k$. Then for every $l\geq 0$, $p\geq 1$, $\alpha\in (0,1)$ and $\nu\in \R$ there exists $C>0$ such that
\[
\begin{gathered}
\| u \|_{L^p_{l+k,\nu+k}} \leq C\left( \| Pu\|_{L^p_{l,\nu}} + \| u \|_{L^p_{0,\nu+k}}\right) ,\qquad \| u \|_{C^{l+k,\alpha}_{\nu+k}} \leq C\left( \| Pu\|_{C^{l,\alpha}_{\nu}} + \| u \|_{C^{0,\alpha}_{\nu+k}}\right),\\
\| u \|_{C^{l+k,\alpha}_{\nu+k}} \leq C \left( \| Pu\|_{C^{l,\alpha}_{\nu}} + \| u \|_{L^2_{\nu+k}}\right)
\end{gathered}
\]
for all $u\in C^\infty_{c}$.
\end{theorem}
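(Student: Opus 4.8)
The plan is to carry out the dyadic rescaling argument already sketched in the paragraph preceding the statement, adapting Bartnik's technique \cite[Theorem 1.2]{Bartnik}. First I would dispose of the compact part. Fix a compact set $K\subset B$ containing the one appearing in Definition \ref{def:AC} in its interior; on a slightly larger compact set the radial function $r$ is bounded above and below, so the weighted norms $L^p_{l,\nu}$ and $C^{l,\alpha}_\nu$ are uniformly equivalent there to the unweighted Sobolev and H\"older norms. Hence the three inequalities, restricted to this compact piece, follow from the standard interior elliptic estimates on compact manifolds: the $L^p$ estimate, the Schauder estimate, and the local interior estimate $\|u\|_{C^{l+k,\alpha}}\le C(\|Pu\|_{C^{l,\alpha}}+\|u\|_{L^2})$ for the last of the three.

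Next I would treat the conical end. Identify $B\setminus K$ with $\{r>R\}\subset\tu{C}(\Sigma)$ via the diffeomorphism $f$ of Definition \ref{def:AC}, and decompose $\{r\ge 2R\}$ into the dyadic annuli $A_j=\{2^jR\le r\le 2^{j+1}R\}$ for $j\ge 1$, each equipped with a slightly enlarged neighbourhood $\widetilde A_j$ (say $\{2^{j-1}R\le r\le 2^{j+2}R\}$) so that the local estimates below see interior data. On each $A_j$ rescale the radial coordinate by $r\mapsto 2^{-j}R^{-1}r$, carrying $\widetilde A_j$ onto a fixed annulus $\widetilde A=\{1/2\le r\le 4\}$; equivalently, pass to the cylinder $\R\times\Sigma$ through $r=e^t$ and translate in $t$. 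Because $P$ is admissible and asymptotic to an operator $P_\infty$ whose conformal rescaling $r^kP_\infty$ is translation-invariant on the cylinder (Definition \ref{def:Admissible:Operators}), the pulled-back rescaled operators on $\widetilde A$ differ from the fixed model operator by coefficient errors of size $O((2^jR)^\mu)$ with $\mu<0$; therefore, after increasing $R$ and $K$ once and for all, these operators are uniformly elliptic with coefficients uniformly bounded (in $C^{l,\alpha}(\widetilde A)$ in the Schauder case, and in $L^\infty$ in the $L^p$ case) independently of $j$.

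I would then apply the standard local estimates — the $L^p$ estimate, the Schauder estimate, and the interior $C^{l+k,\alpha}$–$L^2$ estimate — on $\widetilde A$ to the rescaled sections; by the previous paragraph the constants are uniform in $j$. Rescaling back introduces a factor $(2^jR)^{-\nu}$ up to a fixed multiple (the $n/p$ shift in the Sobolev weight being exactly what makes this factor agree across all three families of norms), and $(2^jR)^{-\nu}$ times the unweighted norm on $\widetilde A$ is comparable to the weighted norm on $\widetilde A_j$. Summing the $p$-th powers over $j$ yields the $L^p$ estimate on the end (using that $\|u\|_{L^p_{l+k,\nu+k}}^p$ is comparable to $\sum_j$ of the pieces over the $\widetilde A_j$), while taking suprema over $j$ yields the $C^{l+k,\alpha}$ estimate; for the third inequality one uses $\sup_j a_j\le(\sum_j a_j^2)^{1/2}$ to bound the supremum of the weighted $C^{l+k,\alpha}$-data on annuli by $\|Pu\|_{C^{l,\alpha}_\nu}+\|u\|_{L^2_{\nu+k}}$. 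Combining with the compact-part estimate gives the result for all $u\in C^\infty_c$, and, although not needed for the statement, the scale-free form of the estimates lets them pass to the weighted Banach spaces by density.

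The point requiring real care is the uniformity in $j$ of the elliptic constants after rescaling: one must verify that the rescaled operators on the fixed annulus $\widetilde A$ have uniformly bounded coefficients in the norm appropriate to the estimate being invoked and are uniformly elliptic, which is precisely where the decay hypothesis on $P-P_\infty$ in Definition \ref{def:Admissible:Operators} is used to absorb the deviation from the genuinely translation-invariant model. The rest is bookkeeping: tracking the powers $(2^jR)^{-\nu}$, matching the $\ell^p$ summation in the Sobolev case against the $\ell^\infty$ supremum in the H\"older case, and arranging the enlarged annuli $\widetilde A_j$ so that the interior estimates apply.
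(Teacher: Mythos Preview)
Your proposal is correct and follows essentially the same approach as the paper, which presents only the sketch in the paragraph preceding the theorem: dyadic decomposition of the end into annuli $\{2^jR\le r\le 2^{j+1}R\}$, rescaling to a fixed annulus via Bartnik's technique, applying the standard local $L^p$/Schauder/$C^{l+k,\alpha}$--$L^2$ estimates with constants uniform in $j$, then rescaling back and summing or taking suprema. Your write-up is in fact more detailed than the paper's (you make explicit the enlarged annuli $\widetilde A_j$, the role of the decay hypothesis in Definition~\ref{def:Admissible:Operators} for uniformity of the elliptic constants, and the $\ell^p$ versus $\ell^\infty$ bookkeeping), but the argument is the same.
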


In order to proceed further it is necessary to study in more detail the mapping properties of the model operator $P_\infty$. This can be done explicitly by separation of variables. Note that the natural $\R^+$--action on $\tu{C}(\Sigma)$ generated by the vector field $r\partial_r$ allows us to talk of homogeneous functions on $\tu{C}(\Sigma)$.

\begin{definition}\label{def:indicial:roots}
Let $P\co \Gamma (E)\ra \Gamma (F)$ be an admissible operator asymptotic to the model operator $P_\infty \co \Gamma (f^\ast E)\ra \Gamma (f^\ast F)$. We say that a section $u$ of $f^\ast E\ra \tu{C}(\Sigma)$ is homogeneous of rate $\lambda$ if $|u|_{h_\infty}$ is a homogeneous function of rate $\lambda$. We say that $\lambda$ is an \emph{indicial root} for $P_\infty$ if there exists a homogeneous section $u$ of rate $\lambda$ such that  $P_\infty u=0$. Let $\mathcal{D}(P_\infty)$ denote the set of indicial roots for $P_\infty$. For each $\lambda\in \mathcal{D}(P_\infty)$ 
let $d(\lambda)$ denote the dimension of the space of $u\in \ker P_\infty$ of the form $u = \sum_{j=0}^m{u_j (\log{r})^j}$ with $u_0,\dots, u_m$ homogeneous sections of $E_\infty$ of rate $\lambda$.
\end{definition}

In order to understand kernel and cokernel of admissible operators acting between weighted H\"older spaces, the following regularity result is necessary.

\begin{prop}\label{prop:Weighted:Regularity}
Let $P\co \Gamma (E)\ra \Gamma (F)$ be an admissible operator of order $k$. Fix $\alpha\in (0,1)$, $\nu\in \R$ and choose $\nu'>\nu$ so that $[\nu,\nu']$ does not contain any indicial root for $P$. Then there exists $C>0$ such that
\[
\| u \|_{C^{k,\alpha}_{\nu}} \leq C\left( \| Pu\|_{C^{0,\alpha}_{\nu-k}} + \| u \|_{L^2_{\nu'}}\right).
\]
\proof
For fixed compact sets $K'\subset K$ we have the standard estimate $\| u \|_{C^{k,\alpha}} \leq C \left( \| Pu\|_{C^{0,\alpha}} + \| u \|_{L^2}\right)$. One can then use these estimate on a fixed compact set $K_0\subset B$ and over annuli $\{ R\leq r \leq 2R\}$ to show that
\[
\| u \|_{C^{k,\alpha}_{\nu'}} \leq C \left( \| Pu\|_{C^{0,\alpha}_{\nu'-k}} + \| u \|_{L^2_{\nu'}}\right).
\]
Finally, since there are no indicial roots of $P$ in the interval $[\nu,\nu']$, by solving the boundary value problem equation $Pv=Pu$  on $B\setminus K_0$, $v=u$ on $\partial K_0$ one can show that 
\[
\| u \|_{C^{k,\alpha}_{\nu}} \leq C\left( \| Pu\|_{C^{0,\alpha}_{\nu-k}} + \| u \|_{C^{k,\alpha}_{\nu'}}\right).\qedhere
\]
\end{prop}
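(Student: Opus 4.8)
The plan is to prove the estimate in two stages: first control $\|u\|_{C^{k,\alpha}_{\nu'}}$ by $Pu$ together with the weak $L^2_{\nu'}$--norm, and then improve the decay rate from $\nu'$ down to $\nu$, using crucially that $[\nu,\nu']$ contains no indicial root of $P_\infty$. For the first stage I would invoke the last estimate of Theorem~\ref{thm:Weighted:Regularity} with $l=0$ and weight $\nu'-k$, namely $\|u\|_{C^{k,\alpha}_{\nu'}}\le C(\|Pu\|_{C^{0,\alpha}_{\nu'-k}}+\|u\|_{L^2_{\nu'}})$; since $\nu-k\le\nu'-k$, the embedding $C^{0,\alpha}_{\nu-k}\subset C^{0,\alpha}_{\nu'-k}$ from Theorem~\ref{thm:Weighted:Embedding}~(iv) lets us replace $\|Pu\|_{C^{0,\alpha}_{\nu'-k}}$ by $\|Pu\|_{C^{0,\alpha}_{\nu-k}}$. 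It therefore suffices to prove $\|u\|_{C^{k,\alpha}_\nu}\le C(\|Pu\|_{C^{0,\alpha}_{\nu-k}}+\|u\|_{C^{k,\alpha}_{\nu'}})$.

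For the second stage, fix a large compact set $K_0\subset B$ with smooth boundary so that $B\setminus K_0$ is identified via $f$ with an exterior region $\{r>R_0\}$ in the model cone, on which $P$ differs from the translation-invariant model operator $P_\infty$ only by a lower-order perturbation decaying at rate $\mu<0$ in the sense of Definition~\ref{def:Admissible:Operators}. Write $g=Pu\in C^{0,\alpha}_{\nu-k}$. I would solve the Dirichlet problem $Pv=g$ on $B\setminus K_0$ with $v|_{\partial K_0}=u|_{\partial K_0}$, seeking $v\in C^{k,\alpha}_\nu$; this is exactly the point where the hypothesis on indicial roots is used. For the model operator $P_\infty$ on a half-cylinder the Mellin-transform theory (see \cite{Lockhart:McOwen,Lockhart,Melrose} and \cite[\S4.3]{Marshall}) gives unique solvability of this problem in $C^{k,\alpha}_\delta$ whenever $\delta\notin\mathcal{D}(P_\infty)$, with an estimate in terms of $\|g\|_{C^{0,\alpha}_{\delta-k}}$ and the boundary data, and enlarging $K_0$ makes the perturbation $P-P_\infty$ small so that solvability persists by a contraction argument. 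Since the weight factor is bounded on the compact set $\partial K_0$, the boundary contribution is controlled by $\|u\|_{C^{k,\alpha}_{\nu'}}$, yielding $\|v\|_{C^{k,\alpha}_\nu}\le C(\|g\|_{C^{0,\alpha}_{\nu-k}}+\|u\|_{C^{k,\alpha}_{\nu'}})$. Then $w:=u-v$ satisfies $Pw=0$ on $B\setminus K_0$, $w|_{\partial K_0}=0$, and $w\in C^{k,\alpha}_{\nu'}$. A finite bootstrap now gives $w\in C^{k,\alpha}_\nu$ with $\|w\|_{C^{k,\alpha}_\nu}\le C\|u\|_{C^{k,\alpha}_{\nu'}}$: one has $P_\infty w=-(P-P_\infty)w=O(r^{\nu'+\mu})$, and since $[\nu'+\mu,\nu']\subset[\nu,\nu']$ contains no indicial root, uniqueness of the model Dirichlet problem in that weight range forces $w\in C^{k,\alpha}_{\nu'+\mu}$; iterating decreases the weight by $|\mu|>0$ at each step without ever crossing an indicial root, so after finitely many steps $w\in C^{k,\alpha}_\nu$. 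Combining the estimates for $v$ and $w$ finishes the argument.

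The main obstacle is the model-operator input in the second stage: the solvability and uniqueness of the Dirichlet problem for $P_\infty$ on a half-cylinder between two weights separated by no indicial root, together with the perturbation argument needed to transfer it to the genuine admissible operator $P$ on the end. This is precisely the content of the weighted Fredholm theory of Lockhart--McOwen and Melrose, which I would quote rather than reprove. Everything else — the weighted Schauder estimate of Theorem~\ref{thm:Weighted:Regularity}, the embeddings of Theorem~\ref{thm:Weighted:Embedding}, and the elementary bootstrap on the weight — is routine.
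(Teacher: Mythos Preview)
Your proposal is correct and follows essentially the same two-stage strategy as the paper: first obtain the $C^{k,\alpha}_{\nu'}$ estimate from the scaled local Schauder estimate (the paper re-derives this, you invoke Theorem~\ref{thm:Weighted:Regularity} directly; same content), then upgrade $\nu'$ to $\nu$ by solving the boundary value problem $Pv=Pu$, $v|_{\partial K_0}=u|_{\partial K_0}$ on the end in weight $\nu$.

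Two small remarks on your second stage. First, your bootstrap on $w=u-v$ is more work than necessary: the same perturbation argument you use to solve the Dirichlet problem for $P$ at weight $\nu$ also gives uniqueness at weight $\nu'$ (since $\nu'\notin\mathcal{D}(P_\infty)$), so $Pw=0$, $w|_{\partial K_0}=0$, $w\in C^{k,\alpha}_{\nu'}$ already forces $w=0$. Second, your claimed inclusion $[\nu'+\mu,\nu']\subset[\nu,\nu']$ is not automatic; nothing in the hypotheses guarantees $\nu'-\nu\geq|\mu|$. This is harmless (if $\nu'+\mu<\nu$ the single step lands below $\nu$ and you solve directly at weight $\nu$), but as written it is a slip.
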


\begin{corollary}
Let $P\co \Gamma (E)\ra \Gamma (F)$ be an admissible operator of order $k$ and fix $\alpha\in (0,1)$ and $\nu\in\R$. Then for every $f\in C^{0,\alpha}_{\nu-k}$ such that
\[
\langle f , \overline{u}\rangle _{L^2}=0
\]
for all $\overline{u}\in \ker P^\ast \cap C^\infty_{-n-\nu+k}$, there exists $u\in C^{k,\alpha}_\nu$ such that $Pu=f$ and
\[
\| u \|_{C^{k,\alpha}_{\nu}} \leq C\| f\|_{C^{0,\alpha}_{\nu-k}} .
\]
\proof
Fix $\nu'>\nu$ so that $[\nu,\nu']$ does not contain any indicial root for $P$. Note that $f\in L^2_{\nu'-k}$ by Theorem \ref{thm:Weighted:Embedding} (iii) since $\nu'>\nu$ and that $\ker P^\ast \cap C^\infty_{-n-\nu+k} = \ker P^\ast \cap L^2_{-n-\nu'+k}$ since $[\nu,\nu']$ does not contain any indicial root. By Lemma \ref{lem:integration:parts}, $f$ is $L^2$--orthogonal to the cokernel of $P\co L^2_{k,\nu'}\ra L^2_{\nu'-k}$ and therefore we can solve the equation $Pu=f$ with $u\in L^2_{k,\nu'}$. If $u$ is $L^2_{\nu'}$--orthogonal to the kernel of $P$ in $L^2_{\nu'}$ then we also have an estimate
\[
\| u\|_{L^2_{\nu'}} \leq C \|f\|_{L^2_{\nu'-k}}\leq C' \| f\|_{C^{0,\alpha}_{\nu-k}}
\]
for constants $C,C'>0$ independent of $u$ and $f$. Now apply Proposition \ref{prop:Weighted:Regularity}.
\endproof
\end{corollary}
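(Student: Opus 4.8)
The plan is to reduce the problem to the $L^2$-based Fredholm theory for admissible elliptic operators on weighted Sobolev spaces, solve there, and then bootstrap to the weighted H\"older setting using the weighted Schauder-type estimate of Proposition \ref{prop:Weighted:Regularity}. First I would choose $\nu'>\nu$ so close to $\nu$ that the closed interval $[\nu,\nu']$ contains no indicial root of $P$ (possible because $\mathcal{D}(P_\infty)$ is discrete). The weighted embedding $C^{0,\alpha}_{\nu-k}\subset L^2_{\nu'-k}$ of Theorem \ref{thm:Weighted:Embedding} (iii), valid since $\nu'-k>\nu-k$, places the right-hand side $f$ in an $L^2$-based space on which $P$ is Fredholm, with a bound $\|f\|_{L^2_{\nu'-k}}\le C\|f\|_{C^{0,\alpha}_{\nu-k}}$.

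Next I would identify the cokernel of $P\co L^2_{k,\nu'}\ra L^2_{\nu'-k}$. By the standard theory recalled in this appendix, $P$ has closed range and finite-dimensional cokernel (since $\nu'$ is not an indicial root), and by the integration-by-parts Lemma \ref{lem:integration:parts} this cokernel is the $L^2_{\nu'-k}$-orthogonal complement of $\ker P^\ast\cap L^2_{-n-\nu'+k}$. The crucial point---where the hypothesis that $[\nu,\nu']$ is indicial-root-free is used---is that every element of $\ker P^\ast\cap L^2_{-n-\nu'+k}$ actually lies in $C^\infty_{-n-\nu+k}$: elliptic regularity gives smoothness, and the a priori decay rate cannot jump across the window between the weights $-n-\nu'+k$ and $-n-\nu+k$ because there is no indicial root of $P^\ast$ in between. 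Hence $\ker P^\ast\cap L^2_{-n-\nu'+k}=\ker P^\ast\cap C^\infty_{-n-\nu+k}$, so the hypothesis $\langle f,\overline{u}\rangle_{L^2}=0$ for all $\overline{u}\in\ker P^\ast\cap C^\infty_{-n-\nu+k}$ says precisely that $f$ is orthogonal to the cokernel. Therefore $Pu=f$ is solvable with $u\in L^2_{k,\nu'}$, and taking $u$ orthogonal to the finite-dimensional space $\ker P\cap L^2_{\nu'}$ yields $\|u\|_{L^2_{\nu'}}\le C\|f\|_{L^2_{\nu'-k}}\le C'\|f\|_{C^{0,\alpha}_{\nu-k}}$ with $C'$ independent of $f$.

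Finally I would upgrade both regularity and decay: since $Pu=f\in C^{0,\alpha}_{\nu-k}$ and $[\nu,\nu']$ contains no indicial root, Proposition \ref{prop:Weighted:Regularity} gives
\[
\|u\|_{C^{k,\alpha}_\nu}\le C\bigl(\|Pu\|_{C^{0,\alpha}_{\nu-k}}+\|u\|_{L^2_{\nu'}}\bigr)=C\bigl(\|f\|_{C^{0,\alpha}_{\nu-k}}+\|u\|_{L^2_{\nu'}}\bigr),
\]
so in particular $u\in C^{k,\alpha}_\nu$ (the faster decay rate $r^\nu$ being forced on the solution by the absence of intermediate indicial roots), and combining with the $L^2$ estimate of the previous step gives the claimed bound $\|u\|_{C^{k,\alpha}_\nu}\le C\|f\|_{C^{0,\alpha}_{\nu-k}}$.

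I expect the main obstacle to be the bookkeeping around the cokernel: one must verify carefully that the obstruction space computed in the $L^2$-setting at weight $-n-\nu'+k$ genuinely coincides with the space $\ker P^\ast\cap C^\infty_{-n-\nu+k}$ appearing in the hypothesis (which hinges on selecting $\nu'$ inside an indicial-root-free interval), and one must track throughout that all constants remain independent of $f$ by working orthogonally to the finite-dimensional kernel of $P$.
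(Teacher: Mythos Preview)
Your proposal is correct and follows essentially the same route as the paper's proof: choose $\nu'>\nu$ avoiding indicial roots, pass to the $L^2$ setting via the embedding of Theorem~\ref{thm:Weighted:Embedding}~(iii), identify the cokernel with $\ker P^\ast\cap C^\infty_{-n-\nu+k}$ using the absence of indicial roots in $[\nu,\nu']$ and Lemma~\ref{lem:integration:parts}, solve orthogonally to the kernel to get the $L^2_{\nu'}$ estimate, and then apply Proposition~\ref{prop:Weighted:Regularity}. Your write-up is in fact slightly more detailed than the paper's (you spell out the final combination of estimates and the reasoning behind the cokernel identification), but the argument is the same.
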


The following theorem contains the main statement about admissible operators between weighted H\"older spaces, their Fredholm property and index.

\begin{theorem}\label{thm:Fredholm}
Let $P\co \Gamma (E)\ra \Gamma (F)$ be an admissible operator of order $k$ and fix $l\geq 0$, $\alpha\in (0,1)$ and $\nu,\nu'\in\R$ with $\nu<\nu'$.
\begin{enumerate}
\item If $\nu \in \R \setminus \mathcal{D}(P_\infty)$ then there exists a compact set $K\subset B$ and a constant $C>0$ such that
\[
\| u \| _{C^{l+k,\alpha}_{\nu}} \leq C \left( \| Pu \| _{C^{l,\alpha}_{\nu-k}} + \| u \| _{L^2 (K)}\right).
\]
In particular, $P\co C^{l+k,\alpha}_{\nu}\ra C^{l,\alpha}_{\nu-k}$ is a Fredholm operator.
\item Assume that $\nu,\nu'\notin\mathcal{D}(P_\infty)$ and denote by $i(\nu)$ and $i(\nu')$ the indexes of $P\co C^{k,\alpha}_{\nu}\ra C^{0,\alpha}_{\nu-k}$ and $P\co C^{k,\alpha}_{\nu'}\ra C^{0,\alpha}_{\nu'-k}$ respectively. Then 
\[
i(\nu')-i(\nu)=N(\nu,\nu'),
\]
where $N(\nu, \nu') = \sum_{\lambda\in \mathcal{D}(P_\infty)\cap (\nu,\nu')}{d(\lambda)}$.
\end{enumerate}
\end{theorem}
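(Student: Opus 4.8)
This is the standard Lockhart--McOwen/Melrose Fredholm package for admissible elliptic operators on AC manifolds, and the plan is to prove it in three stages: (1) invertibility of the model operator $P_\infty$ off the indicial set; (2) a parametrix patching argument giving the semi-Fredholm estimate (i) and Fredholmness; (3) a contour-shifting / asymptotic expansion argument giving the index jump (ii).

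\emph{Stage 1: the model operator.} Under the conformal change $r=e^t$ the cone $\tu{C}(\Sigma)$ becomes the cylinder $\R\times\Sigma$ with coordinate $t$, and $r^kP_\infty$ becomes a translation-invariant elliptic operator $L$ of order $k$; conjugating by $r^{-\nu}$ identifies the weighted spaces $C^{l+k,\alpha}_\nu$ with exponentially weighted, then (after a shift) translation-invariant Hölder spaces on the cylinder. Taking the Fourier transform in $t$ diagonalises $L$ into a holomorphic family $\widehat L(\xi)$ of elliptic operators on the compact manifold $\Sigma$, $\xi\in\C$. By Definition \ref{def:indicial:roots} the indicial roots are precisely the rates $\lambda$ for which $\widehat L(\xi)$ fails to be invertible at the corresponding $\xi$; since $\widehat L(\xi)$ is an analytic Fredholm family of index $0$, this non-invertibility set is discrete (so $\mathcal D(P_\infty)$ is discrete), and for $\nu\notin\mathcal D(P_\infty)$ the elliptic estimate for $\widehat L(\xi)^{-1}$, \emph{uniform} along a horizontal line $\mathrm{Im}\,\xi=\mathrm{const}$ and for $|\mathrm{Re}\,\xi|$ large, shows that $P_\infty\colon C^{l+k,\alpha}_\nu\to C^{l,\alpha}_{\nu-k}$ on the cone is an isomorphism. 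Establishing this uniform control of the inverse across the Fourier variable is the first place where real work is needed.

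\emph{Stage 2: the estimate (i) and Fredholmness.} Choose a cutoff $\chi$ supported near infinity and identify that region with $\{r\ge R\}\subset\tu{C}(\Sigma)$ via the diffeomorphism $f$ of Definition \ref{def:AC}; set $Q=\chi\,P_\infty^{-1}\chi+(1-\chi)Q_{\mathrm{int}}$, with $Q_{\mathrm{int}}$ an ordinary interior parametrix for the elliptic operator $P$ on the compact core. By the admissibility bound $|f^\ast(Pu)-P_\infty f^\ast u|=O(r^{-k+\mu-l})$ of Definition \ref{def:Admissible:Operators}, together with the commutators $[P,\chi]$, the remainders $PQ-\mathrm{Id}$ and $QP-\mathrm{Id}$ split into a term of small operator norm (for $R$ large, since it carries a factor $r^\mu$, $\mu<0$) plus a smoothing term supported on a fixed compact set, which is compact by the compact embedding of Theorem \ref{thm:Weighted:Embedding}~(iv). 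Hence $P$ is invertible modulo compacts, which yields both the estimate
\[
\|u\|_{C^{l+k,\alpha}_\nu}\le C\big(\|Pu\|_{C^{l,\alpha}_{\nu-k}}+\|u\|_{L^2(K)}\big)
\]
(so $\ker P$ is finite-dimensional and the range is closed) and, running the identical argument for the formal adjoint $P^\ast$ — again admissible, asymptotic to $P_\infty^\ast$, with $\lambda\in\mathcal D(P_\infty)$ iff $k-n-\lambda\in\mathcal D(P_\infty^\ast)$, so that $-n-\nu+k\notin\mathcal D(P_\infty^\ast)$ — finiteness of the cokernel. Combined with Lemma \ref{lem:integration:parts} and the solvability corollary following Proposition \ref{prop:Weighted:Regularity}, this identifies $\mathrm{coker}\,P\cong\ker P^\ast\cap C^\infty_{-n-\nu+k}$ and shows $P$ is Fredholm; the passage from $l=0$ to general $l$ is the weighted elliptic regularity of Theorem \ref{thm:Weighted:Regularity}.

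\emph{Stage 3: the index jump (ii).} On a connected component of $\R\setminus\mathcal D(P_\infty)$ the index $i(\nu)$ is constant, since there $\nu\mapsto P$ is a norm-continuous family of Fredholm operators (the various weighted spaces being identified by multiplication by powers of a fixed weight function), so it suffices to compute the jump across $\mathcal D(P_\infty)\cap(\nu,\nu')=\{\lambda_1,\dots\}$. The analytic heart is the asymptotic-expansion theorem: if $u\in C^{k,\alpha}_{\nu'}$ and $Pu=f\in C^{0,\alpha}_{\nu-k}$, then $u=\sum_i\chi\,u_i+\tilde u$ with $\tilde u\in C^{k,\alpha}_\nu$ and each $u_i$ a homogeneous-times-polylog solution of $P_\infty$ of rate $\lambda_i$; this follows by comparing $P_\infty^{-1}$ at weight $\nu$ with that at weight $\nu'$ applied to $\chi f$ and shifting the inverse-Mellin contour past the poles at the $\lambda_i$, whose residues produce exactly the $u_i$ (with $\log$ terms at higher-order poles, as in Proposition \ref{prop:log:homogeneous:harmonic}, matching the counts $d(\lambda_i)$). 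Introducing the finite-dimensional space $\mathcal B$ of all such asymptotic data, with $\dim\mathcal B=N(\nu,\nu')$, the map sending a solution to its asymptotic data gives an injection $\ker P_{\nu'}/\ker P_\nu\hookrightarrow\mathcal B$; dually, Green's formula (Lemma \ref{lem:integration:parts}) shows that the obstruction to solving $Pu=f$ at weight $\nu$ rather than $\nu'$ is the pairing of $f$ against asymptotic data of elements of $\ker P^\ast$, giving an injection $\mathrm{coker}\,P_\nu/\mathrm{coker}\,P_{\nu'}\hookrightarrow\mathcal B^\ast$, and nondegeneracy of the boundary pairing forces these images to be complementary. Hence
\[
i(\nu')-i(\nu)=\dim\big(\ker P_{\nu'}/\ker P_\nu\big)+\dim\big(\mathrm{coker}\,P_\nu/\mathrm{coker}\,P_{\nu'}\big)=\dim\mathcal B=N(\nu,\nu').
\]
I expect the two genuinely hard points to be: (a) the uniform-in-$\xi$ invertibility of $\widehat L(\xi)$ off the indicial line, used both in the parametrix and in the contour shift, where the quantitative ellipticity of $P$ enters; and (b) the bookkeeping in Stage 3 — correctly defining the boundary pairing on $\mathcal B$, handling $\log$-terms at multiple indicial roots, and verifying that the kernel-jump and cokernel-jump exhaust $\mathcal B$ without overlap. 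The partition-of-unity gluing, commutator estimates, and compactness of lower-order remainders are routine given Theorems \ref{thm:Weighted:Embedding} and \ref{thm:Weighted:Regularity}.
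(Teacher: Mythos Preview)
Your proposal is a correct sketch of the standard Lockhart--McOwen/Melrose argument, and this is exactly the approach underlying the statement; however, the paper itself does not give a proof of Theorem~\ref{thm:Fredholm} at all---it is stated in Appendix~\ref{Appendix:Analysic:AC} as a background fact, with the reader referred to \cite{Lockhart:McOwen,Lockhart,Melrose} and \cite[\S 4.3]{Marshall} for proofs. So there is nothing to compare against beyond noting that your outline matches the architecture of those cited references.
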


Finally, the following result about the asymptotic decay of solutions to $Du=f$ is used in the proof of the index jump formula in Theorem \ref{thm:Fredholm} and is as useful as the Theorem itself. 

\begin{prop}\label{prop:Decay:Solutions}
Let $P\co \Gamma (E)\ra \Gamma (F)$ be an admissible operator of order $k$ and fix $l\geq 0$, $\alpha\in (0,1)$ and $\nu,\nu'\in\R$ with $\nu<\nu'$ and $\nu,\nu'\notin\mathcal{D}(P_\infty)$.

Set $N=N(\nu,\nu')$. Let $u_1,\dots, u_N$ be a basis of the space of $u\in \ker P_\infty$ such that there exist $\lambda\in (\nu,\nu')$ and homeogeneous sections $\overline{u}_0,\dots, \overline{u}_m$ of $E_\infty$ of rate $\lambda$ such that $u = \sum_{j=0}^m{\overline{u}_j (\log{r})^j}$.

Then there exists a compact set $K\subset B$ such that for every $f\in C^{0,\alpha}_{\nu-k}$ with $f=Du'$ for some $u'\in C^{k,\alpha}_{\nu'}$ there exist $a=(a_1,\dots,a_N)\in\R^N$ and $u\in C^{k,\alpha}_{\nu}(B\setminus K)$ such that $u'|_{B\setminus K}=u+\sum_{i=1}^N{a_i\,u_i}$. Moreover, there exists a constant $C>0$ independent of $f,u,u',a$ such that
\[
\| u \|_{C^{k,\alpha}_{\nu}(B\setminus K)} + \|a\| \leq C\left( \| f\| _{C^{0,\alpha}_{\nu-k}} + \| u' \|_{C^{k,\alpha}_{\nu'}}\right).
\]
\end{prop}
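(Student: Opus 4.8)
The plan is to reduce everything to the end of $B$ and then invoke the standard meromorphic-Mellin analysis of asymptotically translation-invariant elliptic operators. Since the conclusion only describes $u'$ outside a compact set, I would begin by fixing the diffeomorphism $f\co (R,\infty)\times\Sigma\ra B\setminus K_0$ of Definition \ref{def:AC}, identifying sections over $B\setminus K_0$ with sections over the cone end $\{ r>R\}$, and passing to the cylinder $\R\times\Sigma$ via $t=\log r$. By Definitions \ref{def:AC:admissible:connection} and \ref{def:Admissible:Operators}, after this conformal change $r^kP$ becomes an elliptic operator of the form $L+Q$, where $L$ is translation invariant (the cylindrical model of $r^kP_\infty$) and $Q$ has coefficients decaying like $e^{\mu t}$, $\mu<0$. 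The equation $Pu'=f$ becomes $Lu'=e^{kt}f-Qu'$ on the end, and since $f\in C^{0,\alpha}_{\nu-k}$ and $Q$ gains a factor $e^{\mu t}$ on the relevant weighted spaces, the right-hand side decays (exponentially in $t$) strictly faster than $u'$ does.

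The core of the argument is then the classical ``partial asymptotic expansion'' theorem for such operators on a half-cylinder (\cite{Lockhart:McOwen,Lockhart,Melrose}, \cite[\S 4.3]{Marshall}), which I would apply as follows. The Fourier transform in $t$ turns $L$ into an analytic family $L(\zeta)$ of elliptic operators on the compact manifold $\Sigma$; $L(\zeta)$ is invertible for $\Real\zeta\notin\mathcal{D}(P_\infty)$, and $\zeta\mapsto L(\zeta)^{-1}$ is meromorphic with poles exactly on $\mathcal{D}(P_\infty)$ whose principal parts are built from the homogeneous and logarithmic kernel elements of $P_\infty$. Transforming the equation, $\widehat{u'}(\zeta)$ is holomorphic in the half-plane corresponding to the weight $\nu'$ and there equals $L(\zeta)^{-1}$ applied to the transform of $e^{kt}f-Qu'$; because that term decays faster and $Q$ is a weighted-compact perturbation (so the meromorphic Fredholm theory underlying Theorem \ref{thm:Fredholm} applies), $\widehat{u'}$ continues meromorphically across the strip $\{ \nu<\Real\zeta<\nu'\}$ with poles only in $\mathcal{D}(P_\infty)\cap(\nu,\nu')$. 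Shifting the inversion contour from $\Real\zeta=\nu'$ to $\Real\zeta=\nu$ — both admissible since $\nu,\nu'\notin\mathcal{D}(P_\infty)$ — picks up residues that are exactly linear combinations of $u_1,\dots,u_N$ and leaves a remainder in $C^{k,\alpha}_\nu$. This gives $u'|_{B\setminus K}=u+\sum_{i=1}^N a_iu_i$ with $u\in C^{k,\alpha}_\nu(B\setminus K)$, for $K=\{ r\le R'\}\cup K_0$ with $R'$ depending only on $P,\nu,\nu'$; the stated estimate comes out because the contour-shift and continuation operators are bounded on the weighted spaces with norms depending only on $P,\nu,\nu'$, and $f=Pu'$ controls $Qu'$ in terms of $\|u'\|_{C^{k,\alpha}_{\nu'}}$ and $\|f\|_{C^{0,\alpha}_{\nu-k}}$. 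An equivalent, more elementary route I might present instead is a finite induction over the (finitely many) indicial roots in $(\nu,\nu')$: at each step one applies the model statement over a short non-indicial window, subtracts the obstructing indicial solution, upgrades it to an exact solution of $P$ by absorbing the $Q$-error via the same procedure (which terminates because the rates of the successively introduced indicial solutions strictly decrease through a finite set), and thereby improves the decay rate by a fixed amount.

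The hard part is the meromorphic-Mellin input: showing that $L(\zeta)^{-1}$ is holomorphic off $\mathcal{D}(P_\infty)$, identifying its poles and principal parts with the indicial roots and their logarithmic kernel elements, and checking that the exponentially decaying perturbation $Q$ disturbs none of this. This is exactly the machinery behind the Fredholm and index-jump statements of Theorem \ref{thm:Fredholm}; once it is in place, the reduction to the end and the bookkeeping with weighted H\"older norms are routine, and I would cite \cite{Lockhart:McOwen,Lockhart,Melrose} for the details rather than reproduce them here.
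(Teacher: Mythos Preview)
Your proposal is correct and follows the standard Mellin-transform/contour-shift argument from \cite{Lockhart:McOwen,Melrose}, which is precisely what the paper has in mind: note that the paper does \emph{not} actually prove this proposition but merely states it, referring the reader to \cite{Lockhart:McOwen,Lockhart,Melrose} and \cite[\S 4.3]{Marshall} for proofs. Your sketch, including the alternative finite-induction variant, is exactly the content of those references, so there is nothing to compare.
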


\bibliographystyle{amsinitial}
\bibliography{Def_ALC}

\end{document}